\newcommand{\indicator}[1]{\ensuremath{\mathbf{1}_{\{#1\}}}}
\newcommand{\oindicator}[1]{\ensuremath{\mathbf{1}_{{#1}}}}
\numberwithin{equation}{section}
\DeclareMathOperator{\cov}{Cov}
\DeclareMathOperator{\var}{Var}
\DeclareMathOperator{\corr}{Corr}
\DeclareMathOperator{\tr}{tr}
\DeclareMathOperator{\dist}{dist}
\DeclareMathOperator{\rank}{rank}
\DeclareMathOperator{\Mat}{Mat}
\DeclareMathOperator{\Sparse}{Sparse}
\DeclareMathOperator{\supp}{supp}
\DeclareMathOperator{\Comp}{Comp}
\DeclareMathOperator{\Incomp}{Incomp}
\DeclareMathOperator{\Span}{Span}
\newcommand{\Prob}{\mathbb{P}}
\newcommand{\E}{\mathbb{E}}
\newcommand{\C}{\mathbb{C}}
\renewcommand\Re{\operatorname{Re}}
\renewcommand\Im{\operatorname{Im}}
\newcommand{\eps}{\varepsilon}
\newcommand{\Sp}{\mathbb{S}}
\theoremstyle{plain}
\newtheorem{theorem}{Theorem}[section]
\newtheorem{proposition}[theorem]{Proposition}
\newtheorem{lemma}[theorem]{Lemma}
\newtheorem{corollary}[theorem]{Corollary}
\newtheorem{question}[theorem]{Question}
\theoremstyle{definition}
\newtheorem{definition}[theorem]{Definition}
\newtheorem{remark}[theorem]{Remark}
\newcommand{\NN}{\mathbb{N}}
\newcommand{\ZZ}{\mathbb{Z}}
\newcommand{\RR}{\mathbb{R}}
\newcommand{\CC}{\mathbb{C}}
\newcommand{\vv}{\nu}
\newcommand{\ee}{\varepsilon}
\newcommand{\pp}{\varphi}
\newcommand{\EE}{\mathbb{E}}
\newcommand{\PP}{\mathbb{P}}
\newcommand{\HH}{\mathbb{H}}
\newcommand{\Corr}{\text{Corr}}
\begin{document}
	\title{Spectrum of Heavy-Tailed Elliptic Random Matrices}
	
	\author[A. Campbell]{Andrew Campbell}
	\thanks{A. Campbell has been supported in part by NSF grant ECCS-1610003.}
	\address{Department of Mathematics, University of Colorado at Boulder, Boulder, CO 80309}
	\email{andrew.j.campbell@colorado.edu}
	
	\author[S. O'Rourke]{Sean O'Rourke}
	\thanks{S. O'Rourke has been supported in part by NSF grants DMS-1810500 and ECCS-1610003.}
	\address{Department of Mathematics, University of Colorado at Boulder, Boulder, CO 80309}
	\email{sean.d.orourke@colorado.edu}

	\begin{abstract}
		An elliptic random matrix $X$ is a square matrix whose $(i,j)$-entry $X_{ij}$ is a random variable independent of every other entry except possibly $X_{ji}$.  
		Elliptic random matrices generalize Wigner matrices and non-Hermitian random matrices with independent entries. 
		When the entries of an elliptic random matrix have mean zero and unit variance, the empirical spectral distribution is known to converge to the uniform distribution on the interior of an ellipse determined by the covariance of the mirrored entries. 
		
		We consider elliptic random matrices whose entries fail to have two finite moments. Our main result shows that when the entries of an elliptic random matrix are in the domain of attraction of an $\alpha$-stable random variable, for $0<\alpha<2$, the empirical spectral measure converges, in probability, to a deterministic limit.  
		This generalizes a result of Bordenave, Caputo, and Chafa\"i for heavy-tailed matrices with independent and identically distributed entries. The key elements of the proof are (i) a general bound on the least singular value of elliptic random matrices under no moment assumptions; and (ii) the convergence, in an appropriate sense, of the matrices to a random operator on the Poisson Weighted Infinite Tree.
	\end{abstract}
	
	\maketitle
	
	\section{Introduction}
	
	Let  $\Mat_n(\mathbb{F})$ be the set of $n \times n$ matrices over the field $\mathbb{F}$. For a matrix $A \in \Mat_n(\mathbb{C})$, the singular values of $A$ are the square roots of the eigenvalues of $A A^\ast$, where $A^\ast$ is the conjugate transpose of $A$.  We let $s_n(A) \leq \cdots \leq s_1(A)$ denote the ordered singular values of $A$, and $\lambda_1(A),\dots,\lambda_n(A)\in\CC$ be the eigenvalues of $A$ in no particular order. For a matrix $A\in\Mat_n(\CC)$ we define the empirical spectral measure
	$$\mu_A:=\frac{1}{n}\sum_{i =1}^n\delta_{\lambda_i(A)}$$
	and the empirical singular value measure
	$$\vv_{A}:=\frac{1}{n}\sum_{i =1}^n\delta_{s_i(A)}.$$
	These measures are central objects in random matrix theory, and the goal of this paper is to study the asymptotic behavior of the empirical spectral measure for a class of heavy-tailed elliptic random matrices. 
	
	Elliptic random matrices can be thought of as interpolating between random matrices whose entries are independent and identically distributed (i.i.d.)\ and Wigner matrices. We now give a precise definition. 
	
	\begin{definition}[Elliptic Random Matrix]
		Let $(\xi_1,\xi_2)$ be a random vector with complex-valued random variable entries, $\zeta$ a complex random variable, and $X_n=(X_{ij})_{i,j=1}^n$ be an $n\times n$ random matrix. $X_n$ is an \textit{elliptic random matrix} if \begin{itemize}
			\item[(i)] $\{X_{ii}: 1\leq i\leq n \}\cup\{(X_{ij},X_{ji}):1\leq i<j\leq n \}$ is a collection of independent random elements.
			\item[(ii)] the pairs $\{(X_{ij},X_{ji})\}_{1\leq i< j\leq n}$ are independent copies of $(\xi_1,\xi_2)$.
			\item[(iii)] the diagonal elements $\{X_{ii}: 1\leq i\leq n \}$ are independent copies of $\zeta$.
		\end{itemize} We refer to $(\xi_1,\xi_2),\zeta$ as the \textit{atom variables} of the matrix $X_n$. 
	\end{definition}
	
	Elliptic random matrices were originally introduced by Girko \cite{girko1986elliptic,girko1995elliptic} in the 1980s, with the name coming from the limit of the empirical spectral measure. When the entries of the matrix have four finite moments, the limiting empirical spectral measures was investigated by Naumov \cite{naumov2012elliptic}.  The general case, when the entries are only assumed to have finite variance, was studied in \cite{TheEllipticLaw}.  
	\begin{theorem}[Elliptic law for real random matrices, Theorem 1.5 in \cite{TheEllipticLaw}]\label{EllipticLaw}
		Let $X_n$ be an $n\times n$ elliptic random matrix with real atom variables $(\xi_1,\xi_2),\zeta$. Assume $\xi_1,\xi_2$ have mean zero and unit variance, and $\EE[\xi_1\xi_2]=:\rho$ for $-1<\rho<1$. Additionally assume $\zeta$ has mean zero and finite variance. Then almost surely the empirical spectral measure $\mu_{\frac{1}{\sqrt{n}}X_n}$ of $\frac{1}{\sqrt{n}}X_n$ converges weakly to the uniform probability measure on 
		$$\mathcal{E}_\rho:=\left\{z\in\CC: \frac{\Re(z)^2}{(1+\rho)^2}+\frac{\Im(z)^2}{(1-\rho)^2}\leq 1 \right\}$$
		as $n\rightarrow\infty$.
	\end{theorem}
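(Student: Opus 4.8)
The plan is to combine Girko's Hermitization with the logarithmic potential method, which is by now the standard route to limiting spectral laws for non-Hermitian matrices of independent type. Set $Y_n(z) := \frac{1}{\sqrt n}X_n - zI_n$ for $z\in\CC$. Since
$$U_{\mu_{\frac1{\sqrt n}X_n}}(z) = -\frac1n\log\bigl|\det Y_n(z)\bigr| = -\int_0^\infty \log x\, d\vv_{Y_n(z)}(x),$$
where $U_\mu(z) := -\int_\CC\log|z-w|\,d\mu(w)$ denotes the logarithmic potential, it suffices to prove, for Lebesgue-almost every fixed $z$: (a) almost surely, $\vv_{Y_n(z)}$ converges weakly to a deterministic probability measure $\vv_z$; and (b) almost surely, $\log(\cdot)$ is uniformly integrable with respect to the family $\{\vv_{Y_n(z)}\}_n$. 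Girko's Hermitization lemma (in the quantitative form used by Bordenave--Chafa\"i and Tao--Vu) then yields that $\mu_{\frac1{\sqrt n}X_n}$ converges weakly, almost surely, to the unique probability measure $\mu$ with $U_\mu(z)=-\int_0^\infty\log x\,d\vv_z(x)$; identifying this $\mu$ with the uniform law on $\mathcal{E}_\rho$ then finishes the proof.

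For (a) I would first pass to bounded entries. Replacing each $X_{ij}$ by $X_{ij}\indicator{|X_{ij}|\le \delta\sqrt n}$ minus its mean alters $\vv_{Y_n(z)}$ and $\mu_{\frac1{\sqrt n}X_n}$ negligibly in the limit, uniformly after sending $n\to\infty$ and then $\delta\to0$; this uses only the finite-variance hypothesis together with the Hoffman--Wielandt and rank inequalities for singular values (the diagonal part, of negligible Frobenius norm relative to $\sqrt n$, is handled the same way). For the truncated model I would analyze the resolvent of the Hermitization
$$\mathcal{H}_n(z) = \begin{pmatrix} 0 & Y_n(z) \\ Y_n(z)^\ast & 0 \end{pmatrix},$$
whose symmetrized spectral distribution encodes $\vv_{Y_n(z)}$. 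A Schur-complement computation produces a closed self-consistent system for the limiting Stieltjes transform in which the correlation $\rho=\EE[\xi_1\xi_2]$ enters explicitly (reducing to the usual i.i.d.\ equation when $\rho=0$). Uniqueness of the solution, together with a martingale bounded-difference concentration estimate and Borel--Cantelli, upgrades convergence of $\EE\vv_{Y_n(z)}$ to almost-sure weak convergence to a deterministic $\vv_z$.

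The main obstacle is (b): controlling the smallest singular values of $Y_n(z)$. Part (a) already yields $\vv_{Y_n(z)}([0,t])\le Ct$ for small fixed $t$, which controls all but the bottom $o(n)$ singular values; but to keep $s_n(Y_n(z))$ from contributing to $\int\log x\,d\vv_{Y_n(z)}$ one needs a polynomial lower bound, $\Prob\bigl(s_n(Y_n(z))\le n^{-B}\bigr)\to0$ for a.e.\ fixed $z$. This requires a least singular value estimate for elliptic matrices valid under only a finite-variance assumption — an adaptation of the Rudelson--Vershynin compressible/incompressible dichotomy to the regime where $X_{ij}$ and $X_{ji}$ are dependent. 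Here the strict inequality $-1<\rho<1$ is essential: it guarantees that the pair $(\xi_1,\xi_2)$ is genuinely two-dimensional, so that the small-ball (anti-concentration) probabilities for the bilinear and quadratic forms arising in the distance-to-a-subspace arguments do not degenerate. One also needs the intermediate bound $s_{n-k}(Y_n(z))\gtrsim k/n$ with high probability, so that the logarithmic singularity at $0$ is integrated at the required rate; this follows from (a) via Cauchy interlacing and negative-second-moment estimates in the spirit of Tao--Vu.

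Finally, to identify $\mu$: both the truncation reduction and the self-consistent system in (a) depend on the atom variables only through their first two moments — $\xi_1,\xi_2$ mean zero and unit variance with $\EE[\xi_1\xi_2]=\rho$, and $\zeta$ mean zero — so $\vv_z$, hence $U_\mu$, hence $\mu$, agrees with the corresponding object for the real Gaussian elliptic matrix of correlation $\rho$. (Alternatively, one can transfer directly at the level of the spectral measure by invoking the replacement principle, given the least singular value bound above.) For the Gaussian elliptic model the fixed-point system can be solved explicitly — equivalently, Girko's original analysis applies — and shows $\mu$ to be the uniform probability measure on $\mathcal{E}_\rho$. By uniqueness of the logarithmic potential, the same holds for the general elliptic matrix, which completes the proof.
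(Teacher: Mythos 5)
This theorem is not proved in the paper; it is quoted verbatim as Theorem~1.5 of \cite{TheEllipticLaw} and used only as background. Your sketch — Girko Hermitization via the logarithmic potential, truncation to bounded entries, a Schur-complement self-consistent equation for the Hermitized resolvent with $\rho$ as the key parameter, concentration plus Borel--Cantelli for almost-sure convergence of $\nu_{Y_n(z)}$, a Rudelson--Vershynin-style least singular value bound adapted to the dependent $(X_{ij},X_{ji})$ pair under $|\rho|<1$, Cauchy interlacing with the Tao--Vu negative second moment for intermediate singular values, and identification of the limit by moment-matching against the Gaussian elliptic ensemble — faithfully reproduces the strategy of the cited source, so it is essentially the ``right'' proof, just not one appearing in the present paper.
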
   
	Elliptic random matrices have also been studied in \cite{G_tze_2015,GNT,CLTelliptic,O_Rourke_2015}. 
	
	Our main result, Theorem \ref{EigenConv}, gives an analogous result for heavy-tailed elliptic random matrices, i.e.\ when $\EE|\xi_1|^2$ and $\EE|\xi_2|^2$ are both infinite. In 1994, Cizeau and Bouchaud \cite{PhysRevE.50.1810} introduced L\'evy matrices as a heavy-tailed analogue of the Gaussian Orthogonal Ensemble (GOE). Instead of Gaussian entries, these matrices have entries in the domain of attraction of an $\alpha$-stable random variable, for $0<\alpha<2$. They predicted a deterministic limit $\mu_\alpha$,  which depends only on $\alpha$, for the empirical spectral measures of these matrices when properly scaled. Convergence to a deterministic limit was first proved by Ben Arous and Guionnet \cite{Arous_2007} and later by Bordenave, Caputo, and Chafa\"i \cite{SymHeavy} with an alternative characterization in their study of random Markov matrices. In \cite{HeavyIId} Bordenave, Caputo, and Chafa\"i proved the empirical spectral measure of random matrices with i.i.d.\ entries in the domain of attraction of a complex $\alpha$-stable random variable converges almost surely to an isotropic measure on $\CC$ with unbounded support. Notably for L\'evy matrices the limiting spectral measure inherits the tail behavior of the entries, while the limiting spectral measure of heavy-tailed random matrices with i.i.d.\ entries has a lighter tail and finite moments of every order. 
	
	Much of the work on heavy-tailed random matrices has been on the spectrum of symmetric matrices, either L\'evy or sample covariance matrices \cite{Arous_2007,Auffinger_2009,Auffinger_2016,Belinschi_2009,Biroli_2007,GOEforLevy,Soshnikov_2004,SymHeavy,heiny2020limiting}. Motivated by questions of delocalization from physics there has also been considerable work done in studying the eigenvectors of symmetric heavy-tailed matrices \cite{Bordenave_2013,Bordenave_2017,EigenvectorforLevy,Monthus_2016,PhysRevE.50.1810,Tarquini_2016,benaych-georges2014}. 
	
	As is often the case in random matrix theory most of the work on heavy-tailed random matrices has focused on ensembles where the entries are independent up to symmetry conditions on the matrix. Work on matrices with dependent entries is still limited. Heavy-tailed matrices with normalized rows have been considered for random Markov chains in \cite{SymHeavy,HeavyMarkovOriented} and sample correlation matrices in \cite{heiny2020limiting}. In \cite{basrak2019extreme} extreme eigenvalue statistics of symmetric heavy-tailed random matrices with $m$-dependent entries were studied and shown to converge to a Poisson process. This $m$-dependence can be thought of as a short range dependence meant to model stock returns that depend on stocks from the same sector of size determined by $m$. To the best of our knowledge there are not any results on non-Hermitian heavy-tailed matrices with long range dependence between entries from different rows outside the random reversible Markov chains studied in \cite{SymHeavy}.

	The key question when approaching heavy-tailed elliptic random matrices is how to measure the dependence between $\xi_1$ and $\xi_2$. Without two finite moments the covariance between $\xi_1$ and $\xi_2$, which was the key parameter in Theorem \ref{EllipticLaw}, cannot be defined. Similar notions, such as covariation or codifference, exist for $\alpha$-stable random variables but they do not seem sufficient for our purposes. The difference is that the covariation does not provide as much information for $\alpha$-stable random vectors as the covariance does for Gaussian random vectors. If $X=(X_1,X_2)$ is a bivariate Gaussian random vector where $X_1$ and $X_2$ are standard real Gaussian random variables, then the correlation $\rho=\EE[X_1X_2]$ uniquely determines the distribution of $X$. Thus one approach to measuring dependence is to find a parameter which uniquely determines the distribution of a properly normalized $\alpha$-stable random vector. The distribution of an $\alpha$-stable random vector $Y$ in $\mathbb{R}^n$ is determined uniquely through its characteristic function of the form 
	$$\EE\exp\left(iu^TY \right)=\begin{cases}
		\exp\left(-\int_{\mathbb{S}^{n-1}}|u^Ts|^\alpha(1-i\text{sign}(u^Ts)\tan(\frac{\pi\alpha}{2}))d\theta(s)+iu^Ty \right), \alpha\neq 1\\
		
		\exp\left(-\int_{\mathbb{S}^{n-1}}|u^Ts|(1+i\text{sign}(u^Ts)\log|u^Ts|)d\theta(s)+iu^Ty \right), \alpha=1
	\end{cases}$$ for a finite measure $\theta$ on the unit sphere $\mathbb{S}^{n-1}$ and a deterministic vector $y$. $Y$ can be translated and scaled so that $y=0$ and $\theta$ is a probability measure uniquely determining the distribution of $Y$. $\theta$ is called the spectral measure of $Y$, and it turns out to be the appropriate explicit description of the dependence between the entries of $Y$. The definition of $\theta$ can be extended to random variables which are not stable but rather in the domain of attraction of an $\alpha$-stable random variable, see Definition \ref{def:depmeas}. 
	
	If the components of $Y$ are independent, then $\theta$ is supported entirely on the intersection of the axes and the unit sphere. Intuitively, when considering the mirrored entries of a random matrix, if $\theta$ is close to a measure supported on the intersection of the axes and the unit sphere, the entries are close to independent, after scaling. If $\theta$ is close to a measure supported on the set $\{(z_1,z_2) : |z_1|^2 + |z_2|^2 = 1, z_1=\bar{z}_2\}$ then the matrix is close to Hermitian. Numerical simulations seem to reflect this intuition in the spectrum of elliptic random matrices, see Figures \ref{fig:a}, \ref{fig:b}, and \ref{fig:c}.


	\subsection{Matrix distribution} We will consider elliptic random matrices whose atom variables satisfy the following conditions. 
	
	\begin{definition}[Condition \textbf{C1}]\label{def:depmeas}
		We say the atom variables $(\xi_1,\xi_2),\zeta$ satisfy Condition \textbf{C1} if\begin{itemize}
			\item[(i)]  there exists a positive number $0<\alpha<2$, a sequence $a_n=\ell(n)n^{1/\alpha}$ for a slowly varying function $\ell$ (i.e. $\lim_{t\rightarrow\infty}\ell(tx)/t=1$ for all $x>0$), and a finite measure $\theta_d$ on the unit sphere in $\CC^2$ such that for all Borel subsets $D$ of the unit sphere with $\theta_d(\partial D)=0$ and all $r>0$,
			$$\lim\limits_{n\rightarrow\infty}n\PP\left(\frac{(\xi_1,\xi_2)}{\|(\xi_1,\xi_2) \|}\in D,\|(\xi_1,\xi_2) \|\geq ra_n \right)=\theta_d(D)m_\alpha([r,\infty)),$$
			where $m_\alpha$ is a measure on $(0,\infty)$ with density $f(r)=\alpha r^{-(1+\alpha)}$.  
			\item[(ii)] there exists a constant $C > 0$ such that $\PP(|\zeta|\geq t)\leq Ct^{-\alpha}$ for all $t>0$.
		\end{itemize}
	\end{definition}
	
	We will reserve $\theta_d$ to denote the measure on a sphere associated to the atom variables of a heavy-tailed elliptic random matrix; it may be worth noting that $d$ stands for ``dependence'' and is not a parameter. As it turns out, Condition $\textbf{C1}$ is enough to prove convergence of the empirical singular value distribution, see Theorem \ref{SingValueConv}. We will need more assumptions to prove convergence of the empirical spectral measure. 
	
	\begin{definition}[Condition \textbf{C2}]
		We say $(\xi_1,\xi_2),\zeta$ satisfy Condition \textbf{C2} if the atom variables satisfy Condition \textbf{C1} and if
		\begin{itemize}
			\item[(i)] there exists no $(a,b)\in\CC^2\setminus\{(0,0)\}$ such that 
			$$\supp(\theta_d)\subseteq\{(z,w)\in\CC^2:az+bw=0, |z|^2+|w|^2=1\}.$$
			
			\item[(ii)] $a_n/n^{1/\alpha}\rightarrow c$, for some constant $c>0$ as $n\rightarrow\infty$.
		\end{itemize}
	\end{definition}

	\subsection{Main results} For simplicity, let $A_n:=\frac{1}{a_n}X_n$. Our first result gives the convergence of the singular values of $A_n-zI_n$, which we will denote as $A_n-z$, for $z\in\CC$.  Here and throughout, $I_n$ denotes the $n \times n$ identity matrix.  While interesting on its own, this is the first step in the method of Hermitization to establish convergence of the empirical spectral measure. Throughout we will use $\Rightarrow$ to denote weak convergence of probability measures and convergence in distribution of random variables.
	
	\begin{theorem}[Singular values of heavy-tailed elliptic random matrices]\label{SingValueConv}
		Let $X_n$ be an $n\times n$ elliptic random matrix with atom variables $(\xi_1,\xi_2),\zeta$ which satisfy Condition \textbf{C1}. Then for each $z\in\CC$ there exists a deterministic probability measure $\nu_{z,\alpha,\theta_d}$, depending only on $z,\alpha$ and $\theta_d$, such that almost surely
		$$\nu_{A_n-zI_n}\Rightarrow\nu_{z,\alpha,\theta_d}$$
		as $n\rightarrow\infty$.
	\end{theorem}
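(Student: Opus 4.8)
The plan is to combine Hermitization with the objective method, in the spirit of Bordenave, Caputo, and Chafa\"i for heavy-tailed matrices with i.i.d.\ entries, the genuinely new ingredient being the correlation between the mirrored entries $X_{ij}$ and $X_{ji}$. Fix $z\in\CC$ and write $B:=A_n-zI_n$. Form the $2n\times 2n$ Hermitian dilation
$$H_z:=\begin{pmatrix}0 & B\\ B^\ast & 0\end{pmatrix},$$
whose eigenvalues are $\pm s_1(B),\dots,\pm s_n(B)$; thus $\nu_{A_n-zI_n}$ is the pushforward of the symmetric measure $\mu_{H_z}$ under $x\mapsto|x|$, and it suffices to show $\mu_{H_z}$ converges almost surely to a deterministic symmetric probability measure. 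I would work at the level of Stieltjes transforms: set $m_n(\eta):=\frac{1}{2n}\tr(H_z-\eta I_{2n})^{-1}$ for $\Im\eta>0$, note $|m_n(\eta)|\le 1/\Im\eta$, and establish (a) $m_n(\eta)-\EE m_n(\eta)\to 0$ almost surely; (b) $\EE m_n(\eta)\to g_{z,\alpha,\theta_d}(\eta)$ for a deterministic $g_{z,\alpha,\theta_d}$; and (c) tightness of $\{\nu_{A_n-zI_n}\}_n$. Given these, a normal families argument (the $m_n$ are holomorphic and uniformly bounded on compact subsets of the upper half-plane) upgrades (a)--(b) to almost sure pointwise convergence $m_n\to g_{z,\alpha,\theta_d}$, hence vague convergence of $\mu_{H_z}$, and (c) turns this into weak convergence; $\nu_{z,\alpha,\theta_d}$ is then the pushforward under $x\mapsto|x|$ of the inverse Stieltjes transform of $g_{z,\alpha,\theta_d}$.

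Items (a) and (c) are insensitive to the lack of moments. For (a), let $\mathcal F_k$ be the $\sigma$-algebra generated by the first $k$ rows and columns of $X_n$ and run the Doob martingale of $2n\,m_n(\eta)$ along $(\mathcal F_k)_{k=0}^n$: revealing one more row-column pair changes $H_z$ by a Hermitian perturbation of rank at most $4$, and by the interlacing/resolvent bound $|\tr(M-\eta)^{-1}-\tr(M'-\eta)^{-1}|\le C\,\rank(M-M')/\Im\eta$ with a universal $C$ independent of the sizes of the entries, the increments are $O(1/\Im\eta)$; Azuma--Hoeffding and Borel--Cantelli give (a), uniformly over a countable dense set of $\eta$'s. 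For (c), truncate the pairs $(X_{ij},X_{ji})$ at a large multiple $K$ of $a_n$: Condition \textbf{C1}(i) makes the rescaled ``heavy'' part of $B$ have rank $O(nK^{-\alpha})$ with high probability, while Karamata's theorem gives the rescaled truncated part squared Hilbert--Schmidt norm $O(nK^{2-\alpha})$, so by Weyl's inequalities only a $O(T^{-\alpha})$ fraction of the $s_i(B)$ can exceed $T$ once $n\to\infty$ and $K$ is taken of order $T$.

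The substance is (b), which I would obtain from local weak convergence of the weighted graphs underlying $H_z$. Encode $B$ as a weighted bipartite graph on $\{1,\dots,n\}\sqcup\{1',\dots,n'\}$, edge $\{k,l'\}$ carrying the mark $(A_n)_{kl}-z\delta_{kl}$; the companion edges $\{k,k'\}$ carry $X_{kk}/a_n-z$, and the two partner edges $\{k,l'\}$, $\{l,k'\}$ ($k\ne l$) carry marks built from the correlated pair $(X_{kl},X_{lk})/a_n$. Using Condition \textbf{C1}, I would show that this graph, rooted at a uniform vertex, converges in the local weak sense to a random rooted tree operator on a bipartite Poisson Weighted Infinite Tree in which: the companion edges carry the deterministic mark $-z$ (Condition \textbf{C1}(ii) makes $X_{kk}/a_n$ negligible for a typical vertex); at each vertex the magnitudes of the non-companion marks form a Poisson process on $(0,\infty)$ with the intensity inherited from $m_\alpha$ after the $a_n$-rescaling; the attached directions on the unit sphere of $\CC^2$ are sampled from $\theta_d$; and each non-companion edge is paired with a ``partner'' edge elsewhere in the tree so that the two marks have the joint law prescribed by $\theta_d$ (so Condition \textbf{C1}(i) is exactly what pins down both the Poisson intensity and this joint law). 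Granting that this limiting operator is almost surely essentially self-adjoint, valid for $0<\alpha<2$, its diagonal resolvent entry at the root solves a recursive distributional equation depending only on $z,\alpha,\theta_d$; continuity of the root resolvent under local weak convergence, together with dominated convergence for $\Im\eta$ bounded away from $0$, then yields $\EE m_n(\eta)\to g_{z,\alpha,\theta_d}(\eta)$, and inverting Stieltjes transforms defines $\nu_{z,\alpha,\theta_d}$.

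The main obstacle is precisely the dependence between $X_{ij}$ and $X_{ji}$, which has no analogue in the i.i.d.\ heavy-tailed setting. It forces the limiting tree operator to have off-diagonal marks that are not independent across edges but obey the ``partner'' coupling dictated by $\theta_d$; consequently the correct limiting object must be identified with care (keeping track of which edges are partnered when the natural bipartite cycle opens up in the limit), the verification of local weak convergence must follow partnered pairs of edges jointly rather than edge by edge, the associated recursive distributional equation becomes genuinely more involved, and---most delicately---one must establish essential self-adjointness of this correlated tree operator in a form strong enough to transfer to the resolvent limit. Everything else (Hermitization, martingale concentration, truncation for tightness, and the normal-families/inverse-Stieltjes endgame) follows the established pattern and is robust to the absence of moments.
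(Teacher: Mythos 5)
Your proposal follows essentially the same route as the paper: Hermitian dilation (the paper's ``bipartization''), local weak convergence of the permuted matrix at a uniformly random root to an operator on a Poisson Weighted Infinite Tree whose edge carries the correlated pair $(y^{(1)},y^{(2)})\sim\theta_d\times m_\alpha$, verification that the bipartized limit is almost surely self-adjoint via the Galton--Watson exhaustion criterion, resolvent/Stieltjes-transform convergence at the root, and concentration of $\check\nu_{A_n-z}$ around its mean via bounded-differences (the paper uses a rank-$\le 2$ ``wedge'' decomposition of $X_n$ with McDiarmid's inequality; your Doob-martingale/Azuma argument on the row--column filtration is the same estimate in different clothing). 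The only cosmetic differences are your truncation-plus-Weyl tightness argument in place of the paper's Schatten-bound moment estimate, and your phrasing of the limit as a bipartite tree rather than the paper's cleaner ``define the non-Hermitian tree operator on the PWIT first, then bipartize'' (which neatly sidesteps the bookkeeping over partnered edges you flag as delicate).
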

	
	Under Condition \textbf{C2} we prove the convergence of the empirical spectral measure of $A_n$.
	
	\begin{theorem}[Eigenvalues of heavy-tailed elliptic random matrices]\label{EigenConv}
		Let $X_n$ be an $n\times n$ elliptic random matrix with atom variables $(\xi_1,\xi_2),\zeta$ which satisfy Condition \textbf{C2}. Then there exists a deterministic probability measure $\mu_{\alpha,\theta_d}$, depending only on $\alpha$ and $\theta_d$, such that 
		$$\mu_{A_n}\Rightarrow\mu_{\alpha,\theta_d}$$
		in probability as $n\rightarrow\infty$. Moreover for any smooth $\pp:\CC\rightarrow\CC$ with compact support \begin{equation}\label{eq:HermEquation}
			\int_\CC\pp \ d\mu_{\alpha,\theta_d}=\frac{1}{2\pi}\int_\CC\Delta\pp(z)\left[\int_{0}^\infty\log(t)\ d\nu_{\alpha,z,\theta_d}\right]dz,
		\end{equation} where $\nu_{\alpha,z,\theta_d}$ is as in Theorem \ref{SingValueConv} and $dz$ is the Lebesgue measure on $\CC$.
	\end{theorem}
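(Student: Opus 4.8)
The plan is to run Girko's Hermitization method. For any $M\in\Mat_n(\CC)$ whose eigenvalues avoid a fixed Lebesgue-null set, $U_{\mu_M}(z):=\int_\CC\log|z-w|\,d\mu_M(w)=\tfrac1n\log|\det(M-zI_n)|=\int_0^\infty\log t\,d\nu_{M-z}(t)$, and $\mu_M=\tfrac1{2\pi}\Delta U_{\mu_M}$ as distributions. By the Hermitization lemma (Girko's method in the form of Bordenave and Chafa\"i), to obtain $\mu_{A_n}\Rightarrow\mu_{\alpha,\theta_d}$ in probability it is enough to verify, for Lebesgue-almost every $z\in\CC$: (a) $\nu_{A_n-z}\Rightarrow\nu_{z,\alpha,\theta_d}$ in probability --- this is exactly Theorem~\ref{SingValueConv}; and (b) that $\log$ is uniformly integrable in probability for $(\nu_{A_n-z})_{n\ge1}$, i.e.\ $\lim_{T\to\infty}\limsup_n\PP\big(\int_{\{|\log t|>T\}}|\log t|\,d\nu_{A_n-z}(t)>\eps\big)=0$ for all $\eps>0$. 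The limit $\mu_{\alpha,\theta_d}$ is then the probability measure with logarithmic potential $z\mapsto\int_0^\infty\log t\,d\nu_{z,\alpha,\theta_d}(t)$, which is \eqref{eq:HermEquation}, and it depends only on $\alpha,\theta_d$ because $\nu_{z,\alpha,\theta_d}$ does. Since Theorem~\ref{SingValueConv} yields a.s.\ convergence while the singular value bounds below hold with probability $1-o(1)$, it is cleanest to work along an arbitrary subsequence refined so that all the relevant a.s.\ and in-probability statements hold simultaneously.

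The first task, uniform integrability of $\log^+$, reduces to showing that $\nu_{A_n-z}$ inherits a power-law tail whose exponent is bounded away from zero: there are $C,\gamma>0$ and $t_0\ge1$, independent of $n$, with $\nu_{A_n-z}\big([t,\infty)\big)\le Ct^{-\gamma}$ for all $t\ge t_0$, with probability $1-o(1)$. I would prove this by a truncation argument: decompose $X_n=\sum_kX_n^{(k)}$ dyadically by entry magnitude, with $X_n^{(k)}$ keeping entries of modulus in $(2^ka_n,2^{k+1}a_n]$. By Condition \textbf{C1}, $a_n^{-1}\sum_{2^k>\delta}X_n^{(k)}$ has with high probability rank $O(n\delta^{-\alpha})$, while the operator norm of $a_n^{-1}\sum_{2^k\le\delta}X_n^{(k)}$ (entries bounded by $\delta$, truncated second moment of order $\delta^{2-\alpha}/n$ per entry) is $O(\delta^{1-\alpha/2})$ with high probability by the standard operator-norm bound for elliptic random matrices; Weyl's inequality and the Eckart--Young theorem then cap $\#\{i:s_i(A_n)>t\}$, and after balancing the thresholds (and, for large $t$, also moving the few high-degree rows and columns of the truncated matrix into the low-rank piece) one gets the tail for some $\gamma>0$. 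As $s_i(A_n-z)\le s_i(A_n)+|z|$, the same tail holds for $\nu_{A_n-z}$, and a power-law tail with positive exponent makes $\int\log^+ t\,d\nu_{A_n-z}(t)$ uniformly integrable in probability; via Weyl's product inequality $\prod_{i\le k}|\lambda_i(A_n)|\le\prod_{i\le k}s_i(A_n)$, the bound also controls $\tfrac1n\sum_i\log^+|\lambda_i(A_n)|$, hence $\mu_{A_n}(\{|w|>R\})\le(\log R)^{-1}\int\log^+ t\,d\nu_{A_n}(t)$, which gives tightness of $(\mu_{A_n})$.

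The second task, uniform integrability of $\log^-$ (control of the small singular values of $A_n-z$), is where Condition \textbf{C2} enters. Part (i) --- that $\supp(\theta_d)$ lies on no complex line $\{az+bw=0\}$ --- is precisely the non-degeneracy that keeps $A_n-z$ quantitatively far from singular. I would invoke the paper's general lower bound on the least singular value of elliptic random matrices to get a constant $B>0$ with $\PP\big(s_n(A_n-z)\ge n^{-B}\big)=1-o(1)$, together with a bound on the count of small singular values of the form $\nu_{A_n-z}\big((0,t)\big)\le Ct^{c}$ for $t\in(n^{-B},1]$ with probability $1-o(1)$ --- an intermediate-singular-value estimate of the kind used by Bordenave, Caputo, and Chafa\"i, obtained by bounding $\int(t^2+s)^{-1}\,d\nu_{A_n-z}(t)$ uniformly along a fine net of values of $z$ and $s$. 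Integrating $\log(1/t)$ by parts against $\nu_{A_n-z}$ on $(n^{-B},1]$ then bounds $\int\log^- t\,d\nu_{A_n-z}(t)$ uniformly, so it is uniformly integrable; Condition \textbf{C2}(ii), $a_n/n^{1/\alpha}\to c>0$, is the normalization making the scalings in the least-singular-value bound and in Theorem~\ref{SingValueConv} consistent. Together with the first task this verifies (b), so for a.e.\ $z$, $U_{\mu_{A_n}}(z)=\int_0^\infty\log t\,d\nu_{A_n-z}(t)\to\int_0^\infty\log t\,d\nu_{z,\alpha,\theta_d}(t)$ in probability.

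Finally, along the fixed subsequence, the pointwise-in-$z$ convergence just obtained together with the uniform integrability of $U_{\mu_{A_n}}$ over the compact support $K$ of $\Delta\pp$ --- its positive part is bounded via the first task, and its negative part is controlled by the small-singular-value count of the second task together with $\sup_w\int_{\{|z-w|<r\}}\log(1/|z-w|)\,dz\to0$ as $r\to0$ --- lets us pass the limit through $\int_\CC\cdot\,\Delta\pp(z)\,dz$ by Vitali's theorem, giving $\int_\CC\pp\,d\mu_{A_n}\to\tfrac1{2\pi}\int_\CC\Delta\pp(z)\big[\int_0^\infty\log t\,d\nu_{z,\alpha,\theta_d}(t)\big]\,dz$. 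The right-hand side defines a positive measure $\mu_{\alpha,\theta_d}$ of mass at most $1$ (from the logarithmic-potential structure) and of mass exactly $1$ (from the tightness in the first task); since the limit is independent of the subsequence, $\mu_{A_n}\Rightarrow\mu_{\alpha,\theta_d}$ in probability, and \eqref{eq:HermEquation} holds. I expect the genuine obstacle to be the $\log^-$ step: the quantitative invertibility of $A_n-zI_n$ for elliptic matrices under no moment assumptions and with mirrored-entry dependence encoded by $\theta_d$ is the hardest input, and the reason Condition \textbf{C2}(i) is imposed; the large-singular-value tail and the interchange with $dz$ are comparatively routine once the heavy-tailed bookkeeping is in place.
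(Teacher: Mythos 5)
Your framework---Hermitization via Girko's lemma, verifying (a) a.s.\ convergence of $\nu_{A_n-z}$ from Theorem~\ref{SingValueConv} and (b) uniform integrability in probability of $\log$ against $(\nu_{A_n-z})$, then reading off $\mu_{\alpha,\theta_d}$ from the logarithmic potential---is exactly the paper's. Your route to $\log^+$ integrability differs from the paper's in a benign way: you propose a dyadic split of the entries into a low-rank large-entry piece plus a small-entry piece controlled in operator norm, whence a power-law tail on $\nu_{A_n-z}$ by Weyl and Eckart--Young, whereas the paper (Lemma~\ref{Tightness}) simply proves $\limsup_n \int t^r\,d\nu_{A_n-z}(t)<\infty$ a.s.\ for some small $r>0$ via an upper/lower triangular decomposition, the Schatten bound $\sum_k s_k^r\le\sum_k\|R_k\|^r$, and a strong law for the resulting row-norm sums; both yield tightness and $\log^+$-integrability, and the paper's is the more economical.

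The genuine gap is in your treatment of $\log^-$, specifically the intermediate singular values. You correctly identify Theorem~\ref{thm:lsv} and Condition \textbf{C2}(i) as governing the smallest singular value, but to get $\int_{(n^{-B},\delta)}\log(1/t)\,d\nu_{A_n-z}(t)$ small uniformly in $n$ you need a quantitative \emph{count} bound for the middle range, and the route you sketch---``bounding $\int(t^2+s)^{-1}\,d\nu_{A_n-z}(t)$ uniformly along a fine net of $z$ and $s$''---is not what Bordenave, Caputo and Chafa\"i (or this paper) do, and would in effect require a local law for heavy-tailed elliptic matrices at spectral scales $s\downarrow 0$, which is neither available nor pursued here; the a.s.\ Stieltjes-transform convergence from Theorem~\ref{SingValueConv} is pointwise in fixed $\eta\in\CC_+$ and gives no uniformity as $\eta\to 0$. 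What the paper actually does (Section~\ref{sect:SingVal}) is the Tao--Vu negative second moment route: remove $\lfloor i/2\rfloor$ rows, apply Cauchy interlacing, convert $\sum_j s_j'^{-2}$ into $\sum_j\dist_j^{-2}$ via Lemma~\ref{Negative2ndMom}, and then control each $\dist(R,W)$ from below via the two distance-to-subspace estimates Propositions~\ref{RowSubspace1} and \ref{Rowsubspace2}, which hinge on the heavy-tailed structure (coupling with one-sided $\alpha/2$-stable variables, Lemmas~\ref{StocDomination}, \ref{CouplingLemma}, \ref{TruncatedMoments}) and on Condition \textbf{C2}(ii) to make the slowly varying factors degenerate to constants. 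This yields $\E[s_{n-i}^{-2};F_n]\le c(n/i)^{1+2/\alpha}$ on a high-probability event $F_n$, which is what feeds the Riemann-sum estimate that kills the $\log^-$ tail. Without replacing your Stieltjes-net step by this (or an equivalently quantitative) mechanism, the $\log^-$ part of (b) does not go through.
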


	A distributional equation describing the Stieltjes transform of $\nu_{\alpha,z,\theta_d}$ is given in Proposition \ref{prop:RDE}, which, when combined with \eqref{eq:HermEquation},  gives a description of $\mu_{\alpha,\theta_d}$.
	
	\begin{remark}
		If $\theta_d=1/2(\theta_1+\theta_2)$ where $\theta_1$ and $\theta_2$ are probability measures with $\supp(\theta_i)\subseteq \{(z_1,z_2)\in\mathbb{S}: z_i=0 \}$ and $\theta_1(A)=\theta_2\left(\{(z_1,z_2):(z_2,z_1)\in A\}\right)$, then $\nu_{\alpha,z,\theta_d}$ in Theorem \ref{SingValueConv} and $\mu_{\alpha,\theta_d}$ in Theorem \ref{EigenConv} are the same measures as in the main results of Bordenave, Caputo, and Chafa\"i \cite{HeavyIId}. This can be seen by computing $\theta_d$ when $\xi_1$ and $\xi_2$ are independent and identically distributed. It is also worth noting that if the matrix $X_n$ is complex Hermitian but not real symmetric, then $(\xi_1,\xi_2)$ will satisfy Condition \textbf{C2} (i), and thus Theorem \ref{EigenConv} holds.
	\end{remark}
	
	Numerical simulations seem to give weight to the reasoning that the support of $\theta_d$ determines how close $\mu_{\alpha,\theta_d}$ is to being isotropic or supported on the real line. In Figure \ref{fig:c} we see as $\supp(\theta_d)$ moves further from $\{z_1=z_2\}$ the mass of the spectrum moves further from the real line. A similar phenomenon appears in Figure \ref{fig:b}: as $\supp(\theta_d)$ moves further from the intersection of the axes and the sphere the spectrum becomes further from isotropic. We also see in Figure \ref{fig:a} that the tail behavior of the spectrum appears to depend on $\theta_d$ and may vary in different directions. 
	
	Spectrum similar to Figure \ref{fig:a} can be found in \cite{HeavyLevyOperators} where the authors study the spectral measure of $C_1+iC_2$ where $C_1$ and $C_2$ are free random L\'evy elements. The authors use tools in free probability to show the spectrum is supported inside the ``hyperbolic cross", which appears to be the case for heavy-tailed elliptic random matrices for certain $\theta_d$. The limiting spectral measure in \cite{HeavyIId} is not contained in a ``hyperbolic cross", which suggests $C_1+iC_2$ is not the heavy-tailed analogue of a circular element, in contrast with the case when $C_1$ and $C_2$ are free semicircular elements and $C_1+iC_2$ is a circular element.

	\begin{figure}
		\centering
		\subfigure{{\includegraphics[width=6cm]{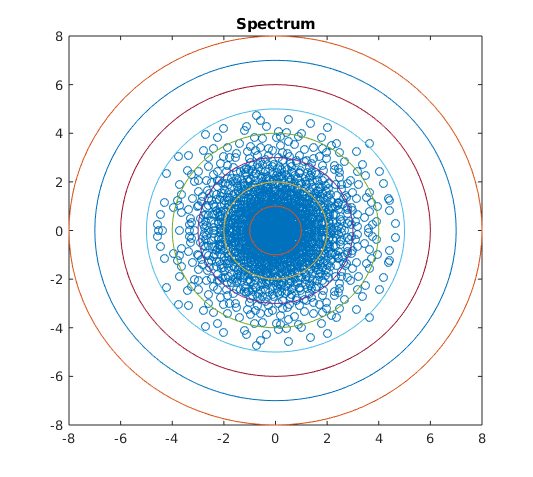} }}%
		\subfigure{{\includegraphics[width=6cm]{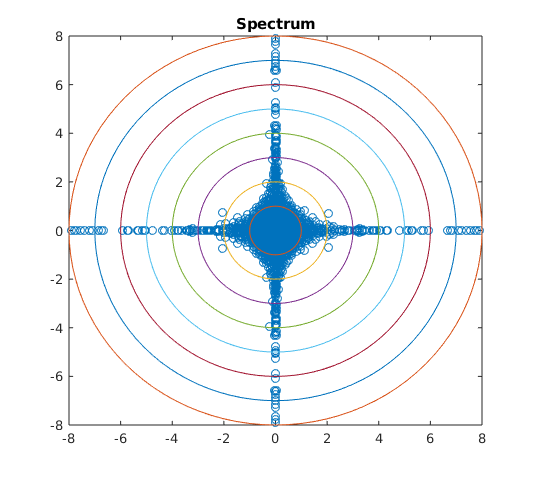} }}%
		\caption{The plot on the left is the spectrum of an $n\times n$ matrix $n^{-1/\alpha}X$ where $\alpha=1.25$, $n=2000$, and the entries of $X$ are i.i.d.\ random variables distributed as $\ee U^{-1/\alpha}$, where $\ee$ is uniformly distributed on $\{-1,1\}$ and $U$ is uniformly distributed on $[0,1]$. The plot on the right is the spectrum of an $n\times n$ elliptic random matrix $n^{-1/\alpha}X$ where $X$ has atom variables $(U^{-1/\alpha}\cos(w),U^{-1/\alpha}\sin(w)),1$ with $\alpha=1.25$, $n=2000$, $U$ uniformly distributed on $[0,1]$, and $w$ uniformly distributed on $[0,2\pi]$. The plot window on the right is cropped to avoid extreme values. }%
		\label{fig:a}%
	\end{figure}     
	
	\subsection{Further questions}
	
	Since our results capture both heavy-tailed Hermitian matrices and heavy-tailed matrices with i.i.d.\ entries, $\mu_{\alpha,\theta_d}$ does actually depend on $\theta_d$. However, as can be seen from \cite{HeavyIId}, $\mu_{\alpha,\theta_d}$ does not depend on every aspect of $\theta_d$. 
	
	\begin{question}\label{ESDuniQ}
		What properties of $\theta_d$ determine $\mu_{\alpha,\theta_d}$?
	\end{question}
	
	In the case when $X_n$ has i.i.d.\ entries the limiting empirical spectral measure has an exponential tail \cite{HeavyIId} while in the case when $X_n$ is Hermitian it has the same tail behavior as the entries \cite{Arous_2007,SymHeavy}. This leads us to ask how the tail of $\mu_{\alpha,\theta_d}$ depends on $\theta_d$.
	
	\begin{question}
		How does the tail behavior of $\mu_{\alpha,\theta_d}$ vary with respect to $\theta_d$? 
	\end{question}

	\begin{figure}
		\centering
		\subfigure{{\includegraphics[width=6cm]{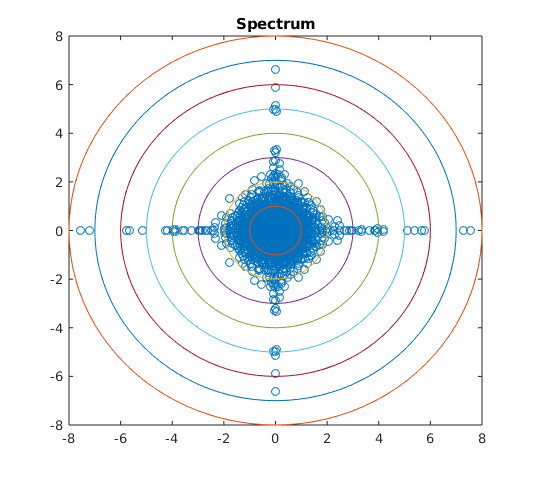} }}%
		\subfigure{{\includegraphics[width=6cm]{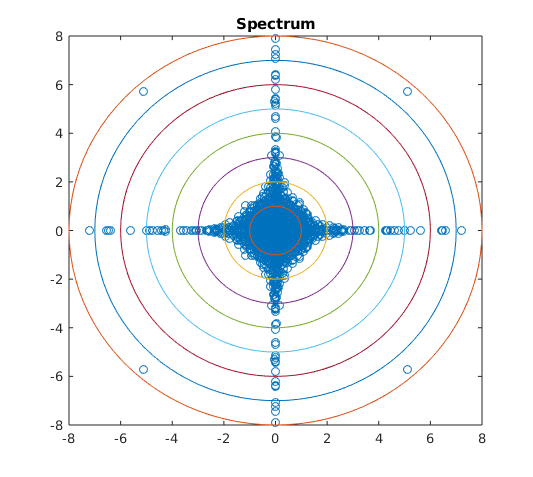} }}%
		\caption{Both plots show the spectrum of an $n\times n$ elliptic random matrix $n^{-1/\alpha}X$ where $X$ has atom variables $(U^{-1/\alpha}\cos(w),U^{-1/\alpha}\sin(w)),1$ with $\alpha=1.25$, $n=2000$, $U$ uniformly distributed on $[0,1]$, and $w$ uniformly distributed on $\{0,\pi/2,\pi,3\pi/2 \}+[-b\pi/4,b\pi/4]$. For the plot on the left $b=0.1$, while for the plot on the right $b=0.5$. Both plot windows are trimmed to avoid extreme values. }%
		\label{fig:b}%
	\end{figure}
	
	\begin{figure}
		\centering
		\subfigure{{\includegraphics[width=6cm]{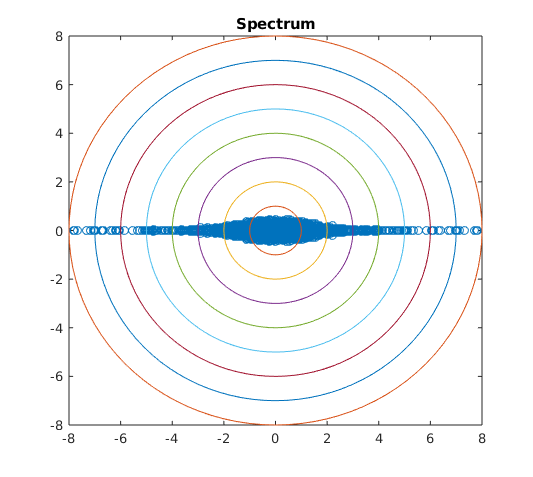} }}%
		\subfigure{{\includegraphics[width=6cm]{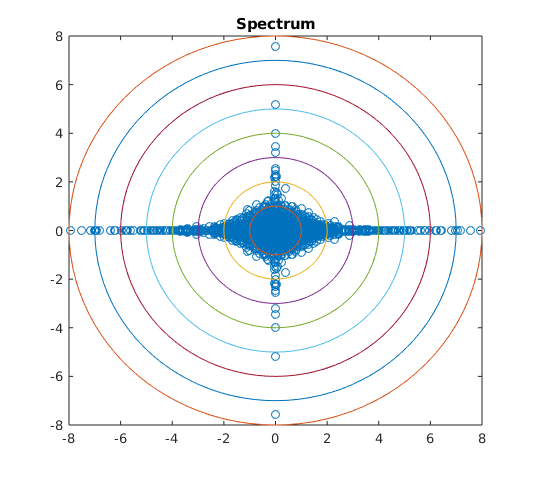} }}%
		\caption{Both plots show the spectrum of an $n\times n$ elliptic random matrix $n^{-1/\alpha}X$ where $X$ has atom variables $(U^{-1/\alpha}\cos(w),U^{-1/\alpha}\sin(w)),1$ with $\alpha=1.25$, $n=2000$, $U$ uniformly distributed on $[0,1]$, and $w$ uniformly distributed on $\{\pi/4,5\pi/4 \}+[-b\pi/4,b\pi/4]$. For the plot on the left $b=1$, while for the plot on the right $b=4/3$. Both plot windows are trimmed to avoid extreme values.}%
		\label{fig:c}%
	\end{figure}

	\subsection{Outline}As expected with the empirical spectral measure of non-Hermitian random matrices, we make use of Girko's Hermitization method. However, instead of considering the logarithmic potential directly, we follow the approach of Bordenave, Caputo, and Chafa\"i \cite{SymHeavy,HeavyIId} by using the objective method of Aldous and Steele \cite{ObjMethod} to get convergence of the matrices to an operator on Aldous' Poisson Weighted Infinite Tree (PWIT). This objective method approach was expanded by Jung \cite{Jung16} to symmetric light-tailed random matrices, adjacency matrices of sparse random graphs, and symmetric random matrices whose column entries are some combination of heavy-tailed and light-tailed random variables.
	
	In Sections \ref{sect:PPP} and \ref{sect:bipartres} we give a collection of results and background for approaching the proof of Theorem \ref{SingValueConv}. In Section \ref{sect:OpConv} we define the PWIT, establish the local weak convergence of the matrices $A_n$ to an operator associated with the PWIT, and give a proof of Theorem \ref{SingValueConv}. In Section \ref{sect:LeastSingValue} we give a very general bound on the least singular value of elliptic random matrices. In Section \ref{sect:SingVal} we establish the uniform integrability of $\log(\cdot)$ against $\nu_{A_n-zI_n}$ and complete the proof of Theorem \ref{EigenConv}. The appendix contains some auxiliary results.  
	
	We conclude this section by establishing notation, giving a brief description of Hermitization, and stating some properties of $\xi_1$ and $\xi_2$ implied by Condition \textbf{C2}.

	\subsection{Notation}
	We now establish notation we will use throughout.
	
	Let $[n] := \{1, \ldots, n\}$ denote the discrete interval.  For a vector $v = (v_i)_{i=1}^n \in \C^n$ and a subset $I \subset [n]$, we let $v_I := (v_i)_{i \in I} \in \C^{I}$.  Similarly, for an $m \times n$ matrix $A = (A_{ij})_{i \in [m], j \in [n]}$ and $I \subset [m], J \subset [n]$, we define $A_{I \times J} := (A_{ij})_{i \in I, j \in J}$. For a countable set $S$ we will let $I_S$ denote the identity on $\ell^2(S)$. In the case when $S=[n], \ell^2(S)=\CC^n$ we will simply write $I_n$ or $I$ if the dimension is clear. 
	
	For a vector $x \in \C^n$, $\|x \|$ is the Euclidean norm of $x$.  For a matrix $A$, $A^{\mathrm{T}}$ is the transpose of $A$ and $A^\ast$ is the conjugate transpose of $A$.  In addition, $\|A\|=s_1(A)$ is the spectral norm of $A$ and $\|A\|_2$ is the Hilbert--Schmidt  norm of $A$ defined by the formula
	\[ \|A\|_2 = \sqrt{ \tr (A A^\ast)}. \]
	For a linear, but not necessarily bounded, operator $A$ on a Hilbert space we let $\mathcal{D}(A)$ denote the domain of $A$.  We will often use the shorthand $A+z$ to denote $A+zI$ where $I$ is the identity operator. 
	
	For two complex-valued square integrable random variables $\xi$ and $\psi$, we define the correlation between $\xi$ and $\psi$ as
	\[ \corr(\xi, \psi) := \frac{\cov(\xi, \psi)}{\sqrt{ \var(\xi) \var(\psi) } }, \]
	where $\cov(\xi, \psi):=\EE[(\xi-\EE\xi)\overline{(\psi-\EE\psi)}]$ is the covariance between $\xi$ and $\psi$, and $\var(\xi)=\EE|\xi-\EE\xi|^2$ is the variance of $\xi$. For two random elements $X$ and $Y$ we say $X\overset{d}{=}Y$ if $X$ and $Y$ have the same distribution.  We will also say a positive random variable $Y$ stochastically dominates a positive random variable $Z$ if for all $x>0$
	$$\PP(Y\geq x)\geq\PP(Z\geq x).$$

	For a topological space $E$, $\mathcal{B}(E)$ will always denote the Borel $\sigma$-algebra of $E$. $\RR_+$ will denote the positive real numbers.
	
	Throughout this paper we will use asymptotic notation ($O, o, \Theta$, etc.) under the assumption that $n\rightarrow\infty$. $X=O(Y)$ if $X\leq CY$ for an absolute constant $C>0$ and all $n \geq C$, $X=o(Y)$ if $X\leq C_nY$ for $C_n\rightarrow 0$, $X=\Theta(Y)$ if $cY\leq X\leq CY$ for absolute constants $C,c>0$ and all $n \geq C$, and $X\sim Y$ if $X/Y\rightarrow1$.
	
	\subsection{Hermitization}
	Let $\mathcal{P}(\CC)$ be the set of probability measures on $\CC$ which integrate $\log|\cdot|$ in a neighborhood of infinity. For every $\mu\in\mathcal{P}(\CC)$ the logarithmic potential $U_\mu$ of $\mu$ on $\CC$ is a function $U_\mu:\CC\rightarrow[-\infty,\infty)$ defined for every $z\in\CC$ by 
	$$U_\mu(z)=\int_\CC\log|z-w|d\mu(w).$$
	
	In $\mathcal{D'}(\CC)$ one has 
	$\Delta U_\mu=2\pi\mu$, where $\mathcal{D'}(\CC)$ is the set of Schwartz-Sobolev distributions on $\CC$ endowed with its usual convergence with respect to all infinitely differentiable functions with compact support.
	\begin{lemma}[Lemma A.1 in \cite{HeavyIId}]\label{Unicity}
		For every $\mu,\nu\in\mathcal{P}(\CC)$, if $U_\mu=U_\nu$ a.e. then $\mu=\nu$.
	\end{lemma}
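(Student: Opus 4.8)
The plan is to promote the pointwise (a.e.) identity $U_\mu = U_\nu$ to an identity of distributions in $\mathcal{D}'(\CC)$ and then apply the distributional Laplacian, using the fact recorded just above the lemma that $\Delta U_\mu = 2\pi\mu$ and $\Delta U_\nu = 2\pi\nu$ in $\mathcal{D}'(\CC)$.

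First I would check that $U_\mu$ defines an element of $L^1_{\mathrm{loc}}(\CC)$, so that it is legitimate to view it as a distribution and differentiate it. Fix a compact $K\subset\CC$. By Tonelli's theorem,
\[
\int_K |U_\mu(z)|\, dz \le \int_\CC \left( \int_K \bigl|\log|z-w|\bigr|\, dz \right) d\mu(w).
\]
For $w$ in a fixed bounded neighborhood of $K$ the inner integral is finite and uniformly bounded, since $\log|\cdot|$ is locally integrable in the plane; for $|w|$ large it is $O(|K|\,\log|w|)$, which is $\mu$-integrable near infinity because $\mu\in\mathcal{P}(\CC)$ integrates $\log|\cdot|$ in a neighborhood of infinity. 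Hence $U_\mu\in L^1_{\mathrm{loc}}(\CC)$, and likewise $U_\nu$.

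Next, since $U_\mu$ and $U_\nu$ are locally integrable and agree Lebesgue-a.e., they induce the same distribution: $\langle U_\mu, \pp\rangle = \langle U_\nu, \pp\rangle$ for all $\pp\in C_c^\infty(\CC)$. Applying the (continuous, linear) distributional Laplacian gives $\Delta U_\mu = \Delta U_\nu$ in $\mathcal{D}'(\CC)$, and combining with $\Delta U_\mu = 2\pi\mu$, $\Delta U_\nu = 2\pi\nu$ yields $\int \pp\, d\mu = \int \pp\, d\nu$ for every $\pp\in C_c^\infty(\CC)$. Finally, a standard approximation argument — mollification, or the density of $C_c^\infty$ in $C_c$ in the sup norm together with the fact that $\mu,\nu$ are finite (hence Radon) Borel measures — upgrades this to $\mu=\nu$ as measures.

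There is essentially no obstacle here: the statement is a soft consequence of the fact that $\tfrac{1}{2\pi}\log|\cdot|$ is the fundamental solution of the Laplacian on $\RR^2$, i.e. of the identity $\Delta U_\mu = 2\pi\mu$, which the text already grants. The only mildly technical point is the local-integrability verification above (needed to justify treating $U_\mu$ as a distribution), and it is routine. Alternatively one may simply cite the corresponding statement in \cite{HeavyIId} or a standard potential-theory reference.
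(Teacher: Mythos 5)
Your proof is correct and is exactly the standard argument; the paper itself does not reproduce a proof but simply cites Lemma~A.1 of \cite{HeavyIId}, and the argument there is precisely yours: verify $U_\mu\in L^1_{\mathrm{loc}}(\CC)$ using the $\mathcal{P}(\CC)$ integrability condition, pass from a.e.\ equality to equality in $\mathcal{D}'(\CC)$, apply $\Delta U_\mu=2\pi\mu$, and conclude by density of test functions.
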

	
	To see the connection between logarithmic potentials and random matrices consider an $n\times n$ random matrix $A$. If $P(z)=\det(A-zI_n)$ is the characteristic polynomial of $A$, then \begin{equation}
		U_{\mu_A}(z)=\int_\CC\log|z-w|d\mu_A(w)=\frac{1}{n}\log|P_A(z)|=\int_{0}^\infty \log(t)d\nu_{A-z}(t).
	\end{equation} Thus, through the logarithmic potential we can move from a question about eigenvalues of $A$ to singular values of $A-z$. We refer the reader to \cite{BC} for more on the logarithmic potential in random matrix theory. One immediate issue is that $\log(\cdot)$ is not a bounded function on $\RR_+$, and thus we need more control on the integral of $\log(\cdot)$ with respect to $\{\nu_{A_n-z}\}_{n\geq 1}$.

	\begin{definition}[Uniform integrability almost surely and in probability]
		Let $\{\mu_n\}_{n=1}^\infty$ be a sequence of random probability measures on a measurable space $(T,\mathcal{T})$. We say a measurable function $f:T\rightarrow\RR$ is uniformly integrable almost surely with respect to $\{\mu_n\}_{n=1}^\infty$ if 
		$$\lim\limits_{t\rightarrow\infty}\limsup\limits_{n\rightarrow \infty}\int_{|f|>t}|f|d\mu_n=0,$$
		with probability one. We say a measurable function $f:T\rightarrow\RR$ is uniformly integrable in probability with respect to $\{\mu_n\}_{n=1}^\infty$ if for every $\ee>0$
		$$\lim\limits_{t\rightarrow\infty}\limsup\limits_{n\rightarrow \infty}\PP\left(\int_{|f|>t}|f|d\mu_n>\ee\right)=0.$$   
	\end{definition}
	
	\begin{lemma}[Lemma 4.3 in \cite{BC}]\label{Girko}
		Let $(A_n)_{n\geq 1}$ be a sequence of complex random matrices where $A_n$ is $n\times n$ for every $n\geq 1$. Suppose for Lebesgue almost all $z\in\CC$, there exists a probability measure $\nu_z$ on $[0,\infty)$ such that \begin{itemize}
			\item a.s. $(\nu_{A_n-z})_{n\geq 1}$ tends weakly to $\nu_z$
			\item a.s. (resp. in probability) $\log(\cdot)$ is uniformly integrable for $(\nu_{A_n-z})_{n\geq 1}$.
		\end{itemize}
		Then there exists a probability measure $\mu\in\mathcal{P}(\CC)$ such that\begin{itemize}
			\item a.s. (resp. in probability) $(\mu_{A_n})_{n\geq 1}$ converges weakly to $\mu$
			\item for almost every $z\in\CC$,
			$$U_\mu(z)=\int_{0}^\infty\log(t)d\nu_z(t).$$
		\end{itemize}
	\end{lemma}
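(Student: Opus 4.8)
The plan is to run Girko's Hermitization through the logarithmic potential, taking as the only analytic input the identity recorded above,
$$U_{\mu_{A_n}}(z)=\frac{1}{n}\log|\det(A_n-z)|=\int_0^\infty\log(t)\,d\nu_{A_n-z}(t),$$
which turns a statement about the eigenvalues of $A_n$ into one about the singular values of $A_n-z$. A preliminary reduction handles the ``in probability'' case: given any subsequence of $(\mu_{A_n})$, in-probability uniform integrability of $\log(\cdot)$ lets one pass to a further subsequence along which the uniform integrability holds almost surely (Borel--Cantelli), while the almost sure weak convergence $\nu_{A_n-z}\Rightarrow\nu_z$ is available on every subsequence; once every such sub-subsequential limit of $(\mu_{A_n})$ is identified with the same deterministic $\mu$, we get $\mu_{A_n}\Rightarrow\mu$ in probability. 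So it suffices to treat the almost sure case, and from now on we fix a realization in the probability-one event on which, for Lebesgue-almost every $z$, $\nu_{A_n-z}\Rightarrow\nu_z$ and $\log(\cdot)$ is uniformly integrable for $(\nu_{A_n-z})_{n\ge1}$ (the exchange of ``a.e.\ $z$'' and ``a.s.'' is Fubini applied to the exceptional subset of $\Omega\times\CC$).

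The first real step is pointwise convergence of the potentials. The elementary fact that weak convergence of probability measures on $[0,\infty)$ together with uniform integrability of a function $f$ implies $f$ is integrable against the limit and the integrals converge, applied with $f=\log$ --- which is unbounded precisely as $t\to0^+$ and as $t\to\infty$, exactly the behavior the hypothesis controls --- yields, for a.e.\ $z$,
$$U_{\mu_{A_n}}(z)\longrightarrow U(z):=\int_0^\infty\log(t)\,d\nu_z(t)\in(-\infty,\infty).$$

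The second step is to recover the measure from its potential. I would first establish tightness of $(\mu_{A_n})$ directly from the hypothesis: by Weyl's multiplicative inequality applied to $A_n-z_0$ for a fixed $z_0$ in the full-measure set, if $k$ of the eigenvalues of $A_n$ have modulus exceeding $R$, then $k\log R$ is bounded by $n$ times $\int_1^\infty\log(t)\,d\nu_{A_n-z_0}(t)$ up to a harmless translation, and the latter is bounded uniformly in $n$ by uniform integrability; hence $\mu_{A_n}(\{|z|>R\})\le C/\log R\to0$ uniformly in $n$. Along a weakly convergent subsequence, $\mu_{A_n}\Rightarrow\mu$ with $\mu$ a probability measure, and a standard potential-theoretic argument (the lower envelope theorem, using the uniform tail bound just obtained to control the contribution of large eigenvalues) gives $U_\mu(z)=\liminf_n U_{\mu_{A_n}}(z)=U(z)$ for quasi-every, hence Lebesgue-almost every, $z$. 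Since a logarithmic potential determines its measure (Lemma \ref{Unicity}), this $\mu$ is the same along every weakly convergent subsequence; tightness upgrades this to $\mu_{A_n}\Rightarrow\mu$, and $U_\mu(z)=\int_0^\infty\log(t)\,d\nu_z(t)$ for a.e.\ $z$ by construction, which is the conclusion.

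I expect the last step --- passing from a.e.\ convergence of the potentials to weak convergence of the measures --- to be the main obstacle, because no moment hypothesis is assumed: $\|A_n\|$ is genuinely unbounded, individual eigenvalues run off to infinity, and the logarithmic potential is neither bounded nor, a priori, uniformly integrable in $z$ over compact sets. The resolution is to lean on the uniform-integrability hypothesis together with Weyl's inequalities (which transfer control from the singular values of $A_n-z$ to the moduli of the eigenvalues of $A_n$) to force the escaping eigenvalues to form a vanishing proportion, so that no mass is lost in the limit; the subsequence reduction, the Fubini bookkeeping, and the weak-convergence-plus-uniform-integrability fact for the one-dimensional measures $\nu_{A_n-z}$ are all routine.
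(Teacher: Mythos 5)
The paper does not actually prove this lemma; it is invoked as a black box (Lemma 4.3 of \cite{BC}), so there is no in-paper argument to compare against. Judged on its own, your sketch has the right skeleton: the Hermitization identity $U_{\mu_{A_n}}(z)=\int_0^\infty\log t\,d\nu_{A_n-z}(t)$, pointwise a.e.\ convergence of potentials from weak convergence plus uniform integrability of $\log$, tightness of $(\mu_{A_n})$ via Weyl's inequality, the uniqueness-from-potentials step via Lemma \ref{Unicity}, and the subsequence reduction of the ``in probability'' case to the almost sure case are all correct ingredients.

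The step that is not closed is precisely the one you flag: passing from pointwise a.e.\ convergence of potentials to the identity $U_\mu=U$ a.e.\ for a subsequential weak limit $\mu$. Invoking the lower envelope theorem is a deviation from the standard argument, and the theorem as usually stated (Landkof; Saff--Totik) assumes the measures are supported in a fixed compact set; your measures are not, and the tail control you extract from Weyl plus uniform integrability ($\mu_{A_n}(\{|z|>R\})\lesssim 1/\log R$, or more sharply $\sup_n\int\log^+|w|\,d\mu_{A_n}(w)<\infty$) is not plugged into a precise replacement for the compactness hypothesis --- the patch is a nontrivial truncation-and-renormalization argument that you do not carry out. The argument in \cite{BC} avoids this entirely: the same moment bound $\sup_n\int\log^+|w|\,d\mu_{A_n}(w)<\infty$ gives uniform local integrability of $z\mapsto U_{\mu_{A_n}}(z)$, so pointwise a.e.\ convergence upgrades to $L^1_{\mathrm{loc}}(\CC)$ convergence by Vitali; then $\mu_{A_n}=\tfrac{1}{2\pi}\Delta U_{\mu_{A_n}}\to\tfrac{1}{2\pi}\Delta U$ in $\mathcal{D}'(\CC)$, and combined with tightness this identifies every subsequential weak limit with the measure $\tfrac{1}{2\pi}\Delta U$, giving both conclusions at once. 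The distributional route is preferable here exactly because it does not require compact support and uses only the quantitative control you already have in hand; if you want to keep the lower-envelope approach you would need to state and justify the non-compact version explicitly.
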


	\subsection{Individual entries and stable random vectors. }  We now state some useful properties of $\xi_1$ and $\xi_2$ implied by Condition \textbf{C2}. First, if $(\xi_1^{(1)},\xi_2^{(1)}),(\xi_1^{(2)},\xi_2^{(2)})\dots$ are i.i.d\ copies of $(\xi_1,\xi_2)$, then Condition \textbf{C1} (i) guarantees, see \cite{sakovich1956single}, there exists a sequence $b_n$ such that
	$$\frac{1}{a_n}\sum_{i=1}^{n}(\xi_1^{(i)},\xi_2^{(i)})-b_n\Rightarrow Z=(Z_1,Z_2),$$  
	for some $\alpha$-stable random vector $Z$ with spectral measure $\theta_d$. Condition \textbf{C2} (i) guaranties neither $Z_1$ nor $Z_2$ is identically $0$. We need the following theorem to get results on stable random vectors.
	
	\begin{theorem}[Theorem 2.3.9 in \cite{StableProcesses}]
		Let $(X_1,\dots,X_d)$ be an  $\alpha$-stable vector in $\RR^d$. Then $(X_1,\dots,X_k)$ is an $\alpha$-stable random vector for any $k\leq d$. 
	\end{theorem}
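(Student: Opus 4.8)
The plan is to read the conclusion off the characteristic function representation of a stable vector recorded above: restricting the frequency variable to the coordinate subspace $\RR^k\times\{0\}^{d-k}\subset\RR^d$ turns the $d$-dimensional stable characteristic exponent into a genuine $k$-dimensional one, with a spectral measure that is an explicit push-forward of $\theta$ and a shift obtained by projecting $y$. Concretely: let $X=(X_1,\dots,X_d)$ be $\alpha$-stable with spectral measure $\theta$ on $\mathbb{S}^{d-1}$ and shift $y$, fix $k\le d$, and for $u\in\RR^k$ set $\tilde u:=(u_1,\dots,u_k,0,\dots,0)\in\RR^d$. Then $\EE\exp(iu^{\mathrm{T}}(X_1,\dots,X_k))=\EE\exp(i\tilde u^{\mathrm{T}}X)$, and for $s\in\mathbb{S}^{d-1}$ one has $\tilde u^{\mathrm{T}}s=u^{\mathrm{T}}\sigma$ where $\sigma:=(s_1,\dots,s_k)$, while $\tilde u^{\mathrm{T}}y=u^{\mathrm{T}}y'$ with $y':=(y_1,\dots,y_k)$.

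For $\alpha\neq1$ the characteristic exponent of $(X_1,\dots,X_k)$ is therefore
$$-\int_{\mathbb{S}^{d-1}}|u^{\mathrm{T}}\sigma|^{\alpha}\big(1-i\,\text{sign}(u^{\mathrm{T}}\sigma)\tan(\pi\alpha/2)\big)\,d\theta(s)+iu^{\mathrm{T}}y'.$$
The integrand vanishes on $\{\sigma=0\}$, and on $\{\sigma\neq0\}$ we may write $\sigma=\|\sigma\|\hat\sigma$ with $\hat\sigma\in\mathbb{S}^{k-1}$, so that $|u^{\mathrm{T}}\sigma|^{\alpha}=\|\sigma\|^{\alpha}|u^{\mathrm{T}}\hat\sigma|^{\alpha}$ and $\text{sign}(u^{\mathrm{T}}\sigma)=\text{sign}(u^{\mathrm{T}}\hat\sigma)$. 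Letting $\theta'$ be the push-forward under $s\mapsto\hat\sigma$ of the measure $\|\sigma\|^{\alpha}\indicator{\sigma\neq0}\,d\theta(s)$ — a finite measure on $\mathbb{S}^{k-1}$, since $\|\sigma\|\le1$ and $\theta$ is finite — the exponent collapses to $-\int_{\mathbb{S}^{k-1}}|u^{\mathrm{T}}r|^{\alpha}(1-i\,\text{sign}(u^{\mathrm{T}}r)\tan(\pi\alpha/2))\,d\theta'(r)+iu^{\mathrm{T}}y'$, which is exactly the $\alpha$-stable form with spectral measure $\theta'$ and shift $y'$. By the uniqueness of this representation, $(X_1,\dots,X_k)$ is $\alpha$-stable.

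The case $\alpha=1$ is the same computation with one extra bookkeeping step: the identity $\log|u^{\mathrm{T}}\sigma|=\log\|\sigma\|+\log|u^{\mathrm{T}}\hat\sigma|$ produces, besides the expected $\alpha=1$ stable exponent (now with $\theta'$ the push-forward of $\|\sigma\|\indicator{\sigma\neq0}\,d\theta(s)$), an additional term $-i\,u^{\mathrm{T}}\!\left(\int_{\{\sigma\neq0\}}\|\sigma\|\log\|\sigma\|\,\hat\sigma\,d\theta(s)\right)$; the integral converges because $r\mapsto r\log r$ is bounded on $(0,1]$ and $\theta$ is finite, and since the term is linear in $u$ it is simply absorbed into the shift vector. (An even quicker argument for the bare qualitative statement: a random vector $X$ is $\alpha$-stable iff for all $a,b>0$ there is $v\in\RR^d$ with $aX^{(1)}+bX^{(2)}\overset{d}{=}(a^{\alpha}+b^{\alpha})^{1/\alpha}X+v$ for independent copies $X^{(1)},X^{(2)}$ of $X$; applying the coordinate projection $\pi\colon\RR^d\to\RR^k$ to both sides shows $(X_1,\dots,X_k)$ obeys the same relation with shift $\pi v$, hence is $\alpha$-stable.) I expect no real obstacle here: the entire content is keeping track of the spectral measure — checking that $\theta'$ is finite, that the mass of $\theta$ on $\{\sigma=0\}$ drops out, and, only when $\alpha=1$, that the stray $\log\|\sigma\|$ term is absolutely convergent and linear in $u$. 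The push-forward description of $\theta'$ is worth recording, since it is precisely what one needs when later extracting the one-dimensional marginals $Z_1,Z_2$ of the limiting stable vector $Z=(Z_1,Z_2)$.
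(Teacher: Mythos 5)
The paper does not give its own proof of this statement; it is quoted verbatim from Samorodnitsky and Taqqu (Theorem 2.3.9 in \cite{StableProcesses}) and used as a black box, so there is nothing in the paper's text to compare against. Your argument is correct, and in fact you give two valid routes. The characteristic-function computation is sound: restricting to $\tilde u=(u,0)$ and writing $\sigma=\|\sigma\|\hat\sigma$ on $\{\sigma\neq0\}$ pulls out $\|\sigma\|^\alpha$ cleanly (the $\operatorname{sign}$ and $\tan$ factors are unaffected by positive rescaling), $\theta'$ is finite because $\|\sigma\|\le1$, and for $\alpha=1$ the identity $|u^{\mathrm T}\hat\sigma|\operatorname{sign}(u^{\mathrm T}\hat\sigma)=u^{\mathrm T}\hat\sigma$ correctly collapses the extra $\log\|\sigma\|$ piece into a well-defined linear shift, with absolute integrability from the boundedness of $r\mapsto r\log r$ on $(0,1]$. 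The quicker argument in your parenthetical --- apply the coordinate projection to the defining relation $aX^{(1)}+bX^{(2)}\overset{d}{=}(a^\alpha+b^\alpha)^{1/\alpha}X+v$ --- is the cleanest way to get the bare qualitative conclusion (it shows more generally that any linear image of an $\alpha$-stable vector is $\alpha$-stable), while the characteristic-function version is the one that actually records the new spectral measure and shift, which is what the paper implicitly needs when it identifies the one-dimensional laws of $\xi_1$ and $\xi_2$. One small point worth acknowledging: if $\theta$ happens to be supported entirely on $\{\sigma=0\}$ then $\theta'$ is the zero measure and $(X_1,\dots,X_k)$ is constant; this is still $\alpha$-stable under the usual convention (the spectral representation allows zero mass), so the conclusion holds, but it is the edge case your pushforward description silently includes.
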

	
	From this, with $k=1$ for real $\xi_1$ and $k=2$ for complex $\xi_1$, we get that $\xi_1$ is in the domain of attraction of an $\alpha$-stable random variable and satisfies \begin{equation}\label{eq:indivassump}
		\PP(|\xi_1|\geq t)=L(t)t^{-\alpha}
	\end{equation} for some slowly varying function $L$.  In addition, 
	\begin{equation}
		\lim\limits_{t\rightarrow\infty}\PP\left(\frac{\xi_1}{|\xi_1|}\in \cdot\Bigg| |\xi_1|\geq t \right)=\theta_1(\cdot)
	\end{equation} for some probability measure $\theta_1$, again see \cite{sakovich1956single}. The same holds for $\xi_2$ with a possibly different probability measure $\theta_2$. Also note\begin{equation}\label{eq:momentbound}
		\EE[|\xi_i|^p]<\infty
	\end{equation} for all $0\leq p<\alpha$ and $i=1,2$.
	
	\section*{Acknowledgment}
	The second author thanks Djalil Chafa\"{i} for answering his questions concerning \cite[Appendix A]{BC}. 
	
	
	

	\section{Poisson point processes and stable distributions}\label{sect:PPP} In this section we give a brief review of Poisson Point Processes (p.p.p.)\ and their relation to the order statistics of random variables in the domain of attraction of an $\alpha$-stable distribution. See \cite{EmpPP, EV/RV/PP, HeavyPhenom},  and the references therein for proofs. 
	
	\subsection{Simple point processes}
	
	Throughout this section we will assume $E=\bar{\RR}^n\setminus\{0\}$ with the relative topology, where $\bar{\RR}^n$ is the one point compactification of $\RR^n$, but many of the results can be extended to other topological spaces. 
	
	Denote by $\mathcal{M}(E)$ the set of simple point Radon measures 
	$$\mu=\sum_{x\in D}\delta_x$$
	where $D$ is such that $D\cap(B_r(0))^c$ is a finite set for any $r>0$ and $B_r(0)$ is the ball of radius $r$ around the point $0$. Denote by $\mathcal{H}(E)$ the set of supports corresponding to measures in $\mathcal{M}(E)$. The elements of $\mathcal{H}(E)$ are called configurations. 
	
	Let $C_K(E)$ denote the set of real-valued continuous functions on $E$ with compact support. The vague topology on $\mathcal{M}(E)$ is the topology where a sequence $\mu_n$ converges to $\mu$ if for any $f\in C_K(E)$
	$$\int_E fd\mu_n\rightarrow\int_E fd\mu.$$
	$\mathcal{M}(E)$ with the vague topology is a Polish space, and thus complete and metrizable. 
	
	If one considers the one-to-one function $I:\mathcal{M}(E)\rightarrow\mathcal{H}(E)$ given by $I(\mu)=\supp(\mu)$, then the topology of $\mathcal{M}(E)$ can be pushed forward to $\mathcal{H}(E)$. The vague convergence in $\mathcal{M}(E)$ can be stated in terms of a convergence of the supports. Let $\mu_n\xrightarrow{v}\mu$ and give some labeling $\supp(\mu)=\{x^{(1)},x^{(2)},\dots \}$. Then this vague convergence implies there exists labeling $\supp(\mu_n)=\{x^{(1)}_n,x^{(2)}_n,\dots \}$ such that for all $k$, $x_n^{(k)}\rightarrow x^{(k)}$. This description will be particularly useful for our case.
	
	A simple point process $N$ is a measurable mapping from a probability space $(\Omega,\mathcal{F},\PP)$ to $(\mathcal{M}(E),\mathcal{B}(\mathcal{M}(E)))$, where $\mathcal{B}(\mathcal{M}(E))$ is the Borel $\sigma$-algebra defined by the vague topology. Weak convergence of simple point processes is defined by weak convergence of the measures in the vague topology.

	\subsection{Poisson point process}
	
	\begin{definition}
		Let $m$ be a Borel measure on $E$. A point process $N$ is called a Poisson Point Process (p.p.p.) with intensity measure $m$ if for any pairwise disjoint Borel sets $A_1,\dots, A_n$, the random variables $N( A_1),\dots,N(A_n)$ are independent Poisson random variables with expected values $m(A_1),\dots, m(A_n)$.
	\end{definition}
	
	\begin{remark}
		$N$ is a.s. simple if and only if $m$ is non-atomic.
	\end{remark}
	
	The next proposition makes clear the connection between point processes and stable random variables. See Proposition 3.21 in \cite{EV/RV/PP} for a proof. First, we describe an important point process. Let $\theta$ be a finite measure on the unit sphere in $\RR^n$, and $m_\alpha$ be a measure with density $\alpha r^{-(\alpha+1)}dr$ on $\RR_+$. We let $N_\alpha$ denote the p.p.p.\ on $\RR^n$ with intensity measure $\theta\times m_\alpha$. 
	
	\begin{proposition}\label{OrderStatConv}
		Let $\{\xi_n\}_{n\geq 1}$ be i.i.d.\ $\RR^d$-valued random variables. Assume there exists a finite measure $\theta$ on the unit sphere in $\RR^d$ such that
		$$\lim\limits_{n\rightarrow\infty}n\PP\left(\frac{\xi_n}{\|\xi_n \|}\in D,\|\xi_n \|\geq rb_n \right)=\theta(D)m_\alpha([r,\infty)),$$
		for every $r>0$ and all Borel subsets $D$ of the unit sphere with $\theta(\partial D)=0$, where $b_n=n^{1/\alpha}L(n)$ for $0<\alpha<2$ and a slowly varying function $L$. Then 
		$$\beta_n:=\sum_{i =1}^n\delta_{\xi_i/b_n}\Rightarrow N_\alpha$$
		as $n \to \infty$. 
	\end{proposition}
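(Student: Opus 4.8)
\emph{Proof proposal.} The cleanest route is through Laplace functionals: it suffices to show that $\EE\big[e^{-\beta_n(f)}\big]\to \EE\big[e^{-N_\alpha(f)}\big]$ for every nonnegative $f\in C_K(E)$, since the Laplace functional determines the law of a point process and its pointwise convergence on the cone of nonnegative functions in $C_K(E)$ is equivalent to weak convergence in $\mathcal{M}(E)$ (see \cite{EV/RV/PP}). An equivalent route is Kallenberg's theorem, verifying convergence of void probabilities and of intensities on a convergence-determining class; I sketch the Laplace approach and indicate the Kallenberg variant at the end.

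\emph{Step 1: recast the hypothesis as vague convergence of intensity measures.} Let $\mu_n$ be the measure on $E$ defined in polar coordinates by $\mu_n := n\,\PP\!\left(\xi_1/b_n \in \cdot\right)$, and set $\mu := \theta\times m_\alpha$. The sets $A_{D,r} := \{x\in E : x/\|x\|\in D,\ \|x\|>r\}$ with $r>0$ and $\theta(\partial D)=0$ form a $\pi$-system of relatively compact subsets of $E=\bar\RR^d\setminus\{0\}$, they generate $\mathcal{B}(E)$, and $\mu(A_{D,r})=\theta(D)\,r^{-\alpha}<\infty$. Moreover $\mu(\partial A_{D,r})=0$: the ``angular'' part of the boundary has $\mu$-measure at most $\theta(\partial D)\,m_\alpha([r,\infty))=0$, and the ``radial'' part $\{\|x\|=r\}$ has $\mu$-measure $0$ since $m_\alpha$ is non-atomic. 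The hypothesis is exactly that $\mu_n(A_{D,r})\to\mu(A_{D,r})$ for all such $A_{D,r}$, so by the standard criterion for vague convergence on a locally compact second-countable space we get $\mu_n\xrightarrow{v}\mu$, i.e. $\int_E g\,d\mu_n\to\int_E g\,d\mu$ for every $g\in C_K(E)$.

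\emph{Step 2: compute and identify the limiting Laplace functional.} Fix nonnegative $f\in C_K(E)$; then $\supp(f)$ is bounded away from $0$, and $g:=1-e^{-f}\in C_K(E)$ with $0\le g\le 1$ and the same support. By independence of the $\xi_i$,
\[
\EE\big[e^{-\beta_n(f)}\big]=\Big(\EE\, e^{-f(\xi_1/b_n)}\Big)^n=\Big(1-\tfrac1n\textstyle\int_E g\,d\mu_n\Big)^{n}.
\]
By Step 1, $\int_E g\,d\mu_n\to\int_E g\,d\mu<\infty$, hence the right-hand side converges to $\exp\!\big(-\int_E(1-e^{-f})\,d\mu\big)$, which is precisely the Laplace functional of the Poisson point process with intensity $\mu=\theta\times m_\alpha$, namely $N_\alpha$. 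Since Laplace functionals are convergence-determining, $\beta_n\Rightarrow N_\alpha$ in $\mathcal{M}(E)$ (note $\mu$ is non-atomic, so $N_\alpha$ is a.s.\ a simple point measure, consistent with the ambient space $\mathcal{M}(E)$).

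\emph{Main obstacle and alternative.} The only genuinely non-routine point is Step 1: upgrading the ``annular-rectangle'' convergence in the hypothesis to full vague convergence of the intensities on the compactified space $\bar\RR^d\setminus\{0\}$, which requires checking that the annular sets form a $\mu$-continuity $\pi$-system of relatively compact sets generating the topology; the null-boundary condition $\theta(\partial D)=0$ and non-atomicity of $m_\alpha$ are exactly what makes this work. Once Step 1 is granted, everything else is the classical Poisson-limit argument for sums of i.i.d.\ point masses. Alternatively, one may avoid Laplace functionals and apply Kallenberg's theorem directly: for each $A=A_{D,r}$, $\beta_n(A)\sim\mathrm{Binomial}\!\big(n,\mu_n(A)/n\big)$, so $\limsup_n\EE[\beta_n(A)]=\mu(A)<\infty$ and $\PP(\beta_n(A)=0)=\big(1-\mu_n(A)/n\big)^n\to e^{-\mu(A)}=\PP(N_\alpha(A)=0)$; combined with the multinomial computation over finitely many disjoint $A_{D,r}$'s, which yields asymptotic independence and the correct Poisson marginals, Kallenberg's criterion gives $\beta_n\Rightarrow N_\alpha$. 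See \cite{EV/RV/PP,HeavyPhenom} for the relevant general machinery.
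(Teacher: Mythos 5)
Your proof is correct and follows essentially the standard argument that the paper delegates to \cite{EV/RV/PP} (Proposition 3.21 there is precisely the equivalence between vague convergence of the scaled intensity $n\,\PP(\xi_1/b_n\in\cdot)$ and weak convergence of the empirical point processes to the Poisson random measure, proved via Laplace functionals or equivalently Kallenberg's criterion). The one step worth having spelled out, and which you handle correctly, is upgrading the hypothesis on the annular rectangles $A_{D,r}$ to full vague convergence of $\mu_n$ on $\bar\RR^d\setminus\{0\}$; the null-boundary condition $\theta(\partial D)=0$ together with the non-atomicity of $m_\alpha$ gives $\mu(\partial A_{D,r})=0$, and the $A_{D,r}$ form a $\mu$-continuity $\pi$-system of relatively compact sets generating the Borel $\sigma$-field, which is exactly what the standard criterion requires. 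The Laplace-functional computation $(1-\tfrac1n\int g\,d\mu_n)^n\to\exp(-\int(1-e^{-f})\,d\mu)$ then closes the argument, so no gap remains.
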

	
	\begin{remark}
		In \cite{EmpPP}, Davydov and Egorov prove this convergence in $\ell^p$-type topologies, under a smoothness assumption on the $\xi_n$.
	\end{remark}
	
	\subsection{Useful properties of $\mathbf{N_\alpha}$} The following are useful and well known properties of the p.p.p.\ $N_\alpha$. Again, see Davydov and Egorov, \cite{EmpPP}, and the references therein for more information and proofs.
	
	\begin{proposition}\label{PPPconstruction}
		Let $\{\lambda_i \}$ and $\{w_i \}$ be independent i.i.d.\ sequences where $\lambda_1$ has exponential distribution with mean $1$, and $w_1$ is $\frac{1}{\theta(\Sp^{d-1})}\theta$ distributed, with $\theta$ a finite nonzero measure on $\Sp^{d-1}$. Define $\Gamma_i=\lambda_1+\cdots+\lambda_i$. Then, for any $\alpha>0$,
		$$N_\alpha\overset{d}{=}\sum_{i =1}^\infty\delta_{\Gamma_i^{-1/\alpha}(\theta(\Sp^{d-1}))^{1/\alpha}w_i }.$$
	\end{proposition}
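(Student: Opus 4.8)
The plan is to realize $N_\alpha$ as the image of the homogeneous rate-one Poisson process on $(0,\infty)$ under marking by the $w_i$ followed by a change of variables, using the marking and mapping theorems for Poisson point processes. First I would recall the classical fact that if $\lambda_1,\lambda_2,\dots$ are i.i.d.\ mean-one exponentials and $\Gamma_i=\lambda_1+\cdots+\lambda_i$, then $\sum_{i\geq 1}\delta_{\Gamma_i}$ is a Poisson point process on $(0,\infty)$ with intensity equal to Lebesgue measure $dt$; since $\Gamma_i/i\to 1$ a.s.\ by the strong law of large numbers, $\Gamma_i\to\infty$, so this is a.s.\ a locally finite simple point measure.

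Next I would apply the marking theorem: attaching to each atom $\Gamma_i$ an independent mark $w_i$ with law $\theta/c$, where $c:=\theta(\Sp^{d-1})$, produces a Poisson point process $\sum_{i\geq 1}\delta_{(\Gamma_i,w_i)}$ on $(0,\infty)\times\Sp^{d-1}$ with intensity $dt\otimes \tfrac1c\theta(dw)$. Then I would apply the mapping theorem to the measurable map $\Phi\colon(0,\infty)\times\Sp^{d-1}\to\RR^d\setminus\{0\}$ given by $\Phi(t,w)=c^{1/\alpha}t^{-1/\alpha}w$, so that $\sum_{i\geq 1}\delta_{\Gamma_i^{-1/\alpha}c^{1/\alpha}w_i}=\sum_{i\geq 1}\delta_{\Phi(\Gamma_i,w_i)}$ is a Poisson point process with intensity $\Phi_*\bigl(dt\otimes\tfrac1c\theta(dw)\bigr)$. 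The core computation is the change of variables in the radial coordinate: writing $r=c^{1/\alpha}t^{-1/\alpha}$, i.e.\ $t=cr^{-\alpha}$, one gets $|dt|=c\alpha r^{-(1+\alpha)}\,dr=c\,m_\alpha(dr)$, so the radial part of the image measure is $c\,m_\alpha(dr)$, the factor $1/c$ cancels, and the angular part remains $\theta(dw)$; hence $\Phi_*\bigl(dt\otimes\tfrac1c\theta(dw)\bigr)$ is precisely the polar form of $\theta\otimes m_\alpha$, which is the intensity of $N_\alpha$.

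To finish, I would invoke uniqueness in law of a Poisson point process given its (non-atomic, $\sigma$-finite) intensity measure to conclude $N_\alpha\overset{d}{=}\sum_{i\geq 1}\delta_{\Gamma_i^{-1/\alpha}c^{1/\alpha}w_i}$, and I would check that the right-hand side is genuinely an element of $\mathcal{M}(E)$: a point has norm $\geq\epsilon$ exactly when $\Gamma_i\leq c\epsilon^{-\alpha}$, and since $\Gamma_i\to\infty$ a.s.\ there are a.s.\ only finitely many such $i$, so the configuration is locally finite. I do not expect a serious obstacle here; the only real care needed is the polar-coordinate bookkeeping in the change of variables (making sure the Jacobian $\alpha r^{-(1+\alpha)}$ and the normalizing constant $c$ combine correctly) and citing the marking and mapping theorems in the form valid for $\sigma$-finite intensities on the locally compact space $E=\bar\RR^d\setminus\{0\}$.
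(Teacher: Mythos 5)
Your argument is correct, and the change-of-variables bookkeeping ($t = c r^{-\alpha}$, so $dt = c\alpha r^{-(1+\alpha)}dr$, which cancels the $1/c$ from the mark law to leave exactly $m_\alpha(dr)\otimes\theta(dw)$) is carried out properly; local finiteness via $\Gamma_i\to\infty$ a.s.\ is also the right check. The paper itself does not give a proof of this proposition --- it merely cites Davydov and Egorov and the references therein --- so there is no internal argument to compare against, but the marking-then-mapping route you take is the standard and efficient one for this classical representation.
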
 
	
	\begin{lemma}\label{Pialphaprop}
		The p.p.p.\ $N_\alpha$ has the following properties. \begin{itemize}
			\item[(1)] Almost surely there are only a finite number of points of $\supp(N_\alpha)$ outside a ball of positive radius centered at the origin.
			\item[(2)] $N_\alpha$ is simple.
			\item[(3)] Almost surely, we can label the points from $\supp(N_\alpha)$ according to the decreasing order of their norms $\supp(N_\alpha)=\{(y_1,y_2,\dots):\|y_1\|>\|y_2\|>\dots \}$.
			\item[(4)] With probability one, for any $p>\alpha$,
			$$\sum_{i=1}^\infty|y_i|^p<\infty.$$  
		\end{itemize}
	\end{lemma}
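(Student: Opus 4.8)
The plan is to read off all four properties from the explicit series representation of $N_\alpha$ furnished by Proposition \ref{PPPconstruction}. Assume first that $\theta$ is a finite \emph{nonzero} measure on $\Sp^{d-1}$; if $\theta\equiv 0$ then $N_\alpha$ is the empty configuration and every assertion is vacuous. By Proposition \ref{PPPconstruction} we may realize $N_\alpha = \sum_{i=1}^\infty \delta_{y_i}$ with $y_i = \Gamma_i^{-1/\alpha}(\theta(\Sp^{d-1}))^{1/\alpha} w_i$, where $\|w_i\|=1$ and $\Gamma_i = \lambda_1+\cdots+\lambda_i$ with the $\lambda_j$ i.i.d.\ mean-one exponentials. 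The whole argument then rests on two elementary facts about $(\Gamma_i)_{i\ge 1}$: since $\lambda_j>0$ almost surely, the sequence $(\Gamma_i)$ is almost surely strictly increasing; and by the strong law of large numbers $\Gamma_i/i\to 1$ almost surely, so in particular $\Gamma_i\to\infty$ and, on an event of probability one, $\Gamma_i\geq i/2$ for all large $i$.

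Concretely, I would argue as follows. Because $\|y_i\| = \Gamma_i^{-1/\alpha}(\theta(\Sp^{d-1}))^{1/\alpha}$, strict monotonicity of $(\Gamma_i)$ yields $\|y_1\|>\|y_2\|>\cdots$ almost surely, which is precisely the labeling asserted in (3); and the inequality $\|y_i\|\geq r$ is equivalent to $\Gamma_i\leq \theta(\Sp^{d-1})r^{-\alpha}$, which holds for only finitely many $i$ since $\Gamma_i\to\infty$, giving (1). For (2), I would invoke the remark following the definition of a p.p.p.: it suffices that the intensity measure $\theta\times m_\alpha$ be non-atomic, and indeed any singleton $\{(s,r)\}$ has mass $\theta(\{s\})\,m_\alpha(\{r\})=0$ because $m_\alpha$ is absolutely continuous; equivalently, the strict monotonicity of the norms above already exhibits the points of $\supp(N_\alpha)$ as pairwise distinct. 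Finally, for (4) I would write $\sum_i \|y_i\|^p = (\theta(\Sp^{d-1}))^{p/\alpha}\sum_i \Gamma_i^{-p/\alpha}$ and, on the almost-sure event $\{\Gamma_i\geq i/2 \text{ for } i\geq N\}$, bound the tail by $2^{p/\alpha}\sum_{i\geq N} i^{-p/\alpha}<\infty$, the convergence coming exactly from $p>\alpha$; the finitely many remaining terms are finite since $\Gamma_i>0$.

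The only point requiring a little care — what I would flag as the main ``obstacle,'' though it is really a bookkeeping matter rather than a genuine difficulty — is that Proposition \ref{PPPconstruction} gives an equality in \emph{distribution} of point processes, so each conclusion must be framed as an almost-sure property of the random configuration $\supp(N_\alpha)\in\mathcal{H}(E)$ that is measurable with respect to the vague topology, and then transferred along $\overset{d}{=}$. The events ``at most finitely many points lie outside $B_r(0)$'' (for $r$ ranging over a countable set with infimum $0$, which suffices by monotonicity in $r$), ``the norms of the points form a strictly decreasing sequence,'' and ``$\sum_i\|y_i\|^p<\infty$'' are all Borel in $\mathcal{M}(E)$, so this transfer is legitimate. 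Beyond that, the entire argument is just the asymptotics $\Gamma_i\sim i$.
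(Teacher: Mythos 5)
Your proof is correct and is the standard argument via the LePage series representation furnished by Proposition \ref{PPPconstruction}. The paper itself does not prove this lemma — it defers to Davydov--Egorov \cite{EmpPP} and the references therein — but the route you take, reading off all four claims from $\|y_i\|=(\theta(\Sp^{d-1}))^{1/\alpha}\Gamma_i^{-1/\alpha}$ together with the strict monotonicity of $(\Gamma_i)$ and the SLLN $\Gamma_i/i\to 1$ a.s., is exactly the argument those references use; your remark about transferring almost-sure properties across the distributional equality via Borel events in the vague topology is a correct and appropriately careful bookkeeping point.
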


	\section{Bipartized resolvent matrix}\label{sect:bipartres} In this section we follow the notation of Bordenave, Caputo, and Chafa\"i in \cite{HeavyIId} to define the bipartizations of matrices and operators. 
	
	\subsection{Bipartization of a matrix} For an $n\times n$ complex matrix $A$ we consider a symmetrized version of $\nu_{A-zI}$,
	$$\check{\nu}_{A-z}:=\frac{1}{2n}\sum_{k=1}^{n}(\delta_{s_k(A-z)}+\delta_{-s_k(A-z)}).$$
	
	Let $\CC_+:=\{z\in\CC:\Im(z)>0 \}$ and 
	$$\HH_+:=\left\{U=\begin{pmatrix}
		\eta&z\\
		\bar{z}&\eta
	\end{pmatrix}: \eta\in\CC_+, z\in\CC \right\}\subset\text{Mat}_2(\CC).$$
	For any $z\in\CC,\eta\in\CC_+$ and $1\leq i,j\leq n$ define the following $2\times 2$ matrices
	$$U=U(z,\eta):=\begin{pmatrix}
		\eta&z\\
		\bar{z}&\eta
	\end{pmatrix}\quad\text{and}\quad B_{ij}:=\begin{pmatrix}
		0&A_{ij}\\
		\bar{A}_{ji}&0
	\end{pmatrix}.$$
	
	Define the matrix $B\in\text{Mat}_n(\text{Mat}_2(\CC))\simeq\text{Mat}_{2n}(\CC)$ by $B=(B_{ij})_{1\leq i,j\leq n}$. As an element of $\text{Mat}_{2n}(\CC)$, $B$ is Hermitian. We call $B$ the bipartization of the matrix $A$. We define the resolvent matrix in $\text{Mat}_n(\text{Mat}_2(\CC))$ by 
	$$R(U)=(B-U\otimes I_n)^{-1},$$
	so that for all $i,j$, $R(U)_{ij}\in\text{Mat}_2(\CC)$. For $1\leq k\leq n$ we write,
	$$R(U)_{kk}=\begin{pmatrix}
		a_k(z,\eta)&b_k(z,\eta)\\
		b'_k(z\eta)&c_k(z,\eta)
	\end{pmatrix}. $$
	
	Letting $B(z)=B-U(z,0)\otimes I_{n}$ we have 
	$$R(U)=(B(z)-\eta I_{2n})^{-1}.$$
	
	Let $m_\mu$ denote the Stieltjes transform of a probability measure $\mu$ on $\RR$ defined by
	$$m_\mu(\eta)=\int_\RR\frac{1}{x-\eta}d\mu(x),\qquad\eta\in\CC_+.$$ 
	
	\begin{theorem}[Theorem 2.1 in \cite{HeavyIId}]\label{StRe}
		Let $A\in\Mat_n(\CC)$. Then $\mu_{B(z)}=\check{\nu}_{A-z}$,
		$$m_{\check{\nu}_{A-z}}(\eta)=\frac{1}{2n}\sum_{k=1}^n(a_k(z,\eta)+c_{k}(z,\eta)),$$
		and in $\mathcal{D'}(\CC)$, the set of Schwartz-Sobolev distributions on $\CC$ endowed with its usual convergence with respect to all infinitely differentiable functions with compact support,
		$$\mu_{A}(\cdot)=-\frac{1}{\pi n}\sum_{k=1}^n\partial b_k(\cdot,0).$$
	\end{theorem}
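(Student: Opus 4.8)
The plan is to reduce the statement to the familiar Hermitization block matrix by a single permutation of coordinates and then read off the three assertions. Let $P$ be the permutation of $\CC^{2n}\simeq\bigoplus_{k=1}^n\CC^2$ sending the interleaved ordering of coordinates (both components of site $1$, then both of site $2$, and so on) to the split ordering (the ``first components'' of all sites, followed by the ``second components''). Unwinding the definitions of $B_{ij}$ and $U(z,\eta)$ shows
\[
P\,B(z)\,P^{-1}=\begin{pmatrix}0 & A-z\\ (A-z)^\ast & 0\end{pmatrix}\qquad\text{and}\qquad P\bigl(B(z)-\eta I_{2n}\bigr)P^{-1}=\begin{pmatrix}-\eta I_n & A-z\\ (A-z)^\ast & -\eta I_n\end{pmatrix}.
\]
Since $P$ is a similarity, $B(z)$ has the same spectrum as $\left(\begin{smallmatrix}0 & A-z\\ (A-z)^\ast & 0\end{smallmatrix}\right)$; inserting a singular value decomposition of $A-z$ block-diagonalizes that matrix into $2\times 2$ blocks $\left(\begin{smallmatrix}0 & s_k(A-z)\\ s_k(A-z) & 0\end{smallmatrix}\right)$ with eigenvalues $\pm s_k(A-z)$, so that $\mu_{B(z)}=\frac1{2n}\sum_k(\delta_{s_k(A-z)}+\delta_{-s_k(A-z)})=\check\nu_{A-z}$. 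This is the first assertion.

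For the remaining two, I would invert the block matrix above by the Schur-complement formula: writing $M:=A-z$, the four $n\times n$ blocks of $P R(U)P^{-1}$ are $\eta(MM^\ast-\eta^2 I)^{-1}$, $M(M^\ast M-\eta^2 I)^{-1}$, $M^\ast(MM^\ast-\eta^2 I)^{-1}$, and $\eta(M^\ast M-\eta^2 I)^{-1}$. Since the $k$-th diagonal $2\times 2$ block $R(U)_{kk}$ consists of the $(k,k)$ entries of these four blocks,
\[
a_k=\bigl[\eta(MM^\ast-\eta^2 I)^{-1}\bigr]_{kk},\qquad c_k=\bigl[\eta(M^\ast M-\eta^2 I)^{-1}\bigr]_{kk},\qquad b_k=\bigl[M(M^\ast M-\eta^2 I)^{-1}\bigr]_{kk}.
\]
Because $B(z)$ is Hermitian and $\eta\in\CC_+$, $m_{\check\nu_{A-z}}(\eta)=m_{\mu_{B(z)}}(\eta)=\frac1{2n}\Tr(B(z)-\eta I_{2n})^{-1}=\frac1{2n}\Tr R(U)$; expanding the trace of the $2\times 2$-block matrix $R(U)$ as $\sum_k\Tr R(U)_{kk}=\sum_k(a_k+c_k)$ gives the Stieltjes-transform formula.

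For the logarithmic-potential formula, with $\partial=\partial_z$ the Wirtinger derivative, I would regularize by taking $\eta=i\eps$ with $\eps>0$, so that $M^\ast M-\eta^2 I=M^\ast M+\eps^2 I$ is positive definite and $\sum_k b_k(z,i\eps)=\Tr\bigl[(A-z)\bigl((A-z)^\ast(A-z)+\eps^2 I\bigr)^{-1}\bigr]$. Using $\partial_{\bar z}(A-z)=0$, $\partial_{\bar z}(A-z)^\ast=-I$, and Jacobi's formula for $\partial_{\bar z}\log\det$, one gets
\[
\partial_{\bar z}\log\det\bigl((A-z)^\ast(A-z)+\eps^2 I\bigr)=-\Tr\bigl[(A-z)\bigl((A-z)^\ast(A-z)+\eps^2 I\bigr)^{-1}\bigr]=-\sum_k b_k(z,i\eps),
\]
hence $\sum_k\partial b_k(z,i\eps)=-\tfrac14\Delta\log\det\bigl((A-z)^\ast(A-z)+\eps^2 I\bigr)$. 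As $\eps\downarrow 0$ the function $\frac1{2n}\log\det\bigl((A-z)^\ast(A-z)+\eps^2 I\bigr)=\frac1{2n}\sum_k\log\bigl(s_k(A-z)^2+\eps^2\bigr)$ decreases to $\frac1n\log|\det(A-z)|=U_{\mu_A}(z)$, and since the limit is locally integrable on $\CC$ and the integrand is dominated near the finitely many eigenvalues of $A$, the convergence holds in $L^1_{\mathrm{loc}}(\CC)$ and so in $\mathcal{D}'(\CC)$. Applying the continuous operator $\Delta$ and using $\Delta U_{\mu_A}=2\pi\mu_A$ yields $\sum_k\partial b_k(\cdot,i\eps)\to-\pi n\,\mu_A$ in $\mathcal{D}'(\CC)$, which is the claimed formula with $b_k(\cdot,0):=\lim_{\eps\downarrow 0}b_k(\cdot,i\eps)$.

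The main obstacle is making sense of $b_k(\cdot,0)$ in the last step: the pointwise limit is $\overline{\bigl((A-z)^{-1}\bigr)_{kk}}$, which may have a pole of order up to the algebraic multiplicity of an eigenvalue of $A$, so an individual $b_k(\cdot,0)$ need not be locally integrable; it is only the sum $\sum_k b_k(z,0)=\overline{\Tr(A-z)^{-1}}$, with simple poles, that is a genuine distribution. Carrying $\eps>0$ all the way to the final limit, as above, avoids this issue; the only supporting facts then required are the $L^1_{\mathrm{loc}}$-domination of $\log\det\bigl((A-z)^\ast(A-z)+\eps^2 I\bigr)$ near $\mathrm{spec}(A)$ and the classical identity $\Delta\log|\cdot|=2\pi\delta_0$, everything else being bookkeeping with the permutation $P$, the Schur complement, and Wirtinger derivatives.
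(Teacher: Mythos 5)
The paper cites this result from \cite{HeavyIId} (Theorem 2.1) without reproving it, so there is no in-paper proof to compare against; your argument --- permuting to the standard Hermitization block form $\left(\begin{smallmatrix}0 & A-z\\ (A-z)^\ast & 0\end{smallmatrix}\right)$, reading off the spectrum from an SVD, computing the resolvent blocks via Schur complements, and carrying the regularization $\eta = i\eps$ through Jacobi's formula to obtain the logarithmic-potential identity --- is correct and matches the standard treatment of this theorem. Your observation that an individual $b_k(\cdot,0) = \overline{[(A-z)^{-1}]_{kk}}$ can have non-integrable poles while only the sum $\sum_k b_k(\cdot,0) = \overline{\tr(A-z)^{-1}}$ has simple (hence locally integrable) poles, and that keeping $\eps>0$ until the final distributional limit circumvents this, is a genuine subtlety that you resolve correctly.
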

	
	\subsection{Bipartization of an operator} Let $V$ be a countable set and let $\ell^2(V)$ denote the Hilbert space defined by the inner product 
	$$\langle\phi,\psi\rangle:=\sum_{u\in V}\bar{\phi}_u\psi_u,\quad \phi_u=\langle\delta_u,\phi\rangle,$$
	where $\delta_u$ is the unit vector supported on $u\in V$. Let $\mathcal{D}(V)$ denote the dense subset of $\ell^2(V)$ of vectors with finite support. Let $(w_{uv})_{u,v\in V}$ be a collection of complex numbers such that for all $u\in V$,
	$$\sum_{v\in V}|w_{uv}|^2+|w_{vu}|^2<\infty.$$
	We then define a linear operator $A$ with domain $\mathcal{D}(V)$ by \begin{equation}\label{eq:OP}
		\langle \delta_u,A\delta_v\rangle=w_{uv}.
	\end{equation}

	Let $\hat{V}$ be a set in bijection with $V$, and let $\hat{v}\in\hat{V}$ be the image of $v\in V$ under the bijection. Let $V^b=V\cup\hat{V}$, and define the bipartization of $A$ as the symmetric operator $B$ on $\mathcal{D}(V^b)$ by 
	$$\langle\delta_u,B\delta_{\hat{v}}\rangle=\overline{\langle\delta_{\hat{v}},B\delta_u\rangle}=w_{uv},$$
	$$\langle\delta_u,B\delta_v\rangle=\langle\delta_{\hat{u}},B\delta_{\hat{v}}\rangle=0.$$
	
	Let $\Pi_{u}:\ell^2(V^b)\rightarrow\Span\{\delta_u,\delta_{\hat{u}} \}$ denote the orthogonal projection onto the span of $\delta_u,\delta_{\hat{u}}$. $\Span\{\delta_u,\delta_{\hat{u}} \}$ is isomorphic to $\CC^2$ under the map $\delta_u\mapsto e_1,\delta_{\hat{u}}\mapsto e_2$. Under this isomorphism we may think of $\Pi_uB\Pi_v^*$ as a linear map from $\CC^2$ to $\CC^2$ with matrix representation   
	$$\Pi_uB\Pi_v^*=\begin{pmatrix}
		0&w_{uv}\\
		\bar{w}_{vu}&0\end{pmatrix}.$$
	Let $B(z)=B-U(z,0)\otimes I_V$. For simplicity we will denote by $B(z)$ the closure of $B(z)$. Recall the sum of an essentially self-adjoint operator and a bounded self-adjoint operator is essentially self-adjoint, thus if $B$ is (essentially) self-adjoint then $B(z)$ is (essentially) self-adjoint. For $\eta\in\CC_+$, $U(z,\eta)\in\HH_+$ we define the resolvent operator 
	$$R(U):=(B(z)-\eta I_{V^b})^{-1},$$
	and \begin{equation}\label{eq:ResFuncts}
		R(U)_{vv}=\Pi_vR(U)\Pi_v^*=\begin{pmatrix}
			a_v(z,\eta)&b_v(z,\eta)\\
			b_v'(z,\eta)&c_v(z,\eta)
		\end{pmatrix}.
	\end{equation}
	\begin{lemma}\label{BoundedRes}
		If $a_v,b_v,c_v,b_v'$ are defined by \eqref{eq:ResFuncts}, then \begin{itemize}
			\item for each $z\in\CC$, $a_v(z,\cdot),c_v(z,\cdot):\CC_+\rightarrow\CC_+$,
			
			\item the functions $a_v(z,\cdot),b_v(z,\cdot),b_v'(z,\cdot),c_v(z,\cdot)$ are analytic on $\CC_+$,
			
			\item and	$$|a_v|\leq(\Im(\eta))^{-1},\quad|c_v|\leq(\Im(\eta))^{-1},\quad|b_v|\leq(2\Im(\eta))^{-1},\quad\text{and}\quad|b_v'|\leq(2\Im(\eta))^{-1}.$$
		\end{itemize} Moreover, if $\eta\in i\RR_+$, then $a_v,c_v$ are pure imaginary and $b_v'=\bar{b}_v$.
	\end{lemma}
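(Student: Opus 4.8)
The plan is to read off all four bullets from standard facts about the resolvent of the single self-adjoint operator $T:=B(z)$ (self-adjoint after closure, as in the discussion above), together with one sharpening argument for the off-diagonal bounds.

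First I would record the elementary facts. For $\eta\in\CC_+$ one has $R(U)=(T-\eta I_{V^b})^{-1}$ with $\|R(U)\|\le(\Im\eta)^{-1}$ since $\sigma(T)\subseteq\RR$, and the first resolvent identity $R(U(z,\eta))-R(U(z,\eta'))=(\eta-\eta')R(U(z,\eta))R(U(z,\eta'))$ shows $\eta\mapsto R(U)$ is operator-norm analytic on $\CC_+$; passing to the fixed matrix entries $\langle\delta_v,R(U)\delta_v\rangle$, $\langle\delta_{\hat v},R(U)\delta_{\hat v}\rangle$, $\langle\delta_v,R(U)\delta_{\hat v}\rangle$, $\langle\delta_{\hat v},R(U)\delta_v\rangle$ gives analyticity of $a_v,c_v,b_v,b_v'$. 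Writing $E$ for the spectral resolution of $T$, I would note $a_v(z,\eta)=\int_\RR(\lambda-\eta)^{-1}\,d\mu_v(\lambda)$ is the Stieltjes transform of the probability measure $\mu_v:=\langle\delta_v,E(\cdot)\delta_v\rangle$; since Stieltjes transforms of probability measures map $\CC_+$ into $\CC_+$ and are bounded by $(\Im\eta)^{-1}$, this yields the first bullet and the bounds on $a_v$, and the same for $c_v$ via $\mu_{\hat v}$. (Strictness $\Im a_v>0$ may alternatively be seen from $\Im a_v=\Im(\eta)\|R(U)^*\delta_v\|^2>0$, using that $R(U)^*$ is invertible.)

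The only nonroutine point will be the factor $1/2$ in $|b_v|,|b_v'|\le(2\Im\eta)^{-1}$: the crude bound $|b_v|\le\|R(U)\|\le(\Im\eta)^{-1}$ loses it. Here I would use $T=T^*$ and the resolvent identity to get $\Im R(U):=\tfrac{1}{2i}(R(U)-R(U)^*)=\Im(\eta)R(U)R(U)^*\ge0$, then compress to $\Span\{\delta_v,\delta_{\hat v}\}$ via $\Pi_v$: since $I_{V^b}-\Pi_v^*\Pi_v\ge0$ one has $\Pi_vR(U)R(U)^*\Pi_v^*\ge(\Pi_vR(U)\Pi_v^*)(\Pi_vR(U)\Pi_v^*)^*=R(U)_{vv}R(U)_{vv}^*$, hence the $2\times2$ Hermitian inequality $\Im\bigl(R(U)_{vv}\bigr)\ge\Im(\eta)R(U)_{vv}R(U)_{vv}^*$. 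Reading off its $(1,1)$ entry gives $\Im a_v-\Im(\eta)(|a_v|^2+|b_v|^2)\ge0$, so
\[
|b_v|^2\le\frac{\Im a_v}{\Im\eta}-|a_v|^2\le\frac{\Im a_v}{\Im\eta}-(\Im a_v)^2\le\frac{1}{4(\Im\eta)^2},
\]
the last inequality by maximizing $q\mapsto q/\Im\eta-q^2$ over $q\in\RR$; the $(2,2)$ entry handles $b_v'$ in exactly the same way. I expect this quadratic matrix inequality, rather than the operator-norm estimate, to be the one genuinely delicate step.

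Finally, for $\eta\in i\RR_+$ I would exploit the bipartite grading: let $J$ be the unitary on $\ell^2(V^b)$ acting as $+1$ on $\ell^2(V)$ and $-1$ on $\ell^2(\hat V)$. Since $B$ and $U(z,0)\otimes I_V$ both send $\ell^2(V)$-basis vectors into the $\ell^2(\hat V)$-block and conversely, $T$ anticommutes with $J$ on $\mathcal{D}(V^b)$, hence $JTJ=-T$ for the closures and $JR(U)J=-(T+\eta)^{-1}$. Decomposing $R(U)=R_{\mathrm{ev}}+R_{\mathrm{od}}$ into the parts commuting and anticommuting with $J$, the resolvent identity gives $R_{\mathrm{ev}}=\eta(T^2-\eta^2)^{-1}$ and $R_{\mathrm{od}}=T(T^2-\eta^2)^{-1}$; when $\eta=it$, $\eta^2=-t^2\in\RR$, so $R_{\mathrm{od}}=T(T^2+t^2)^{-1}$ is self-adjoint and $R_{\mathrm{ev}}=i\,t(T^2+t^2)^{-1}$ is $i$ times a self-adjoint operator. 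As $R_{\mathrm{ev}}$ commutes with $J$ it preserves the splitting $\ell^2(V^b)=\ell^2(V)\oplus\ell^2(\hat V)$, while $R_{\mathrm{od}}$ interchanges the two summands, so $a_v=\langle\delta_v,R_{\mathrm{ev}}\delta_v\rangle$, $c_v=\langle\delta_{\hat v},R_{\mathrm{ev}}\delta_{\hat v}\rangle$, $b_v=\langle\delta_v,R_{\mathrm{od}}\delta_{\hat v}\rangle$, $b_v'=\langle\delta_{\hat v},R_{\mathrm{od}}\delta_v\rangle$; diagonal matrix elements of $i\times(\text{self-adjoint})$ are purely imaginary, so $a_v,c_v\in i\RR$, and $b_v'=\overline{\langle\delta_v,R_{\mathrm{od}}\delta_{\hat v}\rangle}=\bar b_v$ by self-adjointness of $R_{\mathrm{od}}$. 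Beyond routine bookkeeping with domains, no further difficulty is expected in this last step.
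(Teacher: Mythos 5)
Your argument is correct. The paper does not supply its own proof of this lemma --- it refers the reader to Reed--Simon for the first two bullets and to \cite{HeavyIId} for the last --- but your self-contained proof matches the standard approach behind those references: spectral resolution gives the Herglotz property and analyticity (with $\Im a_v=\Im(\eta)\|R(U)^*\delta_v\|^2>0$ settling strictness), the compressed resolvent inequality $\Im R(U)_{vv}\ge\Im(\eta)\,R(U)_{vv}R(U)_{vv}^*$ combined with the elementary maximization $q/\Im\eta-q^2\le(2\Im\eta)^{-2}$ recovers the factor $1/2$ in the off-diagonal bound, and the chiral grading $J$ on $\ell^2(V)\oplus\ell^2(\hat V)$, with $JB(z)J=-B(z)$, cleanly yields the symmetries at $\eta\in i\RR_+$. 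All steps check out, and the compression step $\Pi_v R R^*\Pi_v^*\ge R(U)_{vv}R(U)_{vv}^*$ is indeed the genuinely delicate point that the naive norm bound would miss.
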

	
	See Reed and Simon \cite{MathPhys1} for a proof of the first two statements and \cite{HeavyIId} for the last.
	
	\section{Convergence to the Poisson Weighted Infinite Tree}\label{sect:OpConv}
	
	\subsection{Operators on a tree} Consider a tree $T=(V,E)$ on a vertex set $V$ with edge set $E$. We say $u\sim v$ if $\{u,v\}\in E$. Assume if $\{u,v\}\notin E$ then $w_{uv}=w_{vu}=0$, in particular $w_{vv}=0$ for all $v\in V$. We consider the operator $A$ defined by (\ref{eq:OP}). We begin with useful sufficient conditions for a symmetric linear operator to be essentially self-adjoint, which will be very important for our use. 
	
	\begin{lemma}[Lemma A.3 in \cite{SymHeavy}]\label{SelfAdjointCrit}
		Let $\kappa>0$ and $T=(V,E)$ be a tree. Assume that for all $u,v\in V$, $w_{uv}=\bar{w}_{vu}$ and if $\{u,v\}\notin E$ then $w_{uv}=w_{vu}=0$. Assume that there exists a sequence of connected finite subsets $(S_n)_{n\geq 1}$ of $V$, such that $S_n\subset S_{n+1}$, $\bigcup_n S_n=V$, and for every $n$ and $v\in S_n$,
		$$\sum_{u\notin S_n:u\sim v} |w_{uv}|^2\leq\kappa.$$
		Then $A$ is essentially self-adjoint.
	\end{lemma}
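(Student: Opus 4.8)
The plan is to use Nelson's analytic vector theorem (or equivalently the criterion that a symmetric operator with a dense set of analytic vectors is essentially self-adjoint), realized concretely through a cutoff/exhaustion argument on the tree. The key is the sequence $(S_n)$ of connected finite subsets: since each $S_n$ is finite, the restriction of $A$ to $\ell^2(S_n)$ is a bounded self-adjoint operator, and the hypothesis $\sum_{u\notin S_n,\,u\sim v}|w_{uv}|^2\le\kappa$ controls the "boundary flux" leaving $S_n$ uniformly in $n$. First I would recall the standard fact that for a symmetric operator $A$ on $\mathcal{D}(V)$, essential self-adjointness is equivalent to $\ker(A^\ast \mp i)=\{0\}$; equivalently, it suffices to show that if $\phi\in\ell^2(V)$ satisfies $\langle A\psi,\phi\rangle = \pm i\langle\psi,\phi\rangle$ for all $\psi\in\mathcal{D}(V)$, then $\phi=0$.

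The core estimate is a commutator/localization bound. Let $\chi_n$ denote the indicator (multiplication operator) of $S_n$, so $\chi_n\to I$ strongly. For $\phi$ a putative solution of $A^\ast\phi = i\phi$, one writes $\|\chi_n\phi\|^2$ and uses the eigenvector equation together with the near-commutation of $\chi_n$ with $A$: the operator $[A,\chi_n]$ is supported on the edge boundary of $S_n$, and by Cauchy--Schwarz its contribution is bounded in terms of $\sum_{v\in S_n}\sum_{u\notin S_n,\,u\sim v}|w_{uv}|^2$ applied to $\phi$ restricted near the boundary. Because $A$ lives on a tree, each vertex $v\in S_n$ contributes a boundary term only through edges leaving $S_n$, and the hypothesis caps each such vertex's contribution by $\kappa$. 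The standard trick (going back to the Carleman/limit-point arguments and used in this form in \cite{SymHeavy}) is to iterate: use the uniform bound $\kappa$ together with the fact that $\phi\in\ell^2$ forces the boundary contributions $\sum_{v\in S_n\setminus S_{n-1}}|\phi_v|^2\to 0$ along a subsequence, and combine this with the eigenvalue equation (which gives $\mathrm{Im}\langle A\chi_n\phi,\chi_n\phi\rangle = $ a boundary term that must both equal $\|\chi_n\phi\|^2$ up to boundary corrections and tend to zero) to conclude $\|\phi\|=0$.

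Concretely I would: (1) fix $\phi$ with $\langle\phi, (A\mp i)\psi\rangle=0$ for all $\psi\in\mathcal{D}(V)$; (2) test against $\psi=\chi_n\phi\in\mathcal{D}(V)$ (legitimate since $S_n$ is finite), obtaining $\langle\phi,(A\mp i)\chi_n\phi\rangle=0$; (3) expand $A\chi_n\phi = \chi_n A\phi + [A,\chi_n]\phi$ in the weak sense, where the commutator term only involves the finitely many edges from $S_n$ to $V\setminus S_n$; (4) take imaginary parts: since $A$ is symmetric, $\mathrm{Im}\langle\chi_n\phi, A\chi_n\phi\rangle = 0$, so $\|\chi_n\phi\|^2 = \mp\,\mathrm{Im}\langle\phi,[A,\chi_n]\phi\rangle$, and bound the right side via Cauchy--Schwarz by $\big(\sum_{v\in\partial_{\mathrm{in}}S_n}|\phi_v|^2\big)^{1/2}\big(\kappa\sum_{v\in\partial_{\mathrm{in}}S_n}|\phi_v|^2 \,/\, \text{(appropriate normalization)}\big)^{1/2}\cdot C$, using $\sum_{u\notin S_n,\,u\sim v}|w_{uv}|^2\le\kappa$; (5) observe that $\sum_{v}|\phi_v|^2<\infty$ forces the inner-boundary sums $\sum_{v\in S_n,\,\exists u\notin S_n,\,u\sim v}|\phi_v|^2 \to 0$ as $n\to\infty$ (each vertex eventually lies in the interior of some $S_m$), whence $\|\chi_n\phi\|^2\to 0$; (6) but $\|\chi_n\phi\|\to\|\phi\|$, so $\phi=0$. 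The same argument with $+i$ gives $\ker(A^\ast+i)=\{0\}$, and essential self-adjointness follows.

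The main obstacle is step (4)--(5): making the commutator bound quantitatively match the $\ell^2$-decay of $\phi$ so that the boundary term genuinely vanishes. The delicate point is that $[A,\chi_n]\phi$ pairs boundary values of $\phi$ inside $S_n$ against values just outside, and one must absorb the "outside" values — which need not be small a priori — using the weighted hypothesis; the tree structure is what makes this clean, since there is no feedback (no cycles crossing the boundary), and the $\kappa$-bound is exactly the estimate on the outgoing weights at each boundary vertex. If a direct one-shot estimate is awkward, the fallback is the analytic-vector route: show that each $\delta_v$ is an analytic vector for $A$ by using the $\kappa$-bound and the tree geometry to control $\|A^k\delta_v\|$ by a Catalan-number-type count of paths of length $k$ from $v$, giving $\|A^k\delta_v\|\le C^k k!^{1/2}$ or better, which suffices for Nelson's theorem. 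I expect the commutator argument to be shorter, so I would present that and relegate the analytic-vector estimate to a remark.
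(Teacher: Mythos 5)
Your overall strategy---showing both deficiency indices vanish by testing a putative eigenvector $\phi$ of $A^*$ against the cutoffs $\chi_n\phi$ and taking imaginary parts---is the right one and can be made to work, but steps (4)--(5) misidentify which boundary quantity controls the estimate. With $A^*\phi = i\phi$, taking imaginary parts of $\langle\phi,(A+i)\chi_n\phi\rangle=0$ yields
\[ \|\chi_n\phi\|^2 \;=\; -\,\mathrm{Im}\sum_{v\notin S_n}\ \sum_{u\in S_n,\,u\sim v} w_{vu}\,\bar\phi_v\,\phi_u . \]
The correct Cauchy--Schwarz split of the right side has one factor $\sum_{u\in S_n}|\phi_u|^2\sum_{v\notin S_n,\,v\sim u}|w_{vu}|^2 \leq \kappa\,\|\chi_n\phi\|^2$ (where the hypothesis enters, using $|w_{vu}|=|w_{uv}|$) and one factor $\sum_{v\notin S_n}|\phi_v|^2 = \|(1-\chi_n)\phi\|^2$ (the outer $\ell^2$-tail), \emph{provided} every $v\notin S_n$ has at most one neighbor in $S_n$. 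That provision is exactly what the tree structure plus connectedness of $S_n$ give you (two such inner neighbors would close a cycle through $S_n$); this is where the tree hypothesis concretely enters, and it should be made explicit rather than invoked informally as ``no feedback.'' One then gets $\|\chi_n\phi\|\leq\sqrt{\kappa}\,\|(1-\chi_n)\phi\|\to 0$, so $\phi=0$.

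By contrast, your step (4) places the inner-boundary sum in both Cauchy--Schwarz factors, and your step (5) justifies vanishing by claiming $\sum_{v\in S_n:\,\exists u\notin S_n,\,u\sim v}|\phi_v|^2\to 0$ because ``each vertex eventually lies in the interior of some $S_m$.'' That is false in general: on a tree with an infinite-degree vertex $o$ (say, an infinite star), $o$ remains on the inner boundary of \emph{every} finite connected $S_n$ containing it, so the inner-boundary sum is bounded below by $|\phi_o|^2$ and need not vanish. The quantity that actually tends to zero is the outer tail $\|(1-\chi_n)\phi\|$, simply because $S_n\uparrow V$ and $\phi\in\ell^2$. (Note also that the paper cites this lemma from \cite{SymHeavy} rather than proving it, so there is no in-text proof to compare against; the comparison above is to a correct completion of your own cutoff argument.)
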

	
	\begin{corollary}[Corollary 2.4 in \cite{HeavyIId}]\label{SelfAdjointCrit2}
		Let $\kappa>0$ and $T=(V,E)$ be a tree. Assume that if $\{u,v\}\notin E$ then $w_{uv}=w_{vu}=0$. Assume there exists a sequence of connected finite subsets $(S_n)_{n\geq 1}$ of $V$, such that $S_n\subset S_{n+1}$, $\bigcup_n S_n=V$, and for every $n$ and $v\in S_n$,
		$$\sum_{u\notin S_n:u\sim v} (|w_{uv}|^2+|w_{vu}|^2)\leq\kappa.$$
		Then for all $z\in\CC$, $B(z)$ is self-adjoint.
	\end{corollary}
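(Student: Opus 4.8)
The plan is to deduce the corollary from Lemma~\ref{SelfAdjointCrit} applied to the bipartized operator $B$. Since $U(z,0)\otimes I_V$ is a bounded self-adjoint operator on $\ell^2(V^b)$ (of norm $|z|$), and the sum of an essentially self-adjoint operator and a bounded self-adjoint operator is essentially self-adjoint with self-adjoint closure, it suffices to prove that $B$, with domain $\mathcal{D}(V^b)$, is essentially self-adjoint; the self-adjointness of $B(z)$ then follows from the remarks preceding the statement.

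Next I would record the combinatorial structure carried by $B$. Writing $W_{xy}:=\langle\delta_x,B\delta_y\rangle$ for $x,y\in V^b$, the defining relations of the bipartization give $W_{u\hat v}=w_{uv}$, $W_{\hat v u}=\overline{w_{uv}}$, and $W_{uv}=W_{\hat u\hat v}=0$; in particular $W_{xy}=\overline{W_{yx}}$, so the weight array is Hermitian, and the off-diagonal entries of $B$ are supported on pairs $\{u,\hat v\}$ with $\{u,v\}\in E$ (using that $w_{uv}=w_{vu}=0$ whenever $\{u,v\}\notin E$, and that $w_{vv}=0$ since $T$ is a tree). Thus $B$ is precisely an operator of the form \eqref{eq:OP} for the graph $T^b=(V^b,E^b)$ with $\{u,\hat v\}\in E^b\iff\{u,v\}\in E$. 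Since $T$ is a tree, it is connected and bipartite, say with parts $V=V_0\sqcup V_1$; then $T^b$ decomposes into the two connected pieces $V_0\cup\hat V_1$ and $V_1\cup\hat V_0$, and each is isomorphic as a graph to $T$ (identifying an un-hatted vertex $v$ with $v$ and a hatted vertex $\hat v$ with $v$), hence is itself a tree. So $T^b$ is a forest; since essential self-adjointness passes to orthogonal direct sums, it is enough to verify the hypotheses of Lemma~\ref{SelfAdjointCrit} on each of the two component trees.

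For the exhaustion I would take $S_n^b:=S_n\cup\hat S_n$: these sets are finite, increasing, and their union is $V^b$, and under the isomorphisms above the trace of $S_n^b$ on each component corresponds to $S_n$, hence is connected in that component. It remains to check the uniform $\ell^2$-tail bound. For $v\in S_n$, the neighbours of $v$ in $T^b$ lying outside $S_n^b$ are exactly the vertices $\hat u$ with $u\sim v$ and $u\notin S_n$, so
$$\sum_{x\notin S_n^b:\, x\sim v}|W_{xv}|^2=\sum_{u\notin S_n:\, u\sim v}|w_{vu}|^2\le\sum_{u\notin S_n:\, u\sim v}\bigl(|w_{uv}|^2+|w_{vu}|^2\bigr)\le\kappa,$$
and symmetrically, for $\hat v\in\hat S_n$ the outside neighbours are the vertices $u$ with $u\sim v$ and $u\notin S_n$, which gives $\sum_{u\notin S_n:\, u\sim v}|w_{uv}|^2\le\kappa$. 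Lemma~\ref{SelfAdjointCrit} then applies on each component tree of $T^b$, so $B$ is essentially self-adjoint, and the reduction in the first paragraph finishes the proof.

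I expect the only delicate point to be the bookkeeping of the second paragraph: one has to observe that the bipartized graph $T^b$ is a forest rather than a single tree --- so that $S_n^b$ is not connected in $T^b$ and Lemma~\ref{SelfAdjointCrit} has to be invoked componentwise --- and one has to confirm that Hermiticity, the connected exhausting family, and the $\ell^2$-tail condition all transfer faithfully through the bipartization. Once the indexing of $V^b$ is fixed the tail estimate itself is immediate, which is exactly why the hypothesis is stated with the symmetrized quantity $|w_{uv}|^2+|w_{vu}|^2$ rather than $|w_{uv}|^2$ alone.
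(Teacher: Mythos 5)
The paper states this as Corollary 2.4 of \cite{HeavyIId} without reproducing the argument, so there is no in-paper proof to compare against; on its own terms your proof is correct and is the intended reduction. You correctly identify that the bipartization of a tree is a forest of two trees each isomorphic to $T$ (via the bipartite splitting $V=V_0\sqcup V_1$), that the Hermitian weight array on one copy records $w_{uv}$ and on the other $w_{vu}$ so that the symmetrized tail hypothesis feeds each copy into Lemma~\ref{SelfAdjointCrit} with the exhaustion $S_n\cup\hat S_n$, and that adding the bounded self-adjoint perturbation $U(z,0)\otimes I_V$ preserves essential self-adjointness.
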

	
	The advantages of defining an operator on a tree also include the following recursive formula for the resolvents, see Lemma 2.5 in \cite{HeavyIId} for a proof.
	
	\begin{proposition}\label{prop:RecursiveEquation}
		Assume $B$ is sellf-adjoint and let $U=U(z,\eta)\in\HH_+$. Then \begin{equation}\label{eq:RecursiveEquation}
			R(U)_{\varnothing\varnothing}=-\left(U+\sum_{v\sim\varnothing}\begin{pmatrix}
				0&w_{\varnothing v}\\
				\bar{w}_{v\varnothing}&0
			\end{pmatrix}\tilde{R}(U)_{vv}\begin{pmatrix}
			0&w_{v\varnothing }\\
			\bar{w}_{\varnothing v}&0
		\end{pmatrix} \right)^{-1},
		\end{equation} where $\tilde{R}(U)_{kk}:=\Pi_v R_{B_v}\Pi_v^*$ where $B_v$ is the bipartization of the operator $A$ restricted to $\ell^2(V_v)$, and $V_v$ is the subtree of $V$ of vertices whose path to $\varnothing$ contains $v$.
	\end{proposition}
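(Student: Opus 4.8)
The plan is to derive the recursion from the Schur complement formula for block inverses, exploiting the tree structure to decouple the branches hanging off the root $\varnothing$. First I would fix $U = U(z,\eta) \in \HH_+$ and work with the self-adjoint operator $B(z) = B - U(z,0)\otimes I_{V^b}$, so that $R(U) = (B(z) - \eta I_{V^b})^{-1}$ is a bounded operator (bounded by $(\Im\eta)^{-1}$, as recorded in Lemma \ref{BoundedRes}). The key structural observation is that removing the root $\varnothing$ from the tree $T$ disconnects $V$ into the subtrees $\{V_v : v \sim \varnothing\}$, and these subtrees interact with each other only through $\varnothing$. Correspondingly, on $\ell^2(V^b)$ the operator $B(z)$, after splitting off the two-dimensional coordinate space $\Span\{\delta_\varnothing, \delta_{\hat\varnothing}\}$ at the root, has a block form whose off-root part is the direct sum $\bigoplus_{v\sim\varnothing} B_v(z)$ of the bipartized branch operators, plus the rank-type coupling terms connecting $\varnothing$ to each neighbor $v$ (which in the $\CC^2$ picture are the matrices $\left(\begin{smallmatrix} 0 & w_{\varnothing v} \\ \bar w_{v\varnothing} & 0\end{smallmatrix}\right)$).

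Next I would apply the Schur complement / Feshbach formula for the $(\varnothing,\varnothing)$ block of the inverse: for a $2\times 2$ block decomposition $\left(\begin{smallmatrix} P & Q \\ Q^* & S\end{smallmatrix}\right)$ with $S$ invertible, the top-left block of the inverse is $(P - Q S^{-1} Q^*)^{-1}$. Here $P = U(z,0)|_{\text{root}} - \eta I_2 = -U(z,\eta)$ restricted to the root coordinates (up to the sign convention in \eqref{eq:RecursiveEquation}), $S = \bigoplus_{v\sim\varnothing} (B_v(z) - \eta I)$ is the off-root part, and $Q$ collects the couplings. Because $S$ is block-diagonal over the branches, $Q S^{-1} Q^* = \sum_{v\sim\varnothing} Q_v S_v^{-1} Q_v^*$, and each term is exactly
$$\begin{pmatrix} 0 & w_{\varnothing v} \\ \bar w_{v\varnothing} & 0 \end{pmatrix} \tilde R(U)_{vv} \begin{pmatrix} 0 & w_{v\varnothing} \\ \bar w_{\varnothing v} & 0 \end{pmatrix},$$
since $\tilde R(U)_{vv} = \Pi_v (B_v(z) - \eta I)^{-1} \Pi_v^*$ is the $(v,v)$-block of the resolvent of the branch operator by definition, and the coupling $Q_v$ only touches the $v$-coordinate of $V_v$. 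Collecting signs gives \eqref{eq:RecursiveEquation}. To handle the fact that $V$ is infinite (so the Schur formula is not a finite-matrix identity), I would either invoke the standard resolvent identity for self-adjoint operators written in block form — justified because all operators involved are self-adjoint with resolvents bounded by $(\Im\eta)^{-1}$, so all the indicated inverses exist as bounded operators — or approximate by the finite truncations $S_n$ from Corollary \ref{SelfAdjointCrit2}, apply the finite-dimensional Schur formula on each, and pass to the limit using strong resolvent convergence.

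The main obstacle I anticipate is the rigorous justification of the block inversion in infinite dimensions: one must check that the Schur complement $P - QS^{-1}Q^*$ is genuinely invertible (equivalently, that the relevant operator-matrix manipulations are legitimate for unbounded $B(z)$), and that the branch resolvents $\tilde R(U)_{vv}$ are well-defined, i.e.\ that each $B_v(z)$ is itself self-adjoint. The latter follows because restricting to a subtree preserves the hypothesis of Corollary \ref{SelfAdjointCrit2} (the tail-sum bound $\sum_{u\notin S_n, u\sim v}(|w_{uv}|^2 + |w_{vu}|^2) \le \kappa$ is inherited by subtrees). The former is handled by the uniform $(\Im\eta)^{-1}$ resolvent bounds together with the fact that $\Im\eta > 0$ makes $B(z) - \eta I_{V^b}$ boundedly invertible, so the Feshbach decomposition is valid and the term-by-term expansion $QS^{-1}Q^* = \sum_{v\sim\varnothing} Q_v S_v^{-1} Q_v^*$ converges in operator norm (it is a sum over the neighbors of $\varnothing$, with each summand controlled by $(\Im\eta)^{-1}$ times the squared edge weights, which are summable by the standing assumption $\sum_{v}(|w_{\varnothing v}|^2 + |w_{v\varnothing}|^2) < \infty$). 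Everything else is bookkeeping with the $2\times 2$ block notation.
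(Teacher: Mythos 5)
Your proposal is correct and takes essentially the same route as the proof the paper relies on (Lemma 2.5 in \cite{HeavyIId}, which this paper cites rather than reproving): decompose $\ell^2(V^b)$ into the root coordinates plus the direct sum of the bipartized branch operators, apply the Schur complement formula for the $(\varnothing,\varnothing)$ block of the resolvent, and use the fact that the off-root block is block-diagonal over the subtrees so that $QS^{-1}Q^\ast$ decomposes into a sum over $v\sim\varnothing$. You also correctly flag the two points that require care, namely self-adjointness of each $B_v(z)$ (inherited by restriction from Corollary \ref{SelfAdjointCrit2}) and the legitimacy of block inversion for an unbounded operator with $\Im\eta>0$. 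One small sign slip: the root block of $B(z)-\eta I_{V^b}$ is $-U(z,0)-\eta I_2 = -U(z,\eta)$, not $U(z,0)-\eta I_2$ as written; your final conclusion $P=-U(z,\eta)$ is right, so this is only a typo and does not affect the argument.
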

	
	\subsection{Local operator convergence.} We now define a useful type of convergence. 
	
	\begin{definition}[Local Convergence]
		Suppose $(A_n)$ is a sequence of bounded operators on $\ell^2(V)$ and $A$ is a linear operator on $\ell^2(V)$ with domain $\mathcal{D}(A)\supset\mathcal{D}(V)$. For any $u,v\in V$ we say that $(A_n,u)$ converges locally to $(A,v)$, and write 
		$$(A_n,u)\rightarrow(A,v),$$
		if there exists a sequence of bijections $\sigma_n:V\rightarrow V$ such that $\sigma_n(v)=u$ and, for all $\phi\in\mathcal{D}(V)$,
		$$\sigma_n^{-1}A_n\sigma_n\phi\rightarrow A\phi,$$
		in $\ell^2(V)$, as $n\rightarrow\infty$. 
	\end{definition} 
	
	Here we use $\sigma_n$ for the bijection on $V$ and the corresponding linear isometry defined in the obvious way. This notion of convergence is useful to random matrices for two reasons. First, we will make a choice on how to define the action of an $n\times n$ matrix on $\ell^2(V)$, and the bijections $\sigma_n$ help ensure the choice of location for the support of the matrix does not matter. Second, local convergence also gives convergence of the resolvent operator at the distinguished points $u,v\in V$. This comes down to the fact that local convergence is strong operator convergence, up to the isometries. See \cite{HeavyIId} for details.
	
	\begin{theorem}[Theorem 2.7 in \cite{HeavyIId}]\label{LCtoRes}
		Assume $(A_n,u)\rightarrow(A,v)$ for some $u,v\in V$. Let $B_n$ be the self-adjoint bipartized operator of $A_n$. If the bipartized operator $B$ of $A$ is self-adjoint and $\mathcal{D}(V^b)$ is a core for $B$ (i.e., the closure $B$ restricted to $\mathcal{D}(V^b)$ is $B$), then for all $U\in\HH_+,$
		$$R_{B_n}(U)_{uu}\rightarrow R_B(U)_{vv}.$$
	\end{theorem}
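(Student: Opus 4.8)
The plan is to reduce to the case $u=v$ by conjugating with the canonical isometries, then upgrade the hypothesized local convergence to strong convergence of the self‑adjoint operators $B_n(z)$ on a common core, invoke the standard first convergence theorem to get strong resolvent convergence, and finally read off the desired $2\times2$ block. \emph{Reduction.} Extend each bijection $\sigma_n\colon V\to V$ to $\sigma_n^b\colon V^b\to V^b$ by $\sigma_n^b|_V=\sigma_n$ and $\sigma_n^b(\hat w)=\widehat{\sigma_n(w)}$; the induced isometry of $\ell^2(V^b)$, still written $\sigma_n$, satisfies $\sigma_n\delta_v=\delta_u$, $\sigma_n\delta_{\hat v}=\delta_{\hat u}$, and — using that $\sigma_n$ is a bijection, so no other vertex is sent to $u$ — one gets $\Pi_u\sigma_n=\Pi_v$ and $\sigma_n\Pi_v^\ast=\Pi_u^\ast$. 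Since $U(z,0)\otimes I_V$ acts identically on each pair $\{\delta_w,\delta_{\hat w}\}$ it commutes with $\sigma_n$, so $\sigma_n^{-1}B_n(z)\sigma_n$ is the operator $\widetilde B_n(z)$ attached to the bipartization $\widetilde B_n$ of $\widetilde A_n:=\sigma_n^{-1}A_n\sigma_n$, and hence
\[
R_{B_n}(U)_{uu}=\Pi_u R_{B_n}(U)\Pi_u^\ast=\Pi_v\,\sigma_n^{-1}R_{B_n}(U)\sigma_n\,\Pi_v^\ast=\Pi_v R_{\widetilde B_n}(U)\Pi_v^\ast=R_{\widetilde B_n}(U)_{vv}.
\]
Replacing $A_n$ by $\widetilde A_n$, we may therefore assume $\sigma_n=\mathrm{id}$, so $A_n\phi\to A\phi$ in $\ell^2(V)$ for every $\phi\in\mathcal D(V)$.

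Next I would establish $B_n\phi\to B\phi$ in $\ell^2(V^b)$ for every $\phi\in\mathcal D(V^b)$. Writing $\phi=\phi'+\widehat{\phi''}$ with $\phi',\phi''\in\mathcal D(V)$ and recalling that $B$ acts as $\begin{pmatrix}0&A\\ A^\ast&0\end{pmatrix}$ on $\ell^2(V)\oplus\ell^2(\hat V)$, this amounts to $A_n\phi''\to A\phi''$ together with $A_n^\ast\phi'\to A^\ast\phi'$. The first is the hypothesis; the second is where the tree structure is used — the local convergence $(A_n,u)\to(A,v)$ has to be read as convergence of the rooted \emph{weighted} trees, i.e.\ as recording the pair of weights $(w_{pq},w_{qp})$ on each edge of every finite neighbourhood (equivalently, it is really local convergence of the bipartized operators that the objective method of Section~\ref{sect:OpConv} delivers), and this also yields $A_n^\ast\to A^\ast$ on $\mathcal D(V)$. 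Some such strengthening is genuinely needed: for $V=\NN$ the rank‑one operators $A_n\phi=\langle\delta_n,\phi\rangle\,\delta_1$ satisfy $A_n\phi\to 0$ on $\mathcal D(V)$ while $A_n^\ast\not\to 0$ strongly, and one checks $R_{B_n}(U)_{11}\not\to R_B(U)_{11}$ there. Granting the lifted convergence, and since $U(z,0)\otimes I_V$ is bounded and independent of $n$, we get $B_n(z)\phi\to B(z)\phi$ for every $\phi\in\mathcal D(V^b)$; as the $B_n$ are bounded, $\mathcal D(V^b)\subset\mathcal D(B_n(z))$, and $\mathcal D(V^b)$ is a core for the self‑adjoint operator $B(z)$ (a bounded perturbation leaves the core property intact). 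By the first convergence theorem for self‑adjoint operators (see, e.g., \cite{MathPhys1}), $B_n(z)\to B(z)$ in the strong resolvent sense.

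Finally, strong resolvent convergence yields strong convergence of the resolvents at every non‑real point, so for $U=U(z,\eta)\in\HH_+$, where $R_{B_n}(U)=(B_n(z)-\eta I_{V^b})^{-1}$ with $\eta\in\CC_+$, we have $R_{B_n}(U)\to R_B(U)$ strongly on $\ell^2(V^b)$. Composing with the finite‑rank isometry $\Pi_v^\ast\colon\CC^2\to\ell^2(V^b)$ gives $\Pi_v R_{B_n}(U)\Pi_v^\ast\to\Pi_v R_B(U)\Pi_v^\ast$ in $\Mat_2(\CC)$, i.e.\ $R_{B_n}(U)_{vv}\to R_B(U)_{vv}$, which by the first step is precisely $R_{B_n}(U)_{uu}\to R_B(U)_{vv}$.

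The main obstacle is the middle step — more precisely, the fact that the local convergence has to be understood strongly enough to lift to the bipartized operator (equivalently, that the transposed weights converge as well); everything else is standard spectral‑theory machinery, namely the first convergence theorem, stability of cores under bounded self‑adjoint perturbations, and passing to a finite‑rank compression, together with the elementary bookkeeping for the isometries $\sigma_n$.
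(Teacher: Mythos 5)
The paper does not prove Theorem~\ref{LCtoRes}; it is cited from \cite{HeavyIId}. Your argument is a correct reconstruction of the standard proof: conjugate by the isometries to reduce to $\sigma_n=\mathrm{id}$, upgrade to strong convergence $B_n(z)\to B(z)$ on the common core $\mathcal D(V^b)$, invoke the Trotter--Kato (first) convergence theorem to obtain strong resolvent convergence, and compress to the $2\times2$ block at $v$; the bookkeeping with $\Pi_u\sigma_n=\Pi_v$ and the commutation of $\sigma_n$ with $U(z,0)\otimes I_V$ is all right.

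You also correctly identify a real gap in the \emph{statement} of the definition of local convergence as reproduced in this paper: requiring only $\sigma_n^{-1}A_n\sigma_n\phi\to A\phi$ on $\mathcal D(V)$ is not enough, since $B_n\phi\to B\phi$ on $\mathcal D(V^b)$ decouples into $A_n\to A$ \emph{and} $A_n^\ast\to A^\ast$ on $\mathcal D(V)$, and your rank-one example $A_n=\delta_1\langle\delta_n,\cdot\rangle$ shows the second condition can fail and that $R_{B_n}(U)_{11}$ then stays bounded away from $R_0(U)_{11}$. In \cite{HeavyIId} the definition of local convergence explicitly includes the adjoint condition; its omission here is a transcription slip rather than a flaw in the cited theorem, and in the one place the theorem is actually used (Theorem~\ref{ResConv} via Theorem~\ref{LocalConv}) the PWIT construction records both weights $(w_{uv},w_{vu})$ on each edge and the uniform square-integrability argument applies symmetrically to rows and columns, so both conditions do in fact hold. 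Your observation is the right one to flag if one were to repair the definition in this paper.
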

	
	To apply this to random operators we say that $(A_n,u)\rightarrow(A,v)$ in distribution if there exists a sequence of random bijections $\sigma_n$ such that $\sigma_n^{-1}A_n\sigma_n\phi\rightarrow A\phi$ in distribution for every $\phi\in\mathcal{D}(V^b)$. 
	
	\subsection{Poisson Weighted Infinite Tree (PWIT)} Let $\rho$ be a positive Radon measure on $\CC^n\setminus\{0\}$ such that $\rho(\CC^n\setminus\{0\})=\infty$. $\mathbf{PWIT}(\rho)$ is the random weighted rooted tree defined as follows. The vertex set of the tree is identified with $\NN^f:=\bigcup_{k\in\NN\cup\{0\}}\NN^k$ by indexing the root as $\NN^0=\varnothing$, the offspring of the root as $\NN$ and, more generally, the offspring of some $v\in\NN^k$ as $(v1),(v2),\cdots\in\NN^{k+1}$. Define $T$ as the tree on $\NN^f$ with edges between parents and offspring. Let $\{\Xi_v\}_{v\in\NN^f}$ be independent realizations of a Poisson point process with intensity measure $\rho$. Let $\Xi_\varnothing=\{y_1,y_2,\dots\}$ be ordered such that $\|y_1\|\geq\|y_2\|\geq\cdots$, and assign the weight $y_i$ to the edge between $\varnothing$ and $i$, assuming such an ordering is possible. More generally assign the weight $y_{vi}$ to the edge between $v$ and $vi$ where $\Xi_v=\{y_{v1},y_{v2},\dots \}$ where $\|y_{v1}\|\geq\|y_{v2}\|\geq\cdots.$

	Consider a realization of $\mathbf{PWIT}(\theta_d\times m_\alpha)$, with $y_{vk}=(y_{vk}^{(1)},y_{vk}^{(2)})$. Even though the measure $\theta_d\times m_\alpha$ has a more natural representation in polar coordinates, we let $(y_{vk}^{(1)},y_{vk}^{(2)})$ be the Cartesian coordinates of $y_{vk}$.
	Define an operator $A$ on $\mathcal{D}(\NN^f)$ by the formulas \begin{equation}\label{eq:LimitOp}
		\langle\delta_v,A\delta_{vk}\rangle=y_{vk}^{(1)},\quad\text{ and }\quad\langle\delta_{vk},A\delta_v\rangle=y_{vk}^{(2)}
	\end{equation}
	and $\langle\delta_v,A\delta_u\rangle=0$ otherwise. For $0<\alpha<2$, we know by Lemma \ref{Pialphaprop} that the points in $\Xi_v$ are almost surely square summable for every $v\in\NN^f$, and thus $A$ is actually a well defined linear operator on $\mathcal{D}(\NN^f)$, though is possibly unbounded on $\ell^2(\NN^f)$. Before showing the local convergence of the random matrices $A_n$ to $A$ we will show the bipartization of $A$ is self-adjoint.
	
	\begin{proposition}\label{BSelfAdjoint}
		With probability one, for all $z\in\CC$, $B(z)$ is self-adjoint,  where $B(z)$ is the bipartization of the operator $A$ defined by (\ref{eq:LimitOp}).
	\end{proposition}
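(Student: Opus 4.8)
The plan is to deduce the claim from Corollary~\ref{SelfAdjointCrit2}, applied to the operator $A$ of \eqref{eq:LimitOp} on the tree $T=(\NN^f,E)$ in which each vertex is joined to its offspring. For this tree $w_{v,(vk)}=y_{vk}^{(1)}$, $w_{(vk),v}=y_{vk}^{(2)}$, and $w_{uv}=0$ whenever $\{u,v\}\notin E$, so that $|w_{v,(vk)}|^2+|w_{(vk),v}|^2=|y_{vk}^{(1)}|^2+|y_{vk}^{(2)}|^2=\|y_{vk}\|^2$. Thus it suffices to produce, on one event of probability one that does not depend on $z$, a constant $\kappa>0$ and an increasing sequence of finite connected sets $S_1\subset S_2\subset\cdots$ with $\bigcup_n S_n=\NN^f$ and
\[
\sum_{k:\,(vk)\notin S_n}\|y_{vk}\|^2\le\kappa\qquad\text{for all }n\text{ and all }v\in S_n;
\]
Corollary~\ref{SelfAdjointCrit2} then gives self-adjointness of $B(z)$ for every $z\in\CC$ on that event.

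\emph{Pruning.} By Lemma~\ref{Pialphaprop}(4) with $p=2>\alpha$, the offspring weights at any vertex $v$ satisfy $M_v:=\sum_k\|y_{vk}\|^2<\infty$ almost surely, and they come in nonincreasing order $\|y_{v1}\|\ge\|y_{v2}\|\ge\cdots$. For $\kappa>0$ I set
\[
m_v(\kappa):=\min\Bigl\{m\ge 0:\ \sum_{k>m}\|y_{vk}\|^2\le\kappa\Bigr\},
\]
which is finite a.s. Using the explicit Poisson representation of $\Xi_v$ from Proposition~\ref{PPPconstruction}, under which $\|y_{vk}\|=(\theta_d(\Sp^3)/\Gamma_k)^{1/\alpha}$ with $\Gamma_k$ the sum of $k$ independent unit exponentials, one checks that $m_v(\kappa)$ is integrable for every $\kappa>0$ and that $\E[m_v(\kappa)]\to 0$ as $\kappa\to\infty$; the point is that since $1-2/\alpha<0$ the event $\{m_v(\kappa)>m\}$ essentially forces $\Gamma_{m+1}$ to be at most a constant $c_\kappa$, with $c_\kappa\to 0$, and $\PP(\Gamma_{m+1}\le c_\kappa)=\PP(\mathrm{Poisson}(c_\kappa)\ge m+1)$ is super-polynomially small in $m$. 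Fix $\kappa$ with $\E[m_v(\kappa)]<1$ and abbreviate $m_v:=m_v(\kappa)$. Since the point processes $\{\Xi_v\}_{v\in\NN^f}$ are independent, the $\{m_v\}_{v\in\NN^f}$ are i.i.d.

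\emph{The exhausting sequence.} For $w\in\NN^f$ let $\mathcal{T}_w$ be the rooted subtree obtained by starting from $w$, keeping its $m_w$ largest-weight offspring, and then recursively keeping the $m_u$ largest-weight offspring of each retained vertex $u$. The generation sizes of $\mathcal{T}_w$ form a Galton--Watson process whose offspring law is that of $m_\varnothing$ and has mean $<1$, so $\mathcal{T}_w$ is finite a.s.; intersecting over the countably many $w$, there is an event $\Omega_0$ of probability one on which every $\mathcal{T}_w$ is finite. On $\Omega_0$, put $B_n:=\{(v_1,\dots,v_j)\in\NN^f:\ j\le n,\ v_i\le n\text{ for all }i\}$ and $S_n:=\bigcup_{u\in B_n}\mathcal{T}_u$. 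Each $B_n$ is finite, connected, closed under taking parents, and contains the root, and $\bigcup_n B_n=\NN^f$; hence $S_n$ is finite, connected, increasing, and $\bigcup_n S_n=\NN^f$. Moreover, if $v\in S_n$ then $v\in\mathcal{T}_u$ for some $u\in B_n$, so the $m_v$ largest-weight offspring of $v$ lie in $\mathcal{T}_u\subseteq S_n$, and the parent of $v$ lies in $S_n$ as well (it is in $B_n$ if $v=u$, and on the $u$-to-$v$ path inside $\mathcal{T}_u$ otherwise). Consequently the neighbours of $v$ outside $S_n$ are exactly the offspring of $v$ of index $>m_v$, and
\[
\sum_{u\notin S_n,\ u\sim v}\bigl(|w_{uv}|^2+|w_{vu}|^2\bigr)=\sum_{k>m_v}\|y_{vk}\|^2\le\kappa .
\]
Corollary~\ref{SelfAdjointCrit2} now applies and yields that $B(z)$ is self-adjoint for all $z\in\CC$, which on $\Omega_0$ proves the proposition.

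\emph{Main obstacle.} The one genuinely analytic step is the sub-criticality estimate $\E[m_v(\kappa)]<1$ for $\kappa$ large, i.e.\ quantitative control of how quickly the tail sums $\sum_{k>m}\|y_{vk}\|^2$ collapse to $0$; the rest is bookkeeping with finite subtrees of $\NN^f$. The scheme is the same as the one used for the i.i.d.\ PWIT in \cite{HeavyIId}, and the only structural input is the a.s.\ square-summability of the offspring weights furnished by Lemma~\ref{Pialphaprop}.
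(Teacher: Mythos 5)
Your proof is correct and follows essentially the same route as the paper: reduce to Corollary~\ref{SelfAdjointCrit2}, introduce the stopping time $m_v(\kappa)$ (the paper's $\tau_v$), show $\E[m_v(\kappa)]<1$ for suitable $\kappa$, and use a subcritical Galton--Watson argument to build the exhausting connected sets $S_n$. The only differences are cosmetic --- the paper simply cites Lemma~\ref{PoisStopTime} for the subcriticality estimate instead of re-sketching it, and builds $S_n$ iteratively via outer boundaries rather than as $\bigcup_{u\in B_n}\mathcal{T}_u$, though your construction is arguably cleaner.
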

	We begin with a lemma on point processes, proved in Lemma A.4 from \cite{SymHeavy}, before checking our criterion for self-adjointness. 
	
	\begin{lemma}\label{PoisStopTime}
		Let $\kappa>0, 0<\alpha<2$ and let $0<x_1<x_2<\cdots$ be a Poisson process of intensity $1$ on $\RR_+$. Define $\tau=\inf\{t\in\NN:\sum_{k=t+1}^{\infty}x_{k}^{-2/\alpha}\leq \kappa \}.$ Then $\EE\tau$ is finite and goes to $0$ as $\kappa$ goes to infinity.
	\end{lemma}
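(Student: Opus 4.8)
The plan is to reduce everything to a tail estimate for a single sum and then establish that estimate via a moment bound. Write $\beta := 2/\alpha$, so $\beta>1$ since $\alpha<2$, and for $t\ge 0$ set $R_t := \sum_{k=t+1}^{\infty} x_k^{-\beta}$, so that $\tau=\inf\{t : R_t\le\kappa\}$. Since $x_k=\lambda_1+\cdots+\lambda_k$ with $\lambda_i$ i.i.d.\ rate‑$1$ exponentials, we have $x_k\sim\Gamma(k,1)$ and $x_k/k\to 1$ almost surely; as $\beta>1$ this gives $R_0=\sum_k x_k^{-\beta}<\infty$ a.s. The map $t\mapsto R_t$ is non‑increasing with $R_t\downarrow 0$, so $\tau<\infty$ a.s.\ and, crucially, $\PP(\tau>t)=\PP(R_t>\kappa)$ by monotonicity. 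Hence
\[
\EE\tau=\sum_{t\ge 0}\PP(R_t>\kappa),
\]
and the task is to bound these tails in a way that is summable in $t$ and vanishing in $\kappa$.

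The heart of the argument is a polynomial moment bound: \emph{for every integer $p\ge 1$ there exist $t_0=t_0(p,\alpha)$ and $C=C(p,\alpha)$ such that $\EE[R_t^p]\le C\,(1+t)^{p(1-\beta)}$ for all $t\ge t_0$.} To prove it I would condition on $x_{t+1}$. Writing $R_t=x_{t+1}^{-\beta}+S_t$ with $S_t:=\sum_{k\ge t+2}x_k^{-\beta}$, the increments $x_{t+2}-x_{t+1},\,x_{t+3}-x_{t+1},\dots$ are the points of a rate‑$1$ Poisson process on $(0,\infty)$ independent of $x_{t+1}$, so conditionally on $x_{t+1}=u$ one has $S_t\overset{d}{=}\sum_i(u+y_i)^{-\beta}$ for such a process $\{y_i\}$. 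Using the standard Poisson‑functional moment formula $\EE\big[(\sum_i g(y_i))^p\big]=\sum_{\pi\in\mathcal{P}([p])}\prod_{B\in\pi}\int g^{|B|}$ (the complete Bell polynomial in the masses $\int g^j$) with $g(y)=(u+y)^{-\beta}$ and $\int_0^\infty(u+y)^{-j\beta}\,dy=u^{1-j\beta}/(j\beta-1)$ (finite because $\beta>1$), each partition $\pi$ with $|\pi|$ blocks contributes $u^{|\pi|-p\beta}\prod_{B}(|B|\beta-1)^{-1}$; bounding $|\pi|\le p$ and $|B|\beta-1\ge\beta-1$ gives $\EE[S_t^p\mid x_{t+1}=u]\le C_{p,\beta}\,u^{p(1-\beta)}$ for $u\ge 1$ and $\le C_{p,\beta}\,u^{-p\beta}$ for $u<1$, and the same up to constants for $\EE[R_t^p\mid x_{t+1}=u]$ after adding the $x_{t+1}^{-\beta}$ term. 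Integrating against the $\Gamma(t+1,1)$ density and using $\EE[x_{t+1}^{-q}]=\Gamma(t+1-q)/\Gamma(t+1)\sim(1+t)^{-q}$ (the $\{x_{t+1}<1\}$ part is of order $1/(t+1)!$ and negligible) yields $\EE[R_t^p]\le C\,(1+t)^{p(1-\beta)}$ for $t\ge t_0(p,\alpha)$, as claimed. (Alternatively the moment bound can be obtained by a careful iterated conditioning on $x_{t+1},\dots$, peeling off one index at a time.)

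Granting this, I would choose the integer $p$ so large that $p(\beta-1)>1$, which is possible precisely because $\beta-1=2/\alpha-1>0$ for $\alpha<2$. Then Markov's inequality gives, for $t\ge t_0$,
\[
\PP(R_t>\kappa)\le\kappa^{-p}\,\EE[R_t^p]\le C\,\kappa^{-p}\,(1+t)^{-p(\beta-1)},
\]
so $\sum_{t\ge t_0}\PP(R_t>\kappa)\le C'\kappa^{-p}$ with $C':=C\sum_{t\ge t_0}(1+t)^{-p(\beta-1)}<\infty$. The remaining finitely many terms are each $\le 1$, so $\EE\tau\le t_0+C'\kappa^{-p}<\infty$; and since $R_t<\infty$ a.s.\ we have $\PP(R_t>\kappa)\to 0$ as $\kappa\to\infty$ for each fixed $t$, so the finite sum $\sum_{t<t_0}\PP(R_t>\kappa)\to 0$. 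Combining, $\EE\tau\to 0$ as $\kappa\to\infty$.

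The main obstacle is the moment estimate: $R_t$ is a mildly heavy‑tailed sum whose dominant fluctuation comes from the single smallest term $x_{t+1}^{-\beta}$, so $\EE[R_t]$ is finite only for $t$ large and $R_t$ has no exponential moments, which is why one works with a fixed polynomial moment rather than a Chernoff bound. Getting the \emph{correct} exponent $(1+t)^{p(1-\beta)}$ (rather than the weaker $(1+t)^{1-p\beta}$ that a cruder bound would give) is exactly what makes the series summable in $t$ for all $\alpha<2$, and this is where the Poisson/Bell‑polynomial structure — equivalently the independent‑increment structure of the Gamma points — is essential.
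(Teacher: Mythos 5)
Your proof is correct, but note that the paper contains no proof of Lemma \ref{PoisStopTime} to compare against: it is quoted directly from Lemma A.4 of \cite{SymHeavy}. The argument there is more elementary than yours. It uses the Chernoff bound $\PP(x_k\leq k/2)\leq e^{-\gamma k}$ and a union bound over $k>t$ to show that, off an event of probability $O(e^{-\gamma t})$, one has the deterministic estimate $\sum_{k>t}x_k^{-2/\alpha}\leq Ct^{1-2/\alpha}$, whence $\PP(\tau>t)\leq Ce^{-\gamma t}$ as soon as $Ct^{1-2/\alpha}\leq\kappa$; finiteness of $\EE\tau=\sum_{t\geq0}\PP(\tau>t)$ and the limit as $\kappa\to\infty$ then follow by dominated convergence for the series. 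Your route replaces that concentration step by an exact conditional moment computation: given $x_{t+1}=u$, the remaining points form an independent rate-one Poisson process shifted by $u$, and the partition (Bell-polynomial) moment formula gives $\EE[R_t^p\mid x_{t+1}=u]\leq C\,u^{p(1-2/\alpha)}$ for $u\geq1$, which after integrating against the $\Gamma(t+1,1)$ law yields $\EE[R_t^p]\leq C(1+t)^{p(1-2/\alpha)}$ for $t\geq t_0(p,\alpha)$ and then Markov's inequality finishes. All the individual steps check out: the identity $\{\tau>t\}=\{R_t>\kappa\}$ correctly uses the monotonicity of $t\mapsto R_t$; the conditional law of the shifted points is right; the small-$u$ contribution is indeed $O(1/(t+1)!)$; and the restriction $t\geq t_0$ is exactly what is needed for the negative moment $\EE[x_{t+1}^{-p(2/\alpha-1)}]$ to be finite. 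What your heavier machinery buys is a quantitative polynomial rate, $\EE\tau\leq t_0+C\kappa^{-p}$ for every $p$ with $p(2/\alpha-1)>1$, whereas the cited argument only gives the qualitative conclusion; either suffices for the application in Proposition \ref{BSelfAdjoint}.
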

	
	\begin{proof}[Proof of Proposition \ref{BSelfAdjoint}] 
		For $\kappa>0$ and $v\in\NN^f$ define 
		$$\tau_v=\inf\left\{t\geq 0: \sum_{k=t+1}^\infty \|y_{vk}\|^2\leq\kappa \right\}.$$
		Note if $N$ is a homogeneous Poisson process on $\RR_+$ with intensity 1 and $f(x)=x^{-1/\alpha}$, then $f(N)$ is Poisson process with intensity measure $\alpha r^{-1-\alpha}dr$. Thus by Lemma \ref{PoisStopTime}, $\kappa>0$ can be chosen such that $\EE\tau_v<1$ for any fixed $v$. Since the random variables $\{\tau_v \}_{v\in\NN^f}$ are i.i.d.\ this $\kappa$ works for all $v$. Fix such a $\kappa$. We now color the vertices red and green in an effort to build the sets $S_n$ in Corollary \ref{SelfAdjointCrit2}. Put a green color on all vertices $v$ such that $\tau_v\geq 1$ and a red color otherwise. Define the sub-forest $T^g$ of $T$ where an edge between $v$ and $vk$ is included if $v$ is green and $1\leq k\leq \tau_v$. If the root $\varnothing$ is red let $S_1=\{\varnothing \}$. Otherwise let $T^g_\varnothing=(V^g_\varnothing,E^g_\varnothing)$ be the subtree of $T^g$ containing $\varnothing$. Let $Z_n$ denote the number of vertices in $T^g_\varnothing$ at a depth $n$ from the root, then $Z_n$ is a Galton-Watson process with offspring distribution $\tau_{\varnothing}$. It is well known that if $\EE\tau_{\varnothing}<1$, then the tree is almost surely finite, see Theorem 5.3.7 in \cite{durrett_2010}. Let $L^g_\varnothing$ be the leaves of the tree $T^g_\varnothing$. Set $S_1:=\bigcup_{v\in L^g_\varnothing}\{vk:1\leq k\leq\tau_v \}\bigcup V^g_\varnothing$. It is clear for any $v\in S_1$,
		$$\sum_{u\notin S_1:u\sim v} (|y_{uv}|^2+|y_{vu}|^2)\leq\kappa.$$
		Now define the outer boundary of $\{\varnothing \}$ as $\partial_\tau\{\varnothing \}=\{1,\dots,\max(\tau_{\varnothing},1) \} $ and for $v=(i_1\cdots i_k)$ set $\partial_\tau\{v\}=\{(i_1\cdots i_{k-1}(i_{k}+1)) \}\cup\{(i_1\cdots i_k1),\dots,(i_1\cdots i_k\max(\tau_v,1))\}$. For a connected set $S$ define its outer boundary as
		$$\partial_\tau S=\left(\bigcup_{v\in S}\partial_\tau\{v\} \right)\setminus S.$$
		Now for each $u_1,\dots,u_k\in \partial_\tau S_1$ apply the above process to get subtrees $\{T^g_{u_i}=(V^g_{u_i},E^g_{u_i})\}_{i=1}^k$ with roots $u_i$ and the leaves of tree $T^g_{u_i}$ denoted by $L^g_{u_i}$. Set
		$$S_2:=S_1\cup\left(\bigcup_{i=1}^k(V^g_{u_i}\cup_{v\in L^g_{u_i}}\{vj:1\leq j\leq \tau_v  \}) \right).$$
		Apply this procedure iteratively to get the sequence of subsets $(S_n)_{n\geq 1}$. Apply Corollary \ref{SelfAdjointCrit2} to complete the proof.    
	\end{proof}
	
	\subsection{Local convergence} For an $n\times n$ matrix $M$ we aim to define $M$ as a bounded operator on $\ell^2(\NN^f)$. For $1\leq i,j,\leq n,$ let $\langle \delta_i,M\delta_j\rangle=M_{ij}$. and $\langle\delta_u,M\delta_v\rangle=0$ otherwise.
	
	\begin{theorem}\label{LocalConv}
		Let $(P_n)_{n\geq 1}$ be a sequence of uniformly distributed random $n\times n$ permutation matrices, independent of $A_n$. Then, in distribution, $(P_nA_nP_n^T,1)\rightarrow(A,\varnothing)$ where $A$ is the operator defined by (\ref{eq:LimitOp}).  
	\end{theorem}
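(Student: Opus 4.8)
The plan is to follow the now-standard strategy of Bordenave–Caputo–Chafaï \cite{HeavyIId} for showing local weak convergence of a heavy-tailed matrix to the PWIT operator, adapting it to the elliptic (dependent mirror-entry) setting. The starting point is a coupling: on a fixed row of $P_nA_nP_n^T$ there are $n$ entries of the form $(\xi_1^{(ij)},\xi_2^{(ij)})$ (identified with points in $\CC^2$), and by Condition \textbf{C1}(i) together with Proposition \ref{OrderStatConv}, the point process $\sum_{j}\delta_{(\xi_1^{(1j)},\xi_2^{(1j)})/a_n}$ converges in distribution, as $n\to\infty$, to a Poisson point process on $\CC^2$ with intensity $\theta_d\times m_\alpha$ — exactly the law of the weights $\{y_{\varnothing k}\}$ attached to the root of $\mathbf{PWIT}(\theta_d\times m_\alpha)$. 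Using Proposition \ref{PPPconstruction} (Davydov–Egorov / LePage-type representation) one can realize both the finite-$n$ objects and the limit on a common probability space so that, after a suitable relabelling of columns (supplied by the random permutation $P_n$, which makes the location of the support irrelevant and reduces everything to the order statistics of the norms $\|(\xi_1^{(1j)},\xi_2^{(1j)})\|$), the largest-in-modulus weights in row $1$ of $P_nA_nP_n^T$ converge a.s. to $y_{\varnothing 1},y_{\varnothing 2},\dots$.

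Next I would iterate this one level at a time. Having matched the root's neighbourhood, one looks at the rows indexed by the columns that received the large weights; because $P_n$ is a uniform permutation and $n\to\infty$, with probability tending to $1$ these rows are "fresh" (the relevant entries have not yet been examined, and the probability of collisions — a column being revisited within a bounded-depth, bounded-degree neighbourhood — is $O(1/n)$), so the same Proposition \ref{OrderStatConv} coupling applies independently at each such vertex and produces the offspring weights $\{y_{vk}\}$. Carrying this out for all $v$ in the ball of radius $r$ around the root in $T$, for each fixed $r$, gives bijections $\sigma_n$ (defined on this finite ball, then extended arbitrarily to all of $\NN^f$) such that $\sigma_n^{-1}(P_nA_nP_n^T)\sigma_n$ agrees with $A$ on $\delta_v$ for every $v$ in the ball, up to an error that vanishes in distribution. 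The key analytic point is that for $\phi\in\mathcal{D}(\NN^f)$ — supported on finitely many vertices, say within the ball of radius $r_0$ — the vector $A\phi$ is supported within the ball of radius $r_0+1$ and has finite $\ell^2$ norm a.s. (Lemma \ref{Pialphaprop}(4) with $p=2>\alpha$), so convergence of the finitely many matched weights, plus the fact that the unmatched entries of the relevant rows have squared $\ell^2$-mass going to $0$ (again by the tail control $p=2>\alpha$ applied to the small weights, which is where Condition \textbf{C1} and $0<\alpha<2$ enter crucially), yields $\sigma_n^{-1}(P_nA_nP_n^T)\sigma_n\phi\to A\phi$ in distribution in $\ell^2(\NN^f)$.

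The main obstacle, and the place where this differs from \cite{HeavyIId}, is bookkeeping the \emph{pair} $(\xi_1,\xi_2)$ coherently: each undirected edge $\{v,vk\}$ of the PWIT carries a single point $y_{vk}\in\CC^2$ whose two coordinates become the two directed weights $\langle\delta_v,A\delta_{vk}\rangle$ and $\langle\delta_{vk},A\delta_v\rangle$, and in the matrix $P_nA_nP_n^T$ the entries $(P_nA_nP_n^T)_{ab}$ and $(P_nA_nP_n^T)_{ba}$ are precisely such a mirrored pair, distributed as $(\xi_1,\xi_2)$. So the coupling must be done at the level of the bivariate point process $\sum_j\delta_{(\xi_1^{(1j)},\xi_2^{(1j)})/a_n}$ on $\CC^2\setminus\{0\}$ rather than at the level of scalar entries, and one must check that the norm-ordering used to enumerate columns (hence to define $\sigma_n$) is consistent with the ordering $\|y_{v1}\|\geq\|y_{v2}\|\geq\cdots$ used in the construction of $\mathbf{PWIT}(\theta_d\times m_\alpha)$; this is exactly why the PWIT is built with the $\CC^2$-valued intensity $\theta_d\times m_\alpha$ in Section \ref{sect:OpConv}. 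The diagonal entries $X_{ii}$ contribute nothing in the limit: by Condition \textbf{C1}(ii), $|X_{ii}|/a_n\to 0$ in probability (since $a_n\to\infty$ and $\PP(|\zeta|\ge t)\le Ct^{-\alpha}$), so their removal is an error of vanishing $\ell^2$-norm on any fixed finite vertex set. Self-adjointness of the bipartized limit operator $B$ and the fact that $\mathcal{D}(V^b)$ is a core for it — the hypotheses needed to then invoke Theorem \ref{LCtoRes} — are supplied by Proposition \ref{BSelfAdjoint}, so once local convergence is established the resolvent convergence follows and the path to Theorem \ref{SingValueConv} is open.
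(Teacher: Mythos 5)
Your plan is correct and follows essentially the same route as the paper: build finite rooted subnetworks level by level using Proposition \ref{OrderStatConv} for the bivariate point process of mirrored entries, handle collisions and excess off-tree weights by an $O(1/n)$ union bound together with stochastic domination, treat the diagonal via the tail bound in Condition \textbf{C1}(ii), and close the $\ell^2$-argument with uniform square-integrability of the small order statistics ($p=2>\alpha$). The only cosmetic differences are that the paper upgrades the distributional convergence of the finite subnetworks $(G_n,1)^{B,H}$ to an almost-sure coupling via the Skorokhod representation theorem rather than via the LePage series of Proposition \ref{PPPconstruction}, and that the permutation $P_n$ is not in fact needed for this theorem (the relabelling is done by $\sigma_n$; $P_n$ is only needed later to pass from the root resolvent to the averaged trace).
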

	
	\begin{remark}
		This theorem holds for any sequence of permutation matrices $P_n$ regardless of independence or distribution, but for our use later it is important they are independent of $A_n$ and uniformly distributed. This is to get around the fact the entries of $A_n$ are not exchangeable.
	\end{remark} 
	
	The rest of this subsection is devoted to the proof of Theorem \ref{LocalConv}. The procedure follows along the same lines as Bordenave, Caputo, and Chafa\"i in \cite{SymHeavy} and \cite{HeavyIId}. We will define a network as a graph with edge weights taking values in some normed space. To begin let $G_n$ be the complete network on $\{1,\dots,n\}$ whose weight on edge $\{i,j\}$ equals $\xi^n_{ij}$ for some collection $(\xi^n_{ij})_{1\leq i\leq j\leq n}$ of i.i.d.\ random variables taking values in some normed space. Now consider the rooted network $(G_n,1)$ with the distinguished vertex $1$.
	For any realization $(\xi_{ij}^n)$, and for any $B,H\in\NN$ such that $(B^{H+1}-1)/(B-1)\leq n$, we will define a finite rooted subnetwork $(G_n,1)^{B,H}$ of $(G_n,1)$   whose vertex set coincides with a $B$-ary tree of depth $H$. To this end we partially index the vertices of $(G_n,1)$ as elements in 
	$$J_{B,H}:=\bigcup_{l=0}^H\{1,\dots,B\}^l\subset\NN^f,$$
	the indexing being given by an injective map $\sigma_n$ from $J_{B,H}$ to $V_n:=\{1,\dots,n\}$. We set $I_\varnothing:=\{1\}$ and the index of the root $\sigma_n^{-1}(1)=\varnothing$. The vertex $v\in V_n\setminus I_\varnothing$ is given the index $(k)=\sigma_n^{-1}(v), 1\leq k\leq B$, if $\xi^n_{1,v}$ has the $k$-{th} largest norm value among $\{\xi_{1j}^n, j\neq 1 \}$, ties being broken by lexicographic order\footnote{To help keep track of notation in this section, note that $v=(w)\in V_n$ if $w\in J_{B,H}$ and $\sigma_n(w)=v$.}. This defines the first generation, and let $I_1$ be the union of $I_\varnothing$ and this generation. If $H\geq 2$ repeat this process for the vertex labeled $(1)$ on $V_n\setminus I_1$ to order $\{\xi_{(1)j}^n\}_{j\in V_n\setminus I_1}$ to get $\{11,12,\dots,1B\}$. Define $I_2$ to be the union of $I_1$ and this new collection. Repeat again for $(2),(3),\dots,(B)$ to get the second generation and so on. Call this vertex set $V_n^{B,H}=\sigma_n J_{B,H}$. 
	
	For a realization $T$ of $\mathbf{PWIT}(\rho)$, recall we assign the weight $y_{vk}$ to the edge $\{v,vk\}$. Then $(T,\varnothing)$ is a rooted network. Call $(T,\varnothing)^{B,H}$ the finite rooted subnetwork obtained by restricting $(T,\varnothing)$ to the vertex set $J_{B,H}$. If an edge is not present in $(T,\varnothing)^{B,H}$ assign the weight $0$. We say a sequence $(G_n,1)^{B,H}$, for fixed $B$ and $H$, converges in distribution, as $n\rightarrow\infty$, to $(T,\varnothing)^{B,H}$ if the joint distribution of the weights converges weakly. 
	
	Let $\pi_n$ be the permutation on $\{1,\dots,n \}$ associated to the permutation matrix $P_n$. We let $$\xi_{ij}^n=\left(\xi_{ij}^{n,(1)},\xi_{ij}^{n,(2)}\right) := \left(\frac{X_{\pi_n(i),\pi_n(j)}}{a_n},\frac{X_{\pi_n(j),\pi_n(i)}}{a_n}\right).$$
	
	We now consider $(G_n,1)^{B,H}$ with weights $\xi_{ij}^n$ and a realization, $T$, of $\mathbf{PWIT}(\theta_d\times m_\alpha)$. We aim to show $(G_n,1)^{B,H}$ converges in distribution to $(T,\varnothing)^{B,H}$, for fixed $B,H$ as $n\rightarrow\infty$. 
	
	Order the elements of $J_{B,H}$ lexicographically, i.e. $\varnothing\prec 1\prec 2\prec\cdots\prec B\prec 11\prec 12\prec\cdots\prec B\cdots B$. For $v\in J_{B,H}$ let $\mathcal{O}_v$ denote the offspring of $v$ in $(G,1)^{B,H}$. By construction $I_\varnothing=\{1\}$ and $I_v=\sigma_n\left(\bigcup_{w\prec v}\mathcal{O}_w \right)$, where $w\prec v$ must be strict in this union. Thus at every step of the indexing procedure we order the weights of neighboring edges not already considered at a previous step. Thus for all $v$,
	$$(\xi_{\sigma_n(v),j }^n)_{j\notin I_v}\overset{d}{=}(\xi_{1j}^n)_{1< j\leq n-|I_v|}.$$  
	
	Note that by independence, Proposition \ref{OrderStatConv} still holds if you take the empirical sum over $\{1,\dots,n \}\setminus I$ for any fixed finite set $I$. Thus by Proposition \ref{OrderStatConv} the weights from a fixed parent to its offspring in $(G_n,1)^{B,H}$ converge weakly to those of $(T,\varnothing)^{B,H}$. By independence we can extend this to joint convergence. Because for any fixed $n$, $(G_n,1)^{B,H}$ is still a complete network on $V_n^{B,H}$, we must now check the weights connected to vertices not indexed above converge to zero, which is the weight given to edges not in the tree. For $v,w\in J_{B,H}$ define 
	$$x_{v,w}^n:=\xi_{\sigma_n(v),\sigma_n(w)}^n.$$
	
	Also let $\{z_{v,w}^n, v,w\in J_{B,H} \}$ denote independent variables distributed as $\|\xi_{12}^n\|$. Let $E^{B,H}$ denote the set of edges $\{v,w\}\in J_{B,H}\times J_{B,H}$ that do not belong to the finite subtree $(T,\varnothing)^{B,H}$. Because we have sorted out the largest elements, the vector $\{\|x_{v,w}^n\|, \{v,w\}\in E^{B,H} \}$ is stochastically dominated by the vector $Z_n:=\{z_{v,w}^n, \{v,w\}\in E^{B,H} \}$ (see \cite{SymHeavy} Lemma 2.7). Since $J_{B,H}$ is finite, the vector $Z_n$ converges to $0$ as $n\rightarrow \infty$. Thus $(G,1)^{B,H}\Rightarrow (T,\varnothing)^{B,H}$.
	
	Let $A$ be the operator associated to $\mathbf{PWIT}(\theta_d\times m_\alpha)$ defined by (\ref{eq:LimitOp}). For fixed $B,H$ let $\sigma_n^{B,H}$ be the map $\sigma_n$ above associated to $(G,1)^{B,H}$, and arbitrarily extend $\sigma_n^{B,H}$ to a bijection on $\NN^f$, where $V_n$ is considered in the natural way as a subset of the offspring of $\varnothing$.  From the Skorokhod Representation Theorem we may assume $(G_n,1)^{B,H}$ converges almost surely to $(T,\varnothing)^{B,H}$. Thus there is a sequences $B_n,H_n$ tending to infinity and $\hat{\sigma}_n:=\sigma_n^{B_n,H_n}$ such that for any pair $v,w\in\NN^f$, $\xi_{\hat{\sigma}_n(v),\hat{\sigma}_n(w)}^n$ converges almost surely to 
	$$\begin{cases}
		y_{vk},\quad \text{if $w=vk$ for some $k$}\\
		y_{wk},\quad \text{if $v=wk$ for some $k$}\\
		0,\quad\text{otherwise.}
	\end{cases}$$
	
	Thus almost surely 
	$$\langle\delta_v,\hat{\sigma}_n^{-1}P_nA_nP_n^T\hat{\sigma}_n\delta_w\rangle=\xi_{\hat{\sigma}_n(v),\hat{\sigma}_n(w)}^{n,(i)}\rightarrow\langle\delta_v,A\delta_w\rangle,$$
	where $(i)=1,2$ depending on whether $w$ is an offspring of $v$, vice versa, or we take the convention $i=1$ if neither in which case $\xi_{\hat{\sigma}_n(v),\hat{\sigma}_n(w)}^{n,(1)}\rightarrow 0$. To prove the local convergence of operators it is sufficient, by linearity, to prove point wise convergence for any $\delta_w$. For convenience let $\phi_n^w:=\hat{\sigma}_n^{-1}P_nA_nP_n^T\hat{\sigma}_n\delta_w$. Thus all that remains to be shown to complete the proof of Theorem \ref{LocalConv} is that almost surely as $n\rightarrow\infty$
	$$\sum_{u\in\NN^f}|\langle\delta_u,\phi_n^w\rangle-\langle\delta_u,A\delta_w\rangle |^2\rightarrow 0.$$
	
	Since for every $u$, $\langle\delta_u,\phi_n^w\rangle\rightarrow\langle\delta_u,A\delta_w\rangle$ and $\langle\delta_u, A\delta_w\rangle$ is square summable in $u$ it is enough to show that if $u\in\NN^f$ are given some indexing by $\NN$ $u_1,u_2,\dots$, then 
	$$\sup_{n\geq 1} \sum_{i=k}^{\infty}|\langle\delta_{u_i},\phi_n^w\rangle|^2\rightarrow 0$$ as $k\rightarrow\infty$. This follows from the uniform square-integrability of order statistics, see Lemma 2.4 of \cite{SymHeavy}. This completes the proof of Theorem \ref{LocalConv}.

	\subsection{Resolvent matrix}. Let $R$ be the resolvent of the bipartized random operator of $A$. For $U(z,\eta)\in\HH_+$, set \begin{equation}\label{eq:limitresolvent}
		R(U)_{\varnothing\varnothing}=\begin{pmatrix}
			a(z,\eta)&b(z,\eta)\\
			b'(z,\eta)&c(z,\eta)
		\end{pmatrix}.
	\end{equation}
	
	We have the following result.
	\begin{theorem}\label{ResConv}
		Let $P_n$, $A_n$, and $A$ be as in the Theorem \ref{LocalConv}. Since $B(z)$ is almost surely self-adjoint we may almost surely define $R$, the resolvent of $B(z)$. Let $\hat{R}_n$ be the resolvent of the bipartized matrix of $P_nA_nP_n^T$. For all $U\in\HH_+$, 
		$$\hat{R}_n(U)_{11}\Rightarrow R(U)_{\varnothing\varnothing}$$
		as $n \to \infty$. 
		
	\end{theorem}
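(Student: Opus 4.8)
\textbf{Proof proposal for Theorem \ref{ResConv}.}

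The plan is to combine the local convergence established in Theorem \ref{LocalConv} with the abstract transfer principle in Theorem \ref{LCtoRes}, paying careful attention to the random (rather than deterministic) setting and to the self-adjointness hypotheses. First, recall that Theorem \ref{LocalConv} gives $(P_nA_nP_n^T,1)\rightarrow(A,\varnothing)$ in distribution, meaning there exist random bijections $\hat\sigma_n\colon\NN^f\to\NN^f$ with $\hat\sigma_n(\varnothing)=1$ such that $\hat\sigma_n^{-1}P_nA_nP_n^T\hat\sigma_n\phi\to A\phi$ in $\ell^2(\NN^f)$ for every $\phi\in\mathcal D(\NN^f)$. By the Skorokhod Representation Theorem we may pass to a probability space on which this convergence holds almost surely. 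On that space, Proposition \ref{BSelfAdjoint} guarantees that the bipartized operator $B(z)$ of $A$ is almost surely self-adjoint, and since $A$ is defined on the core $\mathcal D(\NN^f)$ its bipartization has $\mathcal D((\NN^f)^b)$ as a core. The operators $P_nA_nP_n^T$ are bounded (finite matrices acting on $\ell^2(\NN^f)$), so their bipartizations $\hat B_n$ are bounded self-adjoint operators and the hypotheses of Theorem \ref{LCtoRes} are met.

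Second, I would apply Theorem \ref{LCtoRes} verbatim on the Skorokhod space: for every fixed $U\in\HH_+$ it yields $R_{\hat B_n}(U)_{11}\to R_B(U)_{\varnothing\varnothing}$ almost surely, where the left side is exactly $\hat R_n(U)_{11}$ in the notation of the statement. Almost sure convergence implies convergence in distribution, and since convergence in distribution does not depend on the choice of probability space, we conclude $\hat R_n(U)_{11}\Rightarrow R(U)_{\varnothing\varnothing}$ on the original space. A small point to address is that Theorem \ref{LCtoRes} as stated treats deterministic local convergence; one must note that the measurability of $\hat\sigma_n$ and the joint measurability of the resolvent entries are unproblematic (the resolvent entries are bounded analytic functions of $\eta$ with the explicit bounds of Lemma \ref{BoundedRes}, hence measurable functionals of the network weights), so the distributional statement follows by the standard ``couple, apply the deterministic result pointwise, then forget the coupling'' argument.

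The main obstacle, and the one deserving the most care, is the interchange of the $n\to\infty$ limit with the truncation parameters $B_n,H_n\to\infty$ that appear inside the proof of Theorem \ref{LocalConv}. The local convergence there is obtained by first fixing $B,H$, showing $(G_n,1)^{B,H}\Rightarrow(T,\varnothing)^{B,H}$, and then diagonalizing to extract growing $B_n,H_n$; the tail control needed to upgrade finite-neighborhood convergence to genuine $\ell^2$-convergence of $\phi_n^w$ rests on the uniform square-integrability of order statistics (Lemma 2.4 of \cite{SymHeavy}) together with Lemma \ref{Pialphaprop}(4). One must be sure that this same uniform control is what makes the hypothesis ``$\hat\sigma_n^{-1}P_nA_nP_n^T\hat\sigma_n\phi\to A\phi$ for all $\phi\in\mathcal D(\NN^f)$'' genuinely hold almost surely on the Skorokhod space — this is precisely what the end of the proof of Theorem \ref{LocalConv} establishes, so I would simply invoke it. Everything else is a mechanical application of the two cited theorems; no new estimate is required beyond what is already in place.
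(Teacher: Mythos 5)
Your proposal is correct and takes essentially the same route as the paper: the paper's proof of Theorem \ref{ResConv} is a one-line remark that the result follows immediately from Theorem \ref{LocalConv}, Proposition \ref{BSelfAdjoint}, and Theorem \ref{LCtoRes}, which is precisely the combination you carry out. Your elaboration of the Skorokhod coupling to pass from almost-sure to distributional convergence is a reasonable way to make that ``immediately'' precise and is consistent with the use of Skorokhod representation already present inside the proof of Theorem \ref{LocalConv}.
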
 
	
	\begin{proof}
		This follows immediately from Theorem \ref{LocalConv}, Proposition \ref{BSelfAdjoint}, and Theorem \ref{LCtoRes}.
	\end{proof}
	As functions, the entries of the resolvent matrix are continuous, and by Lemma \ref{BoundedRes} are bounded. Thus \begin{equation}\label{eq:Resolventexpectation}
		\lim\limits_{n\rightarrow\infty}\EE\hat{R}_n(U)_{11}=\EE R(U)_{\varnothing\varnothing}.
	\end{equation}
	Note that by independence of $\pi_n$ and $A_n$ \begin{equation}\label{eq:PermutedResolvent}
		\EE\hat{R}_n(U)_{11}=\EE R_n(U)_{\pi_n^{-1}(1)\pi_n^{-1}(1)}=\EE\frac{1}{n}\sum_{i=1}^nR_n(U)_{ii}.
	\end{equation}
	This is the reason for the choice of uniformly distributed $P_n$ independent of $A_n$. 
	
	\begin{theorem}\label{PfofSingConv}
		For all $z\in\CC$, almost surely the measures $\check{\nu}_{A_n-z}$ converge weakly to a deterministic probability measure $\check{\nu}_{\alpha,z,\theta_d}$ whose Stieltjes transform is given by 
		$$m_{\check{\nu}_{\alpha,z,\theta_d}}(\eta)=\EE a(z,\eta),$$
		for $\eta\in\CC_+$ and $a(z,\eta)$ in (\ref{eq:limitresolvent}).
	\end{theorem}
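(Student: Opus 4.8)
The plan is to combine the operator convergence already established with the standard concentration-plus-Stieltjes-transform machinery. First I would reduce the weak convergence of $\check{\nu}_{A_n-z}$ to the pointwise convergence of Stieltjes transforms $m_{\check{\nu}_{A_n-z}}(\eta) \to \EE a(z,\eta)$ for each fixed $\eta \in i\RR_+$ (which suffices by analyticity and the fact that $i\RR_+$ has an accumulation point in $\CC_+$, together with the vague-compactness of sub-probability measures on $\RR$; one also checks no mass escapes to infinity since $\check{\nu}_{A_n-z}$ is a probability measure and the limiting object, being the Stieltjes transform of a probability measure, has the right behavior as $\eta \to i\infty$). By Theorem \ref{StRe}, $m_{\check{\nu}_{A_n-z}}(\eta) = \frac{1}{2n}\sum_{k=1}^n (a_k(z,\eta) + c_k(z,\eta))$, and since $\eta \in i\RR_+$ the bipartized matrix structure gives $c_k = a_k$ (indeed $a_k, c_k$ are pure imaginary and, by the symmetry $B(z)^\ast$-conjugation argument used for Lemma \ref{BoundedRes}, $\frac1n\sum a_k$ and $\frac1n\sum c_k$ have the same expectation), so it is enough to analyze $\frac1n\sum_{k=1}^n a_k(z,\eta)$.

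Next I would split this into the convergence of the mean and a concentration statement. For the mean, equations \eqref{eq:PermutedResolvent} and \eqref{eq:Resolventexpectation} give
\[
\EE\,\frac1n\sum_{k=1}^n a_k(z,\eta) \;=\; \EE\, \hat R_n(U)_{11}^{(1,1)\text{-entry}} \;\longrightarrow\; \EE\, a(z,\eta),
\]
which is exactly the limit we want; here I use that the $(1,1)$ entry of $\hat R_n(U)_{11}$ equals $a_{\pi_n^{-1}(1)}(z,\eta)$ and that $\pi_n$ is uniform and independent of $A_n$, so averaging over the random root produces the empirical average $\frac1n\sum_k a_k$. For concentration, I would show
\[
\frac1n\sum_{k=1}^n a_k(z,\eta) - \EE\,\frac1n\sum_{k=1}^n a_k(z,\eta) \longrightarrow 0 \quad\text{a.s.}
\]
The natural tool is a martingale difference / bounded-difference argument: exposing the pairs $\{(X_{ij}, X_{ji})\}_{i<j}$ and the diagonal entries $\{X_{ii}\}$ one at a time, changing a single independent entry-pair is a rank-at-most-$2$ (in the $2\times 2$-block sense, rank $4$ as a $2n\times2n$ matrix) perturbation of $B(z)$, and the resolvent identity together with the bound $\|R_{B_n}(U)\| \le (\Im\eta)^{-1}$ from Lemma \ref{BoundedRes} shows that $\frac1n\tr$ of the $a$-block changes by $O(1/n)$. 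Azuma–Hoeffding then gives $\PP(|\cdot| > t) \le 2\exp(-cnt^2)$ and Borel–Cantelli upgrades this to almost sure convergence; crucially this concentration requires no moment assumptions because it only uses the deterministic operator-norm bound on the resolvent and independence of the entry-pairs, which is all Condition \textbf{C1} already supplies.

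The main obstacle, I expect, is the interchange of limits hidden in the mean step: Theorem \ref{ResConv} gives convergence \emph{in distribution} of $\hat R_n(U)_{11} \Rightarrow R(U)_{\varnothing\varnothing}$, and passing to convergence of expectations requires uniform integrability — but this is handled precisely because the resolvent entries are uniformly bounded by $(\Im\eta)^{-1}$ (Lemma \ref{BoundedRes}), so bounded convergence applies and \eqref{eq:Resolventexpectation} is legitimate. A secondary technical point is verifying that the limiting function $\eta \mapsto \EE\, a(z,\eta)$ is genuinely the Stieltjes transform of a \emph{probability} measure (total mass $1$, not less): this follows from $\eta\, m_{\check\nu_{A_n-z}}(\eta) \to -1$ as $\eta \to i\infty$ uniformly in $n$ — again a consequence of the uniform resolvent bound — so no mass is lost in the limit, and the identification $m_{\check{\nu}_{\alpha,z,\theta_d}} = \EE\, a(z,\eta)$ together with Lemma \ref{BoundedRes} pins down a bona fide probability measure $\check\nu_{\alpha,z,\theta_d}$ on $\RR$, symmetric about $0$. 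Finally, to recover Theorem \ref{SingValueConv} itself (convergence of $\nu_{A_n - z}$ on $[0,\infty)$), one restricts the symmetric measure $\check\nu_{\alpha,z,\theta_d}$ to $\RR_+$ and doubles, which is immediate from the construction of $\check\nu_{A_n-z}$.
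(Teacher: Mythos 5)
Your plan is structurally the same as the paper's: reduce to convergence of Stieltjes transforms, identify the limit of the mean via \eqref{eq:Resolventexpectation}--\eqref{eq:PermutedResolvent} and the local convergence to the PWIT operator, check the limit is a genuine probability measure, and then concentrate. The reduction from $\frac{1}{2n}\sum(a_k+c_k)$ to $\EE a(z,\eta)$ and the use of bounded convergence (via Lemma \ref{BoundedRes}) to legitimize the passage to expectations match the paper. Your check that $\eta m(\eta)\to -1$ uniformly is a serviceable alternative to the paper's identification of $\EE a(z,\eta)$ as $m_{\EE\nu_{\varnothing,z}}(\eta)$ via the Borel functional calculus and the spectral measure of the self-adjoint $B(z)$; both pin down a bona fide probability measure.

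However, the concentration step has a quantitative gap that breaks the Borel--Cantelli conclusion. You expose the $\binom{n}{2}$ off-diagonal pairs and $n$ diagonal entries one at a time, so there are $\Theta(n^2)$ martingale increments, each bounded by $O(1/n)$ by the rank-$4$ resolvent perturbation argument. Azuma--Hoeffding then gives $\sum_i c_i^2 = \Theta(n^2)\cdot O(n^{-2}) = O(1)$, hence a tail bound of the form $\exp(-ct^2)$ --- constant in $n$ --- and not the $\exp(-cnt^2)$ you claimed. This is not summable, so almost sure convergence does not follow. The paper avoids this by grouping the randomness into only $n$ independent ``wedges'' (the $k$-th wedge collects all of $\{(X_{kj},X_{jk}):j>k\}\cup\{X_{kk}\}$), each of which is a rank-$\le 2$ perturbation of $A_n$; with $n$ increments of size $O(1/n)$ one gets $\sum c_i^2 = O(1/n)$ and hence the desired $\exp(-cnt^2)$. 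This is exactly Lemma \ref{TVwIndWed}, which moreover concentrates the full measure $\check\nu_{A_n-z}$ (via a Lidskii--total-variation inequality) rather than just its Stieltjes transform at a fixed $\eta$, sidestepping the need to upgrade from a countable dense set of $\eta$'s by equicontinuity. Your argument is repairable --- replace pair-by-pair exposure with the wedge (row-and-column) exposure --- but as stated the concentration bound is too weak.
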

	
	\begin{proof}
		By Proposition \ref{BSelfAdjoint} for every $z\in\CC$, $B(z)$ is almost surely essentially self-adjoint. Thus, using the Borel functional calculus, there exists almost surely a random probability measure $\nu_{\varnothing,z}$ on $\RR$ such that 
		$$a(z,\eta)=\langle \delta_\varnothing,R(U)\delta_\varnothing \rangle=\int_\RR\frac{d\nu_{\varnothing,z}(x)}{x-\eta}=m_{\nu_{\varnothing,z}}(\eta).$$
		See Theorem VIII.5 in \cite{MathPhys1} for more on this measure and the Borel functional calculus for unbounded self-adjoint operators. Define $R_n$ as the resolvent matrix of $B_n(z)$, the bipartized matrix of $A_n$. For $U(z,\eta)\in\HH_+$, we write 
		$$R_n(U)_{kk}=\begin{pmatrix}
			a_k(z,\eta)&b_k(z,\eta)\\
			b'_k(z\eta)&c_k(z,\eta)
		\end{pmatrix},\qquad \hat{R}_n(U)_{kk}=\begin{pmatrix}
			\hat{a}_k(z,\eta)&\hat{b}_k(z,\eta)\\
			\hat{b}'_k(z\eta)&\hat{c}_k(z,\eta)
		\end{pmatrix}.$$
		By Theorem \ref{StRe}
		$$m_{\EE\check{\nu}_{A-z}}(\eta)=\EE\frac{1}{2n}\sum_{k=1}^n(a_k(z,\eta)+c_{k}(z,\eta))=\frac{1}{2}\left(\EE\hat{a}_1(z,\eta)+\EE\hat{c}_1(z,\eta) \right)=\EE\hat{a}_1(z,\eta).$$
		Thus, by (\ref{eq:Resolventexpectation}) and (\ref{eq:PermutedResolvent}),
		$$\lim\limits_{n\rightarrow\infty}m_{\EE\check{\nu}_{A-z}}(\eta)=\EE a(z,\eta).$$
		It follows that $\EE\check{\vv}_{A_n-z}$ converges to some deterministic probability measure $\check{\vv}_{z,\alpha,\theta_d}=\EE\nu_{\varnothing,z}$. By Lemma \ref{TVwIndWed} $\check{\nu}_{A_n-z}$ concentrates around its expected value, and thus by the Borel-Cantelli Lemma $\check{\vv}_{A_n-z}$ converges almost surely to $\check{\vv}_{z,\alpha,\theta_d}$. 
	\end{proof}
	
	Theorem \ref{SingValueConv} follows immediately from Theorem \ref{PfofSingConv}. We conclude this section with a recursive distributional equation describing $R(U)_{\varnothing\varnothing}$. 
	
	\begin{proposition}\label{prop:RDE}
		Let $\theta_d$ be a probability measure. Let $(w^{(1)},w^{(2)})$ be a $\theta_d$ distributed random vector independent of the matrix given in \eqref{eq:limitresolvent}. Additionally let $\rho_{z,\eta}$ be the measure on $\CC^4\cong\RR^8$ which is the distribution of the random vector \begin{equation}
			\begin{pmatrix}
				c(z,\eta)|w^{(1)}|^2\\
				b'(z,\eta)w^{(1)}w^{(2)}\\
				b(z,\eta)\bar{w}^{(1)}\bar{w}^{(2)}\\
				a(z,\eta)|w^{(2)}|^2
			\end{pmatrix},
		\end{equation} where $a(z,\eta),b(z,\eta),b'(z,\eta),$ and $c(z,\eta)$ are as in \eqref{eq:limitresolvent}. Then the matrix given in \eqref{eq:limitresolvent} satisfies the following recursive distributional equation \begin{equation}\label{eq:RDE}
		\begin{pmatrix}
			a(z,\eta)&b(z,\eta)\\
			b'(z,\eta)&c(z,\eta)
		\end{pmatrix}\overset{d}{=}-\left(\begin{pmatrix}
		\eta&z\\
		\bar{z}&\eta
	\end{pmatrix}+\begin{pmatrix}
	S_1&S_2\\
	S_3&S_4
\end{pmatrix} \right)^{-1}
	\end{equation} where $S=(S_1,S_2,S_3,S_4)$ is an $\alpha/2$-stable random vector with spectral measure is given by the image of the measure $\frac{\Gamma(2-\alpha/2)\cos(\pi\alpha/4)}{1-\alpha/2}\|v\|^{\alpha/2}d\rho_{z,\eta}(v)$ under the map $v\mapsto \frac{v}{\|v\|}$.
	\end{proposition}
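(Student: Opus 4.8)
The plan is to read \eqref{eq:RDE} off the recursive resolvent identity on the PWIT (Proposition \ref{prop:RecursiveEquation}) and then recognize the resulting infinite sum as the series (LePage) representation of a stable random vector.

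First I would verify that Proposition \ref{prop:RecursiveEquation} applies at the root $\varnothing$. By Proposition \ref{BSelfAdjoint}, $B(z)$ is a.s.\ self-adjoint; moreover, for each child $k$ of $\varnothing$ the subtree $V_k$ together with its edge weights is, by the mutual independence of the point processes $\{\Xi_v\}$, an independent copy of $\mathbf{PWIT}(\theta_d\times m_\alpha)$, so the same proposition applies to the restricted operator $A|_{\ell^2(V_k)}$ and its bipartization. Hence
\[
R(U)_{\varnothing\varnothing} = -\Bigl(U + \sum_{k\ge 1} M_k\Bigr)^{-1}, \qquad M_k := \begin{pmatrix} 0 & y_k^{(1)}\\ \bar y_k^{(2)} & 0\end{pmatrix}\begin{pmatrix} a_k & b_k\\ b_k' & c_k\end{pmatrix}\begin{pmatrix} 0 & y_k^{(2)}\\ \bar y_k^{(1)} & 0\end{pmatrix},
\]
where $(y_k^{(1)}, y_k^{(2)})$ is the weight on the edge $\{\varnothing, k\}$ and $\bigl(\begin{smallmatrix} a_k & b_k\\ b_k' & c_k\end{smallmatrix}\bigr) = \tilde R(U)_{kk}$. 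By the self-similarity and independence of the PWIT, the matrices $\tilde R(U)_{kk}$ are i.i.d.\ copies of the matrix in \eqref{eq:limitresolvent}, jointly independent of the weight sequence $\{(y_k^{(1)},y_k^{(2)})\}_{k\ge 1}$.

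Carrying out the $2\times 2$ multiplication gives $M_k = \bigl(\begin{smallmatrix} c_k|y_k^{(1)}|^2 & b_k' y_k^{(1)}y_k^{(2)}\\ b_k\bar y_k^{(1)}\bar y_k^{(2)} & a_k|y_k^{(2)}|^2\end{smallmatrix}\bigr)$. Since $\theta_d$ is a probability measure, Proposition \ref{PPPconstruction} lets us realize the weights at the root as $(y_k^{(1)},y_k^{(2)}) = \Gamma_k^{-1/\alpha}w_k$, where $\Gamma_k = \lambda_1+\dots+\lambda_k$ with $\{\lambda_i\}$ i.i.d.\ unit exponentials, $\{w_k\}$ i.i.d.\ $\theta_d$-distributed, all mutually independent and independent of $\{(a_k,b_k,b_k',c_k)\}$. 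Listing the entries of $M_k$ as a vector in $\CC^4\cong\RR^8$ in the order $((M_k)_{11},(M_k)_{12},(M_k)_{21},(M_k)_{22})$, we obtain $M_k = \Gamma_k^{-2/\alpha}V_k$ with $\{V_k\}$ i.i.d.\ of common law $\rho_{z,\eta}$.

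Finally, I would invoke the series representation of stable random vectors. By Lemma \ref{BoundedRes} the entries $a,b,b',c$ are bounded by $(\Im\eta)^{-1}$ and $\|w_k\|=1$, so $\|V_k\|$ is a.s.\ bounded; as $2/\alpha>1$ and $\Gamma_k\sim k$ a.s., the series $\sum_k \Gamma_k^{-2/\alpha}V_k$ converges absolutely a.s., which legitimizes treating the matrix-valued sum as an $\RR^8$-valued series. The LePage representation (valid since $\alpha/2\in(0,1)$; see, e.g., Samorodnitsky--Taqqu \cite{StableProcesses}) then identifies the random matrix $\sum_k M_k$, whose entries we denote $\bigl(\begin{smallmatrix} S_1 & S_2\\ S_3 & S_4\end{smallmatrix}\bigr)$ in that same order, as a strictly $\alpha/2$-stable random vector in $\RR^8$ whose spectral measure is the push-forward under $v\mapsto v/\|v\|$ of $C_{\alpha/2}^{-1}\|v\|^{\alpha/2}\,d\rho_{z,\eta}(v)$, where $C_\beta^{-1} = \Gamma(2-\beta)\cos(\pi\beta/2)/(1-\beta)$; for $\beta=\alpha/2$ this constant is exactly $\Gamma(2-\alpha/2)\cos(\pi\alpha/4)/(1-\alpha/2)$. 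Substituting back into the recursion yields \eqref{eq:RDE}. The step I expect to require the most care is pinning down this normalization constant against whichever form of the series representation one cites, together with the bookkeeping showing that the required i.i.d.\ and independence structure (in particular independence of $\{w_k\}$ from $\{(a_k,b_k,b_k',c_k)\}$) is inherited from the self-similarity of the PWIT, and the routine but necessary absolute-convergence check that permits rearranging the matrix series as an $\RR^8$-valued LePage series.
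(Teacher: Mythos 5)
Your proof is correct and follows the same route as the paper's: apply Proposition \ref{prop:RecursiveEquation} at the root, use the self-similarity and independence of the PWIT to view $\tilde R(U)_{kk}$ as i.i.d.\ copies of $R(U)_{\varnothing\varnothing}$ independent of the root edge weights, pass to the polar/Poissonian form of the weights via Proposition \ref{PPPconstruction}, and identify the resulting $\RR^8$-valued series as an $\alpha/2$-stable vector with the stated spectral measure. The only genuine difference is in the final identification step: you cite the LePage representation directly from Samorodnitsky--Taqqu, whereas the paper states and proves a standalone lemma (Lemma \ref{lem:SeriesRepofStableVectors}, based on Theorem 10.3 of Ben Arous--Guionnet) which recovers the same fact by a weak-convergence argument --- discretizing $\sum_k r_k^2 v_k$ as a triangular array $b_n^{-1}\sum_{k\le n}X_k v_k$ whose limit is identified on one hand as a stable vector with the given spectral measure and on the other hand, via Proposition \ref{OrderStatConv}, as the Poissonian series. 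Both paths require checking that the series representation with i.i.d.\ $V_k$ not supported on the unit sphere produces the spectral measure $\propto\|v\|^{\alpha/2}d\rho_{z,\eta}(v)$ pushed to the sphere with precisely that normalization constant; you correctly flag this as the delicate bookkeeping step, and the paper's Lemma \ref{lem:SeriesRepofStableVectors} is essentially a self-contained verification of it.
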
 
The proof of Proposition \ref{prop:RDE} is given in Appendix \ref{app:PropRDEProof}.
	
	\section{Least singular values of elliptic random matrices}\label{sect:LeastSingValue} 
	
	Now that we have proven Theorem \ref{SingValueConv}, we move on to show that $\log(\cdot)$ is uniformly integrable, in probability, with respect to $\{\nu_{A_n-zI_n}\}_{n\geq 1}$. We begin with a bound on the least singular value of an elliptic random matrix under very general assumptions. This section is entirely self contained.
	
	\begin{theorem}[Least singular value bound] \label{thm:lsv}
		Let $X = (X_{ij})$ be an $n \times n$ complex-valued random matrix such that
		\begin{enumerate}[(i)]
			\item (off-diagonal entries) $\{ (X_{ij}, X_{ji}) : 1 \leq i < j \leq n\}$ is a collection of independent random tuples,
			\item (diagonal entries) the diagonal entries $\{ X_{ii} : 1 \leq i \leq n\}$ are independent of the off-diagonal entries (but can be dependent on each other),
			\item there exists $a > 0$ such that
			the events 
			\begin{equation} \label{eq:defEij}
				\mathcal{E}_{ij} := \{ |X_{ij}| \leq a, |X_{ji}| \leq a \} 
			\end{equation}
			defined for $i \neq j$ satisfy 
			\[ b := \min_{i < j} \Prob(\mathcal{E}_{ij}) > 0, \quad \sigma^2 := \min_{i \neq j} \var(X_{ij} \mid \mathcal{E}_{ij}) > 0, \]
			and 
			\[ \rho := \max_{i < j} \left| { \corr( X_{ij} \mid \mathcal{E}_{ij}, X_{ji} \mid \mathcal{E}_{ji}) } \right| < 1. \]
		\end{enumerate}
		Then there exists $C = C(a, b, \sigma) > 0$ such that for $n \geq C$, any $M \in \Mat_n(\mathbb{C})$, $s \geq 1$, $0 < t \leq 1$, 
		\[ \Prob \left( s_n(X + M) \leq \frac{t}{\sqrt{n}}, s_1(X+M) \leq s \right) \leq C \left( \frac{\log (Cns)}{\sqrt{1-\rho}} \left( \sqrt{s^5 t} + \frac{1}{\sqrt{n}} \right) \right)^{1/4}. \]
	\end{theorem}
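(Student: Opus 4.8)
The plan is to follow the standard geometric approach to least singular value bounds for random matrices (à la Rudelson–Vershynin, Tao–Vu, and the elliptic adaptations of O'Rourke–Renfrew), but carried out using only the conditioned entries on the events $\mathcal{E}_{ij}$ so that no moment assumptions on $X$ are needed. First I would write $s_n(X+M) = \inf_{\|v\|=1} \|(X+M)v\|$ and split the unit sphere into compressible and incompressible vectors: $v$ is \emph{compressible} if it is within distance, say, $c_0$ of a vector supported on at most $\delta n$ coordinates, and \emph{incompressible} otherwise, for suitable small constants $\delta, c_0$ depending on $a, b, \sigma$. On the compressible part one uses a net argument: for a fixed sparse unit vector $v$, $\|(X+M)v\|$ is a sum of independent contributions across rows, and by conditioning on the $\mathcal{E}_{ij}$ and using the lower bound $\sigma^2$ on the conditional variance together with a small-ball/anti-concentration estimate (Paley–Zygmund or a Berry–Esseen-type bound), one gets $\Prob(\|(X+M)v\| \le c_1 \sqrt{n}) \le e^{-c_2 n}$; a union bound over an $\epsilon$-net of compressible vectors of size $e^{C\delta n \log(1/\delta)}$ then kills this regime, provided we also intersect with the event $s_1(X+M) \le s$ so the net has the right metric entropy (this is where the $\log(Cns)$ and the restriction on $s_1$ enter). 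The diagonal entries, being arbitrary and merely independent of the off-diagonal part, are harmless here since one can condition on them.

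For incompressible vectors one uses the standard distance formulation: if $v$ is incompressible then $\|(X+M)v\| \gtrsim \frac{1}{\sqrt{n}} \max_k \dist(R_k, H_k)$ where $R_k$ is the $k$-th row of $X+M$ and $H_k = \Span\{R_j : j \neq k\}$, so it suffices to lower bound $\dist(R_k, H_k)$ for a typical $k$. Writing this distance as $|\langle R_k, w \rangle|$ for $w$ a unit normal to $H_k$, and noting $w$ depends only on the rows other than $k$ — but here is the elliptic subtlety: row $k$ and column $k$ of $X$ share the correlated pairs $(X_{kj}, X_{jk})$, so $R_k$ is \emph{not} independent of $H_k$. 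The fix, following the elliptic-matrix literature, is to condition on everything except the pairs $\{(X_{kj}, X_{jk}) : j \neq k\}$, observe that $w$ then still has many incompressible-type coordinates with high probability, decompose $\langle R_k, w \rangle$ into the part coming from $X_{kj}$'s and a deterministic-given-the-conditioning remainder, and apply a conditional small-ball inequality to the linear combination $\sum_{j} X_{kj} w_j$. The condition $\rho < 1$ is exactly what guarantees that, after conditioning out the "column" contribution through $w$, the residual randomness in the row still has nondegenerate conditional variance — quantitatively the relevant variance is of order $(1-\rho)\sigma^2 \sum_j |w_j|^2$, which is why $\frac{1}{\sqrt{1-\rho}}$ appears in the final bound. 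A Lévy-concentration / Esseen bound then gives $\Prob(\dist(R_k,H_k) \le \tau) \lesssim \frac{\tau}{\sqrt{1-\rho}} + (\text{Berry–Esseen error} \sim n^{-1/2})$, and taking a max over $k$ (or averaging, via a counting argument over which $k$ are "good") converts this into the bound on $s_n$.

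The remaining work is bookkeeping to assemble the two regimes and to track the polynomial dependence on $s$ and $t$: the $\sqrt{s^5 t}$ term comes from optimizing the net scale against the spectral-norm truncation $s_1 \le s$ and the target scale $t/\sqrt n$, and the additive $\frac{1}{\sqrt n}$ is the irreducible Berry–Esseen error from replacing the true conditional distribution of the entries by a Gaussian in the anti-concentration step; the final exponent $1/4$ is what survives after combining an $L^1$-type distance bound with a union over the net and taking a root to absorb the various $\epsilon$-optimizations. I expect the \textbf{main obstacle} to be the incompressible case: specifically, making the conditioning argument rigorous so that the normal vector $w$ to $H_k$ remains "spread out" (incompressible) \emph{after} conditioning on the shared correlated pairs — one cannot simply quote the i.i.d. case because $w$ is correlated with the very randomness one wants to use in row $k$. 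Handling this requires either an invertibility-via-distance argument applied to a submatrix with the $k$-th row and column removed (to produce a $w$ genuinely independent of the relevant entries) or a careful two-step conditioning, and getting the constants to come out uniformly in $M$, $s$, $t$ is the delicate part.
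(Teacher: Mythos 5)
Your overall architecture matches the paper's: a compressible/incompressible decomposition of $\Sp^{n-1}$, nets plus Talagrand-type distance estimates for the compressible part, and an averaged distance-to-hyperplane bound for the incompressible part with anti-concentration via a Berry--Esseen small-ball estimate. The role you assign to $\rho<1$ (a lower bound of order $(1-\rho)\sigma^2$ on the residual conditional variance) and your account of the exponent $1/4$ as an artifact of iterated square roots are both consistent with what actually happens.

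However, your first proposed fix for the incompressible case has a real gap, and it is exactly the gap you flag at the end. You suggest conditioning on everything except $\{(X_{kj},X_{jk}):j\neq k\}$ and then applying a conditional small-ball inequality to $\langle R_k, w\rangle$, ``observing that $w$ still has many incompressible-type coordinates.'' But after that conditioning, $w$ (a unit normal to $H_k=\Span\{R_j:j\neq k\}$) is \emph{not} a fixed vector: every $R_j$ with $j\neq k$ contains $X_{jk}$, which is part of the very collection of pairs you left unconditioned. So $w$ and $R_k$ remain statistically coupled through $\{X_{jk}\}$, and the small-ball estimate cannot be applied directly; ``$w$ is incompressible with high probability'' does not rescue independence. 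Your alternative --- working with a submatrix --- is the right direction, and it is what the paper does via the identity
\[
\dist(C_k,H_k)\;\ge\;\frac{|A_{kk}-uB^{-1}v|}{\sqrt{1+\|uB^{-1}\|^2}},
\]
where $u,v$ are the $k$-th row and column with the $k$-th entry removed and $B$ is the $(n-1)\times(n-1)$ minor. But this reduction alone is not enough: $u$ and $v$ are still dependent (they consist of the pairs $(X_{kj},X_{jk})$), so the bilinear form $uB^{-1}v$ is a quadratic-type expression in dependent variables, not a bilinear form in independent ones. The missing ingredient is a \emph{decoupling} step: pick a random index set $\pi\subset[n-1]$, introduce an independent copy $(u',v')$ of $(u,v)$, and use the inequality
\[
\Prob\bigl(|x^{\mathrm T}My-z|\le t\bigr)^2\le\Prob\bigl(|x_\pi^{\mathrm T}M_{\pi\times\pi^c}(y_{\pi^c}-y'_{\pi^c})+(x_{\pi^c}-x'_{\pi^c})^{\mathrm T}M_{\pi^c\times\pi}y_\pi+z_0|\le 2t\bigr)
\]
(Rudelson's decoupling for quadratic forms). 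This produces a linear form in $u_\pi,v_\pi$ with coefficient vectors built from $(u-u')_{\pi^c}B^{-1}$ and $B^{-1}(v-v')_{\pi^c}$, which are genuinely independent of $u_\pi,v_\pi$, and one must separately show (again via compressible-vector control on $B^{-1}$) that these coefficient vectors are incompressible and have norm comparable to $\|B^{-1}\|_2$. Your proposal diagnoses the obstacle correctly but does not supply this decoupling, which is the technical heart of the incompressible case. Without it the dependence is not actually removed and the small-ball step does not go through.

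One smaller inaccuracy: the reduction from $s_n$ to the distance problem for incompressible vectors gives an \emph{average} bound, $\Prob\bigl(\min_{x\in\Incomp}\|Ax\|\le t\tau/\sqrt{n}\bigr)\le\frac{2}{\delta n}\sum_k\Prob(\dist(C_k,H_k)\le t)$, not a $\max_k$ lower bound on $\|(X+M)v\|$; this matters because the per-$k$ estimate is only polynomially small in $t$, so one genuinely needs the averaging rather than a pointwise maximum.
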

	
	\begin{remark}
		The constant $C$ from Theorem \ref{thm:lsv} only depends on $a,b,\sigma$ and does not depend on $\rho$.  This allows one to apply Theorem \ref{thm:lsv} to cases where $\rho$ depends on $n$.  
	\end{remark}

	\subsection{Proof of Theorem \ref{thm:lsv}}
	
	In this section we prove Theorem \ref{thm:lsv}.  Suppose $X$ and $M$ satisfy the assumptions of the theorem and denote $A := X + M$. We will use $A$ throughout this section and it may be worth noting it is not the $A_n$ of Theorems \ref{SingValueConv} and \ref{EigenConv}.  Throughout the section, we allow all constants to depend on $a,b, \sigma$ without mentioning or denoting this dependence.  Constants, however, will not depend on $\rho$; instead we will state all dependence on $\rho$ explicitly.  
	
	For the proof of Theorem \ref{thm:lsv}, it suffices to assume that $A$ and every principle submatrix of $A$ is invertible with probability $1$.  To see this, define $X' := X + \frac{t}{\sqrt{n}} \xi I$, where $I$ is the identity matrix and $\xi$ is a real-valued random variable uniformly distributed on the interval $[-1,1]$, independent of $X$.  It follows that $X'$ satisfies the assumptions of Theorem \ref{thm:lsv}.  However, since $\xi$ is continuously distributed, it also follows that $A' := X' + M$ and every principle submatrix of $A'$ is invertible with probability $1$.  By Weyl's inequality for the singular values (see, for instance, \cite[Problem III.6.13]{Bhatia}), we find
	\[ \max_{1 \leq k \leq n} |s_k(A) - s_k(A')| \leq \frac{t}{\sqrt{n}} \leq s. \]
	Hence, we conclude that
	\[ \Prob \left( s_n(A) \leq \frac{t}{\sqrt{n}}, s_1(A) \leq s \right) \leq \Prob \left( s_n(A') \leq \frac{2t}{\sqrt{n}}, s_1(A') \leq 2s \right).  \]
	In other words, it suffices to prove Theorem \ref{thm:lsv} under the additional assumption that $A$ and every principle submatrix of $A$ is invertible.  We work under this additional assumption for the remainder of the proof.  
	
	\subsection{Nets and a decomposition of the unit sphere}
	
	Consider a compact set $K \subset \C^n$ and $\eps > 0$.  A subset $\mathcal{N} \subset K$ is called an \emph{$\eps$-net} of $K$ if for every point $v \in K$ one has $\dist(v, \mathcal{N}) \leq \eps$.  
	
	For some real positive parameters $\delta, \tau > 0$ that will be determined later, we define the set of \emph{$\delta$-sparse vectors} as
	\[ \Sparse(\delta) := \{ x \in \mathbb{C}^n : |\supp(x)| \leq \delta n\}. \]
	We decompose the unit sphere $\mathbb{S}^{n-1}$ into the set of \emph{compressible vectors} and the complementary set of \emph{incompressible} vectors by
	\[ \Comp(\delta, \tau) := \{x \in \Sp^{n-1} : \dist(x, \Sparse(\delta)) \leq \tau \} \]
	and
	\[ \Incomp(\delta, \tau) := \Sp^{n-1} \setminus \Comp(\delta, \tau). \]
	
	We have the following result for incompressible vectors.  
	
	\begin{lemma}[Incompressible vectors are spread; Lemma A.3 from \cite{BC}] \label{lemma:lsv:spread}
		Let $x \in \Incomp(\delta, \tau)$.  There exists a subset $\pi \subset [n]$ such that $|\pi| \geq \delta n / 2$ and for all $i \in \pi$, 
		\[ \frac{\tau}{\sqrt{n}} \leq |x_i| \leq \sqrt{ \frac{2}{\delta n} }. \]
	\end{lemma}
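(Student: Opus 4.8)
The plan is the standard ``three-scale'' decomposition of the coordinates of $x$ by magnitude. Write $r := \tau/\sqrt n$ and $R := \sqrt{2/(\delta n)}$, and set
\[ \sigma_{\mathrm{big}} := \{ i \in [n] : |x_i| > R \}, \qquad \sigma_{\mathrm{small}} := \{ i \in [n] : |x_i| < r \}, \qquad \pi := [n] \setminus (\sigma_{\mathrm{big}} \cup \sigma_{\mathrm{small}}). \]
By construction every $i \in \pi$ satisfies $r \le |x_i| \le R$, which is exactly the pair of bounds in the statement, so the whole content of the lemma is the cardinality estimate $|\pi| \ge \delta n/2$. First I would note that the three sets above genuinely partition $[n]$: since $0 \in \Sparse(\delta)$ we have $\dist(x,\Sparse(\delta)) \le \|x\| = 1$, so the hypothesis $x \in \Incomp(\delta,\tau)$ forces $\tau < 1$, and hence $r = \tau/\sqrt n < 1/\sqrt n \le R$, so the defining conditions $|x_i|>R$, $r\le|x_i|\le R$, $|x_i|<r$ are mutually exclusive and exhaustive.

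Next I would establish the two cardinality estimates. For $\sigma_{\mathrm{big}}$, from $\|x\|=1$ we get $1 = \sum_i |x_i|^2 \ge \sum_{i \in \sigma_{\mathrm{big}}} |x_i|^2 > |\sigma_{\mathrm{big}}|\,R^2 = |\sigma_{\mathrm{big}}| \cdot \tfrac{2}{\delta n}$, so $|\sigma_{\mathrm{big}}| < \delta n/2$. For $\pi$, I would argue by contradiction: suppose $|\pi| < \delta n/2$, and let $y \in \C^n$ agree with $x$ on $\sigma_{\mathrm{big}} \cup \pi$ and vanish on $\sigma_{\mathrm{small}}$. Then $|\supp(y)| \le |\sigma_{\mathrm{big}}| + |\pi| < \delta n$, so $y \in \Sparse(\delta)$, while $x-y$ is supported on $\sigma_{\mathrm{small}}$, whence
\[ \|x-y\|^2 = \sum_{i \in \sigma_{\mathrm{small}}} |x_i|^2 < |\sigma_{\mathrm{small}}| \cdot \frac{\tau^2}{n} \le \tau^2 \]
(the strict inequality being replaced by $\|x-y\|=0$ when $\sigma_{\mathrm{small}} = \varnothing$). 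Thus $\dist(x,\Sparse(\delta)) \le \|x-y\| < \tau$, contradicting $x \in \Incomp(\delta,\tau)$; hence $|\pi| \ge \delta n/2$ and the proof is complete.

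I do not expect a genuine obstacle here: this is a routine packing argument. The only points requiring a little care are keeping the inequality $\|x-y\| < \tau$ \emph{strict} so that it truly contradicts incompressibility (which is where the definition of $\sigma_{\mathrm{small}}$ by a strict inequality is used), and confirming that the thresholds $r$ and $R$ are ordered so that $\sigma_{\mathrm{big}}, \pi, \sigma_{\mathrm{small}}$ partition $[n]$ — both of which, as indicated above, are immediate once one observes $\tau < 1$.
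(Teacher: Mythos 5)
Your argument is correct, and it is essentially the standard packing-and-approximation proof of this fact: the paper itself does not supply a proof (it cites Lemma~A.3 of \cite{BC}), and your decomposition of $[n]$ into $\sigma_{\mathrm{big}}$, $\sigma_{\mathrm{small}}$, and the middle band $\pi$, together with the contradiction obtained by replacing $x$ with the sparse vector $y$ that zeroes out the small coordinates, is exactly the argument used there. The two small points you flagged (that $\tau<1$ so the thresholds are ordered, and that the strict definition of $\sigma_{\mathrm{small}}$ keeps $\|x-y\|<\tau$ strict) are indeed the only places where any care is needed, and you handle both correctly.
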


	\subsection{Control of compressible vectors}
	
	The case of compressible vectors roughly follows the arguments from \cite{BC}.  For $i,j \in [n]$, we let $C_j$ denote the $j$-th column of $A$ and $C_j^{(i)}$ denote the $j$-th column of $A$ with the $i$-th entry removed.  
	
	\begin{lemma}[Distance of a random vector to a deterministic subspace] \label{lemma:lsv:distance}
		There exist constants $\eps, C, c, \delta_0 > 0$ such that for all $1 \leq i \leq n$ and any deterministic subspace $H$ of $\mathbb{C}^{n-1}$ with $1 \leq \dim(H) \leq \delta_0 n$, we have 
		\[ \Prob \left(\dist(C_i^{(i)}, H) \leq \eps \sqrt{n} \right) \leq C \exp(-c n). \]
	\end{lemma}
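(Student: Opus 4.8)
The plan is to reduce the distance bound to a statement about the size of a random linear combination of the (conditioned) entries of the column $C_i^{(i)}$, and then exploit the small-ball behavior guaranteed by the variance and correlation hypotheses (iii) of Theorem \ref{thm:lsv}. First I would write $\dist(C_i^{(i)}, H) = \| P_{H^\perp} C_i^{(i)} \|$ where $P_{H^\perp}$ is the orthogonal projection onto the orthogonal complement of $H$ inside $\CC^{n-1}$. Since $\dim(H) \leq \delta_0 n$, the complement has dimension at least $(1 - \delta_0)(n-1)$, so there is plenty of room: choosing an orthonormal basis $w^{(1)}, \dots, w^{(m)}$ of $H^\perp$ with $m \geq (1-\delta_0)(n-1)$, we have $\|P_{H^\perp} C_i^{(i)}\|^2 = \sum_{k=1}^m |\langle w^{(k)}, C_i^{(i)}\rangle|^2$, so it suffices to show that with overwhelming probability at least a constant fraction of these inner products are of order $1$ in absolute value, or more simply that $\| P_{H^\perp} C_i^{(i)} \|^2 \geq \eps^2 n$ except on an exponentially small event.

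The key point is that the entries $\{X_{ji} : j \neq i\}$ of the column $C_i^{(i)}$ (shifted by the deterministic column of $M$) are independent across $j$ — by hypothesis (i) the pairs $(X_{ji}, X_{ij})$ are independent tuples, so in particular the single coordinates $X_{ji}$ for distinct $j$ are independent. Conditioning each $X_{ji}$ on the event $\mathcal{E}_{ij}$ (which has probability $\geq b > 0$) gives a coordinate with conditional variance $\geq \sigma^2 > 0$ and bounded support, hence a uniform small-ball estimate: there is $\beta > 0$ and $\epsilon_0>0$ so that $\Prob(|X_{ji} - \zeta| \leq \epsilon_0 \mid \mathcal{E}_{ij}) \leq 1 - \beta$ for every scalar $\zeta$. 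The standard route is then a tensorization / Esseen-type argument: using the independence over $j$ together with this per-coordinate anticoncentration, one shows that the projection of the random vector onto any fixed subspace of codimension $o(n)$ has norm $\geq \eps \sqrt n$ with probability $1 - e^{-cn}$. Concretely I would (a) condition on the independent events $\mathcal{E}_i := \bigcap_{j \neq i} \mathcal{E}_{ij}$, which fails with probability at most $(1-b)^{n-1} \le e^{-c'n}$ by independence, so it is harmless; (b) on $\mathcal{E}_i$, apply a Paley–Zygmund or Hanson–Wright–style lower tail bound, or directly the argument of \cite[Lemma A.3]{BC} (which the lemma cites), to the now-independent bounded-variance coordinates to control $\|P_{H^\perp} C_i^{(i)}\|$ from below; and (c) union/multiply the two exponentially small failure probabilities.

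The main obstacle I anticipate is step (b): obtaining a genuinely exponential bound $e^{-cn}$ rather than merely a polynomial one for the event that a random vector lands close to a fixed subspace. The per-coordinate small-ball estimate alone gives, via tensorization, only that $\dist(C_i^{(i)}, H)$ is small with probability bounded away from one; to boost this to an exponential bound one needs to combine it with the fact that the ambient dimension minus $\dim H$ is linear in $n$ — essentially there are $\Theta(n)$ "independent directions" the column can spread into. The cleanest way is to invoke a concentration inequality for $\|P C_i^{(i)}\|^2$ viewed as a function of the independent bounded coordinates $(X_{ji})_{j\neq i}$ (bounded differences / Azuma), showing it concentrates around its mean $\gtrsim \tr(P)\sigma^2 \gtrsim (1-\delta_0) n \sigma^2$, which forces it to exceed $\eps^2 n$ off an event of probability $e^{-cn}$; the correlation hypothesis $\rho < 1$ and the $2\times 2$ pair structure play no role here, since we only use single-coordinate variance, so the constants $\eps, C, c, \delta_0$ indeed depend only on $a, b, \sigma$ as claimed. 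I would then conclude by taking $\delta_0$ small enough (depending on $\sigma$) that $(1-\delta_0)\sigma^2 > 2\eps^2$ and choosing $\eps$ accordingly.
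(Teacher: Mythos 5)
Your plan is in the right spirit — truncate the column entries, exploit independence and a per-coordinate variance lower bound, then use a concentration inequality to show $\dist(C_i^{(i)},H)$ concentrates around a value of order $\sqrt n$. That is indeed how the paper argues (with Talagrand's inequality in place of your bounded differences/Azuma). But there is a concrete error at the start of your step (a) that breaks the argument.

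You write that conditioning on $\mathcal{E}_i := \bigcap_{j\ne i}\mathcal{E}_{ij}$ "fails with probability at most $(1-b)^{n-1}\le e^{-c'n}$ by independence." This is backwards. Since each $\mathcal{E}_{ij}$ has probability at least $b<1$ (and in general not close to $1$), the intersection over $n-1$ independent events satisfies $\Prob(\mathcal{E}_i)=\prod_{j\ne i}\Prob(\mathcal{E}_{ij})$, which can be as small as $b^{\,n-1}$ — exponentially small — so $\Prob(\mathcal{E}_i^c)$ is close to $1$, not close to $0$. What you have written is the probability that \emph{none} of the $\mathcal{E}_{ij}$ occur. Conditioning on the full intersection is therefore not a harmless preprocessing step; it discards almost all of the probability space. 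The paper handles this correctly via a Chernoff bound: with probability $1-e^{-\Omega(n)}$, at least $m:=\lceil (n-1)b/2\rceil$ of the events $\mathcal{E}_{ji}$ occur, and one conditions only on those $m$ truncation events, absorbing the remaining $n-1-m$ uncontrolled coordinates into the subspace (by spanning $H$ together with the vector of those coordinates). Your argument as stated does not make this reduction, and it is essential.

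A second, smaller omission: even on the event where a coordinate $X_{ji}$ is truncated, it is generally not mean zero under the conditional law, and $C_i^{(i)}$ also carries the deterministic shift from the $i$-th column of $M$. Your outline moves directly from "conditional variance $\geq\sigma^2$" to a lower bound on $\E\|P_{H^\perp}C_i^{(i)}\|^2\gtrsim \tr(P_{H^\perp})\sigma^2$, which implicitly assumes the coordinates are centered. The paper addresses this by enlarging $H$ to a subspace $W$ that also contains the vector of conditional means (raising the codimension loss by only $O(1)$), and then applying the concentration inequality to the centered vector $Y$. Without this (or an equivalent centering step), the trace identity $\E\dist^2 = \sum_k P_{kk}\E|Y_k|^2$ you want is not available. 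Once you replace step (a) with the Chernoff/partial-conditioning reduction and center the vector by augmenting $H$ appropriately, the rest of your outline — Talagrand/bounded-differences concentration plus a trace lower bound, with $\delta_0$ small relative to $b$ — matches the paper's proof.
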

	\begin{proof}
		The proof follows the same arguments as those given in the proof of \cite[Theorem A.2]{BC}.  Fix $1 \leq i \leq n$; the arguments and bounds below are all uniform in $i$.  Recall the definitions of the events $\mathcal{E}_{ij}$ given in \eqref{eq:defEij}.  By the assumptions on $\mathcal{E}_{ij}$, the Chernoff bound gives
		\[ \Prob \left( \sum_{j \neq i } \oindicator{\mathcal{E}_{ji}} \leq \frac{(n-1)b}{2} \right) \leq \exp \left( - \frac{ (n-1)b}{8} \right). \]
		In other words, with high probability, at least $m := \left \lceil \frac{ (n-1)b}{2} \right \rceil$ of the events $\mathcal{E}_{ji}$, $j \neq i$ occur.  
		Thus, it suffices to prove the result by conditioning on the event 
		\begin{equation} \label{eq:Emcap}
			E_m := \bigcap_{j \in [m], j \neq i} \mathcal{E}_{ji}. 
		\end{equation}
		
		There are two cases to consider.  Either $i \in [m]$ or $i > m$.  The arguments for these two cases are almost identical except some notations must be changed slightly to remove the $i$-th index.  For the remainder of the proof, let us only consider the case when $i > m$; the changes required for the other case are left to the reader.  
		
		Recall that it suffices to prove the result by conditioning on the event $E_m$ defined in \eqref{eq:Emcap}.  In fact, as $i > m$, the definition of the event $E_m$ given in \eqref{eq:Emcap} can be stated as
		\[ E_m := \bigcap_{j \in [m]} \mathcal{E}_{ji}. \]
		Let $\E_m[ \cdot ] := \E[ \cdot \mid E_m, \mathcal{F}_m]$ denote the conditional expectation given the event $E_m$ and the filtration $\mathcal{F}_m$ generated by $X_{ji}$, $j > m$, $j \neq i$.  Let $W$ be the subspace spanned by $H$ and the vectors 
		\[ u := (0, \ldots, 0, X_{m+1, i}, \ldots, X_{i-1, i}, X_{i+1, i}, \ldots, X_{n, i} ) \]
		and
		\[ w := ( \E_m[X_{1, i}], \ldots, \E_m[X_{m, i}], 0, \ldots, 0). \]
		By construction, $\dim(W) \leq \dim(H) + 2$ and $W$ is $\mathcal{F}_m$ measurable.  In addition, 
		\[ \dist(C_i^{(i)}, H) \geq \dist(C_i^{(i)}, W) = \dist(Y, W), \]
		where
		\[ Y := ( X_{1,i} - \E_m[X_{1,i}], \ldots, X_{m,i} - \E_m[X_{m,i}], 0, \ldots, 0) = C_{i}^{(i)} - u - w. \]
		By assumption, the coordinates $Y_1, \ldots, Y_m$ are independent random variables which satisfy 
		\[ |Y_k| \leq 2a, \qquad E_m[Y_k] = 0, \qquad \E_m|Y_k|^2 \geq \sigma^2 \]
		for $1 \leq k \leq m$.   
		Thus, since the function $x \mapsto \dist(x,W)$ is convex and $1$-Lipschitz, Talagrand's concentration inequality (see for instance \cite[Theorem 2.1.13]{Tbook}) yields
		\begin{equation} \label{eq:probmtalagrand}
			\Prob_m( |\dist(Y,W) - \E_m[\dist(Y,W)]| \geq t) \leq C \exp \left( - c\frac{t^2}{a^2} \right) 
		\end{equation}
		for every $t \geq 0$, where $C, c > 0$ are absolute constants.  In particular, this implies that
		\begin{equation} \label{eq:Emtalagrand}
			( \E_m \dist(Y,W) )^2 \geq \E_m \dist^2(Y, W) - c' a^2 
		\end{equation}
		for an absolute constant $c' >0$.  Thus, if $P$ denotes the orthogonal projection onto the orthogonal complement of $W$, we get
		\begin{align*}
			\E_m \dist^2(Y,W) &= \sum_{k=1}^m \E_m|Y_k|^2 P_{kk} \\
			&\geq \sigma^2 \left(\tr P - \sum_{k=m+1}^{n-1} P_{kk} \right) \\
			&\geq \sigma^2 ((n-1) - \dim(H) - 2 - (n-1-m) ) \\
			&\geq \sigma^2 \left( \frac{ (n-1)b}{2}  - \delta_0 n - 2 \right).
		\end{align*}
		The last term is lower bounded by $c'' \sigma^2 n$ for all $n$ sufficiently large by taking $\delta_0 := b/4$.  Combining this bound with \eqref{eq:probmtalagrand} and \eqref{eq:Emtalagrand} completes the proof.  
	\end{proof}
	
	The next bound, which follows as a corollary of Lemma \ref{lemma:lsv:distance}, will be useful when dealing with compressible vectors.  
	
	\begin{corollary} \label{cor:lsv:distance}
		There exist $\eps, C, c, \delta_0 > 0$ such that for any deterministic subset $\pi \subset [n]$ with $|\pi| \leq \delta_0 n $ and any deterministic $u \in \mathbb{C}^n$, we have
		\[ \Prob \left( \min_{i \in \pi} \dist(C_i, H_i) \leq \eps \sqrt{n} \right) \leq Cn \exp(-c n), \]
		where $H_i := \Span\left(\{ C_j : j \in \pi, j \neq i\}\cup\{u\} \right)$.  
	\end{corollary}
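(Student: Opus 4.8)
We take $\eps,C,c$ equal to the constants supplied by Lemma \ref{lemma:lsv:distance}, write $\delta_0^{(L)}$ for its constant $\delta_0$, and set the corollary's constant to be $\delta_0 := \delta_0^{(L)}/2$. The plan is then to reduce the corollary to Lemma \ref{lemma:lsv:distance} by a union bound over $i \in \pi$, after arranging the independence the lemma needs by deleting the $i$-th coordinate from every column in sight. Fix $i \in \pi$ and, for $v \in \C^n$, write $v^{(i)} \in \C^{n-1}$ for $v$ with its $i$-th entry deleted. First I would enlarge the target subspace, replacing $H_i$ by $H_i' := \Span\big(\{C_j : j \in \pi, j\neq i\}\cup\{u, \mathbf 1\}\big)$ with $\mathbf 1 = (1,\dots,1)$; since enlarging the subspace only decreases the distance, it suffices to bound $\Prob(\dist(C_i, H_i') \le \eps\sqrt n)$. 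As the coordinate deletion $v \mapsto v^{(i)}$ is $1$-Lipschitz and maps $H_i'$ onto $(H_i')^{(i)} = \Span\big(\{C_j^{(i)} : j \in \pi, j\neq i\}\cup\{u^{(i)}, \mathbf 1^{(i)}\}\big)$, we have $\dist(C_i, H_i') \ge \dist(C_i^{(i)}, (H_i')^{(i)})$, so it is enough to control the latter quantity.

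The crux is that $C_i^{(i)}$ and $(H_i')^{(i)}$ are independent. The vector $C_i^{(i)}$ is a function of the off-diagonal entries $\{X_{ki} : k \neq i\}$ of the $i$-th column (its diagonal entry $X_{ii}$ has been deleted), whereas $(H_i')^{(i)}$ is a function of $\{X_{kj} : k \neq i,\ j \in \pi\setminus\{i\}\}$ together with the deterministic vectors $u^{(i)}, \mathbf 1^{(i)}$. Under hypotheses (i)--(ii) of Theorem \ref{thm:lsv} these two families of random variables are independent: the entries $\{X_{ki}\}_{k\neq i}$ are off-diagonal, hence independent of every diagonal entry and of every off-diagonal entry other than the mirrored partners $\{X_{ik}\}_{k\neq i}$---and those partners all sit in row $i$, which has been deleted from the columns that span $(H_i')^{(i)}$. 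So $C_i^{(i)} \perp (H_i')^{(i)}$.

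Given this, I would condition on the $\sigma$-algebra generated by $\{X_{kj} : k \neq i,\ j \in \pi\setminus\{i\}\}$. This freezes $(H_i')^{(i)}$ to a deterministic subspace of $\C^{n-1}$, while $C_i^{(i)}$, being independent of this conditioning, retains its distribution and is still exactly the random vector appearing in Lemma \ref{lemma:lsv:distance}. Moreover $1 \le \dim\big((H_i')^{(i)}\big) \le |\pi| + 1$: the lower bound because $\mathbf 1^{(i)} \neq 0$ for $n \ge 2$, the upper bound because the subspace is spanned by at most $|\pi|+1$ vectors. Hence for $n$ large the hypothesis $|\pi| \le \delta_0 n$ gives $1 \le \dim\big((H_i')^{(i)}\big) \le \delta_0 n + 1 \le \delta_0^{(L)} n$, and Lemma \ref{lemma:lsv:distance} (whose conclusion is uniform in $i$ and in the deterministic subspace) applies on the conditional probability to yield $\Prob\big(\dist(C_i^{(i)},(H_i')^{(i)}) \le \eps\sqrt n\big) \le C\exp(-cn)$; averaging preserves this bound. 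A union bound over $i\in\pi$, together with $|\pi| \le \delta_0 n \le n$, then gives
\[ \Prob\Big(\min_{i\in\pi}\dist(C_i,H_i)\le\eps\sqrt n\Big)\le\sum_{i\in\pi}\Prob\big(\dist(C_i^{(i)},(H_i')^{(i)})\le\eps\sqrt n\big)\le Cn\exp(-cn), \]
after enlarging $C$ to cover the finitely many small values of $n$.

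The one genuinely delicate step is the independence assertion: it is crucial that the subspace $H_i$ in the statement is built from the \emph{full} columns $C_j$, because deleting the $i$-th coordinate from both $C_i$ and each $C_j$ removes precisely the mirrored pairs $(X_{ij},X_{ji})$---the sole channel of dependence between column $i$ and the other columns---as well as the diagonal entry $X_{ii}$. Everything after that is a routine combination of enlarging the subspace, conditioning, and a union bound on top of Lemma \ref{lemma:lsv:distance}.
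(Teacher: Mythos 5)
Your proof is correct and follows essentially the same route as the paper's: project away the $i$-th coordinate to restore independence between $C_i^{(i)}$ and the subspace, condition, apply Lemma~\ref{lemma:lsv:distance}, and union-bound over $i\in\pi$. The one genuine difference is that you augment the spanning set with the vector $\mathbf 1$ to guarantee $\dim\bigl((H_i')^{(i)}\bigr)\geq 1$ (the lemma requires a positive dimension, and the paper's $H_i^{(i)}$ could be $\{0\}$ in the degenerate case $u=0$, $|\pi|=1$); you also explicitly halve $\delta_0$ to absorb the $+1$ in the dimension count. Both are harmless refinements that the paper passes over, and your independence analysis is careful and correct.
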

	\begin{proof}
		We will apply Lemma \ref{lemma:lsv:distance} to control $\dist(C_i, H_i)$.  To this end, define $u^{(i)}$ to be the vector $u$ with the $i$-th entry removed, and set $H_i^{(i)} := \Span\left(\{C_j^{(i)} : j \in \pi, j \neq i \} \cup\{u^{(i)}\}\right)$.  Then
		\[ \dist(C_i, H_i) \geq \dist(C_i^{(i)}, H_i^{(i)}) \]
		for all $1 \leq i \leq n$.  Note that $H_i^{(i)}$ is independent of $C_i^{(i)}$.  Hence, conditioning on $H_i^{(i)}$ and applying Lemma \ref{lemma:lsv:distance}, we find the existence of $\eps, C, c, \delta_0$ such that for any $\pi \subset [n]$ with $|\pi| \leq \delta_0 n$ and all $i \in \pi$ 
		\begin{align*}
			\Prob \left( \dist(C_i, H_i) \leq {\eps} \sqrt{n} \right) \leq \Prob \left( \dist(C_i^{(i)}, H_i^{(i)}) \leq {\eps} \sqrt{n} \right) \leq C \exp(-c  n). 
		\end{align*}
		Therefore, by the union bound, 
		\[ \Prob \left( \min_{i \in \pi} \dist(C_i, H_i) \leq {\eps} \sqrt{n} \right) \leq \sum_{i \in \pi} \Prob \left( \dist(C_i, H_i) \leq {\eps} \sqrt{n} \right) \leq Cn \exp(-c n), \]
		and the proof is complete.  
	\end{proof}
	
	Let $\eps, \delta_0$ be as in Corollary \ref{cor:lsv:distance}.  From now on we set\begin{equation}\label{eq:taudef}
		\tau := \frac{1}{4} \min\left\{ 1, \frac{\eps}{s \sqrt{\delta}} \right\}.
	\end{equation}
	(Recall that $s \geq 1$ is the upper bound for $s_1(X+M)$ specified in the statement of Theorem \ref{thm:lsv}.)
	In particular, this definition implies that $\tau \leq 1/4$.  The parameter $\delta$ is still to be specified.  Right now we only assume that $\delta < \delta_0$.  
	
	\begin{lemma}[Control of compressible vectors] \label{lemma:lsv:comp}
		There exist constants $C, c, \delta > 0$ such that for any deterministic vector $u \in \mathbb{C}^n$ and any $s \geq 1$
		\[ \Prob \left( \inf_{x \in \Comp(\delta, \tau)} \|Ax - u \| \leq \frac{\eps}{2 \sqrt{\delta}}, s_1(A) \leq s \right) \leq C \exp(- c  n). \]
	\end{lemma}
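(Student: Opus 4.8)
The plan is to cover the set of compressible vectors by a short $\eps$-net and apply Corollary~\ref{cor:lsv:distance} together with a union bound. The key observation is that if $\|Ax-u\|$ is small and $s_1(A)\le s$, then $x$ cannot be too close to a sparse vector unless the columns of $A$ indexed by the support of the nearby sparse vector fail to be ``spread out'' in the sense of Corollary~\ref{cor:lsv:distance}.

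First I would reduce to the sparse case. Suppose $x\in\Comp(\delta,\tau)$, so there is $y\in\Sparse(\delta)\cap B(x,\tau)$; we may take $\|y\|\le 1$ and $\pi:=\supp(y)$ with $|\pi|\le\delta n$. If $s_1(A)\le s$ and $\|Ax-u\|\le \eps/(2\sqrt{\delta})$, then by the triangle inequality $\|Ay-u\|\le \eps/(2\sqrt{\delta})+s\tau\le \eps/(2\sqrt{\delta})+\eps/(4\sqrt{\delta})<\eps/\sqrt{\delta}$, using the definition \eqref{eq:taudef} of $\tau$ and $\tau\le \eps/(4s\sqrt{\delta})$. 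Writing $Ay=\sum_{j\in\pi}y_j C_j$, this says $\dist\bigl(\sum_{j\in\pi}y_jC_j,\, \C u\bigr)$ — and in particular $\dist\bigl(y_i C_i,\, H_i\bigr)$ for $H_i:=\Span(\{C_j:j\in\pi,j\neq i\}\cup\{u\})$ and \emph{any} $i\in\pi$ — is at most $\eps/\sqrt{\delta}$. Now some coordinate of the unit-ish vector $y$ restricted to $\pi$ has modulus at least $1/\sqrt{|\pi|}\ge 1/\sqrt{\delta n}$ (after first discarding the case $\|y\|$ much less than $1$, which one handles by noting $\|x\|=1$ forces $\|y\|\ge 1-\tau\ge 3/4$); choosing $i$ to be that coordinate gives $\dist(C_i,H_i)\le \sqrt{\delta n}\cdot(\eps/\sqrt{\delta})\cdot(\text{const})$... so I need to be a bit more careful with constants. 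Better: pick $i\in\pi$ with $|y_i|\ge 3/(4\sqrt{\delta n})$; then $\dist(C_i,H_i)\le \|y_iC_i - (\text{element of }H_i)\|/|y_i| \le (\eps/\sqrt{\delta})/(3/(4\sqrt{\delta n})) = \tfrac{4}{3}\eps\sqrt{n}$. Rescaling $\eps$ at the outset (shrink the $\eps$ of Corollary~\ref{cor:lsv:distance} by a constant factor, which is harmless) we land inside the event $\min_{i\in\pi}\dist(C_i,H_i)\le \eps\sqrt{n}$ of that corollary.

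Next I would take a union over the choice of $\pi$. Since $|\pi|\le\delta n$, there are at most $\sum_{k\le\delta n}\binom nk\le \exp(H(\delta)n)$ such sets, where $H(\delta)\to 0$ as $\delta\to 0$. For each fixed $\pi$, Corollary~\ref{cor:lsv:distance} (applied with this $\pi$ and the given $u$) gives probability at most $Cn\exp(-cn)$ that $\min_{i\in\pi}\dist(C_i,H_i)\le\eps\sqrt{n}$. Hence the total probability is at most $\exp(H(\delta)n)\cdot Cn\exp(-cn)$, which is $\le C\exp(-c'n)$ once $\delta$ is chosen small enough that $H(\delta)<c/2$ (and still $\delta<\delta_0$). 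This fixes the value of $\delta$, and the statement follows.

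The main obstacle is purely bookkeeping: tracking the constants through the net argument so that the ``$\min_{i\in\pi}$'' form needed by Corollary~\ref{cor:lsv:distance} really is implied, in particular handling the possibility that $y$ has small norm or that its mass on $\pi$ is spread thinly, and absorbing the resulting multiplicative constants by a preliminary rescaling of $\eps$. There is no genuine probabilistic difficulty beyond Corollary~\ref{cor:lsv:distance} and the crude entropy bound $|\{\pi\}|\le e^{H(\delta)n}$; the one subtlety worth stating explicitly is that $\delta$ must be chosen \emph{after} $\eps,\delta_0,c$ are fixed, small enough to beat the entropy factor.
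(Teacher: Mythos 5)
Your proof is correct and follows essentially the same route as the paper: decompose $\Comp(\delta,\tau)$ by the support $\pi$ of a nearby sparse vector, use the triangle inequality plus $\|A\|\le s$ and the definition of $\tau$ to pass from $\|Ax-u\|$ small to $\|Ay-u\|$ small, pigeonhole a coordinate $|y_i|\gtrsim 1/\sqrt{\delta n}$ to get $\min_{i\in\pi}\dist(C_i,H_i)\lesssim\eps\sqrt n$, apply Corollary~\ref{cor:lsv:distance}, and kill the $\binom{n}{\lfloor\delta n\rfloor}$ entropy factor by shrinking $\delta$. The only cosmetic difference is that the paper uses the tighter bound $\|Ay-u\|\le 3\eps/(4\sqrt\delta)$, which lands on exactly $\eps\sqrt n$ without your preliminary rescaling of $\eps$; your version with the looser $\eps/\sqrt\delta$ and the rescaling is equally valid.
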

	\begin{proof}
		Let $0 < \delta < \delta_0$ be a constant to be chosen later.  We decompose $\Comp(\delta, \tau)$ as
		\[ \Comp(\delta, \tau) = \bigcup_{\pi \subset [n] : |\pi| = \lfloor \delta n \rfloor} S_{\pi}, \]
		where
		\[ S_{\pi} := \{x \in \Comp(\delta, \tau) : \dist(x, \Sparse_\pi(\delta)) \leq \tau \} \]
		and
		\[ \Sparse_\pi(\delta) := \{ y \in \Sparse(\delta) : \supp(y) \subset \pi \}. \]
		So by the union bound, 
		\begin{align}
			\Prob &\left( \inf_{x \in \Comp(\delta, \tau)} \|Ax - u \| \leq \frac{\eps}{2 \sqrt{\delta}}, s_1(A) \leq s \right) \label{eq:ubdist}\\
			&\qquad\qquad\leq \sum_{\pi \in [n] : |\pi| = \lfloor \delta n \rfloor} \Prob \left( \inf_{x \in S_{\pi}} \|Ax - u \| \leq \frac{\eps}{2 \sqrt{\delta}}, s_1(A) \leq s \right). \nonumber
		\end{align}
		Fix $\pi \subset [n]$ with $|\pi| = \lfloor \delta n \rfloor$, and suppose there exists $x \in S_{\pi}$ such that $\|Ax - u \| \leq \frac{\eps}{2 \sqrt{\delta}}$ and $s_1(A) \leq s$.  Then there exists $y \in \Sparse_{\pi}(\delta)$ with $\|x - y \| \leq \tau$.  In particular, this implies that $\|y\| \geq 3/4$.  In addition, we have
		\begin{align*}
			\frac{\eps}{2 \sqrt{\delta}} \geq \|Ax - u \| \geq \|A y - u \| - \tau \|A \| \geq \|A y - u \| - \frac{\eps}{4 \sqrt{\delta}} 
		\end{align*}
		by the assumption that $\|A \| \leq s$ and the definition of $\tau$ (\ref{eq:taudef}).  Hence, we obtain
		\begin{equation} \label{eq:34Ayu}
			\| Ay - u \| \leq \frac{3 \eps}{4 \sqrt{\delta}}. 
		\end{equation}
		We now bound $\| Ay - u \|$ from below.  Indeed, we have
		\begin{align} \label{eq:Ayubnd}
			\|A y - u \|^2 &= \left\| \sum_{i \in \pi} C_i y_i - u \right\|^2  \geq \max_{i \in \pi} |y_i|^2 \dist^2(C_i, H_i), 
		\end{align}
		where $H_i := \Span \left(\{C_j : j \in \pi, j \neq i \}\cup\{u \}\right)$.  In addition, we bound 
		\begin{align*}
			\max_{i \in \pi} |y_i|^2 \dist^2(C_i, H_i) \geq \min_{i \in \pi} \dist^2(C_i, H_i) \frac{1}{|\pi|} \sum_{i \in \pi} |y_i|^2 \geq \min_{i \in \pi} \dist^2(C_i, H_i) \left( \frac{3}{4} \right)^2 \frac{1}{\delta n}.   
		\end{align*}
		Thus, combining \eqref{eq:34Ayu} and \eqref{eq:Ayubnd} with the bound above, we find 
		\[ \min_{i \in \pi} \dist(C_i, H_i) \leq \eps \sqrt{n}. \]
		To conclude, we have shown that
		\[ \Prob \left( \inf_{x \in S_{\pi}} \|Ax - u \| \leq \frac{\eps}{2 \sqrt{\delta}}, s_1(A) \leq s \right) \leq \Prob \left( \min_{i \in \pi} \dist(C_i, H_i) \leq \eps \sqrt{n} \right). \]
		In view of Corollary \ref{cor:lsv:distance}, there exist $C, c > 0$ such that for any $\pi \in [n]$ with $|\pi| = \lfloor \delta n \rfloor$, we have
		\[ \Prob \left( \inf_{x \in S_{\pi}} \|Ax - u \| \leq \frac{\eps}{2 \sqrt{\delta}}, s_1(A) \leq s \right) \leq Cn \exp(-c n). \]
		Returning to \eqref{eq:ubdist}, we conclude that
		\begin{align*}
			\Prob \left( \inf_{x \in \Comp(\delta, \tau)} \|Ax - u \| \leq \frac{\eps }{2 \sqrt{\delta}}, s_1(A) \leq s \right) &\leq \binom{n}{\lfloor \delta n \rfloor} Cn \exp(-c n) \\
			&\leq \left( \frac{ n e }{\lfloor \delta n \rfloor} \right)^{\lfloor \delta n \rfloor} C n \exp(-c  n) \\
			&\leq C' \exp(-c' n)
		\end{align*}
		for some constants $C', c' > 0$ by taking $\delta$ sufficiently small (in terms of $c$).  
	\end{proof}
	
	We now fix $\delta$ to be the constant from Lemma \ref{lemma:lsv:comp}.  Thus, $\delta$ and $\tau$ are now completely determined.  We will also need the following corollary of Lemma \ref{lemma:lsv:comp}.
	\begin{corollary}
		There exist constants $C, c > 0$ such that for any deterministic vector $u \in \mathbb{C}^n$ and any $s \geq 1$
		\begin{equation} \label{eq:xxcomp}
			\Prob \left( \inf_{x/\|x\| \in \Comp(\delta, \tau)} \frac{\|Ax - u \|}{\|x\|} \leq \frac{\eps}{4 \sqrt{\delta}}, s_1(A) \leq s \right) \leq Cs \exp(- c  n). 
		\end{equation}
	\end{corollary}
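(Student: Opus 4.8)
The plan is to deduce \eqref{eq:xxcomp} from Lemma \ref{lemma:lsv:comp} by a scaling reduction together with a one-dimensional net argument in the radial variable. Writing $y := x/\|x\|$ and $t := 1/\|x\| > 0$, we have $\|Ax-u\|/\|x\| = \|Ay - tu\|$, so the event in \eqref{eq:xxcomp} is contained in
\[
\bigcup_{t>0}\left\{\inf_{y\in\Comp(\delta,\tau)}\|Ay-tu\|\le\frac{\eps}{4\sqrt{\delta}},\ s_1(A)\le s\right\}.
\]
We may assume $u\neq 0$, since for $u=0$ the bound is immediate from Lemma \ref{lemma:lsv:comp}. The idea is to split the range of $t$ into three regimes and control each separately.

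For small $t$, namely $t\le\frac{\eps}{4\sqrt{\delta}\|u\|}$, note that on the event above $\|Ay\|\le\|Ay-tu\|+t\|u\|\le\frac{\eps}{2\sqrt{\delta}}$ for the witnessing unit vector $y$, so this regime is contained in $\{\inf_{y\in\Comp(\delta,\tau)}\|Ay\|\le\frac{\eps}{2\sqrt{\delta}},\ s_1(A)\le s\}$, which has probability at most $C\exp(-cn)$ by Lemma \ref{lemma:lsv:comp} applied with the zero vector. For large $t$, namely $t>\frac{1}{\|u\|}\bigl(s+\frac{\eps}{4\sqrt{\delta}}\bigr)$, on $\{s_1(A)\le s\}$ one has $\|Ay-tu\|\ge t\|u\|-\|A\|>\frac{\eps}{4\sqrt{\delta}}$, so this regime is empty and contributes nothing.

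It remains to handle $t$ in the bounded interval $\bigl[\frac{\eps}{4\sqrt{\delta}\|u\|},\frac{1}{\|u\|}\bigl(s+\frac{\eps}{4\sqrt{\delta}}\bigr)\bigr]$, whose length is exactly $s/\|u\|$. Here I would take a net $t_1<\cdots<t_N$ of this interval with consecutive gaps at most $\frac{\eps}{8\sqrt{\delta}\|u\|}$, so that $N=O(s)$, and observe that each $t_k u$ is a deterministic vector. Given any admissible $t$, choosing $t_k$ nearest to $t$ gives $\|Ay-t_ku\|\le\|Ay-tu\|+|t-t_k|\,\|u\|\le\frac{\eps}{4\sqrt{\delta}}+\frac{\eps}{8\sqrt{\delta}}<\frac{\eps}{2\sqrt{\delta}}$, so a union bound over $k$ together with Lemma \ref{lemma:lsv:comp} (applied to each deterministic vector $t_ku$) bounds this regime by $N\cdot C\exp(-cn)$. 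Adding the three contributions and using $N+1=O(s)$ and $s\ge1$ yields the claimed bound $Cs\exp(-cn)$. The only point requiring a little care is keeping the net cardinality linear in $s$ rather than $s\log s$ — this is why the net is taken uniformly in $t$ (equivalently, uniformly in $1/\|x\|$) rather than geometrically — though even a cruder count would suffice after absorbing logarithmic factors into the exponential.
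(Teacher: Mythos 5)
Your proof is correct and is essentially the same argument as the paper's: both reduce the infimum over all scales to a one-dimensional $O(s)$-net over the radial parameter $1/\|x\|$, invoke Lemma \ref{lemma:lsv:comp} for each deterministic shift, and conclude by a union bound. The paper handles your small-$t$ and middle-$t$ regimes in one stroke by taking the net over the whole interval $[-\eps/(4\sqrt{\delta})-s,\ \eps/(4\sqrt{\delta})+s]$ for the scalar $c_0$ with $u_0 = c_0 u/\|u\|$, and excludes your large-$t$ regime by the same observation that $\|u_0\|\le s+\eps/(4\sqrt{\delta})$, so the two arguments coincide in substance.
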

	\begin{proof}
		The proof is based on the arguments given in \cite{Rsym}.  We first note that if $u = 0$, then the claim follows immediately from Lemma \ref{lemma:lsv:comp}.  Assume $u \neq 0$.  Let $\mathcal{E}$ denote the event on the left-hand side of \eqref{eq:xxcomp} whose probability we would like to bound.  Suppose that $\mathcal{E}$ holds.  Then there exists $x_0 := x / \|x\| \in \Comp(\delta, \tau)$ and $u_0 := u / \|x\| \in \Span(u)$ such that $\|A x_0 - u_0 \| \leq \frac{\eps}{4 \sqrt{\delta}}$ and $s_1(A) \leq s$.  In particular, this implies that
		\[ \| u_0 \| \leq \|Ax_0 - u_0 \| + \|A x_0\| \leq \frac{\eps}{4 \sqrt{\delta}} + s. \]
		Let $\mathcal{N}$ be a $\frac{\eps}{4 \sqrt{\delta}}$-net of the real interval $[-\frac{\eps}{4 \sqrt{\delta}} - s, \frac{\eps}{4 \sqrt{\delta}} + s]$.  In particular, we can choose $\mathcal{N}$ so that
		\begin{equation} \label{eq:Nrealbnd}
			|\mathcal{N}| \leq C' s 
		\end{equation}
		for some constant $C' > 0$.  Here, we have used the assumption that $s \geq 1$.  In particular, there exists $c_0 \in \mathcal{N}$ such that
		\[ \left\| \frac{u}{\|x\|} - c_0 \frac{u}{\|u\|} \right\| \leq \frac{\eps}{4 \sqrt{\delta}}. \]
		By the triangle inequality, this implies that
		\[ \left\| Ax_0 - c_0 \frac{u}{\|u\|} \right\| \leq \frac{\eps}{2 \sqrt{\delta}}. \]
		To conclude, we have shown that 
		\[ \Prob(\mathcal{E}) \leq \Prob \left( \inf_{c_0 \in \mathcal{N}} \inf_{x_0 \in \Comp(\delta, \tau)} {\left\|Ax_0 - c_0 \frac{u}{\|u\|} \right\|} \leq \frac{\eps}{2 \sqrt{\delta}}, s_1(A) \leq s \right), \]
		and thus, by the union bound, we have
		\[ \Prob(\mathcal{E}) \leq \sum_{c_0 \in \mathcal{N}} \Prob \left( \inf_{x_0 \in \Comp(\delta, \tau)} {\left\|Ax_0 - c_0 \frac{u}{\|u\|} \right\|} \leq \frac{\eps}{2 \sqrt{\delta}}, s_1(A) \leq s \right). \]
		The claim now follows by the cardinality bound for $\mathcal{N}$ in \eqref{eq:Nrealbnd} and Lemma \ref{lemma:lsv:comp}.  
	\end{proof}
	
	When dealing with incompressible vectors we will need the following corollary.  
	
	\begin{corollary} \label{cor:lsv:inversestruct}
		There exists constants $C, c > 0$, such that, for any $u \in \C^{n}$ with $u \neq 0$, and any $s \geq 1$, we have
		\[ \Prob \left( \frac{  A^{-1} u }{ \| A^{-1} u \| } \in \Comp(\delta, \tau), s_1(A) \leq s \right) \leq C s \exp(-c n). \]
	\end{corollary}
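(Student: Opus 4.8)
The plan is to deduce Corollary~\ref{cor:lsv:inversestruct} immediately from the bound \eqref{eq:xxcomp} obtained in the preceding corollary, exploiting the fact that $A^{-1}u$ is an \emph{exact} solution of the linear system $Ax=u$. Suppose the event whose probability we wish to estimate holds, and set $x:=A^{-1}u$. Under the standing reduction of this section that $A$ is invertible, and since $u\neq 0$ by hypothesis, the vector $x$ is well defined and nonzero; moreover $x/\|x\| = A^{-1}u/\|A^{-1}u\| \in \Comp(\delta,\tau)$ by assumption, and $Ax=u$ gives $\|Ax-u\|/\|x\| = 0 \le \eps/(4\sqrt{\delta})$. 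Hence the event in Corollary~\ref{cor:lsv:inversestruct} (together with $\{s_1(A)\le s\}$) is contained in the event appearing on the left-hand side of \eqref{eq:xxcomp}, and the probability bound $Cs\exp(-cn)$ transfers verbatim.

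If one prefers an argument that does not invoke \eqref{eq:xxcomp} as a black box, the same conclusion follows by repeating the proof of the preceding corollary directly: writing $v:=A^{-1}u$ and $c^{\ast}:=\|u\|/\|v\|$, one has $A(v/\|v\|) = c^{\ast}\, u/\|u\|$, and on the event $\{s_1(A)\le s\}$ the bound $\|v\|=\|A^{-1}u\| \ge \|u\|/s_1(A) \ge \|u\|/s$ forces $c^{\ast}\in(0,s]$. Choosing a net $\mathcal{N}$ of $[0,s]$ of mesh $\eps/(4\sqrt{\delta})$ and cardinality $O(s)$ (using $s\ge 1$), one finds $c_0\in\mathcal{N}$ with $\|A(v/\|v\|) - c_0\, u/\|u\|\| \le \eps/(2\sqrt{\delta})$; thus the event is contained in $\bigcup_{c_0\in\mathcal{N}}\bigl\{\inf_{x_0\in\Comp(\delta,\tau)} \|Ax_0 - c_0\, u/\|u\|\| \le \eps/(2\sqrt{\delta}),\ s_1(A)\le s\bigr\}$, and a union bound over $\mathcal{N}$ followed by Lemma~\ref{lemma:lsv:comp} yields the stated estimate.

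The only point requiring care is the role of the event $\{s_1(A)\le s\}$: it is precisely what keeps the normalizing scalar $c^{\ast}=\|u\|/\|A^{-1}u\|$ in a bounded range, so that the discretizing net has size $O(s)$ rather than being infinite. Beyond this bookkeeping there is no substantive obstacle; the content of the corollary is entirely that, after normalization, an exact solution of $Ax=u$ is the unit vector $A^{-1}u/\|A^{-1}u\|$, which allows the compressibility estimate for approximate solutions to be applied.
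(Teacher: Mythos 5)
Your first paragraph reproduces the paper's proof exactly: set $x=A^{-1}u$, observe that $\|Ax-u\|/\|x\|=0$ while $x/\|x\|\in\Comp(\delta,\tau)$, so the event is contained in the one controlled by \eqref{eq:xxcomp}, and the bound transfers. This is correct and is precisely how the paper argues. Your second paragraph is a valid alternative that unwinds \eqref{eq:xxcomp} rather than quoting it, noting the extra simplification that the scalar $c^{\ast}=\|u\|/\|A^{-1}u\|$ is nonnegative (so only a net of $[0,s]$ is needed, rather than the symmetric interval used in the proof of \eqref{eq:xxcomp}); this is fine but adds nothing over the one-line reduction, since the whole point of isolating \eqref{eq:xxcomp} is to cover both the approximate and the exact ($=0$) case in a single statement.
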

	\begin{proof}
		If $x := A^{-1} u$, then $(Ax - u)/\|x\| = 0$.  Thus, we have
		\begin{align*}
			\Prob &\left( \frac{  A^{-1} u }{ \| A^{-1} u \| } \in \Comp(\delta, \tau), s_1(A) \leq s \right) \\
			&\qquad\qquad\leq \Prob \left( \inf_{x / \|x\| \in \Comp(\delta, \tau)} \frac{ \|A x - u \|}{\|x\|} = 0, s_1(A) \leq s \right). 
		\end{align*}
		The conclusion now follows from \eqref{eq:xxcomp}.  
	\end{proof}

	\subsection{Anti-concentration bounds} \label{sec:anticonc}
	
	In order to handle incompressible vectors, we will need several anti-concentration bounds.  The main idea is to use the rate of convergence from the Berry--Esseen Theorem to obtain the estimates.  This idea appears to have originated in \cite{LPRT} and has been used previously in many works including \cite{BC, LPRT, RVLO}.  
	
	\begin{lemma}[Small ball probability via Berry--Esseen; Lemma A.6 from \cite{BC}] \label{lemma:lsv:BE}
		There exists $C > 0$ such that if $Z_1, \ldots, Z_n$ are independent centered complex-valued random variables, then for all $t \geq 0$, 
		\[ \sup_{z \in \C} \Prob \left( \left| \sum_{i=1}^n Z_i - z \right| \leq t \right) \leq C \left( \frac{t}{\sqrt{ \sum_{i=1}^n \E|Z_i|^2} } + \frac{ \sum_{i=1}^n \E|Z_i|^3}{ \left( \sum_{i=1}^n \E|Z_i|^2 \right)^{3/2}} \right). \]
	\end{lemma}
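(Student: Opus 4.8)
The plan is to deduce this complex-valued anti-concentration estimate from the classical one-dimensional Berry--Esseen theorem for sums of independent (not necessarily identically distributed) random variables, after projecting onto a suitably chosen real line. Write $S := \sum_{i=1}^n Z_i$ and $v := \sum_{i=1}^n \E|Z_i|^2$. If $v = 0$ then every $Z_i$ vanishes almost surely and there is nothing to prove, and if $\sum_{i=1}^n \E|Z_i|^3 = \infty$ the asserted bound is vacuous; so I may assume $0 < v < \infty$.

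First I would pass to a real projection. For $\theta \in \RR$ put $W_i^{(\theta)} := \Re(e^{-i\theta} Z_i)$. These are independent, real-valued, and centered (because $\E Z_i = 0$), and they satisfy $|W_i^{(\theta)}| \le |Z_i|$, hence $\E|W_i^{(\theta)}|^3 \le \E|Z_i|^3$. The role of the free parameter $\theta$ is to keep the variance of the projected sum comparable to $v$: since $\frac{1}{2\pi}\int_0^{2\pi} \Re(e^{-i\theta} w)^2\, d\theta = \tfrac12 |w|^2$ for every $w \in \C$, averaging over $\theta$ gives
\[
\frac{1}{2\pi}\int_0^{2\pi} \sum_{i=1}^n \E\big[(W_i^{(\theta)})^2\big]\, d\theta = \tfrac12 v,
\]
so I can fix $\theta_0$ --- one may simply take $\theta_0 \in \{0, \pi/2\}$, i.e.\ the real or the imaginary part --- with $\sigma^2 := \sum_{i=1}^n \E\big[(W_i^{(\theta_0)})^2\big] \ge \tfrac12 v > 0$. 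Writing $W_i := W_i^{(\theta_0)}$ and using $|\Re(e^{-i\theta_0}(S - z))| \le |S - z|$, I obtain, for every $z \in \C$ and with $a := \Re(e^{-i\theta_0} z)$,
\[
\Prob(|S - z| \le t) \le \Prob\Big( \Big| \sum_{i=1}^n W_i - a \Big| \le t \Big).
\]

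Next I would apply the Berry--Esseen theorem to the independent centered real random variables $W_1, \dots, W_n$: there is an absolute constant $C_0 > 0$ such that $\sup_{x \in \RR} \big| \Prob\big( \sigma^{-1} \sum_i W_i \le x \big) - \Phi(x) \big| \le C_0 \sigma^{-3} \sum_i \E|W_i|^3$, where $\Phi$ is the standard normal distribution function. Writing the probability of an interval of length $2t$ as a difference of two values of the distribution function, and bounding the corresponding Gaussian interval probability by $(2t/\sigma)(2\pi)^{-1/2} \le t/\sigma$, this gives
\[
\Prob\Big( \Big| \sum_{i=1}^n W_i - a \Big| \le t \Big) \le \frac{t}{\sigma} + \frac{2 C_0}{\sigma^3} \sum_{i=1}^n \E|W_i|^3.
\]
Substituting $\sigma^2 \ge \tfrac12 v$ and $\E|W_i|^3 \le \E|Z_i|^3$ then yields
\[
\Prob(|S - z| \le t) \le \frac{\sqrt{2}\, t}{\sqrt{v}} + \frac{2^{5/2} C_0}{v^{3/2}} \sum_{i=1}^n \E|Z_i|^3
\]
uniformly in $z$, which is the claimed inequality with $C := \max\{\sqrt{2},\, 2^{5/2} C_0\}$.

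The only real obstacle here is the variance control in the second step: a careless projection could kill most of the mass $v$ (for instance if almost all of the variance sits in the imaginary part and one projects onto the real axis), making the Berry--Esseen bound useless. The averaging identity above --- equivalently, the pigeonhole observation $\sum_i \E(\Re Z_i)^2 + \sum_i \E(\Im Z_i)^2 = v$, so one of the two summands is at least $v/2$ --- is exactly what removes this difficulty; once the projection is fixed, the rest is the standard Berry--Esseen small-ball computation.
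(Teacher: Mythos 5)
Your proof is correct. The paper itself does not give a proof of this lemma — it simply cites Lemma A.6 of \cite{BC} — but your argument (project onto whichever of the real or imaginary axes carries at least half the total variance, pass from the complex small ball to a real one via $|\Re(e^{-i\theta_0}(S-z))|\leq |S-z|$, then apply the non-i.i.d.\ Berry--Esseen theorem and bound the Gaussian interval probability by the density bound $1/\sqrt{2\pi}$) is the standard derivation and is essentially the one used in \cite{BC}. The reduction steps ($v=0$, infinite third moments) are handled correctly and the constant tracking is fine.
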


	We begin the following anti-concentration bound for sums involving dependent random variables.  
	
	\begin{lemma} \label{lemma:lsv:anticor}
		Let $\{(\xi_i, \psi_i) : 1 \leq i \leq n\}$ be a collection of independent complex-valued random tuples, and assume there exist $a, b, \sigma > 0$ such that the events
		\[ \mathcal{E}_i := \{ |\xi_i| \leq a, |\psi_i| \leq a \} \]
		for $1 \leq i \leq n$ satisfy 
		\[ b \leq \min_{1 \leq i \leq n} \Prob(\mathcal{E}_i), \qquad \sigma^2 \leq \min_{1 \leq i \leq n} \var( \xi_i \mid \mathcal{E}_i), \qquad \sigma^2 \leq \min_{1 \leq i \leq n} \var( \psi_i \mid \mathcal{E}_i). \]
		In addition, assume there exists $\rho < 1$ such that
		\begin{equation} \label{eq:assumprhocorr}
			\max_{1 \leq i \leq n} | \corr( \xi_i \mid \mathcal{E}_i, \psi_i \mid \mathcal{E}_i ) | \leq \rho. 
		\end{equation}
		Then for any $\delta, \tau \in (0,1)$, any $w=(w_i)_{i=1}^n \in \Incomp(\delta, \tau)$, any $w' = (w'_i)_{i=1}^n \in \C^n$ with $\| w' \| \leq 1$, any $J \subset [n]$ with $|J| \geq n(1-\delta/4)$, and any $t \geq 0$, we have
		\begin{align*}
			\sup_{z \in \C} \Prob &\left( \left| \sum_{i \in J} (\xi_i w_i + \psi_i w_i') - z \right| \leq t \tau \right) \\
			&\leq \frac{C}{\sigma \sqrt{\delta b (1 - \rho)}} \sqrt{ \left \lceil \frac{1}{2} \log_2 \left( \frac{2}{\tau^2 \delta} \right) \right \rceil \left \lceil \log_2 \left( \frac{\sqrt{n}}{\tau} \right) \right \rceil } \left( t + \frac{a}{\sqrt{n}} \right) + \exp(-\delta n b / 32), 
		\end{align*}
		where $C > 0$ is an absolute constant. 
	\end{lemma}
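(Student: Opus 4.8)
The plan is to reduce the sum $\sum_{i \in J}(\xi_i w_i + \psi_i w_i')$ to a sum over a random but large subset of coordinates on which all magnitudes are comparable, and then to apply the Berry--Esseen small ball estimate of Lemma \ref{lemma:lsv:BE}. Write $Z_i := \xi_i w_i + \psi_i w_i'$. First I would invoke the spreading property: since $w \in \Incomp(\delta,\tau)$, Lemma \ref{lemma:lsv:spread} gives a set $\pi \subset [n]$ with $|\pi| \geq \delta n/2$ on which $\tau/\sqrt{n} \leq |w_i| \leq \sqrt{2/(\delta n)}$, and intersecting with $J$ (which omits at most $\delta n/4$ indices) leaves $\pi' := \pi \cap J$ with $|\pi'| \geq \delta n/4$. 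Next, the indicators $\mathbf{1}_{\mathcal{E}_i}$ for $i \in \pi'$ are independent with mean at least $b$, so a Chernoff bound shows that with probability at least $1 - \exp(-\delta n b/32)$ the random set $I := \{ i \in \pi' : \mathcal{E}_i \text{ holds}\}$ satisfies $|I| \geq \delta n b/8$; on the complementary event we bound the small ball probability trivially by $1$, which yields the additive $\exp(-\delta n b/32)$ term in the statement.

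Working on the event $\{|I| \geq \delta n b/8\}$, I would condition on the $\sigma$-algebra generated by the indicators $(\mathbf{1}_{\mathcal{E}_j})_{j \in [n]}$ and by the pairs $(\xi_j, \psi_j)_{j \notin I}$; under this conditioning the pairs $(\xi_i, \psi_i)$ with $i \in I$ are independent, each distributed as $(\xi_i, \psi_i)$ conditioned on $\mathcal{E}_i$, and $\sum_{i \in J \setminus I} Z_i$ is a constant that can be absorbed into the shift $z$. Since the magnitudes $|w_i|$ and $|w_i'|$ may still vary over a wide range, I would split $I$ into dyadic classes according to the scale of $|w_i|$ and of $|w_i'|$: because $|w_i| \in [\tau/\sqrt{n}, \sqrt{2/(\delta n)}]$ and $|w_i'| \leq \|w'\| \leq 1$ (lumping $|w_i'| < \tau/\sqrt{n}$, where $|w_i'|^2 \leq |w_i|^2$, into a single class), the number of classes is at most $L := C\lceil \tfrac12\log_2(2/(\tau^2\delta))\rceil \lceil \log_2(\sqrt{n}/\tau)\rceil$. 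Choosing the largest class $G \subseteq I$ gives $|G| \geq |I|/L \geq \delta n b/(8L)$, and for $i \in G$ all quantities $|w_i|^2 + |w_i'|^2$ lie within a factor $4$ of a common value $\beta^2$, with $\beta^2 \geq \tau^2/(4n)$ since $|w_i|^2 \geq \tau^2/n$.

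It remains to estimate the small ball probability of $\sum_{i \in G} Z_i$. The key deterministic input is that, for $i \in G$,
$$\var(Z_i \mid \mathcal{E}_i) \geq |w_i|^2\var(\xi_i \mid \mathcal{E}_i) + |w_i'|^2\var(\psi_i \mid \mathcal{E}_i) - 2\rho |w_i||w_i'|\sqrt{\var(\xi_i \mid \mathcal{E}_i)\var(\psi_i \mid \mathcal{E}_i)} \geq (1-\rho)\sigma^2(|w_i|^2 + |w_i'|^2),$$
which follows from the Cauchy--Schwarz bound $|\cov(\xi_i, \psi_i \mid \mathcal{E}_i)| \leq \rho\sqrt{\var(\xi_i \mid \mathcal{E}_i)\var(\psi_i \mid \mathcal{E}_i)}$ and the arithmetic--geometric mean inequality; here the hypothesis $\rho < 1$ is essential. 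Moreover $|Z_i - \E[Z_i \mid \mathcal{E}_i]| \leq 4\sqrt{2}\,a\beta$ on $\mathcal{E}_i$. Applying Lemma \ref{lemma:lsv:BE} to the centered variables $(Z_i - \E[Z_i \mid \mathcal{E}_i])_{i \in G}$ at scale $t\tau$, using $\sum_{i \in G}\var(Z_i \mid \mathcal{E}_i) \geq (1-\rho)\sigma^2|G|\beta^2$ together with the third-moment bound $\sum_{i \in G}\E|Z_i - \E[Z_i \mid \mathcal{E}_i]|^3 \leq 4\sqrt{2}\,a\beta\sum_{i \in G}\var(Z_i \mid \mathcal{E}_i)$, gives a conditional small ball bound of order $\frac{C}{\sigma\sqrt{1-\rho}\,\sqrt{|G|}}(t\tau/\beta + a)$; since $\tau/\beta \leq 2\sqrt{n}$ and $|G| \geq \delta n b/(8L)$ this is at most $\frac{C}{\sigma\sqrt{\delta b(1-\rho)}}\sqrt{L}(t + a/\sqrt{n})$. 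Taking expectations over the conditioning and adding back the $\exp(-\delta n b/32)$ term completes the proof.

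The main obstacle is the third-moment control required by Lemma \ref{lemma:lsv:BE}: the crude bound $\E|Z_i - \E Z_i|^3 \leq (\sup|Z_i - \E Z_i|)^3$ would spoil the estimate by an extra power of $a^2/((1-\rho)\sigma^2)$ unless the magnitudes $|w_i|^2 + |w_i'|^2$ are essentially constant over the coordinates used — which is precisely what the double dyadic decomposition arranges, at the cost of the two logarithmic factors (and the square root, which reflects Berry--Esseen's $|G|^{-1/2}$). The accompanying difficulty is the per-coordinate variance lower bound above, which is where the non-degeneracy assumption $\rho < 1$ genuinely enters and which must be handled with care for complex-valued variables (the cross term $2\Re(w_i\bar{w}_i'\cov(\xi_i,\psi_i\mid\mathcal{E}_i))$ is controlled via $|\cov|\le\rho\sqrt{\var\,\var}$ and then absorbed by arithmetic--geometric mean).
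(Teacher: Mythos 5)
Your proof is correct and takes essentially the same route as the paper's: spread to a subset of $J$, Chernoff to obtain $\gtrsim\delta b n$ good indices (giving the additive $\exp(-\delta n b/32)$ term), condition on the bad part and center, dyadic double decomposition in $|w_i|$ and $|w_i'|$ with pigeonhole, per-coordinate variance lower bound $(1-\rho)\sigma^2(|w_i|^2+|w_i'|^2)$ via Cauchy--Schwarz and AM--GM, and Berry--Esseen on the dominant class. The only cosmetic differences are that the paper performs the two pigeonholes sequentially (first on $|w_i|$ to get $I_j$, then on $|w_i'|$ to get $I_{j,k}$) and handles the $|w_i'|<\tau/\sqrt n$ bucket as a separate case, whereas you take a single pigeonhole over the product partition and absorb that bucket via $|w_i'|^2\le|w_i|^2$ — both yield the same $\sqrt{LK}$ factor up to constants.
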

	\begin{proof}
		The proof is based on the arguments from \cite{BC}.  Since $w \in \Incomp(\delta, \tau)$ and $|J| \geq n(1-\delta/4)$, Lemma \ref{lemma:lsv:spread} implies the existence of $\pi \subset J$ such that $|\pi| \geq \delta n /4$ and
		\[ \frac{ \tau}{\sqrt{n}} \leq |w_i| \leq \sqrt{\frac{2}{\delta n}} \]
		for all $i \in \pi$.  By conditioning on the random variables $\xi_i, \psi_i$ for $i \not\in \pi$ and absorbing their contribution into the constant $z$, it suffices to bound
		\[ \sup_{z \in \C} \Prob \left( \left| \sum_{i \in \pi} (\xi_i w_i + \psi_i w_i') - z \right| \leq t \tau \right). \]
		
		We now proceed to truncate the random variables $\xi_i, \psi_i$ for $i \in \pi$.  Indeed, by the Chernoff bound, it follows that
		\[ \sum_{i \in \pi} \oindicator{\mathcal{E}_i} \geq \frac{ |\pi| b}{2} \geq \frac{\delta b n}{8} \]
		with probability at least $1 - \exp(-\delta b n / 32)$.  Therefore, taking $m := \lceil \delta b n / 8 \rceil$, it suffices to show
		\begin{align*}
			\sup_{z \in \C} \Prob_m &\left( \left| \sum_{i =1}^m (\xi_i w_i + \psi_i w_i') - z \right| \leq t \tau \right) \\
			&\leq \frac{C}{\sigma \sqrt{\delta b (1 - \rho)}} \sqrt{ \left \lceil \frac{1}{2} \log_2 \left( \frac{2}{\tau^2 \delta} \right) \right \rceil \left \lceil \log_2 \left( \frac{\sqrt{n}}{\tau} \right) \right \rceil } \left( t + \frac{a}{\sqrt{n}} \right), 
		\end{align*}
		where $\Prob_m(\cdot) := \Prob(\cdot \vert E_m, \mathcal{F}_m)$ is the conditional probability given $\mathcal{F}_m$, the $\sigma$-algebra generated by all random variables except $\xi_1, \ldots, \xi_m, \psi_1, \ldots, \psi_m$, and the event
		\[ E_m := \left\{ |\xi_i| \leq a, |\psi_i| \leq a : 1 \leq i \leq m \right\} \bigcap \left\{ \frac{ \tau}{\sqrt{n}} \leq |w_i| \leq \sqrt{\frac{2}{\delta n}} : 1 \leq i \leq m\right\}.\footnote{When defining this event, we consider the $w_i$ to be constant random variables.} \]
		By centering the random variables  (and absorbing the expectations into the constant $z$), it suffices to bound
		\[ \sup_{z \in \C} \Prob_m \left( \left| \sum_{i=1}^m \left[ (\xi_i - \E_m[\xi_i]) w_i  + (\psi_i - \E_m[\psi_i]) w'_i \right] - z \right| \leq t \tau \right). \]
		
		We again reduce to the case where we only need to consider a subset of the coordinates of $w$ and $w'$ which are roughly comparable.  Indeed, as the random tuples  $(\xi_1, \psi_1), \ldots, (\xi_m, \psi_m)$ are jointly independent under the probability measure $\Prob_m$, we can condition on any subset of them (and again absorb their contribution into the constant $z$); hence, for any subset $I \subset [m]$, we have
		\begin{align} \label{eq:partitionsum2}
			\sup_{z \in \C} \Prob_m &\left( \left| \sum_{i=1}^m \left[ (\xi_i - \E_m[\xi_i]) w_i  + (\psi_i - \E_m[\psi_i]) w'_i \right] - z \right| \leq t \tau \right) \\
			&\leq \sup_{z \in \C} \Prob_m \left( \left| \sum_{i \in I} \left[ (\xi_i - \E_m[\xi_i]) w_i  + (\psi_i - \E_m[\psi_i]) w'_i \right] - z \right| \leq t \tau \right). \nonumber
		\end{align}
		
		We now choose the subset $I$ in a sequence of two steps.  First, define
		\[ L := 2 \left \lceil \frac{1}{2} \log_2 \left(\frac{2}{\delta \tau^2}\right) \right \rceil, \]
		and for $1 \leq j \leq L$ set
		\[ I_j := \left\{1 \leq i \leq m : \frac{2^{j-1} \tau}{\sqrt{n}} \leq |w_i| < \frac{2^j \tau}{\sqrt{n}} \right\}. \]
		By construction, $I_1, \ldots, I_L$ partition the index set $[m]$.  Hence, by the pigeonhole principle, there exists $j$ such that $|I_j| \geq m / L$.  Second, we partition the set $I_j$ as follows.  Define 
		\[ K := 2 \left \lceil \log_2 \left( \frac{\sqrt{n}}{\tau} \right) \right \rceil, \]
		and for $1 \leq k \leq K$ set
		\[ I_{j,k} := \left\{ i \in I_j : \frac{2^{k-1} \tau}{\sqrt{n}} \leq |w_i'| < \frac{2^k \tau}{\sqrt{n}} \right\} \]
		and define
		\[ I_{j,0} := \left\{ i \in I_j : |w'_i| < \frac{\tau}{\sqrt{n}} \right\}.  \]
		As $\|w ' \| \leq 1$ by assumption, the sets $I_{j,0}, I_{j,1}, \ldots, I_{j,K}$ form a partition of $I_j$.  By the pigeonhole principle, there exists $k$ such that 
		\begin{equation} \label{eq:Ijklowbnd}
			|I_{j,k}| \geq \frac{m}{L (K+1)} \geq \frac{m}{2 LK }. 
		\end{equation}
		Applying \eqref{eq:partitionsum2} to the set $I_{j,k}$, it now suffices to show
		\begin{align} \label{eq:sufficesupzm}
			\sup_{z \in \C} \Prob_m &\left( \left| \sum_{i \in I_{j,k}} \left[ (\xi_i - \E_m[\xi_i]) w_i  + (\psi_i - \E_m[\psi_i]) w'_i \right] - z \right| \leq t \tau \right) \\
			&\qquad\qquad\qquad \leq \frac{ C \sqrt{LK }}{\sigma \sqrt{ \delta b (1 - \rho) } } \left( t + \frac{a} {\sqrt{n}} \right)  \nonumber
		\end{align}
		for some absolute constant $C > 0$.  
		
		We will apply Lemma \ref{lemma:lsv:BE} to obtain \eqref{eq:sufficesupzm}.  For $i \in I_{j,k}$, define
		\[ Z_i := (\xi_i - \E_m[\xi_i]) w_i  + (\psi_i - \E_m[\psi_i]) w'_i.  \]
		Then
		\begin{align*}
			q^2 &:= \sum_{i \in I_{j,k}} \E_m |Z_i|^2 \\
			&\geq \sum_{i \in I_{j,k}} \left[ |w_i|^2 \var_m(\xi_i) + |w'_i|^2 \var_m(\psi_i) - 2 \rho |w_i| |w'_i| \sqrt{ \var_m(\xi_i) \var_m(\psi_i) }  \right]
		\end{align*}
		by assumption \eqref{eq:assumprhocorr}.  Thus, we deduce that
		\begin{align}
			q^2 &\geq (1 - \rho) \sum_{i \in I_{j,k}} \left[ |w_i|^2 \var_m(\xi_i) + |w_i'|^2 \var_m(\psi_i) \right] \nonumber\\
			&\geq (1 - \rho) \sigma^2 \sum_{i \in I_{j,k}} \left[ |w_i|^2 + |w_i'|^2 \right]. \label{eq:qlowbnd}
		\end{align}
		In addition, 
		\[ \sum_{i \in I_{j,k}} \E_m |Z_i|^3 \leq 2a \tau \left( \frac{2^j + 2^k}{\sqrt{n}} \right) q^2. \]
		Hence, Lemma \ref{lemma:lsv:BE} implies the existence of an absolute constant $C > 0$ such that
		\begin{align} 
			\sup_{z \in \C} \Prob_m &\left( \left| \sum_{i \in I_{j,k}} \left[ (\xi_i - \E_m[\xi_i]) w_i  + (\psi_i - \E_m[\psi_i]) w'_i \right] - z \right| \leq t \tau \right) \nonumber \\
			&\qquad\qquad\qquad\leq \frac{C \tau}{q} \left(  t + \frac{a (2^j + 2^k) }{ \sqrt{n} } \right). \label{eq:probmconcabs}
		\end{align}
		
		We complete the proof by considering two separate cases.  First, if $k = 0$, then using \eqref{eq:Ijklowbnd} and \eqref{eq:qlowbnd}, we obtain
		\[ q^2 \geq \sigma^2 (1 - \rho) \sum_{i \in I_{j,k}} |w_i|^2 \geq \sigma^2 (1 - \rho) |I_{j,k}| \frac{2^{2j - 2} \tau^2}{n} \geq \sigma^2 (1 - \rho) \delta b \frac{ 2^{2j} \tau^2}{64 LK }. \]
		Hence returning to \eqref{eq:probmconcabs}, we find that
		\begin{align}
			\sup_{z \in \C} \Prob_m &\left( \left| \sum_{i \in I_{j,k}} \left[ (\xi_i - \E_m[\xi_i]) w_i  + (\psi_i - \E_m[\psi_i]) w'_i \right] - z \right| \leq t \tau \right) \nonumber \\
			&\qquad\qquad\qquad\leq \frac{ C' \sqrt{LK }}{\sigma \sqrt{ \delta b (1 - \rho) } } \left( t + \frac{a} {\sqrt{n}} \right), \label{eq:absconstconc1}
		\end{align}
		where $C' > 0$ is an absolute constant.  
		Similarly, if $1 \leq k \leq K$, then 
		\[ q^2 \geq \sigma^2 (1 - \rho) |I_{j,k}| \frac {\tau^2}{n} \left( 2^{2j - 2} + 2^{2k - 2} \right) \geq \sigma^2 (1 - \rho) \delta b \tau^2 \frac{ 2^{2j} + 2 ^{2k}}{64 LK }. \]
		In this case, we again apply \eqref{eq:probmconcabs} to deduce the existence of an absolute constant $C'' > 0$ such that
		\begin{align}
			\sup_{z \in \C} \Prob_m &\left( \left| \sum_{i \in I_{j,k}} \left[ (\xi_i - \E_m[\xi_i]) w_i  + (\psi_i - \E_m[\psi_i]) w'_i \right] - z \right| \leq t \tau \right) \nonumber \\
			&\qquad\qquad\qquad\leq \frac{ C'' \sqrt{LK }}{\sigma \sqrt{ \delta b (1 - \rho) } } \left( t + \frac{a} {\sqrt{n}} \right). \label{eq:absconstconc2}
		\end{align}
		Combining \eqref{eq:absconstconc1} and \eqref{eq:absconstconc2}, we obtain the bound \eqref{eq:sufficesupzm} (with the absolute constant $C := \max\{C', C''\}$), and the proof of the lemma is complete.  
	\end{proof}
	
	Lastly, we will need the following technical anti-concentration bound which is similar to Lemma \ref{lemma:lsv:anticor}.  
	
	\begin{lemma} \label{lemma:lsv:anticor2}
		Let $\{(\xi_i, \psi_i) : 1 \leq i \leq n\}\cup\{ (\xi_i', \psi_i') : 1 \leq i \leq n\}$ be a collection of independent complex-valued random tuples with the property that $(\xi_i, \psi_i)$ has the same distribution as $(\xi_i', \psi_i')$ for $1 \leq i \leq n$, and assume there exist $a, b, \sigma > 0$ such that the events
		\[ \mathcal{E}_i := \{ |\xi_i| \leq a, |\psi_i| \leq a \} \]
		for $1 \leq i \leq n$ satisfy 
		\[ b \leq \min_{1 \leq i \leq n} \Prob(\mathcal{E}_i), \qquad \sigma^2 \leq \min_{1 \leq i \leq n} \var( \xi_i \mid \mathcal{E}_i), \qquad \sigma^2 \leq \min_{1 \leq i \leq n} \var( \psi_i \mid \mathcal{E}_i). \]
		In addition, let $\delta_1, \ldots, \delta_n$ be i.i.d.\ $\{0,1\}$-valued random variables with $\E \delta_i = c_{oo} \in (0,1]$, and assume $\delta_1, \ldots, \delta_n$ are independent of the random tuples $\{(\xi_i, \psi_i), (\xi_i', \psi_i') : 1 \leq i \leq n\}$. Then for any $\delta, \tau \in (0,1)$, any $w=(w_i)_{i=1}^n \in \Incomp(\delta, \tau)$, any $w' = (w'_i)_{i=1}^n \in \C^n$ with $\| w' \| \leq 1$, and any $t \geq 0$, we have
		\begin{align*}
			\sup_{z \in \C} \Prob &\left( \left| \sum_{i=1}^n (\delta_i \xi_i w_i + \delta_i \xi_i' w_i') - z \right| \leq t \tau \right) \\
			&\leq \frac{C}{\sigma \sqrt{c_{oo} \delta b^2 }} \sqrt{ \left \lceil \frac{1}{2} \log_2 \left( \frac{2}{\tau^2 \delta} \right) \right \rceil \left \lceil \log_2 \left( \frac{\sqrt{n}}{\tau} \right) \right \rceil } \left( t + \frac{a}{\sqrt{n}} \right) + \exp(-\delta n c_{oo} b^2 / 16), 
		\end{align*}
		where $C > 0$ is an absolute constant. 
	\end{lemma}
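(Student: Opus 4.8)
The plan is to run the proof of Lemma~\ref{lemma:lsv:anticor} essentially verbatim, with two changes: the tuples $(\xi_i,\psi_i)$ and $(\xi_i',\psi_i')$ are now independent of one another (so no correlation parameter $\rho$ enters), and the indices are thinned by the Bernoulli variables $\delta_i$ (so a factor $c_{oo}$ appears), while the price for truncating \emph{two} independent tuples is a factor $b^2$ in place of $b$. Throughout, the $\psi$-coordinates play no role since they do not appear in the sum; in particular the hypotheses on $\var(\psi_i\mid\mathcal{E}_i)$ will not be needed. Write $\mathcal{E}_i':=\{|\xi_i'|\le a,|\psi_i'|\le a\}$, so that $\Prob(\mathcal{E}_i')=\Prob(\mathcal{E}_i)\ge b$ because $(\xi_i',\psi_i')\overset{d}{=}(\xi_i,\psi_i)$.

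First I would use $w\in\Incomp(\delta,\tau)$ and Lemma~\ref{lemma:lsv:spread} to extract $\pi\subset[n]$ with $|\pi|\ge\delta n/2$ and $\tau/\sqrt n\le|w_i|\le\sqrt{2/(\delta n)}$ for $i\in\pi$; conditioning on all $\xi_j,\xi_j',\delta_j$ with $j\notin\pi$ and absorbing their contribution into the centre $z$, it suffices to bound the small-ball probability of $\sum_{i\in\pi}\delta_i(\xi_i w_i+\xi_i' w_i')$. Next, set $\mathcal{G}_i:=\{\delta_i=1\}\cap\mathcal{E}_i\cap\mathcal{E}_i'$, so that $\Prob(\mathcal{G}_i)\ge c_{oo}b^2$ by independence of the three events. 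A multiplicative Chernoff bound gives $\sum_{i\in\pi}\oindicator{\mathcal{G}_i}\ge\delta n c_{oo}b^2/4$ off an event of probability at most $\exp(-\delta n c_{oo}b^2/16)$, which is precisely the additive error term in the statement. On the good event we relabel so that $\mathcal{G}_1,\dots,\mathcal{G}_m$ hold with $m:=\lceil\delta n c_{oo}b^2/4\rceil$, pass to the conditional measure $\Prob_m$ (and $\E_m,\var_m$) given $E_m:=\bigcap_{i\le m}\mathcal{G}_i$ together with the $\sigma$-algebra generated by all remaining randomness, center the $\xi_i,\xi_i'$ (absorbing the conditional means into $z$), and — exactly as in the proof of Lemma~\ref{lemma:lsv:anticor} — reduce via the dyadic level sets $I_j$ of $|w_i|$ and $I_{j,k}$ of $|w_i'|$, with the same $L=2\lceil\tfrac12\log_2(2/(\delta\tau^2))\rceil$, $K=2\lceil\log_2(\sqrt n/\tau)\rceil$ and the pigeonhole bound $|I_{j,k}|\ge m/(2LK)$, to bounding the small-ball probability of $\sum_{i\in I_{j,k}}Z_i$, where $Z_i:=(\xi_i-\E_m\xi_i)w_i+(\xi_i'-\E_m\xi_i')w_i'$.

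The only computation that differs is the variance lower bound. Under $\Prob_m$ the variables $\xi_i$ and $\xi_i'$ are independent, so the cross term vanishes \emph{exactly} and
\[
q^2:=\sum_{i\in I_{j,k}}\E_m|Z_i|^2=\sum_{i\in I_{j,k}}\bigl(|w_i|^2\var_m(\xi_i)+|w_i'|^2\var_m(\xi_i')\bigr)\ \ge\ \sigma^2\sum_{i\in I_{j,k}}\bigl(|w_i|^2+|w_i'|^2\bigr),
\]
using $\var_m(\xi_i')=\var(\xi_i'\mid\mathcal{E}_i')=\var(\xi_i\mid\mathcal{E}_i)\ge\sigma^2$. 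No factor $(1-\rho)$ is lost; the only losses are the $c_{oo}b^2$ already built into the choice of $m$, which feeds into the lower bound for $q^2$ through $|I_{j,k}|\ge m/(2LK)$. With this in hand, the third-moment estimate $\sum_{i\in I_{j,k}}\E_m|Z_i|^3\le 2a\tau\,\tfrac{2^j+2^k}{\sqrt n}q^2$, the application of Lemma~\ref{lemma:lsv:BE}, and the split into the cases $k=0$ and $1\le k\le K$ to absorb the factors $2^j,2^k$ are copied verbatim from the proof of Lemma~\ref{lemma:lsv:anticor}, and give $\sup_{z\in\C}\Prob_m(\,\cdot\,)\le\frac{C\sqrt{LK}}{\sigma\sqrt{c_{oo}\delta b^2}}\bigl(t+a/\sqrt n\bigr)$; combining with the Chernoff error finishes the proof. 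There is no real obstacle beyond bookkeeping: the one point that needs care is checking that conditioning on $E_m$, which constrains $(\xi_i,\psi_i)$ and $(\xi_i',\psi_i')$ only through the \emph{disjoint} events $\mathcal{E}_i$ and $\mathcal{E}_i'$ and fixes $\delta_i=1$, preserves the mutual independence of $\xi_1,\dots,\xi_m,\xi_1',\dots,\xi_m'$ under $\Prob_m$ — which is exactly what makes the cross term disappear.
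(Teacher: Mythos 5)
Your proposal is correct and follows essentially the same path as the paper's own proof: the same reduction to a spread set $\pi$, the same Chernoff truncation on $\mathcal{G}_i=\{\delta_i=1\}\cap\mathcal{E}_i\cap\mathcal{E}_i'$ with $m=\lceil c_{oo}\delta b^2 n/4\rceil$, the same dyadic decomposition and pigeonhole, and the same variance lower bound using the conditional independence of $\xi_i$ and $\xi_i'$ under $\Prob_m$ (which is indeed what replaces the $(1-\rho)$ factor of Lemma~\ref{lemma:lsv:anticor}). Your remark that the $\var(\psi_i\mid\mathcal{E}_i)$ hypothesis plays no role in the argument is accurate and consistent with the paper's own comment that the hypotheses mention $\psi_i,\psi_i'$ only to match the setup of Theorem~\ref{thm:lsv}.
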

	
	Lemma \ref{lemma:lsv:anticor2} is very similar to Lemma \ref{lemma:lsv:anticor}. The statement of the lemma is somewhat unusual since the hypotheses involve the variables $\xi_i, \psi_i, \xi_i', \psi_i'$, but the conclusion only involves the random variables $\xi_i, \xi_i'$.  This is to match the assumptions of Theorem \ref{thm:lsv}.  The proof of Lemma \ref{lemma:lsv:anticor2} presented below follows the same framework as the proof of Lemma \ref{lemma:lsv:anticor}.  
	
	\begin{proof}[Proof of Lemma \ref{lemma:lsv:anticor2}]
		Since $w \in \Incomp(\delta, \tau)$, Lemma \ref{lemma:lsv:spread} implies the existence of $\pi \subset [n]$ such that $|\pi| \geq \delta n /2$ and
		\[ \frac{ \tau}{\sqrt{n}} \leq |w_i| \leq \sqrt{\frac{2}{\delta n}} \]
		for all $i \in \pi$.  By conditioning on the random variables $\xi_i, \psi_i, \xi_i', \psi_i', \delta_i$ for $i \not\in \pi$ and absorbing their contribution into the constant $z$, it suffices to bound
		\[ \sup_{z \in \C} \Prob \left( \left| \sum_{i \in \pi} (\delta_i \xi_i w_i + \delta_i \xi_i' w_i') - z \right| \leq t \tau \right). \]
		
		We now proceed to truncate the random variables $\xi_i, \psi_i, \xi_i', \psi_i'$ for $i \in \pi$.  Define the events
		\[ \mathcal{E}_i' := \{ |\xi_i'| \leq a, |\psi_i'| \leq a \}, \]
		and observe that $\mathcal{E}_i'$ is independent of $\mathcal{E}_i$ for $1 \leq i \leq n$.  By the Chernoff bound, it follows that
		\[ \sum_{i \in \pi} \oindicator{\mathcal{E}_i\cap\mathcal{E}_i' \cap \{\delta_i = 1\}} \geq \frac{ c_{oo}|\pi| b^2}{2} \geq \frac{c_{oo} \delta b^2 n}{4} \]
		with probability at least $1 - \exp(-c_{oo}\delta b^2 n / 16)$.  Therefore, taking $m := \lceil c_{oo} \delta b^2 n / 4 \rceil$, it suffices to bound
		\begin{align} \label{eq:sufftobndprobm}
			\sup_{z \in \C} \Prob_m &\left( \left| \sum_{i =1}^m (\delta_i \xi_i w_i + \delta_i \xi_i' w_i') - z \right| \leq t \tau \right) = \sup_{z \in \C} \Prob_m &\left( \left| \sum_{i =1}^m (\xi_i w_i + \xi_i' w_i') - z \right| \leq t \tau \right) 
		\end{align}
		where $\Prob_m(\cdot) := \Prob(\cdot \vert E_m, \mathcal{F}_m)$ is the conditional probability given $\mathcal{F}_m$, the $\sigma$-algebra generated by all random variables except $\xi_1, \ldots, \xi_m$, $\psi_1, \ldots, \psi_m$, $\xi_1', \ldots, \xi_m'$, $\psi_1', \ldots, \psi_m'$, $\delta_1, \ldots, \delta_m$, and the event
		\[ E_m := \bigcap_{i=1}^m \left\{ |\xi_i| \leq a, |\psi_i| \leq a, |\xi_i'| \leq a, |\psi_i'| \leq a, \delta_i = 1 \right\} \bigcap_{i=1}^m \left\{ \frac{ \tau}{\sqrt{n}} \leq |w_i| \leq \sqrt{\frac{2}{\delta n}} \right\}. \]
		Here, we have exploited the fact that on the event $E_m$, $\delta_i = 1$ for $i \in [m]$, and so all factors of $\delta_i$ have been replaced by $1$ on the right-hand side of \eqref{eq:sufftobndprobm}.  
		By centering the random variables  (and absorbing the expectations into the constant $z$), it suffices to bound
		\[ \sup_{z \in \C} \Prob_m \left( \left| \sum_{i=1}^m \left[ (\xi_i - \E_m[\xi_i]) w_i  + (\xi_i' - \E_m[\xi_i']) w'_i \right] - z \right| \leq t \tau \right). \]
		
		We again reduce to the case where we only need to consider a subset of the coordinates of $w$ and $w'$ which are roughly comparable.  Indeed, as the random vectors  $(\xi_1, \psi_1, \xi_1', \psi_1'), \ldots, (\xi_m, \psi_m, \xi_m', \psi_m')$ are jointly independent under the probability measure $\Prob_m$, we can condition on any subset of them (and again absorb their contribution into the constant $z$); hence, for any subset $I \subset [m]$, we have
		\begin{align} \label{eq:partitionsum3}
			\sup_{z \in \C} \Prob_m &\left( \left| \sum_{i=1}^m \left[ (\xi_i - \E_m[\xi_i]) w_i  + (\xi_i' - \E_m[\xi_i']) w'_i \right] - z \right| \leq t \tau \right) \\
			&\leq \sup_{z \in \C} \Prob_m \left( \left| \sum_{i \in I} \left[ (\xi_i - \E_m[\xi_i]) w_i  + (\xi_i' - \E_m[\xi_i']) w'_i \right] - z \right| \leq t \tau \right). \nonumber
		\end{align}
		
		We now choose the subset $I$ in a sequence of two steps as was done in the proof of Lemma \ref{lemma:lsv:anticor}.  First, define
		\[ L := 2 \left \lceil \frac{1}{2} \log_2 \left(\frac{2}{\delta \tau^2}\right) \right \rceil, \]
		and for $1 \leq j \leq L$ set
		\[ I_j := \left\{1 \leq i \leq m : \frac{2^{j-1} \tau}{\sqrt{n}} \leq |w_i| < \frac{2^j \tau}{\sqrt{n}} \right\}. \]
		By construction, $I_1, \ldots, I_L$ partition the index set $[m]$.  Hence, by the pigeonhole principle, there exists $j$ such that $|I_j| \geq m / L$.  Second, we partition the set $I_j$ as follows.  Define 
		\[ K := 2 \left \lceil \log_2 \left( \frac{\sqrt{n}}{\tau} \right) \right \rceil, \]
		and for $1 \leq k \leq K$ set
		\[ I_{j,k} := \left\{ i \in I_j : \frac{2^{k-1} \tau}{\sqrt{n}} \leq |w_i'| < \frac{2^k \tau}{\sqrt{n}} \right\} \]
		and define
		\[ I_{j,0} := \left\{ i \in I_j : |w'_i| < \frac{\tau}{\sqrt{n}} \right\}.  \]
		As $\|w ' \| \leq 1$ by assumption, the sets $I_{j,0}, I_{j,1}, \ldots, I_{j,K}$ form a partition of $I_j$.  By the pigeonhole principle, there exists $k$ such that 
		\begin{equation} \label{eq:Ijklowbnd2}
			|I_{j,k}| \geq \frac{m}{L (K+1)} \geq \frac{m}{2 LK }. 
		\end{equation}
		Applying \eqref{eq:partitionsum3} to the set $I_{j,k}$, it now suffices to show
		\begin{align} \label{eq:sufficesupzm2}
			\sup_{z \in \C} \Prob_m &\left( \left| \sum_{i \in I_{j,k}} \left[ (\xi_i - \E_m[\xi_i]) w_i  + (\xi_i' - \E_m[\xi_i']) w'_i \right] - z \right| \leq t \tau \right) \\
			&\qquad\qquad\qquad \leq C \frac{ \sqrt{LK }}{\sigma \sqrt{ c_{oo} \delta b^2} } \left( t + \frac{a} {\sqrt{n}} \right) \nonumber
		\end{align}
		for some absolute constant $C > 0$.  
		
		We will apply Lemma \ref{lemma:lsv:BE} to obtain \eqref{eq:sufficesupzm2}.  For $i \in I_{j,k}$, define
		\[ Z_i := (\xi_i - \E_m[\xi_i]) w_i  + (\xi_i' - \E_m[\xi_i']) w'_i.  \]
		Then
		\begin{align}
			q^2 &:= \sum_{i \in I_{j,k}} \E_m |Z_i|^2 \nonumber \\
			&= \sum_{i \in I_{j,k}} \left[ |w_i|^2 \var_m(\xi_i) + |w'_i|^2 \var_m(\xi_i') \right] \label{eq:qlowbnd2} \\
			&\geq \sigma^2 \sum_{i \in I_{j,k}} \left[ |w_i|^2 + |w_i'|^2 \right] \nonumber
		\end{align}
		since $\xi_i$ and $\xi_i'$ are independent under the measure $\Prob_m$ for $1 \leq i \leq n$.  
		In addition, 
		\[ \sum_{i \in I_{j,k}} \E_m |Z_i|^3 \leq 2a \tau \left( \frac{2^j + 2^k}{\sqrt{n}} \right) q^2. \]
		Hence, Lemma \ref{lemma:lsv:BE} implies the existence of an absolute constant $C > 0$ such that
		\begin{align} 
			\sup_{z \in \C} \Prob_m &\left( \left| \sum_{i \in I_{j,k}} \left[ (\xi_i - \E_m[\xi_i]) w_i  + (\xi_i' - \E_m[\xi_i']) w'_i \right] - z \right| \leq t \tau \right) \nonumber \\
			&\qquad\qquad\qquad\leq \frac{C \tau}{q} \left(  t + \frac{a (2^j + 2^k) }{ \sqrt{n} } \right). \label{eq:probmconcabs2}
		\end{align}
		
		We complete the proof by considering two separate cases.  First, if $k = 0$, then using \eqref{eq:Ijklowbnd2} and \eqref{eq:qlowbnd2}, we obtain
		\[ q^2 \geq \sigma^2 \sum_{i \in I_{j,k}} |w_i|^2 \geq \sigma^2 |I_{j,k}| \frac{2^{2j - 2} \tau^2}{n} \geq \sigma^2 c_{oo} \delta b^2 \frac{ 2^{2j} \tau^2}{32 LK }. \]
		Hence returning to \eqref{eq:probmconcabs2}, we find that
		\begin{align}
			\sup_{z \in \C} \Prob_m &\left( \left| \sum_{i \in I_{j,k}} \left[ (\xi_i - \E_m[\xi_i]) w_i  + (\xi_i' - \E_m[\xi_i']) w'_i \right] - z \right| \leq t \tau \right) \nonumber \\
			&\qquad\qquad\qquad\leq \frac{ C' \sqrt{LK }}{\sigma \sqrt{ c_{oo} \delta b^2 } } \left( t + \frac{a} {\sqrt{n}} \right), \label{eq:absconstconc12}
		\end{align}
		where $C' > 0$ is an absolute constant.  
		Similarly, if $1 \leq k \leq K$, then 
		\[ q^2 \geq \sigma^2 |I_{j,k}| \frac {\tau^2}{n} \left( 2^{2j - 2} + 2^{2k - 2} \right) \geq \sigma^2 c_{oo} \delta b^2 \tau^2 \frac{ 2^{2j} + 2 ^{2k}}{32 LK }. \]
		In this case, we again apply \eqref{eq:probmconcabs2} to deduce the existence of an absolute constant $C'' > 0$ such that
		\begin{align}
			\sup_{z \in \C} \Prob_m &\left( \left| \sum_{i \in I_{j,k}} \left[ (\xi_i - \E_m[\xi_i]) w_i  + (\xi_i' - \E_m[\xi_i]') w'_i \right] - z \right| \leq t \tau \right) \nonumber \\
			&\qquad\qquad\qquad\leq \frac{ C'' \sqrt{LK }}{\sigma \sqrt{ c_{oo} \delta b^2 } } \left( t + \frac{a} {\sqrt{n}} \right). \label{eq:absconstconc22}
		\end{align}
		Combining \eqref{eq:absconstconc12} and \eqref{eq:absconstconc22}, we obtain the bound \eqref{eq:sufficesupzm2} (with the absolute constant $C := \max\{C', C''\}$), and the proof of the lemma is complete.  
	\end{proof}

	\subsection{Incompressible vectors}
	
	In order to control the set of incompressible vectors, we will require the following averaging estimate.  
	
	\begin{lemma}[Invertibility via average distance; Lemma A.4 from \cite{BC}] \label{lemma:lsv:avg}
		Let $A$ be a random matrix taking values in $\Mat_n(\mathbb{C})$ with columns $C_1, \ldots, C_n$.  For any $1 \leq k \leq n$, let $H_k := \Span\{ C_i : i \neq k\}$.  Then, for any $t \geq 0$, 
		\[ \Prob \left( \min_{x \in \Incomp(\delta, \tau)} \|A x \| \leq \frac{ t \tau}{ \sqrt{n} } \right) \leq \frac{2}{\delta n} \sum_{k=1}^n \Prob( \dist(C_k, H_k) \leq t ). \]
	\end{lemma}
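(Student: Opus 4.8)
The plan is to combine an elementary geometric lower bound on $\|Ax\|$ with the spreadness of incompressible vectors from Lemma~\ref{lemma:lsv:spread}, and then convert the resulting event inclusion into the stated probability bound via Markov's inequality.

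First I would record the key geometric observation: for any $x = (x_i)_{i=1}^n \in \C^n$ and any index $k$, since $Ax = \sum_{i=1}^n x_i C_i$ and $\sum_{i \ne k} x_i C_i \in H_k = \Span\{C_i : i \ne k\}$, we have
\[ \|Ax\| = \left\| x_k C_k + \sum_{i \ne k} x_i C_i \right\| \ge \dist(x_k C_k, H_k) = |x_k|\, \dist(C_k, H_k). \]
Next, for a fixed $x \in \Incomp(\delta, \tau)$, Lemma~\ref{lemma:lsv:spread} supplies a set $\pi = \pi(x) \subset [n]$ with $|\pi| \ge \delta n / 2$ and $|x_k| \ge \tau/\sqrt{n}$ for all $k \in \pi$, so the display above gives, for every $k \in \pi$,
\[ \dist(C_k, H_k) \le \frac{\|Ax\|}{|x_k|} \le \frac{\sqrt{n}}{\tau}\, \|Ax\|. \]
Consequently, if $\|Ax\| \le t\tau/\sqrt{n}$ then $\dist(C_k, H_k) \le t$ for all $k \in \pi$, i.e.\ at least $\delta n / 2$ of the events $\{\dist(C_k, H_k) \le t\}$, $1 \le k \le n$, occur.

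Since the previous deduction applies to any incompressible $x$ realizing $\min_{x\in\Incomp(\delta,\tau)} \|Ax\| \le t\tau/\sqrt{n}$ — and if the infimum is not attained on the (relatively open) set $\Incomp(\delta,\tau)$ one runs the same argument along a minimizing sequence and pigeonholes over the finitely many possible index sets $\pi \subset [n]$ — we obtain the event inclusion
\[ \left\{ \min_{x \in \Incomp(\delta, \tau)} \|Ax\| \le \frac{t\tau}{\sqrt{n}} \right\} \subseteq \left\{ \sum_{k=1}^n \indicator{\dist(C_k, H_k) \le t} \ge \frac{\delta n}{2} \right\}. \]
Taking probabilities and applying Markov's inequality to the nonnegative random variable $\sum_{k=1}^n \indicator{\dist(C_k, H_k) \le t}$ then yields
\[ \Prob\left( \min_{x \in \Incomp(\delta, \tau)} \|Ax\| \le \frac{t\tau}{\sqrt{n}} \right) \le \Prob\left( \sum_{k=1}^n \indicator{\dist(C_k, H_k) \le t} \ge \frac{\delta n}{2} \right) \le \frac{2}{\delta n} \sum_{k=1}^n \Prob(\dist(C_k, H_k) \le t), \]
which is the asserted bound.

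There is no substantive obstacle in this argument; the only mildly delicate point is the passage from an infimum over the non-closed set $\Incomp(\delta,\tau)$ to a genuine incompressible vector, which is handled by the standard near-minimizer/pigeonhole device noted above (and in any case this statement is precisely Lemma~A.4 of~\cite{BC}).
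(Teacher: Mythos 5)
Your proof is correct, and it is the standard argument for this lemma (the paper itself does not reprove it but cites Lemma A.4 of \cite{BC}, whose proof follows exactly this spreadness-plus-Markov route). The geometric lower bound $\|Ax\|\ge|x_k|\dist(C_k,H_k)$, the application of Lemma~\ref{lemma:lsv:spread} to get at least $\delta n/2$ large coordinates, the resulting event inclusion, and the final Markov bound are all handled properly, and your remark about passing from an infimum over the relatively open set $\Incomp(\delta,\tau)$ to a genuine witness via a minimizing sequence and pigeonholing over the finitely many index sets is the right way to dispatch the only delicate point.
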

	
	Let $A = X +M$ be the matrix from Theorem \ref{thm:lsv}.  Let $C_1, \ldots, C_n$ be the columns of $A$ and $H_k := \Span \{C_i : i \neq k\}$ be as in Lemma \ref{lemma:lsv:avg}.  Our main result for controlling the set of incompressible vectors is the following.  
	
	\begin{lemma} \label{lemma:lsv:dist}
		There exists $C, c > 0$ such that for every $1 \leq k \leq n$, any $t > 0$ and any $s \geq 1$, 
		\[ \Prob( \dist(C_k, H_k) \leq t, s_1(A) \leq s ) \leq  C \left( \frac{ \log(Csn) }{\sqrt{1-\rho}} \left( s^2 \sqrt{t} + \frac{1}{\sqrt{n}} \right) \right)^{1/4}.  \]
	\end{lemma}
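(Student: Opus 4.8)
The plan is to reduce $\dist(C_k,H_k)$ to a small--ball estimate for a random linear form in the entries of $C_k$ and then feed that form into the dependent anti-concentration bounds of Lemmas~\ref{lemma:lsv:anticor} and~\ref{lemma:lsv:anticor2}. Since we work under the assumption that $A$ and all of its principal submatrices are invertible, $H_k:=\Span\{C_i:i\neq k\}$ is an $(n-1)$-dimensional subspace of $\C^n$, so it has a unit normal $\hat w$, unique up to a unimodular scalar, and $\dist(C_k,H_k)=|\langle\hat w,C_k\rangle|$. A standard computation identifies $\hat w$, up to normalization and complex conjugation, with $(A^{\mathrm T})^{-1}e_k$ (equivalently, with the $k$-th row of $A^{-1}$). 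The transposed matrix $A^{\mathrm T}=X^{\mathrm T}+M^{\mathrm T}$ satisfies the hypotheses of Theorem~\ref{thm:lsv} with the same parameters $a,b,\sigma,\rho$ and has $s_1(A^{\mathrm T})=s_1(A)$, so Corollary~\ref{cor:lsv:inversestruct} applied to $A^{\mathrm T}$ with $u=e_k$ (and the fact that $\Comp(\delta,\tau)$ is invariant under conjugation) shows $\hat w\in\Incomp(\delta,\tau)$ off an event of probability at most $Cs\exp(-cn)$ on $\{s_1(A)\leq s\}$. As this Chernoff-type error is dominated by the asserted bound once $n$ is large, it remains to estimate $\Prob\big(|\langle\hat w,C_k\rangle|\leq t,\ \hat w\in\Incomp(\delta,\tau)\big)$.

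The crux is that $\hat w$ and $C_k$ are not independent: although $\hat w$ is built from the columns $C_i$, $i\neq k$, and does not involve $C_k$ directly, each such column carries the mirrored entry $X_{ki}$ in its $k$-th coordinate, and $X_{ki}$ is correlated with $X_{ik}=(C_k)_i$. Conditioning on the $\sigma$-algebra $\mathcal G$ generated by $\{X_{lj}:l,j\neq k\}$, by the diagonal, and by the deterministic $M$, the vector $\hat w$ becomes a measurable function of $\psi:=(X_{kj})_{j\neq k}$, while $\langle\hat w,C_k\rangle$ becomes a linear form $\overline{\hat w_k(\psi)}(X_{kk}+M_{kk})+\sum_{i\neq k}\overline{\hat w_i(\psi)}(X_{ik}+M_{ik})$ in $\xi:=(X_{ik})_{i\neq k}$; here the tuples $(\xi_i,\psi_i)$, $i\neq k$, are independent of $\mathcal G$ and, on the truncation events $\mathcal E_i:=\{|\xi_i|\leq a,|\psi_i|\leq a\}$, satisfy exactly the bounds ($\Prob(\mathcal E_i)\geq b$, conditional variances $\geq\sigma^2$, conditional correlation $\leq\rho$) required by Lemmas~\ref{lemma:lsv:anticor} and~\ref{lemma:lsv:anticor2}. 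One cannot simply condition further on $\psi$ so as to freeze $\hat w$: the conditional law of $\xi_i$ given $\psi_i$ may be degenerate even when $\rho$ is small, which is precisely why those two lemmas keep both $\xi_i$ and $\psi_i$ (or an independent copy) inside the sum and pay only a factor $(1-\rho)^{-1/2}$.

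I would therefore decouple $\hat w$ from $\xi$ by a symmetrization step: introduce an independent copy of the mirror tuples $(\xi_i,\psi_i)_{i\neq k}$ (and of $X_{kk}$) and compare $\langle\hat w,C_k\rangle$ with the form built from the copies, at the cost of passing to a fractional power of a probability. Splitting the resulting difference into a ``coefficient-frozen'' part, a sum $\sum_i\overline{\hat w_i}\,\xi_i$ with $\hat w$ held fixed and the $\xi_i$ restricted to $\mathcal E_i$ — which is exactly of the shape of Lemma~\ref{lemma:lsv:anticor} — and a ``coefficient-perturbation'' part driven by the resampled $\psi$'s, where a Bernoulli thinning by the indicators of $\mathcal E_i\cap\mathcal E_i'$ enters and Lemma~\ref{lemma:lsv:anticor2} applies, each piece can be bounded. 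Using the spreadness of $\hat w$ from Lemma~\ref{lemma:lsv:spread} and the choice \eqref{eq:taudef} of $\tau$ (so $1/\tau\lesssim s\sqrt\delta$, which is where the powers of $s$ come from), the anti-concentration bounds give, up to the square-root losses incurred in symmetrization (which account for the overall exponent $1/4$ and for $t$ appearing as $\sqrt t$) and the negligible $\exp(-cn)$ errors, a bound of the form $\frac{\log(Csn)}{\sqrt{1-\rho}}\big(s^2\sqrt t+n^{-1/2}\big)$ raised to an appropriate power; reorganizing constants yields the stated inequality.

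The main obstacle is carrying out the decoupling in the third step cleanly: separating the random coefficient vector $\hat w$ from the mirror-correlated randomness of $C_k$ while retaining a usable variance lower bound on the relevant linear forms. This is exactly what forces the two specialized anti-concentration estimates — one that keeps the correlated pair and pays $(1-\rho)^{-1/2}$, one that replaces part of the correlated data by an independent copy together with a Bernoulli thinning onto the good events — and it is the source of the repeated square-root losses that degrade the final exponent to $1/4$.
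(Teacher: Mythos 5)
Your plan identifies the right high-level architecture — incompressibility of the normal direction, a decoupling of the correlated row/column pair, then the dependent anti-concentration lemmas — but the crucial middle step is described in a way that, as stated, would not produce a usable object. Two specific issues:

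\textbf{The decoupling cannot be a full symmetrization.} You propose to introduce an independent copy of the mirror tuples $(\xi_i,\psi_i)_{i\neq k}$ and compare $\langle\hat w,C_k\rangle$ with the form built from the copies, passing to a fractional power of the probability. But if you replace \emph{all} the mirror randomness by an independent copy, the difference $\langle\hat w(\psi),C_k(\xi)\rangle-\langle\hat w(\psi'),C_k(\xi')\rangle$ has the same coupling structure in both terms; nothing is "frozen." The paper's decoupling (Lemma~\ref{lemma:lsv:decouple}) deliberately resamples only on a subset $\pi^c\subset[n-1]$ while keeping the $\pi$-coordinates unchanged, and it operates not on $\langle\hat w,C_k\rangle$ directly but on the bilinear form $uB^{-1}v$ that Lemma~\ref{lemma:lsv:bilinear} extracts, where $u,v$ are the (mirror) row and column and $B$ is the principal submatrix. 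This produces the two cross-terms $u_\pi B^{-1}_{\pi\times\pi^c}(v-v')_{\pi^c}$ and $(u-u')_{\pi^c}B^{-1}_{\pi^c\times\pi}v_\pi$: genuine linear forms in the fresh randomness indexed by $\pi$ with coefficient vectors that are measurable with respect to what was resampled. That is what makes the "coefficient-frozen" part you describe actually frozen. Without the partition, and without the bilinear rewriting $\dist(C_k,H_k)\geq|A_{kk}-uB^{-1}v|/\sqrt{1+\|uB^{-1}\|^2}$, the phrase "$\hat w$ held fixed" does not correspond to a valid conditioning.

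\textbf{The Bernoulli thinning is the random partition, not the truncation events.} You say the thinning is "by the indicators of $\mathcal E_i\cap\mathcal E_i'$" and that this feeds Lemma~\ref{lemma:lsv:anticor2}. In the paper, the $\delta_i$ that enter Lemma~\ref{lemma:lsv:anticor2} are independent Bernoulli variables used to choose the random subset $\pi$ in Step~1; the events $\mathcal E_i,\mathcal E_i'$ appear only \emph{inside} the proof of that lemma as a truncation and must be independent of the $\delta_i$ for the Chernoff step there to go through. Conflating them would break the independence assumption in the hypotheses of Lemma~\ref{lemma:lsv:anticor2}. Related, your route does not address how the denominator $\sqrt{1+\|uB^{-1}\|^2}$ is controlled — this requires a separate anti-concentration argument (the paper's Lemma~\ref{lemma:lsv:sizeuB} via Lemma~\ref{lemma:lsv:sumdependent}) relating the decoupled vectors to $\|B^{-1}\|_2$, not merely incompressibility of $\hat w$. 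These are the missing ingredients; the rest of your outline (incompressibility of the normal via Corollary~\ref{cor:lsv:inversestruct}, the spread bound from Lemma~\ref{lemma:lsv:spread}, the $t_0=\sqrt t$ optimization) matches the paper's structure.
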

	
	The rest of the subsection is devoted to the proof of Lemma \ref{lemma:lsv:dist}.  We complete the proof of Theorem \ref{thm:lsv} in Subsection \ref{sec:proof:lsv}.  We will also need the following result based on \cite[Proposition 5.1]{Rsym} and \cite[Statement 2.8]{GNT}.  
	
	\begin{lemma}[Distance problem via bilinear forms] \label{lemma:lsv:bilinear}
		Let $A = (A_{ij}) \in \Mat_n(\mathbb{C})$, let $C_1, \ldots, C_n$ denote the columns of $A$, and fix $1 \leq k \leq n$.  Let $H_k := \Span\{C_i : i \neq k\}$, $u$ be the $k$-th row of $A$ with the $k$-th entry removed, $v$ be $C_k$ with the $k$-th entry removed, and let $B$ be the $(n-1) \times (n-1)$ submatrix of $A$ formed from removing the $k$-th row and $k$-th column.  If $B$ is invertible, then
		\begin{equation} \label{eq:distancequotient}
			\dist(C_k, H_k) \geq \frac{ |A_{kk} - u B^{-1} v|}{ \sqrt{ 1 + \| u B^{-1} \|^2 } }. 
		\end{equation}
	\end{lemma}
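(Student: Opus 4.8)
The plan is to combine the variational description of the distance from a vector to a subspace with an explicit construction of a vector normal to $H_k$ built out of $B^{-1}$. After simultaneously permuting the rows and columns of $A$ — an operation that changes neither $\dist(C_k,H_k)$ nor the right-hand side of \eqref{eq:distancequotient} — I may assume $k=n$ and write $A$ in block form as
\[ A = \begin{pmatrix} B & v \\ u & A_{nn} \end{pmatrix}, \]
with $B \in \Mat_{n-1}(\C)$, $v \in \C^{n-1}$ a column vector, and $u$ an $(n-1)$-dimensional row vector. Then, for $i < n$, the $i$-th column of $A$ is $C_i = \begin{pmatrix} B e_i \\ u_i \end{pmatrix}$, while $C_n = \begin{pmatrix} v \\ A_{nn} \end{pmatrix}$.

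The first step is to record the elementary bound: for any nonzero $w \in \C^n$ orthogonal to $H_k$,
\[ \dist(C_k, H_k) \geq \frac{|w^\ast C_k|}{\|w\|}. \]
Indeed, if $P$ denotes the orthogonal projection onto $H_k^\perp$, then $\dist(C_k, H_k) = \|P C_k\|$, and since $w \in H_k^\perp$ we have $w^\ast C_k = w^\ast P C_k$, so the inequality follows from Cauchy--Schwarz. Hence it suffices to exhibit a single such $w$ realizing the asserted quotient.

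The second step is the construction of $w$: I would look for it of the form $w = \begin{pmatrix} w' \\ 1 \end{pmatrix}$ with $w' \in \C^{n-1}$. The conditions $w^\ast C_i = 0$ for $1 \leq i \leq n-1$ read $(w')^\ast B + u = 0$, which, because $B$ is invertible, forces $(w')^\ast = -uB^{-1}$ and is solvable. For this $w$ one computes $\|w\|^2 = 1 + \|uB^{-1}\|^2$ and $w^\ast C_n = (w')^\ast v + A_{nn} = A_{nn} - uB^{-1}v$; substituting these two identities into the bound from the first step yields exactly \eqref{eq:distancequotient}.

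There is no substantial obstacle here — the argument is a short piece of linear algebra. The only points requiring care are the simultaneous row/column relabeling used to reduce to $k = n$ (so that $B$, $u$, $v$ sit in the claimed positions) and consistently tracking conjugate transposes in the identities $(w')^\ast = -uB^{-1}$ and $w^\ast C_n = A_{nn} - uB^{-1}v$, together with the remark that the norm of a row vector equals that of its conjugate transpose, so $\|w'\| = \|uB^{-1}\|$.
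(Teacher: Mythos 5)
Your proof is correct and follows essentially the same route as the paper: both arguments identify a vector normal to $H_k$ (you construct it explicitly as $w = (w',1)^T$ with $(w')^\ast = -uB^{-1}$, while the paper parametrizes an arbitrary unit normal $h$ and solves for its last $n-1$ coordinates), then apply the bound $\dist(C_k,H_k) \geq |w^\ast C_k|/\|w\|$ and compute $\|w\|$ and $w^\ast C_k$. The two derivations are algebraically identical up to normalization of the normal vector and the choice of placing the distinguished index at $k=n$ versus $k=1$.
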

	\begin{proof}
		The proof presented below is based on the arguments given in \cite{GNT, Rsym}.  By permuting the rows and columns, it suffices to assume that $k=1$.  
		Let $h \in \mathbb{S}^{n-1}$ denote any normal to the hyperplane $H_1$.  Then
		\[ \dist(C_1, H_1) \geq | h^\ast C_1|. \]
		We decompose
		\[ C_1 = \begin{pmatrix} A_{11} \\ v \end{pmatrix}, \quad h = \begin{pmatrix} h_1 \\ g \end{pmatrix}, \]
		where $h_1 \in \mathbb{C}$ and $g \in \mathbb{C}^{n-1}$.  Then
		\begin{equation} \label{eq:lsv:distdot}
			\dist(C_1, H_1) \geq | h^\ast C_1| = |\bar{h}_1 A_{11} + g^\ast v |. 
		\end{equation}
		Since $h$ is orthogonal to the columns of the matrix $\begin{pmatrix} u \\ B \end{pmatrix}$, we find
		\[ 0 = h^\ast \begin{pmatrix} u \\ B \end{pmatrix} = \bar{h}_1 u + g^\ast B, \]
		and hence
		\[ g^\ast = - \bar{h}_1 u B^{-1}. \]
		Returning to \eqref{eq:lsv:distdot}, we have
		\begin{equation} \label{eq:lsv:distbnd}
			\dist(C_1, H_1) \geq |h_1| |A_{11} - uB^{-1} v |. 
		\end{equation}
		In addition, 
		\[ 1 = \| h \|^2 = |h_1|^2 + \|g \|^2 = |h_1|^2 (1 + \| u B^{-1} \|^2), \]
		and so
		\begin{equation} \label{eq:lsv:h1}
			|h_1|^2 = \frac{1}{1 + \| u B^{-1} \|^2}. 
		\end{equation}
		The conclusion now follows from \eqref{eq:lsv:distbnd} and \eqref{eq:lsv:h1}.  
	\end{proof}
	
	Our study of the bilinear form $u B^{-1} v$ is based on the following general result, which will allow us to introduce some additional independence into the problem to deal with the fact that $u$ and $v$ are dependent.  Similar decoupling techniques have also appeared in \cite{Costello, CTV, Gotze, GNT, Rsym}.  The lemma below is based on \cite[Lemma 8.4]{Rsym} for quadratic forms.  
	
	\begin{lemma}[Decoupling lemma] \label{lemma:lsv:decouple}
		Let $M \in \Mat_n(\mathbb{C})$, and let $x = (x_i)_{i=1}^n, y = (y_i)_{i=1}^n$ be random vectors in $\C^n$ such that $\{ (x_i, y_i) :1 \leq i \leq n\}$ is a collection of independent random tuples.  Let $(x', y')$ denote an independent copy of $(x,y)$.  Then for every subset $\pi \subset [n]$ and $t \geq 0$, we have
		\begin{align*} 
			\sup_{z \in \C} \Prob&( |x^{\mathrm{T}} M y - z | \leq t)^2 \\
			&\leq \Prob_{x, y, x', y'} \left( |x_\pi^{\mathrm{T}} M_{\pi \times \pi^c} (y_{\pi^c} - y'_{\pi^c}) + (x_{\pi^c} - x'_{\pi^c})^{\mathrm{T}} M_{\pi^c \times \pi} y_\pi + z_0 | \leq 2 t \right), 
		\end{align*}
		where $z_0$ is a random variable whose value is determined by $M_{\pi^c \times \pi^c}, x_{\pi^c}, y_{\pi^c}, x'_{\pi^c}, y'_{\pi^c}$.  
	\end{lemma}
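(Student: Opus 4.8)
The plan is to reduce the small-ball probability of the bilinear form $x^{\mathrm{T}} M y$ to that of a ``decoupled'' form by conditioning on the coordinates indexed by $\pi$ and then introducing an independent copy of the coordinates indexed by $\pi^c$. (This is the linear analogue of the quadratic-form decoupling in \cite[Lemma 8.4]{Rsym}, and is somewhat simpler because only one family of variables needs to be duplicated.) Fix $z \in \C$. Split the form along $[n] = \pi \cup \pi^c$:
\[ x^{\mathrm{T}} M y = x_\pi^{\mathrm{T}} M_{\pi \times \pi} y_\pi + x_\pi^{\mathrm{T}} M_{\pi \times \pi^c} y_{\pi^c} + x_{\pi^c}^{\mathrm{T}} M_{\pi^c \times \pi} y_\pi + x_{\pi^c}^{\mathrm{T}} M_{\pi^c \times \pi^c} y_{\pi^c}. \]
Let $\mathcal{G} := \sigma((x_i, y_i) : i \in \pi)$. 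Since $\{(x_i,y_i) : 1 \leq i \leq n\}$ is an independent family, conditionally on $\mathcal{G}$ the tuple $(x_{\pi^c}, y_{\pi^c})$ retains its original (product over $\pi^c$) law and is independent of $\mathcal{G}$; in particular, if $(x',y')$ is the independent copy from the statement, then, conditionally on $\mathcal{G}$, the pair $(x_{\pi^c}, y_{\pi^c})$ and $(x'_{\pi^c}, y'_{\pi^c})$ are i.i.d.

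First I would apply Cauchy--Schwarz (Jensen for the conditional expectation):
\[ \Prob\big( |x^{\mathrm{T}} M y - z| \leq t \big)^2 = \Big( \E\big[ \Prob( |x^{\mathrm{T}} M y - z| \leq t \mid \mathcal{G}) \big] \Big)^2 \leq \E\big[ \Prob( |x^{\mathrm{T}} M y - z| \leq t \mid \mathcal{G})^2 \big]. \]
Squaring a conditional probability is the probability of two conditionally independent copies of the same event, so the right-hand side equals $\Prob_{x,y,x',y'}\big( |x^{\mathrm{T}} M y - z| \leq t,\ |g'| \leq t \big)$, where $g'$ denotes $x^{\mathrm{T}} M y - z$ with $(x_{\pi^c}, y_{\pi^c})$ replaced by $(x'_{\pi^c}, y'_{\pi^c})$ and $(x_\pi, y_\pi)$ left unchanged. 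On the intersection of the two events, the triangle inequality yields $\big| (x^{\mathrm{T}} M y - z) - g' \big| \leq 2t$.

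It then remains to compute this difference from the block decomposition. The term $x_\pi^{\mathrm{T}} M_{\pi \times \pi} y_\pi$ and the constant $z$ are untouched by the replacement and hence cancel, leaving exactly
\[ x_\pi^{\mathrm{T}} M_{\pi \times \pi^c}(y_{\pi^c} - y'_{\pi^c}) + (x_{\pi^c} - x'_{\pi^c})^{\mathrm{T}} M_{\pi^c \times \pi} y_\pi + z_0, \qquad z_0 := x_{\pi^c}^{\mathrm{T}} M_{\pi^c \times \pi^c} y_{\pi^c} - (x'_{\pi^c})^{\mathrm{T}} M_{\pi^c \times \pi^c} y'_{\pi^c}, \]
and $z_0$ depends only on $M_{\pi^c \times \pi^c}, x_{\pi^c}, y_{\pi^c}, x'_{\pi^c}, y'_{\pi^c}$, as required. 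Since the resulting upper bound no longer involves $z$, taking the supremum over $z \in \C$ on the left-hand side finishes the proof. The argument is essentially bookkeeping; the one point that genuinely requires the hypothesis is the conditional-independence claim above, where it is crucial that the \emph{tuples} $(x_i, y_i)$ are independent across $i$ — if only $x$ and $y$ were separately independent, conditioning on $(x_\pi, y_\pi)$ could distort the conditional law of $(x_{\pi^c}, y_{\pi^c})$ and the duplication step would break down.
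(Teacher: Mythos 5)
Your proof is correct and follows the same strategy as the paper: decompose the bilinear form along $\pi$ and $\pi^c$, use the independence of the tuples $(x_i,y_i)$ across $i$ to decouple, and read off the difference of the two copies, with the triangle inequality providing the $2t$. The only difference is that you prove the intermediate decoupling inequality $\Prob(\mathcal{E}(\xi,\psi))^2 \leq \Prob(\mathcal{E}(\xi,\psi)\cap\mathcal{E}(\xi,\psi'))$ inline via Jensen applied to the conditional probability, whereas the paper cites it as a black-box lemma (Lemma \ref{lemma:lsv:Rdecouple}, from \cite{Rsym}); this is purely expository and the mathematical content is the same.
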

	
	The proof of Lemma \ref{lemma:lsv:decouple} is based on the following decoupling bound from \cite{Scorr, Rsym}. 
	
	\begin{lemma}[Lemma 8.5 from \cite{Rsym}] \label{lemma:lsv:Rdecouple}
		Let $\xi$ and $\psi$ be independent random vectors, and let $\psi'$ be an independent copy of $\psi$.  Let $\mathcal{E}(\xi, \psi)$ be an event which is determined by the values of $\xi$ and $\psi$.  Then
		\[ \Prob( \mathcal{E}(\xi, \psi))^2 \leq \Prob( \mathcal{E}(\xi, \psi) \cap \mathcal{E}(\xi, \psi') ). \]
	\end{lemma}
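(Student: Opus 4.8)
The plan is to prove the inequality by conditioning on $\xi$ and exploiting the conditional independence of $\psi$ and $\psi'$. Since the event $\mathcal{E}(\xi,\psi)$ is determined by the values of $\xi$ and $\psi$, it is of the form $\{(\xi,\psi)\in A\}$ for a measurable set $A$ in the relevant product space; equivalently there is a measurable indicator function $F$ with $\mathbf{1}_{\mathcal{E}(\xi,\psi)} = F(\xi,\psi)$. I would then define
\[ g(x) := \Prob(\mathcal{E}(x,\psi)) = \E[F(x,\psi)], \]
the probability computed with $\xi$ frozen at the value $x$ and randomness only in $\psi$. Because $\xi$ and $\psi$ are independent, Fubini's theorem gives $\Prob(\mathcal{E}(\xi,\psi)) = \E[g(\xi)]$.

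The next step is to treat the coupled event. Conditionally on $\xi = x$, the vectors $\psi$ and $\psi'$ are independent and identically distributed, since $\psi'$ is an independent copy of $\psi$ and both are independent of $\xi$; hence
\[ \Prob\big(\mathcal{E}(x,\psi)\cap\mathcal{E}(x,\psi')\big) = \E\big[F(x,\psi)F(x,\psi')\big] = g(x)^2, \]
and integrating over $\xi$ (again by Fubini) yields $\Prob\big(\mathcal{E}(\xi,\psi)\cap\mathcal{E}(\xi,\psi')\big) = \E[g(\xi)^2]$.

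The conclusion is then immediate from the Cauchy--Schwarz inequality (equivalently Jensen's inequality for the convex function $t\mapsto t^2$) applied to the random variable $g(\xi)$:
\[ \Prob(\mathcal{E}(\xi,\psi))^2 = \big(\E[g(\xi)]\big)^2 \leq \E[g(\xi)^2] = \Prob\big(\mathcal{E}(\xi,\psi)\cap\mathcal{E}(\xi,\psi')\big). \]
There is no substantive obstacle in this argument; the only point deserving a little care is the measurability step that lets one write both probabilities as iterated integrals, which is exactly what the hypothesis "$\mathcal{E}(\xi,\psi)$ is determined by the values of $\xi$ and $\psi$" provides. One could also present the same proof without the function $g$ by applying Lemma~\ref{lemma:lsv:Rdecouple}-style reasoning directly: condition on $\xi$, observe the conditional probability of $\mathcal{E}(\xi,\psi)\cap\mathcal{E}(\xi,\psi')$ equals the square of the conditional probability of $\mathcal{E}(\xi,\psi)$, and then invoke $\E[Y]^2\le\E[Y^2]$ for $Y$ that conditional probability.
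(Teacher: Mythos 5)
Your proof is correct. Note that the paper does not actually prove this lemma --- it is stated with a citation to Lemma 8.5 of \cite{Rsym} and used as a black box --- so there is no in-paper proof to compare against; your conditioning-on-$\xi$ plus Jensen/Cauchy--Schwarz argument is the standard one and matches the argument in the cited source.
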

	
	\begin{proof}[Proof of Lemma \ref{lemma:lsv:decouple}]
		Let $\xi$ be the random vector formed by the tuples $\{(x_i, y_i) : i \in \pi\}$, and let $\psi$ be the random vector formed from the tuples $\{ (x_i, y_i) : i \not\in \pi \}$.  Then $\xi$ and $\psi$ are independent by supposition, and we can apply Lemma \ref{lemma:lsv:Rdecouple}.  To this end, let $\tilde{x}, \tilde{y}$ be random vectors in $\mathbb{C}^n$ defined by 
		\[ \tilde{x}_{\pi} := x_{\pi}, \quad \tilde{x}_{\pi^c} := x'_{\pi^c}, \quad \tilde{y}_{\pi} := y_{\pi}, \quad \tilde{y}_{\pi^c} := y'_{\pi^c}. \]
		An application of Lemma \ref{lemma:lsv:Rdecouple} yields
		\begin{align*}
			\Prob( |x^{\mathrm{T}} M y - z | \leq t )^2 &\leq \Prob_{x, y, \tilde{x}, \tilde{y}} ( |{x}^{\mathrm{T}} M y - z| \leq t, |\tilde{x}^{\mathrm{T}} M \tilde{y} - z| \leq t ) \\
			&\leq \Prob_{x, y, \tilde{x}, \tilde{y}} (|x^{\mathrm{T}} M y - \tilde{x}^{\mathrm{T}} M \tilde{y} | \leq 2t),
		\end{align*}
		where the last inequality follows from the triangle inequality.  We now note that
		\begin{align*}
			x^{\mathrm{T}} M y - \tilde{x}^{\mathrm{T}} M \tilde{y} = x_\pi^{\mathrm{T}} M_{\pi \times \pi^c} (y_{\pi^c} - \tilde{y}_{\pi^c}) + (x_{\pi^c} - \tilde{x}_{\pi^c})^{\mathrm{T}} M_{\pi^c \times \pi} y_\pi + z_0,
		\end{align*}
		where $z_0$ depends only on $M_{\pi^c \times \pi^c}, x_{\pi^c}, y_{\pi^c}, \tilde{x}_{\pi^c}, \tilde{y}_{\pi^c}$.  Since $\tilde{x}_{\pi^c} = x'_{\pi^c}$ and $\tilde{y}_{\pi^c} = y'_{\pi^c}$, the claim follows.  
	\end{proof}

	We now turn to the proof of Lemma \ref{lemma:lsv:dist}.  The arguments presented here follow the general framework of \cite[Section 8.3]{Rsym}.    
	Fix $1 \leq k \leq n$; the arguments and bounds below will all be uniform in $k$.  Let $u$ be the $k$-th row of $A$ with the $k$-th entry removed.  Let $v$ be the $k$-th column of $A$ with the $k$-th entry removed, and let $B$ be the $(n-1) \times (n-1)$ matrix formed by removing the $k$-th row and $k$-th column from $A$.  In view of Lemma \ref{lemma:lsv:bilinear}, it suffices to prove that
	\begin{equation} \label{eq:suffdistpf}
		\Prob \left( \frac{ |A_{kk} - u B^{-1} v|}{ \sqrt{ 1 + \| u B^{-1} \|^2 } } \leq t, s_1(A) \leq s \right) \leq C \left( \frac{ \log(Csn) }{\sqrt{1-\rho}} \left( s^2 \sqrt{t} + \frac{1}{\sqrt{n}} \right) \right)^{1/4} 
	\end{equation}
	for some constants $C, c > 0$.  Our argument is based on applying Lemma \ref{lemma:lsv:decouple} to decouple the bilinear form $u B^{-1} v$ and then applying our anti-concentration bounds from Subsection \ref{sec:anticonc} to bound the resulting expressions.  We divide the proof of \eqref{eq:suffdistpf} into a number of sub-steps.  
	
	\subsubsection{Step 1: Constructing a random subset $\pi$}
	
	Following \cite{Rsym}, we decompose $[n-1]$ into two random subsets $\pi$ and $\pi^c$.  Let $\delta_1, \ldots, \delta_{n-1}$ be i.i.d.\ $\{0,1\}$-valued random variables, independent of $X$, with $\E \delta_i = c_{oo}/2$, where $c_{oo}$ is a constant defined by
	\[ c_{oo} := \delta/8 \]
	and $\delta \in (0,1)$ was previously fixed.  
	We then define $\pi := \{ i \in [n-1] : \delta_i = 0 \}$.  By the Chernoff bound, it follows that
	\begin{equation} \label{eq:condpic}
		|\pi^c| \leq c_{oo} n 
	\end{equation} 
	with probability at least $1 - C_{oo}' \exp(-c_{oo}' n)$ for some constants $C_{oo}', c_{oo}' > 0$.

	\subsubsection{Step 2: Estimating $\|B^{-1}\|_2$}
	
	Lemma \ref{lemma:lsv:sizeuB} below will allow us to estimate the denominator appearing on the right-hand side of \eqref{eq:distancequotient}.  To this end, let $(u', v')$ be an independent copy of $(u,v)$, also independent of $X$.  
	
	\begin{lemma} \label{lemma:lsv:sizeuB}
		There exist constants $C, c > 0$ such that, for any $s \geq 1$, the random matrix $B$ has the following properties with probability at least $1 - Cs \exp(-cn)$.  If $s_1(B) \leq s$, one has:
		\begin{enumerate}[(i)]
			\item \label{item:lsv:geqHSnorm} for any $t_0 \geq 0$, with probability at least $1 - C {\log(Cns)} \left( s t_0 + n^{-1/2} \right)$ in $u, u', \pi$, 
			\[ \|(u - u')_{\pi^c} B^{-1}_{\pi^c \times [n-1]} \| \geq t_0 \|B^{-1} \|_2. \]
			\item \label{item:lsv:geqHSnorm2} for any $t_0 \geq 0$, with probability at least $1 - C {\log(Cns)} \left( s t_0 + n^{-1/2} \right)$ in $v, v', \pi$, 
			\[ \| B^{-1}_{[n-1] \times \pi^c} (v - v')_{\pi^c} \| \geq t_0 \|B^{-1} \|_2. \]
		\end{enumerate}
	\end{lemma}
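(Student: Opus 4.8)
The two items are mirror images of each other: statement (ii) for $B$ is statement (i) for $B^{\mathrm T}$, and $B^{\mathrm T}$ is again a submatrix (indexed by $[n]\setminus\{k\}$) of a matrix satisfying the hypotheses of Theorem \ref{thm:lsv} with the \emph{same} constants $a,b,\sigma$ and the \emph{same} $\rho$, because those hypotheses are symmetric in each mirrored pair $(X_{ij},X_{ji})$ (note $\mathcal E_{ij}=\mathcal E_{ji}$) and deleting one row and column only replaces $n$ by $n-1$. So I would prove (i) and obtain (ii) by transposition. As in the reduction at the start of the section, we may assume $B$ and all its principal submatrices are invertible.

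The strategy is to reduce $\|(u-u')_{\pi^c}B^{-1}_{\pi^c\times[n-1]}\|$ to a one–dimensional small–ball problem that Lemma \ref{lemma:lsv:BE} can handle. Write the quantity as $\bigl(\sum_{j=1}^{n-1}\bigl|\sum_{i\in\pi^c}(u_i-u_i')(B^{-1})_{ij}\bigr|^2\bigr)^{1/2}$ and bound it below by the term indexed by a column $j^\ast=j^\ast(B)$ of $B^{-1}$ that is extremal with respect to the relevant norm, so the coefficient vector carries a controlled fraction of $\|B^{-1}\|_2$. Two obstructions block a direct application of Lemma \ref{lemma:lsv:BE}: the increments $u_i-u_i'$ have no finite moments, and $B^{-1}$ can be spiky. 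The first is removed by truncation: on the event $\mathcal E_{ki}\cap\mathcal E_{ki}'$ one has $u_i-u_i'=X_{ki}-X_{ki}'$, which is bounded by $2a$ and has conditional variance at least $2\sigma^2$; a Chernoff bound shows a constant proportion of the indices $i\in\pi^c$ lie in this event, and conditioning on the rest (absorbing it into the centering constant) leaves a sum of independent bounded centered summands.

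The second obstruction is where the exceptional event of probability $Cs\exp(-cn)$ enters. Applying Corollary \ref{cor:lsv:inversestruct} to $B$ (for item (ii), to $B^{\mathrm T}$) with each of the deterministic vectors $e_1,\dots,e_{n-1}$ and taking a union bound produces a $B$–measurable event $G$ with $\Prob(G^c)\le Cns\exp(-cn)\le Cs\exp(-c'n)$ on which $s_1(B)\le s$ forces every normalized column of $B^{-1}$ to lie in $\Incomp(\delta,\tau)$. On $G$, Lemma \ref{lemma:lsv:spread} applied to the column $j^\ast$ supplies a set $S\subset[n-1]$ with $|S|\ge\delta(n-1)/2$ on which all the moduli $|(B^{-1})_{i,j^\ast}|$ are comparable. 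Intersecting $S$ with $\pi^c$ and with the truncation event — each a constant fraction, by Chernoff bounds for the $\mathrm{Bernoulli}(c_{oo}/2)$ variables defining $\pi$ and for the independent events $\mathcal E_{ki},\mathcal E_{ki}'$ — leaves a sum $\sum_{i\in S'}Z_i$ of independent centered summands with $\sum_{i\in S'}\E|Z_i|^2$ bounded below in terms of $\sigma,\tau$ and $\|B^{-1}\|_2$, and with Lyapunov ratio controlled; feeding $t=t_0\|B^{-1}\|_2$ into Lemma \ref{lemma:lsv:BE} then produces a bound of the advertised shape $C\log(Cns)(st_0+n^{-1/2})$, with the logarithmic factor coming from the dyadic pigeonholing needed to replace "all columns" by a family of comparable ones (and to handle comparable-but-unequal coordinates inside $S$), and the factor $s$ entering through $\tau^{-1}$; the $\exp(-cn)$ Chernoff failures are swept into the constant in front of $\exp(-cn)$ in the $B$–level statement.

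The step I expect to be the main obstacle, and which I would follow \cite[Section 8.3]{Rsym} on, is precisely this interface between the three independent sources of randomness — the matrix $B$, the random subset $\pi$, and the vectors $u,u'$ (resp. $v,v'$) — together with the uniformity over all $n-1$ columns of $B^{-1}$: the only a priori information about $B$ is the bound $s_1(B)\le s$, which says nothing about how delocalized $B^{-1}$ is, so the incompressibility of the columns of $B^{-1}$ furnished by Corollary \ref{cor:lsv:inversestruct} and the ensuing Lemma \ref{lemma:lsv:spread} decomposition must carry the quantitative weight of the argument. Keeping the powers of $n$ and $s$ aligned through the truncation, the $\pi$–selection, and the Berry--Esseen estimate so as to land exactly on $C\log(Cns)(st_0+n^{-1/2})$ (rather than a bound weakened by a stray power of $n$) is the delicate part.
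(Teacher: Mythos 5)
Your overall scaffolding---Corollary \ref{cor:lsv:inversestruct} plus a union bound to put every normalized column (resp.\ row) of $B^{-1}$ in $\Incomp(\delta,\tau)$ with probability $1-Cs\exp(-cn)$, the transposition argument reducing (ii) to (i), and a truncation/Chernoff/Berry--Esseen anti-concentration bound at the end---matches the paper. But there is a real gap where you reduce the $\ell^2$ sum over columns of $B^{-1}$ to a single extremal column $j^*=j^*(B)$. Writing
\[
\|(u-u')_{\pi^c} B^{-1}_{\pi^c\times[n-1]}\|^2 \;=\; \sum_{j=1}^{n-1}\bigl|(u-u')_{\pi^c}(x_j)_{\pi^c}\bigr|^2\,\|B^{-1}e_j\|^2,
\]
if you pick $j^*$ to maximize the $B$-measurable weight $\|B^{-1}e_{j}\|^2$, the only available guarantee is $\|B^{-1}e_{j^*}\|^2 \ge \|B^{-1}\|_2^2/(n-1)$. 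Dropping all other terms and intersecting with the event $\|(u-u')_{\pi^c} B^{-1}_{\pi^c\times[n-1]}\| \le t_0\|B^{-1}\|_2$ then only forces $|(u-u')_{\pi^c}(x_{j^*})_{\pi^c}|\le t_0\sqrt{n-1}$, and feeding this window into the Berry--Esseen small-ball bound yields $O\!\bigl(\log(Cns)(st_0\sqrt n + n^{-1/2})\bigr)$, a factor $\sqrt n$ worse than the stated estimate. (Choosing $j^*$ to maximize the whole summand $|(u-u')_{\pi^c}(x_j)_{\pi^c}|^2\,\|B^{-1}e_j\|^2$ is not allowed either, since that $j^*$ would depend on $u,u'$, destroying the independence needed for Berry--Esseen.)

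The paper closes precisely this gap with Lemma \ref{lemma:lsv:sumdependent} (sums of dependent random variables, from \cite{Rsym}). Taking $p_j := \|B^{-1}e_j\|^2/\|B^{-1}\|_2^2$, so $\sum_j p_j = 1$, and $Z_j := |(u-u')_{\pi^c}(x_j)_{\pi^c}|^2$, that lemma gives
\[
\Prob\Bigl(\sum_{j} p_j Z_j \le t_0^2\Bigr) \;\le\; 2\sum_j p_j\,\Prob(Z_j\le 2t_0^2)\;\le\; 2\sup_{w\in\Incomp(\delta,\tau)}\Prob\bigl(|(u-u')_{\pi^c}w_{\pi^c}|\le \sqrt2\,t_0\bigr),
\]
i.e.\ it reduces to a single-column small-ball estimate at threshold $O(t_0)$ rather than $O(t_0\sqrt n)$, with no dimensional loss. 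The resulting probability is then exactly what Lemma \ref{lemma:lsv:anticor2} bounds; your truncation/Chernoff/Berry--Esseen plan is essentially a re-derivation of that lemma, and you can invoke it directly. With Lemma \ref{lemma:lsv:sumdependent} substituted for your extremal-column step, the rest of your outline is sound.
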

	
	In order to prove the lemma, we will need the following elementary result.
	
	\begin{lemma}[Sums of dependent random variables; Lemma 8.3 from \cite{Rsym}] \label{lemma:lsv:sumdependent}
		Let $Z_1, \ldots, Z_n$ be arbitrary non-negative random variables (not necessarily independent), and $p_1, \ldots, p_n$ be non-negative numbers such that 
		\[ \sum_{j=1}^n p_j = 1. \]
		Then, for every $t \geq 0$, we have
		\[ \Prob \left( \sum_{j=1}^n p_j Z_j \leq t \right) \leq 2 \sum_{j=1}^n p_j \Prob( Z_j \leq 2 t). \]
	\end{lemma}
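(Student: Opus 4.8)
The plan is to run a direct first‑moment (Markov‑type) argument on the weighted average, using only nonnegativity of the $Z_j$ and $p_j$ — no independence is needed. Fix $t>0$ for the main argument; the degenerate case $t=0$ will be disposed of at the end. Let $\mathcal{A}:=\{\sum_{j=1}^n p_j Z_j \leq t\}$ be the event whose probability we want to bound, and introduce the \emph{random} weight $W:=\sum_{j:\,Z_j>2t} p_j$ carried by the ``large'' coordinates. The key observation is that on $\mathcal{A}$ the large coordinates cannot carry more than half the weight:
\[ 2t\,W \;=\; \sum_{j:\,Z_j>2t} 2t\,p_j \;\leq\; \sum_{j:\,Z_j>2t} p_j Z_j \;\leq\; \sum_{j=1}^n p_j Z_j \;\leq\; t \qquad \text{on } \mathcal{A}, \]
so $W\leq 1/2$ on $\mathcal{A}$, and hence $\sum_{j:\,Z_j\leq 2t} p_j = 1-W \geq 1/2$ on $\mathcal{A}$.

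Next I would convert this pointwise statement into the desired probability bound by integrating. Since $\mathbf{1}_{\mathcal{A}}\cdot\tfrac12 \leq \mathbf{1}_{\mathcal{A}}\sum_{j:\,Z_j\leq 2t} p_j = \mathbf{1}_{\mathcal{A}}\sum_{j=1}^n p_j \mathbf{1}_{\{Z_j\leq 2t\}}$, taking expectations and then dropping $\mathbf{1}_{\mathcal{A}}\leq 1$ and swapping expectation with the finite sum gives
\[ \tfrac12\,\Prob(\mathcal{A}) \;\leq\; \E\!\left[\mathbf{1}_{\mathcal{A}}\sum_{j=1}^n p_j \mathbf{1}_{\{Z_j\leq 2t\}}\right] \;\leq\; \sum_{j=1}^n p_j\,\E\big[\mathbf{1}_{\{Z_j\leq 2t\}}\big] \;=\; \sum_{j=1}^n p_j\,\Prob(Z_j\leq 2t). \]
Rearranging yields $\Prob(\mathcal{A})\leq 2\sum_{j=1}^n p_j \Prob(Z_j\leq 2t)$, which is exactly the claim for $t>0$.

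Finally I would treat $t=0$ by the same scheme: on $\{\sum_j p_j Z_j \leq 0\}$ every term $p_j Z_j$ vanishes, so $Z_j = 0 \leq 2t$ for each $j$ with $p_j>0$, whence $\sum_{j:\,Z_j\leq 0} p_j = 1 \geq 1/2$ and the averaging step above applies verbatim. There is no genuine obstacle in this lemma; the only point requiring a little care is that $W$ is a bona fide random variable depending on the realization of the $Z_j$, so the bound $W\leq 1/2$ must be asserted pointwise on $\mathcal{A}$ before integrating, not merely in expectation.
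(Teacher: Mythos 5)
Your argument is correct. The paper itself does not prove this lemma; it is stated and cited directly as Lemma 8.3 from \cite{Rsym}, so there is no in-paper proof to compare against. Your first-moment argument (observe that on $\{\sum_j p_j Z_j\le t\}$ the weight carried by indices with $Z_j>2t$ is at most $1/2$ by a pointwise Markov estimate, hence $\sum_j p_j\mathbf{1}_{\{Z_j\le 2t\}}\ge 1/2$ on that event, then take expectations) is exactly the standard proof, and your separate handling of $t=0$ correctly patches the division-by-$2t$ step that is otherwise needed.
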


	\begin{proof}[Proof of Lemma \ref{lemma:lsv:sizeuB}]
		By Corollary \ref{cor:lsv:inversestruct} and the union bound, under the assumption $s_1(B) \leq s$, we have
		\[ x_j := \frac{ B^{-1} e_j }{\|B^{-1} e_j \|} \in \Incomp(\delta, \tau), \qquad y_j := \frac{e_j^\ast B^{-1} }{ \| e_j^\ast B^{-1} \| } \in \Incomp(\delta, \tau) \]
		for $1 \leq j \leq n-1$ with probability at least $1 - C s \exp(-cn)$.  Here, $e_1, \ldots, e_{n-1}$ are the standard basis elements of $\mathbb{C}^{n-1}$.  Fix a realization of $B$ for which this property holds.  We will prove that both properties hold with the desired probability for this fixed realization of $B$.  
		
		For \eqref{item:lsv:geqHSnorm}, we note that
		\begin{align*}
			\|(u-u')_{\pi^c} B^{-1}_{\pi^c \times [n-1]} \|^2 &= \sum_{j=1}^{n-1} | (u-u')_{\pi^c} B^{-1}_{\pi^c \times [n-1]} e_j |^2 \\
			&= \sum_{j=1}^{n-1} |(u-u')_{\pi^c} (x_j)_{\pi^c} |^2 \| B^{-1} e_j \|^2. 
		\end{align*}
		Taking $p_j := \| B^{-1} e_j \|^2 / \| B^{-1} \|_2^2$, we see that $\sum_{j=1}^{n-1} p_j = 1$, and hence
		\begin{align*}
			\Prob_{u, u', \pi} &\left( \| (u-u')_{\pi^c} B^{-1}_{\pi^c \times [n-1]} \| \leq t_0 \| B^{-1} \|_{2} \right) \\
			&\qquad\qquad\leq \Prob_{u, u', \pi} \left( \sum_{j=1}^{n-1} |(u-u')_{\pi^c} (x_j)_{\pi^c} |^2 p_j \leq t_0^2 \right) \\
			&\qquad\qquad\leq 2 \sum_{j=1}^{n-1} p_j \Prob_{u, u', \pi} (|(u-u')_{\pi^c} (x_j)_{\pi^c} |^2 \leq 2 t_0^2 ) \\
			&\qquad\qquad\leq 2 \sup_{w \in \Incomp(\delta, \tau)} \Prob_{u,u',\pi} ( |(u-u')_{\pi^c} w_{\pi^c} | \leq \sqrt{2} t_0)
		\end{align*}
		by Lemma \ref{lemma:lsv:sumdependent}.  Recalling our choice of $\delta$, $\tau$ (\ref{eq:taudef}), and $c_{oo}$, the claim now follows from the anti-concentration bound given in Lemma \ref{lemma:lsv:anticor2}.  
		
		The proof of \eqref{item:lsv:geqHSnorm2} is similar.  Indeed, we have
		\[ \| B^{-1}_{[n-1] \times \pi^c} (v - v')_{\pi^c} \|^2 = \sum_{j=1}^{n-1} | (y_j)_{\pi^c} (v - v')_{\pi^c} |^2 \| e_j^\ast B^{-1} \|^2. \]
		Applying Lemma \ref{lemma:lsv:sumdependent} with $p_j := \| e_j^\ast B^{-1} \|^2 / \| B^{-1} \|_2^2$, we conclude that
		\begin{align*} 
			\Prob_{v, v', \pi} & \left( \| B^{-1}_{[n-1] \times \pi^c} (v - v')_{\pi^c} \| \leq t_0 \|B^{-1} \|_2 \right) \\
			&\qquad\qquad\leq 2 \sum_{j=1}^{n-1} p_j \Prob_{v,v',\pi} \left( | (y_j)_{\pi^c} (v - v')_{\pi^c} | \leq \sqrt{2} t_0 \right) \\
			&\qquad\qquad\leq 2 \sup_{w \in \Incomp(\delta,\tau)} \Prob_{v, v', \pi} \left( |w_{\pi^c}^{\mathrm{T}} (v-v')_{\pi^c}| \leq \sqrt{2} t_0 \right). 
		\end{align*}
		As before, the conclusion now follows from the anti-concentration bound given in Lemma \ref{lemma:lsv:anticor2}.  
	\end{proof}
	
	\subsubsection{Step 3: Working on the appropriate events}
	We have one last preparatory step before we can apply the decoupling lemma, Lemma \ref{lemma:lsv:decouple}.  In this step, we define the events we will need to work on for the remainder of the proof.  To this end, define the events
	\[ \mathcal{B}_A := \{ s_1(A) \leq s \}, \qquad \mathcal{B}_B := \{ s_1(B) \leq s \}, \qquad \mathcal{B}_u := \{ \| u \| \leq s \}. \]
	We note that $\mathcal{B}_A \subset \mathcal{B}_B$ and $\mathcal{B}_A \subset \mathcal{B}_u$ since $u$ and $B$ are sub-matrices of $A$.  Consider the random vectors
	\begin{equation} \label{eq:defww'}
		w := (u-u')_{\pi^c} B^{-1}_{\pi^c \times [n-1]}, \qquad w' := B^{-1}_{[n-1] \times \pi^c} (v - v')_{\pi^c}. 
	\end{equation}
	It is possible that $w = 0$ or $w' = 0$, although we will show that these events happen with small probability momentarily.  
	
	Let $t_0 > 0$.  Consider the event
	\begin{equation} \label{eq:condB2}
		\|B^{-1} \|_2 \leq \frac{1}{t_0} \min\{ \|w \|, \|w'\| \}. 
	\end{equation}
	By Lemma \ref{lemma:lsv:sizeuB}, we find
	\[ \Prob_{B, u, u', v, v', \pi} ( \eqref{eq:condB2} \text{ holds } \lor \mathcal{B}_B^c) \geq 1 - C \log (csn ) (s t_0 + n^{-1/2}) - Cs \exp(-cn) \]
	for some constants $C, c > 0$.  In particular, since $\|B^{-1}\|_2 > 0$, it follows that when the event in \eqref{eq:condB2} occurs, it must be the case that $w$ and $w'$ are both nonzero.  In order to avoid several different cases later in the proof, let us define $\omega$ and $\omega'$ as follows.  If $w$ is nonzero, we take $\omega := w$, and if $w$ is zero, we define $\omega$ to be a fixed vector in $\Incomp(\delta, \tau)$.  We define $\omega'$ analogously in terms of $w'$.  It follows that on the event \eqref{eq:condB2}, we have
	\begin{equation} \label{eq:condomega}
		\omega = w, \qquad \omega' = w'.
	\end{equation}
	
	Next, consider the event
	\begin{align} \label{eq:condIncomp}
		\frac{\omega}{\| \omega\|} \in \Incomp(\delta, \tau), \qquad \frac{\omega'}{\|\omega'\|} \in \Incomp(\delta, \tau). 
	\end{align}
	Let us fix an arbitrary realization of $u, v, u', v'$ and a realization of $\pi$ which satisfies \eqref{eq:condpic}.  We will apply Corollary \ref{cor:lsv:inversestruct} to control the event in \eqref{eq:condIncomp}.  Indeed, we only need to consider the cases when $w \neq 0$ or $w' \neq 0$.  In these cases, it follows that $\omega = w$ or $\omega' = w'$.  Let us suppose this is the case.  Then $\omega = (u - u') P_{\pi^c} B^{-1}$ or $\omega' = B^{-1} P_{\pi^c} (v - v')$, where $P_{\pi^c}$ is an orthogonal projection onto those coordinates specified by $\pi^c$.  Thus, from Corollary \ref{cor:lsv:inversestruct}, we deduce that
	\[ \Prob_B ( \eqref{eq:condIncomp} \text{ holds } \lor \mathcal{B}_B^c \mid u, v, u', v', \pi \text{ satisfies } \eqref{eq:condpic}) \geq 1 - C' s \exp(-c'n) \]
	for some constants $C', c' > 0$.  Combining the probabilities above, we conclude that
	\begin{align*} 
		\Prob_{B, u, v, u', v', \pi} &\left( ( \eqref{eq:condpic}, \eqref{eq:condB2}, \eqref{eq:condIncomp} \text{ hold}) \lor \mathcal{B}_B^c \right)  \\
		&\qquad\qquad\geq 1 - C'' \log(C''sn) (s t_0 + n^{-1/2}) - C'' s \exp(-c'' n) \\
		& \qquad\qquad= : 1 - p_0
	\end{align*}
	for some constants $C'', c'' > 0$. 
	
	It follows that there exists a realization of $\pi$ that satisfies \eqref{eq:condpic} and such that
	\[ \Prob_{B, u, v, u', v'} \left( ( \eqref{eq:condB2}, \eqref{eq:condIncomp} \text{ hold}) \lor \mathcal{B}_B^c \right) \geq 1- p_0. \]
	We fix such a realization of $\pi$ for the remainder of the proof.  Using Fubini's theorem, we deduce that the random matrix $B$ has the following property with probability at least $1 - \sqrt{p_0}$:
	\[ \Prob_{u, v, u', v'} \left( ( \eqref{eq:condB2}, \eqref{eq:condIncomp} \text{ hold}) \lor \mathcal{B}_B^c \mid B \right) \geq 1- \sqrt{p_0}. \]
	Since the event $\mathcal{B}_B$ depends only on $B$ and not on $u, v, u', v'$, it follows that the random matrix $B$ has the following property with probability at least $1 - \sqrt{p_0}$: either $\mathcal{B}_B^c$ holds, or
	\begin{equation} \label{eq:conddagger}
		\mathcal{B}_B \text{ holds and } \Prob_{u, v, u', v'} \left( \eqref{eq:condB2}, \eqref{eq:condIncomp} \text{ hold} \mid B \right) \geq 1 - \sqrt{p_0}. 
	\end{equation}
	
	\subsubsection{Step 4: Decoupling} 
	Recall that we are interested in bounding $\Prob_{B, u, v, A_{kk}}(\mathcal{E} \land \mathcal{B}_A)$, where
	\[ \mathcal{E} := \left\{ \frac{ |A_{kk} - u B^{-1} v|}{ \sqrt{ 1 + \| u B^{-1} \|^2 } } \leq t \right\}. \]
	We first observe that
	\[ \Prob_{B, u, v, A_{kk}}( \mathcal{E} \land \mathcal{B}_A) \leq \Prob_{B, u, v, A_{kk}}(\mathcal{E} \land \mathcal{B}_B \land \mathcal{B}_u). \]
	On the event $\mathcal{B}_u$, we have
	\[ \| u B^{-1} \| \leq \| u \| \| B^{-1} \| \leq s \| B^{-1} \|_2. \]
	In addition, if $s_1(B) \leq s$, then $\|B^{-1} \|_2 \geq 1/s$.  Hence, on the event $\mathcal{B}_B \land \mathcal{B}_u$, 
	\[ 1 + \| u B^{-1} \|^2 \leq 1 + s^2 \| B^{-1} \|_2^2 \leq 2 s^2 \| B^{-1} \|_2^2. \]
	Thus, we obtain
	\[ \Prob_{B, u, v, A_{kk}}( \mathcal{E} \land \mathcal{B}_A) \leq \Prob_{B, u, v, A_{kk}}(\mathcal{E}' \land \mathcal{B}_B ), \]
	where 
	\[ \mathcal{E}' := \left\{ |A_{kk} - u B^{-1} v| \leq \sqrt{2} ts \| B^{-1} \|_2 \right\}. \] 
	Thus, we find
	\begin{align*}
		\Prob_{B, u, v, A_{kk}}( \mathcal{E} \land \mathcal{B}_A) \leq \Prob_{B, u, v, A_{kk}} ( \mathcal{E}' \land \eqref{eq:conddagger} \text{ holds}) + \Prob_{B, u, v, A_{kk}}( \mathcal{B}_B \land \eqref{eq:conddagger} \text{ fails}).  
	\end{align*}
	The last probability is bounded above by $\sqrt{p_0}$ by the previous step.  We conclude that
	\[ \Prob_{B, u, v, A_{kk}}( \mathcal{E} \land \mathcal{B}_A) \leq \sup_{\substack{B \text{ satisfies } \eqref{eq:conddagger} \\ A_{kk} \in \mathbb{C}}} \Prob_{u, v}( \mathcal{E}' \mid B, A_{kk}) + \sqrt{p_0}. \]
	We now begin to work with the random vectors $u', v'$ (recall that $(u', v')$ are independent of $X$).  To do so, we will work on a larger probability space which also includes the random vectors $u', v'$.  Indeed, computing the probability above on the larger space which includes $u', v'$, we conclude that
	\[ \Prob_{B, u, v, A_{kk}}( \mathcal{E} \land \mathcal{B}_A) \leq \sup_{\substack{ B \text{ satisfies } \eqref{eq:conddagger} \\ A_{kk} \in \mathbb{C}}} \Prob_{u,v,u',v'}( \mathcal{E}' \mid B, A_{kk}) + \sqrt{p_0}. \]
	
	For the remainder of the proof, we fix a realization of $B$ which satisfies \eqref{eq:conddagger} and fix an arbitrary realization of $A_{kk}$.  By supposition, both $B$ and $A_{kk}$ are independent of $u, v, u', v'$.  It remains to bound the probability
	\[ p_1 := \sup_{z \in \mathbb{C}} \Prob_{u,v,u',v'}( \mathcal{E}_z'), \]
	where
	\[ \mathcal{E}_z' := \left\{ |z - u B^{-1} v| \leq \sqrt{2} ts \| B^{-1} \|_2 \right\}. \]
	
	To bound $p_1$, we apply the decoupling lemma, Lemma \ref{lemma:lsv:decouple}.  Indeed, by Lemma \ref{lemma:lsv:decouple}, 
	\[ p_1^2 \leq \Prob_{u, v, u', v'} (\mathcal{E}'' ), \]
	where
	\[ \mathcal{E}'' := \left\{ | u_{\pi} B^{-1}_{\pi \times \pi^c} (v - v')_{\pi^c} + (u - u')_{\pi^c} B^{-1}_{\pi^c \times \pi} v_{\pi} + z_0 | \leq 2 \sqrt{2} st \| B^{-1} \|_2 \right\} \]
	and $z_0$ is a complex number depending only on $B_\{\pi^c\times\pi^c \}$, $u_{\pi^c}, v_{\pi^c}, u_{\pi^c}', v_{\pi^c}'$.  Using \eqref{eq:conddagger} (where the conditioning on $B$ is no longer required since $B$ is now fixed), we find
	\[ p_1^2 \leq \Prob_{u, v, u', v'} (\mathcal{E}'' \land \eqref{eq:condB2}, \eqref{eq:condIncomp} \text{ hold}) + \sqrt{p_0}, \]
	and hence
	\[ p_1^2 \leq \Prob_{u,v,u', v'}(\mathcal{E}''' \land \eqref{eq:condomega}, \eqref{eq:condIncomp} \text{ hold}) + \sqrt{p_0}, \]
	where
	\begin{align*}
		\mathcal{E}''' &:= \left\{  \left| u_{\pi} w_{\pi}' + w_{\pi} v_{\pi} + z_0 \right| \leq 2 \sqrt{2} \frac{st}{t_0} \max\{ \|w \|, \|w'\| \} \right\}; 
	\end{align*}
	here, we used the fact that on the event \eqref{eq:condB2}, the event \eqref{eq:condomega} holds.  
	
	\subsubsection{Step 5: Applying the anti-concentration bounds}
	Recall that $w, w'$ depend only on $u_{\pi^c}, v_{\pi^c}, u_{\pi^c}', v_{\pi^c}'$.  In addition, $u_\pi$ and $v_{\pi}$ are independent of these random vectors.  Let us fix a realization of the random vectors $u_{\pi^c}, v_{\pi^c}, u_{\pi^c}', v_{\pi^c}'$ which satisfy \eqref{eq:condomega} and \eqref{eq:condIncomp}.  This completely determines $w$ and $w'$; moreover, $z_0$ is also completely determined.  Therefore, we conclude that
	\[ p_1^2 \leq \sup_{ \substack{w, w' \text{ satisfy } \eqref{eq:condomega}, \eqref{eq:condIncomp}\\z_0 \in \mathbb{C}} } \Prob_{u_{\pi}, v_{\pi}} \left(\left| u_{\pi} w_{\pi}' + w_{\pi} v_{\pi} + z_0 \right| \leq 2 \sqrt{2} \frac{st}{t_0} \max\{ \|w \|, \|w'\| \} \right) + \sqrt{p_0}. \]
	In order to bound this first term on the right-hand side, we will apply the anti-concentration bound given in Lemma \ref{lemma:lsv:anticor}.  Without loss of generality, let us assume that $\max\{ \|w \|, \|w'\| \} = \|w\|$.  Then dividing through by $\|w\|$, we find that
	\[ p_1^2 \leq \sup_{ \substack{w, w' \text{ satisfy } \eqref{eq:condomega}, \eqref{eq:condIncomp}\\z_0 \in \mathbb{C}} } \Prob_{u_{\pi}, v_{\pi}} \left(\left| u_{\pi} \frac{w_{\pi}'}{\|w\|} + \frac{w_{\pi}}{\|w\|} v_{\pi} + z_0 \right| \leq 2 \sqrt{2} \frac{st}{t_0}  \right) + \sqrt{p_0}, \]
	and 
	\[ \frac{ \left\| {w'} \right\| }{\|w\|} \leq 1. \]
	In view of Lemma \ref{lemma:lsv:anticor} (where we recall that $|\pi| \geq n(1-\delta/8)$ due to \eqref{eq:condpic}), we conclude that
	\[ p_1^2 \leq C''' \frac{1}{\sqrt{1 - \rho}} \log(C'''ns) \left( \frac{s^2t}{t_0} + \frac{1}{\sqrt{n}} \right) + \sqrt{p_0} \]
	for some constant $C''' > 0$.  
	
	\subsubsection{Step 6: Completing the proof}
	Combining the bounds from the previous steps, we obtain
	\[ \Prob_{B, u, v, A_{kk}}( \mathcal{E} \land \mathcal{B}_A) \leq p_1 + \sqrt{p_0}. \]
	We now proceed to simplify the expression to obtain \eqref{eq:suffdistpf}.  We still have the freedom to chose $t_0 > 0$; let us take $t_0 := \sqrt{t}$.  In addition, we may assume that the expression
	\begin{equation} \label{eq:errorbndform}
		C''' \log(C''' ns) \left( {s^2\sqrt{t}} + \frac{1}{\sqrt{n}} \right) 
	\end{equation}
	is less than one as the bound is trivial otherwise.  In particular, this implies that $s \leq \exp(\sqrt{n})$.  Among others, this means that the error term $C'' s \exp(-c'' n)$ can be absorbed into terms of the form \eqref{eq:errorbndform} (by increasing the constant $C'''$ if necessary).  After some simplification, the bound for $p_1$ obtained in the previous step (with the substitution $t_0 := \sqrt{t}$) yields
	\[ \Prob_{B, u, v, A_{kk}}( \mathcal{E} \land \mathcal{B}_A) \leq C \left( \frac{ \log(Csn) }{\sqrt{1-\rho}} \left( s^2 \sqrt{t} + \frac{1}{\sqrt{n}} \right) \right)^{1/4} \]
	for some constant $C > 0$.  This completes the proof of \eqref{eq:suffdistpf}, and hence the proof of Lemma \ref{lemma:lsv:dist} is complete.

	\subsection{Proof of Theorem \ref{thm:lsv}} \label{sec:proof:lsv}
	
	In this subsection, we complete the the proof of Theorem \ref{thm:lsv}.  Indeed, for any $s \geq 1$ and any $0 < t \leq 1$, we have
	\begin{align}
		\Prob \left( s_n(A) \leq \frac{t}{\sqrt{n}}, s_1(A) \leq s \right) &\leq \Prob \left( \min_{x \in \Sp^{n-1}} \|Ax \| \leq \frac{t}{\sqrt{n}}, s_1(A) \leq s \right) \nonumber \\
		&\leq \Prob \left( \min_{x \in \Incomp(\delta, \tau)} \|A x \| \leq \frac{ t}{ \sqrt{n} }, s_1(A) \leq s \right) \label{eq:finalsingbnd} \\
		&\qquad + \Prob \left( \min_{x \in \Comp(\delta, \tau)} \|Ax \| \leq \frac{1}{\sqrt{n}}, s_1(A) \leq s \right) \nonumber
	\end{align}
	due to our decomposition of the unit sphere into compressible and incompressible vectors.  It remains to bound each of the terms on the right-hand side.  
	
	For the incompressible vectors, we combine Lemmas \ref{lemma:lsv:avg} and \ref{lemma:lsv:dist} to find that, for any $t > 0$, 
	\begin{align*}
		\Prob \left( \min_{x \in \Incomp(\delta, \tau)} \|A x \| \leq \frac{ t \tau}{ \sqrt{n} }, s_1(A) \leq s \right) &\leq \frac{2}{\delta n} \sum_{k=1}^n \Prob( \dist(C_k, H_k) \leq t, s_1(A) \leq s ) \\
		&\leq \frac{2C}{\delta} \left( \frac{ \log(Csn) }{\sqrt{1-\rho}} \left( s^2 \sqrt{t} + \frac{1}{\sqrt{n}} \right) \right)^{1/4} 
	\end{align*}
	for some constant $C > 0$.  Recalling the definitions of $\delta$ and $\tau$ (\ref{eq:taudef}), we conclude that
	\begin{equation} \label{eq:Incompfinal}
		\Prob \left( \min_{x \in \Incomp(\delta, \tau)} \|A x \| \leq \frac{ t}{ \sqrt{n} }, s_1(A) \leq s \right) \leq C' \left( \frac{ \log(C'sn) }{\sqrt{1-\rho}} \left( \sqrt{s^5 t} + \frac{1}{\sqrt{n}} \right) \right)^{1/4} 
	\end{equation}
	for some constant $C' > 0$.  
	For compressible vectors, Lemma \ref{lemma:lsv:comp} implies the existence of constants $C'', c'' > 0$ such that
	\begin{equation} \label{eq:Compfinal}
		\Prob \left( \min_{x \in \Comp(\delta, \tau)} \|Ax \| \leq \frac{1}{\sqrt{n}}, s_1(A) \leq s \right) \leq C'' \exp(-c'' n). 
	\end{equation}
	
	Combining \eqref{eq:Incompfinal} and \eqref{eq:Compfinal} with \eqref{eq:finalsingbnd}, we conclude that, for any $s \geq 1$ and any $0 < t \leq 1$, 
	\begin{align*} 
		\Prob \left( s_n(A) \leq \frac{t}{\sqrt{n}}, s_1(A) \leq s \right) &\leq C' \left( \frac{ \log(C'sn) }{\sqrt{1-\rho}} \left( \sqrt{s^5 t} + \frac{1}{\sqrt{n}} \right) \right)^{1/4} + C'' \exp(-c'' n) \\
		&\leq C''' \left( \frac{ \log(C'''sn) }{\sqrt{1-\rho}} \left( \sqrt{s^5 t} + \frac{1}{\sqrt{n}} \right) \right)^{1/4} 
	\end{align*}
	for some constant $C''' > 0$, where the second inequality follows from the fact that the first error term dominates the second for all $n$ sufficiently large.  The proof of Theorem \ref{thm:lsv} is complete.

	\section{Singular values of $A_n$ and uniform integrability}\label{sect:SingVal}
	
	\subsection{Tightness}
	
	We begin with a bound on the largest singular values of $A_n - z$.  
	
	\begin{lemma}\label{Tightness}
		If Condition \textbf{C1} holds, there exists $r>0, C>0$ such that the following hold. \begin{itemize}
			\item For all $z\in\CC$, there exists $C_z>0$ such that almost surely 
			$$ \limsup_{n\rightarrow\infty}\int_{0}^{\infty} t^r d\nu_{A_n-z}(t)<C_z \text{ and thus } (\nu_{A_n-z})_{n\geq 1} \text{ is tight.}$$
			
			\item Almost surely
			$$\limsup_{n\rightarrow\infty}\int_\CC |w|^r d\mu_{A_n}(w)<C \text{ and thus } (\mu_{A_n})_{n\geq 1} \text{ is tight.}.$$
		\end{itemize}
	\end{lemma}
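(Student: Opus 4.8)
The plan is to reduce both statements to the single almost sure bound
\[ \limsup_{n\to\infty}\ \frac1n\sum_{i=1}^n s_i(A_n)^r<C \]
for a suitable $r\in(0,1)$ and constant $C>0$, and to obtain this bound by a one–step truncation of the entries. Granting the display: Weyl's perturbation inequality for singular values (\cite[Problem III.6.13]{Bhatia}) gives $s_i(A_n-zI_n)\le s_i(A_n)+|z|$, so, since $x\mapsto x^r$ is subadditive, $\int_0^\infty t^r\,d\nu_{A_n-z}(t)\le\frac1n\sum_i s_i(A_n)^r+|z|^r$, which gives the first assertion with $C_z:=C+|z|^r$; and the Weyl majorant theorem (the moduli of the eigenvalues are log–majorized by the singular values, see \cite{Bhatia}) gives $\int_\CC|w|^r\,d\mu_{A_n}(w)=\frac1n\sum_i|\lambda_i(A_n)|^r\le\frac1n\sum_i s_i(A_n)^r$, which gives the second. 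In each case the claimed tightness then follows from Markov's inequality. To prove the display I would fix $r\in(0,\alpha/4)$ — so that $r<1$, $r<2$, $r<\alpha$ and $4r<\alpha$ — and a constant $K\ge1$, and write $A_n=\hat A_n+\check A_n$ with $\hat A_n:=a_n^{-1}\bigl(X_{ij}\indicator{|X_{ij}|\le Ka_n}\bigr)_{i,j}$ and $\check A_n:=A_n-\hat A_n$. Since $r\le1$ one has $\sum_i s_i(M+N)^r\le\sum_i s_i(M)^r+\sum_i s_i(N)^r$ (subadditivity of the $r$-th power of the Schatten-$r$ quasinorm), so $\frac1n\sum_i s_i(A_n)^r\le\frac1n\sum_i s_i(\hat A_n)^r+\frac1n\sum_i s_i(\check A_n)^r$, and the two pieces are controlled by different mechanisms.

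For the ``bulk'' piece, the ordering of the singular values gives $i\,s_i(\hat A_n)^2\le\|\hat A_n\|_2^2$, whence (using $r<2$) $\frac1n\sum_i s_i(\hat A_n)^r\le\frac1n\|\hat A_n\|_2^r\sum_{i=1}^n i^{-r/2}\le C_r\bigl(\|\hat A_n\|_2^2/n\bigr)^{r/2}$, and it remains to bound $\|\hat A_n\|_2^2/n$ almost surely. Here $\|\hat A_n\|_2^2=a_n^{-2}\sum_{i,j}|X_{ij}|^2\indicator{|X_{ij}|\le Ka_n}$ is a sum of independent random variables — one per mirrored pair $(X_{ij},X_{ji})$ and one per diagonal entry — each bounded by $2K^2$, the $n$ diagonal terms contributing at most $nK^2$. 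By \eqref{eq:indivassump} the variable $|\xi_1|$ is regularly varying of index $\alpha$, so Karamata's theorem (see e.g.\ \cite{HeavyPhenom}) gives $\EE[|\xi_1|^2\indicator{|\xi_1|\le M}]=O\bigl(M^2\,\PP(|\xi_1|>M)\bigr)=O\bigl(M^2\,\PP(\|(\xi_1,\xi_2)\|>M)\bigr)$ as $M\to\infty$; the point is the extra decay $M^{2-\alpha}$ with $2-\alpha>0$ hidden in this quantity, which a crude bound would miss. Condition \textbf{C1}(i), taken over the whole sphere, gives $\sup_n n\,\PP(\|(\xi_1,\xi_2)\|\ge Ka_n)<\infty$, so $M^2\PP(\|(\xi_1,\xi_2)\|>M)$ at $M=Ka_n$ is $O(a_n^2/n)$, and therefore $\EE\|\hat A_n\|_2^2=O\bigl(a_n^{-2}\cdot n^2\cdot a_n^2/n\bigr)+O(nK^2)=O(n)$. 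Bernstein's inequality for sums of bounded independent random variables then yields $\PP(\|\hat A_n\|_2^2>C'n)\le e^{-cn}$, and Borel--Cantelli finishes this piece.

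For the ``heavy'' piece, entrywise subadditivity of $\sum_i s_i(\cdot)^r$ gives $\sum_i s_i(\check A_n)^r\le\sum_{i,j}|(\check A_n)_{ij}|^r=a_n^{-r}\sum_{i,j}|X_{ij}|^r\indicator{|X_{ij}|>Ka_n}$. The diagonal part of the right side is at most $a_n^{-r}\sum_i|X_{ii}|^r$, which tends to $0$ a.s.\ because $\frac1n\sum_i|X_{ii}|^r\to\EE|\zeta|^r<\infty$ (finite since $\PP(|\zeta|\ge t)\le Ct^{-\alpha}$ and $r<\alpha$, by Condition \textbf{C1}(ii)) by the strong law while $a_n\to\infty$. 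For the off-diagonal part, bounding $|X_{ij}|$ by $W_{ij}:=\|(X_{ij},X_{ji})\|$ reduces matters to $S_n:=a_n^{-r}\sum_{i<j}W_{ij}^r\indicator{W_{ij}>Ka_n}$, a sum of $\binom n2$ i.i.d.\ nonnegative terms. Since $W=\|(\xi_1,\xi_2)\|$ is regularly varying of index $\alpha$ (Condition \textbf{C1}(i)), Karamata's theorem gives $\EE[W^p\indicator{W>Ka_n}]=O\bigl((Ka_n)^p\,\PP(W>Ka_n)\bigr)$ for each $p\in\{r,2r,3r,4r\}$ (all $<\alpha$), and, again using $\sup_n n\PP(W>Ka_n)<\infty$, one gets $\EE[a_n^{-p}W_{12}^p\indicator{W_{12}>Ka_n}]=O(1/n)$ for these $p$. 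Hence $\EE S_n=O(n)$ and, writing $S_n=\sum_{i<j}V_{ij}$, the centred summands satisfy $\EE|V_{12}-\EE V_{12}|^k=O(1/n)$ for $k\le4$; expanding the centred fourth moment of the i.i.d.\ sum gives $\EE(S_n-\EE S_n)^4=O(n)+O(n^2)=O(n^2)$, so $\PP(|S_n/n-\EE S_n/n|>\eps)=O(n^{-2})$ and Borel--Cantelli gives $\limsup_n S_n/n<\infty$ a.s. Adding the two pieces yields the display, hence the lemma. The main obstacle is precisely this last estimate: $S_n$ is a sum of order $n^2$ i.i.d.\ heavy-tailed terms with only finitely many finite moments, so Hoeffding/Bernstein are unavailable and one must squeeze out enough integrability — here a fourth moment, which is why $r<\alpha/4$ is imposed — to close the Borel--Cantelli argument (a Fuk--Nagaev-type inequality would be an alternative).
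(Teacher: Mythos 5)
Your proposal is correct in outline and substance but takes a genuinely different route from the paper. The paper's proof decomposes $A_n=U_n+L_n$ into its upper-triangular and strictly lower-triangular parts, precisely so that each piece has \emph{independent} entries (copies of $\xi_1,\zeta$ and $\xi_2$ respectively); it then applies the interlacing bound $s_{1+k}(A_n)\leq s_{1+\lfloor k/2\rfloor}(U_n)+s_{1+\lceil k/2\rceil}(L_n)$ to pass to $\nu_{U_n},\nu_{L_n}$, uses the Schatten row bound $\sum_k s_k(U_n)^r\le\sum_k\|R_k\|^r$, fills the missing half of each row with an artificial i.i.d.\ copy of $\xi_1$, and then quotes Lemma~3.1 of \cite{HeavyIId} wholesale. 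Your proof instead truncates at level $Ka_n$, splitting $A_n=\hat A_n+\check A_n$, and treats each piece directly: the bulk via $i\,s_i(\hat A_n)^2\le\|\hat A_n\|_2^2$, so that $\frac1n\sum_i s_i(\hat A_n)^r\lesssim(\|\hat A_n\|_2^2/n)^{r/2}$, with $\|\hat A_n\|_2^2/n$ controlled by Karamata plus Bernstein on the bounded i.i.d.\ pair sums; the tail via the Rotfel'd/Schatten-$r$ subadditivity $\sum_i s_i(\cdot)^r\le\sum_{ij}|(\cdot)_{ij}|^r$ and a centred fourth-moment Borel--Cantelli bound on the off-diagonal sum (which is what forces $r<\alpha/4$). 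The truncation route is self-contained where the paper defers to the i.i.d.\ case, at the modest cost of a more restrictive $r$; both are legitimate.

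Two small slips are worth flagging, both fixable. First, you cite \eqref{eq:indivassump} to claim $|\xi_1|$ is regularly varying; the paper derives \eqref{eq:indivassump} under Condition \textbf{C2}, and under \textbf{C1} alone the marginal $Z_1$ could be degenerate and $\xi_1$ could have a lighter tail, so \eqref{eq:indivassump} need not hold. However, you only ever need an upper bound: $\PP(|\xi_1|>t)\le\PP(\|(\xi_1,\xi_2)\|>t)$, and Karamata applied to $\|(\xi_1,\xi_2)\|$ (which \emph{is} regularly varying by \textbf{C1}(i) applied to the full sphere) gives $\EE[|\xi_1|^2\indicator{|\xi_1|\le M}]\le\EE[\min(\|(\xi_1,\xi_2)\|,M)^2]=O(M^2\,\PP(\|(\xi_1,\xi_2)\|>M))$, so the estimate survives verbatim. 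Second, the sentence ``$a_n^{-r}\sum_i|X_{ii}|^r$ tends to $0$'' is false as written — that quantity grows like $n\,a_n^{-r}\to\infty$ since $r<\alpha$. What is needed (and what your own justification actually shows) is that the \emph{normalized} diagonal contribution $\frac1n\,a_n^{-r}\sum_i|X_{ii}|^r=a_n^{-r}\cdot\frac1n\sum_i|X_{ii}|^r$ tends to $0$, which follows from the SLLN and $a_n\to\infty$.
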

	
	\begin{proof}
		We follow the approach of Lemma 3.1 in \cite{HeavyIId}. The tightness follows from the moment bound and Markov's inequality. The moment bound on $\mu_{A_n}$ follows from the bound on $\nu_{A_n}$ and Weyl's inequality, Lemma \ref{Weyl}. One has from Lemma \ref{Basics}, $s_k(A_n-z)\leq s_k(A_n)+|z|$ for every $1\leq k\leq n$ and thus using the fact for any  $x,y\geq0, (x+y)^r\leq 2^r(x^r+y^r)$ we can assume $z=0$. We aim to work with matrices with independent entries, and thus decompose $A_n=U_n+L_n$ where $L_n$ is strictly lower triangular, and $U_n$ is upper triangular. Note for all $0\leq k\leq n-1$, we have by Lemma \ref{Basics}
		$$s_{1+k}(A_n)\leq s_{1+\lfloor k/2\rfloor}(U_n)+s_{1+\lceil k/2\rceil}(L_n).$$
		We now restrict $r$ such that $0<r\leq 2$. Thus 
		$$\int_{0}^{\infty} t^r d\nu_{A_n}(t)\leq 8\left[\int_{0}^{\infty} t^r d\nu_{U_n}(t)+\int_{0}^{\infty} t^r d\nu_{L_n}(t) \right].$$
		We show only  
		$$ \limsup_{n\rightarrow\infty}\int_{0}^{\infty} t^r d\nu_{U_n}(t)<\infty\qquad\qquad\text{a.s.}$$
		as the proof that 
		$$ \limsup_{n\rightarrow\infty}\int_{0}^{\infty} t^r d\nu_{L_n}(t)<\infty\qquad\qquad\text{a.s.}$$ follows in the exact same way.
		
		By the Schatten bound, Lemma \ref{Schattent},
		$$\int_{0}^{\infty} t^r d\nu_{U_n}(t)\leq Z_n:=\frac{1}{n}\sum_{i=1}^{n}Y_{n,i}$$
		where 
		$$Y_{n,i}=\left(\sum_{j=i}^{n}a_n^{-2}|X_{ij}|^2 \right)^{r/2}.$$
		For every $1\leq i< j\leq n$ we let $X_{ji}'$ be a copy of $\xi_1$, independent of all $X_{ij}$ and all other $X_{ji}'$. In addition let $$Y_{n,i}':=\left(\sum_{j=i}^{n}a_n^{-2}|X_{ij}|^2 + \sum_{j=1}^{i-1}a_n^{-2}|X_{ij}'|^2 \right)^{r/2}$$ then 
		$$Z_n\leq\frac{1}{n}\sum_{i=1}^n Y_{n,i}',$$
		since $Y_{n,i}\leq Y_{n,i}'$. The proof then follows exactly as in Lemma 3.1 of \cite{HeavyIId}.
	\end{proof}
	
	\subsection{Distance from a row and a vector space.} Throughout the rest of this section we assume the atom variables of $X_n$ satisfy Condition \textbf{C2}. The proof of Proposition 3.3 from \cite{HeavyIId} can be adapted in a straight forward way to get Proposition \ref{RowSubspace1} below. We give a brief explanation of the changes to the proof needed for entries that are independent but not necessarily identically distributed.   
	
	\begin{proposition}\label{RowSubspace1}
		Let $0<\gamma<1/2$, and $R$ be the $i$-{th} row of $a_n(A_n-z)$ with the $i$-th entry set to zero. There exists $\delta>0$ depending on $\alpha,\gamma$ such that for all $d$-dimensional subspaces $W$ of $\CC^n$ with $n-d\geq n^{1-\gamma}$, one has 
		$$\PP\left(\dist(R,W)\leq n^{(1-2\gamma)/\alpha} \right)\leq e^{-n^\delta}.$$
	\end{proposition}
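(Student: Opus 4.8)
The plan is to follow the strategy of Proposition 3.3 in \cite{HeavyIId}, adapting it to handle independent but not identically distributed entries. The essential point is that the entries $R_j = a_n(A_n - z)_{ij}$ for $j \neq i$ are independent, each satisfies the tail bound $\PP(|R_j| \geq t) = L_j(t) t^{-\alpha}$ coming from \eqref{eq:indivassump} (applied to $\xi_1$, or $\xi_2$ depending on position relative to the diagonal) with a slowly varying function, and by Condition \textbf{C2}(i) combined with the discussion in the ``Individual entries'' subsection, the entries are genuinely heavy-tailed in the sense that they are \emph{not} degenerate; moreover $\zeta$ satisfies a uniform tail bound. The first step is a truncation: write $R_j = R_j' + R_j''$ where $R_j' = R_j \indicator{|R_j| \leq a_n^{1 - \gamma'}}$ for a suitable small $\gamma' > 0$ (chosen in terms of $\gamma$), and $R_j'' = R_j - R_j'$. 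One shows that with probability at least $1 - e^{-n^{\delta}}$ the ``large'' part $R''$ is supported on at most $n^{\gamma'}$ coordinates or so (a Poissonization/binomial tail estimate, using that $\PP(|R_j| > a_n^{1-\gamma'}) \asymp n^{-(1-\gamma')}$ up to slowly varying corrections, summed over $j$), and in that event the contribution of $R''$ to $\dist(R, W)$ is irrelevant because we can enlarge $W$ by the (at most $n^{\gamma'}$) coordinate directions where $R''$ is nonzero, keeping the codimension $n - d - n^{\gamma'} \gtrsim n^{1-\gamma}$.

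The second step handles the truncated vector $R'$. After projecting onto $W^\perp$ (enlarged as above), one has $\dist(R, W) \geq \|P R'\|$ where $P$ is the orthogonal projection onto a subspace of dimension at least $n^{1-\gamma}$, and the coordinates $R_j'$ are independent, bounded by $a_n^{1-\gamma'} \asymp n^{(1-\gamma')/\alpha}$, and have a variance that is bounded below \emph{uniformly in $j$ and $n$} of order $a_n^{2(1-\gamma')(1 - \alpha/2) \pm o(1)} \asymp n^{(2(1-\gamma')/\alpha)(1-\alpha/2)}$ (this is where I need the non-degeneracy from Condition \textbf{C2} — it guarantees that the truncated second moment grows at the expected rate rather than staying bounded or vanishing, uniformly over the finitely-parametrized family of entry distributions). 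Then I would centre the $R_j'$ (the shift is absorbed by further enlarging $W$ by one dimension, or by noting the mean is deterministic) and apply a tensorization/Hanson--Wright-type or simply Talagrand's concentration inequality for the convex $1$-Lipschitz function $x \mapsto \dist(x, W)$, exactly as in Lemma \ref{lemma:lsv:distance}: $\E \dist(R', W)^2 \geq \sigma_n^2 \tr P \gtrsim n^{(2(1-\gamma')/\alpha)(1-\alpha/2)} \cdot n^{1-\gamma}$, and the fluctuations are controlled at scale $a_n^{1-\gamma'}$, which is much smaller than $\E\dist(R',W)$ once $\gamma'$ is chosen small relative to $\gamma$. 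This forces $\dist(R, W) \geq c\, n^{(1-2\gamma)/\alpha}$ with probability $1 - e^{-n^{\delta}}$ for appropriate $\delta > 0$; a routine bookkeeping of exponents pins down the admissible $\gamma'$ and the resulting $\delta$.

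The main obstacle is the uniformity of the lower variance bound $\sigma_n^2$ over the (non-identically-distributed) entries. In the i.i.d.\ case of \cite{HeavyIId} this is a single computation; here $R_j$ ranges over $\xi_1$, $\xi_2$, and the diagonal involves $\zeta - z$, and the distributions of $\xi_1, \xi_2$ need not be the same. I would handle this by invoking \eqref{eq:indivassump} and the spectral-measure description: each of $\xi_1, \xi_2$ lies in the domain of attraction of a \emph{nondegenerate} $\alpha$-stable law (nondegeneracy being precisely Condition \textbf{C2}(i)), so the truncated second moment $\E[|\xi_i|^2 \indicator{|\xi_i| \leq M}]$ is regularly varying of index $2 - \alpha > 0$ as $M \to \infty$, hence grows; since there are only two such distributions plus the $\zeta$-contribution, taking the minimum over a finite collection preserves the growth rate, giving the required uniform lower bound for all large $n$. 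The diagonal entry $\zeta - z$ contributes at most one coordinate and can simply be discarded by enlarging $W$ once more. With that uniform bound in hand, the concentration argument goes through verbatim, and the proposition follows.
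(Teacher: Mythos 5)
Your overall architecture is the right one and matches what the paper does by deferring to the proof of Proposition~3.3 in \cite{HeavyIId}: truncate, absorb the few large coordinates into the subspace, lower-bound the truncated second moment uniformly using Lemma~\ref{TruncatedMoments} together with the non-degeneracy from Condition~\textbf{C2}, then conclude by Talagrand. The only genuine issue is the direction of your constraint on the truncation exponent $\gamma'$: you claim the fluctuation scale $a_n^{1-\gamma'}$ is ``much smaller than $\E\dist(R',W)$ once $\gamma'$ is chosen small relative to $\gamma$,'' but this is backwards. Writing $a_n\sim n^{1/\alpha}$, one has $\E\dist(R',W)^2 \gtrsim n^{(1-\gamma')(2-\alpha)/\alpha}\cdot n^{1-\gamma}$, so
\[
\frac{\sqrt{\E\dist(R',W)^2}}{a_n^{1-\gamma'}} \asymp n^{(\gamma'-\gamma)/2},
\]
which tends to infinity only when $\gamma'>\gamma$. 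If you take $\gamma'$ small (say $\gamma'\to 0$), the fluctuation scale is $n^{1/\alpha}$ while $\sqrt{\E\dist(R',W)^2}\asymp n^{1/\alpha-\gamma/2}$, so the Talagrand fluctuation \emph{dominates} the mean and the argument collapses. The correct window is $\gamma < \gamma' < \gamma(4-\alpha)/(2-\alpha)$: the upper bound makes $\E\dist(R',W)$ exceed the target $n^{(1-2\gamma)/\alpha}$, the lower bound beats the fluctuation, and since $(4-\alpha)/(2-\alpha)>1$ this window is nonempty; one then takes $\delta<\gamma'-\gamma$ and also $\delta<\gamma'$ for the Chernoff bound on the number of large coordinates. (The constraint $\gamma'<1-\gamma$ for absorbing those coordinates is automatic from $\gamma<1/2$.) With this correction, the rest of your sketch — including the observation that only two entry distributions occur so the uniform truncated-moment lower bound follows from a finite minimum, and the enlargement of $W$ by one dimension to absorb the centering — is exactly the paper's intended adaptation. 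Your remark about discarding the $\zeta$-contribution is unnecessary since $R$ has its $i$-th entry set to zero, but it is harmless.
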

	
	\begin{proof}
		Assume $R$ is the $i$-th row of $a_n(A_n-z)$ with the $i$-th entry set to zero. If $X^{(i)}$ is the $i$-th row of $X_n$ with the $i$-th entry set to zero, we have
		$$\dist(R,W)\geq\dist(X^{(i)},W_1)$$
		where $W_1=\Span(W,e_i)$. Note the entries of $X^{(i)}$ are independent, but have two potentially different distributions, in contrast with \cite{HeavyIId} where the entries are independent and identically distributed. However, Lemma \ref{TruncatedMoments} can be applied to either distribution. Under Condition \textbf{C2} the slowly varying function, $L(t)$, in (\ref{eq:indivassump}) is bounded and $L(t)\rightarrow c>0$ as $t\rightarrow\infty$ for both entries. To adapt the proof of Proposition 3.3 in \cite{HeavyIId} apply Lemma \ref{TruncatedMoments} to the entries of $X^{(i)}$ to get uniform bounds on the truncated moments without assuming the entries are identically distributed. 
	\end{proof}
	
	We now give some results for stable random variables which will be helpful. For $0<\beta<1$, let $Z=Z^{(\beta)}$ denote the one-sided positive $\beta$-stable distribution such that for all $s\geq 0$,\begin{equation}\label{eq:Zdef}
		\EE\exp(-sZ)=\exp(-s^\beta).
	\end{equation}
	
	Recall for $y,m>0$,
	$$y^{-m}=\Gamma(m)^{-1}\int_{0}^\infty x^{m-1}e^{-xy}dx.$$
	Thus for all $m>0$,\begin{equation}\label{eq:stablemoments}
		\EE[Z^{-m}]=\Gamma(m)^{-1}\int_{0}^{\infty}x^{m-1}e^{-x^\beta}dx,
	\end{equation} 
	and if $Z_1,\dots,Z_n$ are i.i.d.\ copies of $Z$ and $w_1,w_2,\dots,w_n$ are non-negative real numbers then\begin{equation}\label{eq:betastable}
		\sum_{i=1}^{n}w_iZ_i\overset{d}{=}\left(\sum_{i=1}^{n}w_i^\beta \right)^{1/\beta}Z_1.
	\end{equation}

	\begin{lemma}[Lemma 3.5 in \cite{HeavyIId}] \label{StocDomination}
		Let $0<\alpha<2$ and $Y$ be a random variable such that $t^{\alpha}\PP(|Y|\geq t)\rightarrow c$ as $t\rightarrow\infty$ for some $c>0$. Then there exists $\ee>0$ and $p\in(0,1)$ such that the random variable $|Y|^2$ dominates stochastically the random variable $\ee DZ$, where $\PP(D=1)=1-\PP(D=0)=p$ is a Bernoulli random variable, $Z=Z^{(\alpha/2)}$ and $D$ and $Z$ are independent.
	\end{lemma}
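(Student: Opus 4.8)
The plan is to prove the stochastic domination directly at the level of survival functions: writing $G(x):=\PP(|Y|^2\geq x)$ and noting that $\PP(\ee DZ\geq x)=p\,\PP(Z\geq x/\ee)$ for $x>0$, it suffices to produce $\ee>0$ and $p\in(0,1)$ with $G(x)\geq p\,\PP(Z\geq x/\ee)$ for every $x>0$. I would handle large $x$ and small $x$ separately: for large $x$ the inequality follows from matching the (regularly varying, index $-\alpha/2$) tails of $|Y|^2$ and $Z$ with room to spare, while for small $x$ it follows from the trivial bound $\PP(Z\geq x/\ee)\leq 1$ together with the fact that $G$ is bounded below on bounded sets.

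First I would record the two tail estimates. From the hypothesis $t^\alpha\PP(|Y|\geq t)\to c$, substituting $t=\sqrt x$ gives $G(x)=\PP(|Y|\geq\sqrt x)\sim c\,x^{-\alpha/2}$ as $x\to\infty$; since $G$ is non-increasing and eventually positive, $G(x)>0$ for all $x\geq 0$, and $G(x)\to 0$. On the other side, $Z=Z^{(\alpha/2)}$ is the one-sided $\alpha/2$-stable law with Laplace transform $\EE e^{-sZ}=e^{-s^{\alpha/2}}$, and it is classical (apply Karamata's Tauberian theorem to $1-e^{-s^{\alpha/2}}\sim s^{\alpha/2}$ as $s\to 0^+$, or see \cite{StableProcesses}) that
\[ \PP(Z\geq t)\sim\frac{1}{\Gamma(1-\alpha/2)}\,t^{-\alpha/2}\qquad\text{as }t\to\infty; \]
moreover $0<\PP(Z\geq t)\leq 1$ for every $t>0$ since $Z>0$ almost surely.

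Now fix $\ee:=1$. Dividing the two asymptotics gives $G(x)/\PP(Z\geq x)\to c\,\Gamma(1-\alpha/2)>0$, so there is $x_0>0$, which we may enlarge so that also $G(x_0)<1$, such that $G(x)\geq\kappa\,\PP(Z\geq x)$ for all $x\geq x_0$, where $\kappa:=\tfrac{1}{2}c\,\Gamma(1-\alpha/2)>0$. Put $p:=\min\{\kappa,G(x_0)\}\in(0,1)$. Then for $x\geq x_0$ we have $G(x)\geq\kappa\,\PP(Z\geq x)\geq p\,\PP(Z\geq x)=\PP(DZ\geq x)$, while for $0<x<x_0$ we have $G(x)\geq G(x_0)\geq p\geq p\,\PP(Z\geq x)=\PP(DZ\geq x)$ because $\PP(Z\geq x)\leq 1$. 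Hence $\PP(|Y|^2\geq x)\geq\PP(\ee DZ\geq x)$ for all $x>0$, which is the asserted domination.

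The only non-elementary ingredient is the exact order of the upper tail of the one-sided stable law $Z$: the crude Markov bound $\PP(Z\geq t)\lesssim t^{-q}$, valid only for $q<\alpha/2$ since $\EE[Z^q]<\infty$ exactly when $q<\alpha/2$, is too weak to be dominated by $G(x)\sim c\,x^{-\alpha/2}$ for large $x$, so one genuinely needs the sharp exponent $-\alpha/2$. Thus the Tauberian input (equivalently, a direct citation of the stable tail asymptotics from \cite{StableProcesses}) is really the crux; once that is in hand the remaining argument is just bookkeeping of constants as above.
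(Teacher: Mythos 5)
Your proof is correct. The paper itself gives no proof of this statement (it is quoted verbatim as Lemma 3.5 of Bordenave--Caputo--Chafa\"i \cite{HeavyIId}), so there is no in-paper argument to compare against, but your reasoning is the standard and clean one: the hypothesis says $G(x)=\PP(|Y|^2\geq x)\sim c\,x^{-\alpha/2}$, the survival function of the one-sided $\alpha/2$-stable law has the same exact exponent with explicit constant $1/\Gamma(1-\alpha/2)$ (via Karamata), so taking $\ee=1$ and choosing $p$ small enough to absorb both the asymptotic comparison for large $x$ and the value $G(x_0)$ at the threshold makes the domination hold everywhere; for small $x$ the trivial bound $\PP(Z\geq x)\leq1$ together with monotonicity of $G$ finishes. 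The remark that a sub-critical Markov bound on $\PP(Z\geq t)$ would not suffice, since the sharp exponent $-\alpha/2$ is needed, is also correct and worth keeping.
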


	\begin{lemma}\label{CouplingLemma}
		Let $X^{(i)}$ be the $i$-th row of the matrix $X_n$, with the $i$-th entry set to zero. Let $w_j\in[0,1]$ be numbers such that $w(n):=\sum_{j=1}^{n}w_j\geq n^{1/2+\ee}$ for some $\ee>0$. Let $Z=Z^{(\beta)}$ with $\beta=\alpha/2$. Then there exists $\delta>0$ and a coupling of $X^{(i)}$ and $Z$ such that 
		$$\PP\left(\sum_{j=1}^{n}w_j|X_{ij}|^2\leq\delta w(n)^{1/\beta}Z  \right)\leq Ce^{-cn^\delta}$$
		for constants $C,c>0$.
	\end{lemma}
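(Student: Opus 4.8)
The plan is to dominate $\sum_{j}w_j|X_{ij}|^2$ from below by a constant multiple of a single one-sided $\beta$-stable variable, using the entrywise stochastic domination of Lemma \ref{StocDomination}, the scaling identity \eqref{eq:betastable}, and a Hoeffding-type bound to guarantee that enough mass survives the truncation.

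First I would record that, under Condition \textbf{C2}, the entries $\{X_{ij}:j\neq i\}$ of the row $X^{(i)}$ are independent, and each is distributed either as $\xi_1$ or as $\xi_2$; by \eqref{eq:indivassump} and the discussion following it, both satisfy $t^\alpha\PP(|X_{ij}|\geq t)\to c>0$. Hence Lemma \ref{StocDomination} applies to every entry of $X^{(i)}$, and (taking the worse of the two resulting pairs of parameters $\ee,p$, and invoking Strassen's theorem to realize the stochastic domination as an almost sure coupling) one obtains a coupling of $X^{(i)}$ with independent $\mathrm{Bernoulli}(p)$ variables $D_1,\dots,D_n$ and with independent copies $Z_1,\dots,Z_n$ of $Z=Z^{(\beta)}$, the $D_j$ independent of the $Z_j$, such that $|X_{ij}|^2\geq \ee D_jZ_j$ almost surely for all $j$. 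Consequently $\sum_j w_j|X_{ij}|^2\geq \ee\sum_j (w_jD_j)Z_j$.

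Next I would condition on $(D_j)_j$. Conditionally, $\sum_j (w_jD_j)Z_j$ is a fixed nonnegative linear combination of i.i.d.\ copies of $Z^{(\beta)}$, so by \eqref{eq:betastable} it has the law $\big(\sum_j w_j^\beta D_j\big)^{1/\beta}Z^{(\beta)}$. Concretely, one sets $Z:=\big(\sum_j w_j^\beta D_j\big)^{-1/\beta}\sum_j(w_jD_j)Z_j$ on $\{\sum_jD_j\geq 1\}$ and lets $Z$ be an independent $Z^{(\beta)}$ on the complement; conditioning on $(D_j)_j$ shows $Z\sim Z^{(\beta)}$, and on $\{\sum_jD_j\geq 1\}$ one has $\sum_j w_j|X_{ij}|^2\geq \ee\big(\sum_j w_j^\beta D_j\big)^{1/\beta}Z$. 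Since $w_j\in[0,1]$ and $\beta=\alpha/2<1$ we have $w_j^\beta\geq w_j$, hence $\sum_j w_j^\beta D_j\geq \sum_j w_jD_j$. The last ingredient is a lower bound on $\sum_j w_jD_j$: the $w_jD_j$ are independent, lie in $[0,1]$, have mean at least $p\,w(n)$, and $\sum_j w_j^2\leq \sum_j w_j=w(n)$, so Hoeffding's inequality gives $\PP\big(\sum_j w_jD_j\leq \tfrac p2 w(n)\big)\leq \exp\big(-\tfrac{p^2}{2}w(n)\big)\leq \exp\big(-\tfrac{p^2}{2}n^{1/2+\ee}\big)$ using $w(n)\geq n^{1/2+\ee}$. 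On the complementary event $\sum_j w_j|X_{ij}|^2\geq \ee(p/2)^{1/\beta}w(n)^{1/\beta}Z$, so choosing $\delta:=\min\{1/2+\ee,\ \ee(p/2)^{1/\beta}\}$—shrinking the multiplicative constant only shrinks the event whose probability we bound, and shrinking the exponent only weakens the bound—gives the claim with $C=1$ and $c=p^2/2$.

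The only mildly delicate point is making precise the construction of the single variable $Z$ out of $(D_j,Z_j)_j$ via \eqref{eq:betastable}; the remainder is the Bernoulli–truncation bookkeeping together with a routine application of Hoeffding's inequality, so I do not expect a genuine obstacle.
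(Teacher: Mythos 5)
Your proof is correct and follows essentially the same route as the paper: Lemma~\ref{StocDomination} plus Strassen coupling to dominate $|X_{ij}|^2\geq \ee D_j Z_j$ entrywise, the $\beta$-stable scaling identity~\eqref{eq:betastable} to package the conditional sum as $(\sum_j w_j^\beta D_j)^{1/\beta}Z$, the inequality $w_j^\beta\geq w_j$, and a Chernoff/Hoeffding bound to ensure enough indicators survive. The one small deviation is cosmetic but actually a minor sharpening: you apply one-sided Hoeffding to $\sum_j w_j D_j$ with the variance proxy $\sum_j w_j^2\leq w(n)$, giving decay $\exp(-c\,w(n))\leq\exp(-cn^{1/2+\ee})$, whereas the paper bounds $\sum_j w_j^\beta D_j$ using the cruder proxy $n$, getting $\exp(-c\,w(n)^2/n)\leq\exp(-cn^{2\ee})$; both suffice under $w(n)\geq n^{1/2+\ee}$, and your final choice of $\delta=\min\{1/2+\ee,\ \ee(p/2)^{1/\beta}\}$ correctly reconciles the dual role $\delta$ plays in the statement. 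You also correctly flag the need for Condition~\textbf{C2}(ii) (so that the slowly varying function $L$ tends to a positive constant, which is what lets Lemma~\ref{StocDomination} apply), and the two-parameter issue (the entries of $X^{(i)}$ follow two different distributions) is handled by taking the worse of the two Bernoulli parameters, exactly as the paper does via $\min(p,p')$.
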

	
	\begin{proof}
		Let $D=(D_j)_{j=1}^{i}$ and $D'=(D_j)_{j=i+1}^n$ be two independent vectors of i.i.d.\ Bernoulli random variables given by Lemma \ref{StocDomination} for $Y=X_{21}$ with parameter $p$ and $Y=X_{12}$ with parameter $p'$ respectively.  We know from Lemma \ref{StocDomination} there exists $\ee'>0$, such that for independent random variables $Z_j$ satisfying (\ref{eq:Zdef}) such that for every $j$, $w_j|X_{ij}|^2$ stochastically dominates $\ee'w_j D_j Z_j$. Then there exists a coupling (see Lemma 2.12 in \cite{hofstad_2016}) such that  
		$$\PP\left(\sum_{j=1}^{n}w_j|X_{ij}|^2\geq\ee'\sum_{j=1}^{n}w_j D_j Z_j  \right)=1.$$
		Let $\mathbf{a}=(a_1,\dots,a_n)\in\{0,1\}^n$, and $A_\mathbf{a}$ be the event $D_j=a_j$ for all $j$. Then define the random variable $Z$ pointwise on $A_\mathbf{a}$, $\mathbf{a}\neq \mathbf{0}$
		$$Z(\omega):=\frac{\sum_{j=i}^nw_ja_jZ_j(\omega)}{\left(\sum_{j=1}^nw_j^\beta a_j\right)^{1/\beta}},$$
		for $\omega\in A_\mathbf{a}$ and $Z(\omega)=Z_1(\omega)$ on $A_\mathbf{0}$. From (\ref{eq:betastable}), we see $Z$ satisfies (\ref{eq:Zdef}) and the distribution of $Z$ does not depend on $D_1,\dots, D_n$. Thus it is sufficient to show there exists $\ee''>0$ such that 
		$$\PP\left(\sum_{j=1}^{n}w_j^\beta D_j\leq \ee''w(n) \right)\leq Ce^{-cn^{\ee''}}.$$
		Note $w_j^{\beta}\geq w_j$, and thus $\EE\sum_{j=1}^{n}w_j^\beta D_j\geq \min(p,p')w(n)$. Therefore for $0<\ee''<\min(p,p')$,\begin{align*}
			&\PP\left(\sum_{j=1}^{n}w_j^\beta D_j\leq\ee''w(n)\right)\\
			&\qquad\qquad\leq\PP\left(\big|\sum_{j=1}^{n}(w_j^\beta D_j-\EE w_j^\beta D_j) \big|\geq (\min(p,p')-\ee'')w(n) \right)\\
			&\qquad\qquad\leq 2e^{-\frac{1}{2}(\min(p,p')-\ee'')^2w(n)^2/n},
		\end{align*} where the last bound follows from Hoeffding's inequality.	
	\end{proof}

	We now give another bound on the distance between a row of $A_n$ and a deterministic subspace. 
	
	\begin{proposition}\label{Rowsubspace2}
		Take $0<\gamma<\alpha/4$. Let $R$ be the $i$-th row of $a_n(A_n-z)$ with the $i$-th entry set to zero. There exists an event $E$ such that for any $d$-dimensional subspace $W$ of $\CC^n$ with $n-d\geq n^{1-\gamma}$, we have for sufficiently large $n$
		$$\EE[\dist^{-2}(R,W);E]\leq c(n-d)^{-2/\alpha}\quad \text{and}\quad \PP(E^c)\leq cn^{-\frac{1}{2}+\gamma(\frac{2}{\alpha}-\frac{1}{2})},$$
		where $c>0$ is an absolute constant which does not depend on the choice of row.
	\end{proposition}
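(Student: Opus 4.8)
The plan is to adapt the argument behind Proposition~3.3 of \cite{HeavyIId}, incorporating independent but non-identically distributed entries exactly as in Proposition~\ref{RowSubspace1}. Write $P$ for the orthogonal projection of $\CC^n$ onto $W^\perp$; it is deterministic of rank $m:=n-d\ge n^{1-\gamma}$, and since the $i$-th coordinate of $R$ has been set to zero and the only entry of $a_n(A_n-z)$ affected by $z$ is the $i$-th diagonal one, $\dist^2(R,W)=R^\ast P R$ with $R=X^{(i)}$, the $i$-th row of $X_n$ with its $i$-th entry removed. The coordinates of $X^{(i)}$ are independent with at most two distinct laws, both of which satisfy \eqref{eq:indivassump} with $L$ bounded and $L(t)\to c>0$ under Condition~\textbf{C2}; hence Lemmas~\ref{StocDomination} and~\ref{CouplingLemma}, as well as the truncated-moment estimates used in Proposition~\ref{RowSubspace1}, apply with constants uniform in $i$ and in the choice of row entry.

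The first step is to expand the quadratic form along its diagonal,
\[
\dist^2(R,W)=\sum_{j}P_{jj}\,|X_{ij}|^2+\sum_{j\neq k}\overline{X_{ij}}\,P_{jk}\,X_{ik},
\]
and to lower bound the diagonal term. Since $P$ is a projection, $\sum_j P_{jj}=\tr P=m$ with $P_{jj}\in[0,1]$, and because $\gamma<\alpha/4<1/2$ we have $m\ge n^{1-\gamma}\ge n^{1/2+\eps}$ for $\eps:=\tfrac12-\gamma>0$. Thus Lemma~\ref{CouplingLemma}, applied with weights $w_j:=P_{jj}$, produces a coupling of $X^{(i)}$ with a one-sided $(\alpha/2)$-stable variable $Z=Z^{(\alpha/2)}$ under which $\sum_j P_{jj}|X_{ij}|^2\ge\delta\,m^{2/\alpha}Z$ off an event of probability at most $Ce^{-cn^{\delta}}$. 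The remaining task is to construct an event $E$, \emph{not} depending on $W$, on which the off-diagonal sum is dominated in modulus by $\tfrac12\sum_j P_{jj}|X_{ij}|^2$ (up to negligible exceptional sets): on such an $E$ one gets $\dist^2(R,W)\ge\tfrac{\delta}{2}m^{2/\alpha}Z$, hence $\dist^{-2}(R,W)\le\tfrac{2}{\delta}m^{-2/\alpha}Z^{-1}$, and since all negative moments of $Z$ are finite by \eqref{eq:stablemoments} this gives $\EE[\dist^{-2}(R,W);E]\le c\,m^{-2/\alpha}=c(n-d)^{-2/\alpha}$.

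To build $E$ I would truncate: fix a threshold $T_n=n^{a}$ with $a$ chosen so that the union bound $\PP(E^c)\le Cn^2T_n^{-2\alpha}$ matches the target $cn^{-1/2+\gamma(2/\alpha-1/2)}$, and let $E$ be the ($W$-free) event that at most one coordinate of $X^{(i)}$ exceeds $T_n$. Splitting $X^{(i)}=\widehat R+\widetilde R$ into its truncation at level $T_n$ and a remainder supported on at most one coordinate on $E$, the remainder contributes through $\widetilde R^\ast P\widetilde R\ge0$ and $|\widehat R^\ast P\widetilde R|\le\|\widehat R\|\,\|\widetilde R\|$, while the off-diagonal part of $\widehat R^\ast P\widehat R$, namely $\widehat R^\ast P\widehat R-\sum_j P_{jj}|\widehat R_j|^2$, is estimated by a conditional second-moment bound: truncation makes $\EE|\widehat R_j|^2\le T_n^2$, $\tr(P^2)=\tr(P)=m$ controls the fluctuation of the off-diagonal sum by $O(T_n^2\sqrt{m})$, and its truncated conditional means are handled via the uniform truncated first-moment estimates; comparing with $\sum_j P_{jj}|X_{ij}|^2\gtrsim m^{2/\alpha}Z$ then closes the bound once $T_n^2\sqrt m$ is a sufficiently small power of $m^{2/\alpha}$. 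Any residual $W$-dependent exceptional events (the coupling failure, the second-moment deviation) carry negative powers of $n$, and their contribution to $\EE[\dist^{-2}(R,W);E]$ is absorbed by H\"older's inequality together with the finiteness of $\EE[Z^{-q}]$ for all $q>0$, which, via the same coupling, bounds all negative moments of $\dist(R,W)$ on $E$.

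The main obstacle is precisely this last step: obtaining the pointwise lower bound $\dist^2(R,W)\gtrsim\sum_j P_{jj}|X_{ij}|^2$ on an event $E$ that is uniform in $W$ and has the stated (polynomially small) failure probability, i.e.\ quantitative control of the off-diagonal part of the quadratic form $R^\ast P R$ with no second moment available on the entries. Everything else — the reduction to the quadratic form, the stable coupling, and the concluding moment computation — is a routine adaptation of \cite{HeavyIId} that Condition~\textbf{C2} makes available for non-identically distributed entries.
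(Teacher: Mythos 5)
Your self-identified ``main obstacle'' is genuine, and your workaround does not close it. Despite the loose quantifiers in the proposition statement, the paper's event $E$ does depend on $W$: it is built from the coupling event $\Gamma_J$ (whose weights $w_j=P_{jj}$ come from the projection onto $W^\perp$), the comparison event $G_J^1$, and --- crucially --- $G_J^2:=\{\dist^2(Y,W')\geq S/2\}$ with $S:=\sum_j P_{jj}|Y_j|^2$ and $Y$ the \emph{centered} truncated row. That $W$-dependence is harmless in the application of Section~\ref{sect:SingVal}, where one conditions on the remaining rows (hence on $W$) before invoking the proposition. Insisting on a $W$-free $E$ is what manufactures your obstacle and drives you toward an event (``at most one coordinate exceeds $T_n$'') that has no leverage on the off-diagonal of the quadratic form.

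The proposed patches then fail quantitatively and structurally. Matching $n^2T_n^{-2\alpha}$ to $n^{-1/2+\gamma(2/\alpha-1/2)}$ forces $T_n\approx n^{5/(4\alpha)}$, far above $a_n\sim n^{1/\alpha}$; the truncated second moment of a coordinate scales as $T_n^{2-\alpha}$ (not $T_n^2$ --- you ignored the heavy-tail decay), and even with that correction the exponent of $T_n^{2-\alpha}\sqrt{m}/m^{2/\alpha}$ at $\gamma=0$, $m\sim n$ is $(2-3\alpha)/(4\alpha)$, which is nonnegative for $\alpha\leq 2/3$, so the domination you need fails there. Moreover, your variance-plus-Markov control of the off-diagonal produces a $W$-dependent exceptional event that cannot be absorbed into a $W$-free $E$, and the H\"older fix requires higher negative moments of $\dist(R,W)$ that are unavailable without exactly the off-diagonal control you are trying to establish. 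The paper instead conditions on $\mathcal{J}=\{j:|X_{ij}|\leq a_n\}$ (allowing up to $\sqrt{n}$ large coordinates, with exponentially small failure), centers the truncated row to get $Y$, places $G_J^2$ directly into $E$, and bounds
$$\PP\Big(\tfrac{|\dist^2(Y,W')-S|}{S}\geq\tfrac12;\tilde{\Gamma}_J\Big)\leq 2\sqrt{\EE\big[|\dist^2(Y,W')-S|^2\big]\,\EE\big[S^{-2};\tilde{\Gamma}_J\big]}$$
by Markov and Cauchy--Schwarz; the first factor is $O(a_n^4(n-d)/n^2)$ via the truncated moments and the second is $O((n-d)^{-4/\alpha})$ via the stable coupling, and their product gives exactly $n^{-1/2+\gamma(2/\alpha-1/2)}$. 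This Markov--Cauchy--Schwarz step on the \emph{ratio}, with the off-diagonal control folded into the $W$-dependent event $E$, is the missing mechanism; the rest of your sketch (the reduction to the quadratic form, the stable coupling via Lemma~\ref{CouplingLemma}, and the negative-moment computation on $Z$) does match the paper.
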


	\begin{proof}
		We follow the approach of the proof of Proposition 3.7 in \cite{HeavyIId}. The only difference with Proposition 3.7 in \cite{HeavyIId} is the entries here are independent but not necessarily identically distributed. Assume that $R$ is the $i$-th row of $a_n(A_n-z)$ with the $i$-th entry set to zero. Note\begin{equation}\label{eq:Distreduction}
			\dist(R,W)\geq\dist(X^{(i)},W_1), 
		\end{equation}
		where $W_1=\Span(W,e_i)$, $e_i$ is the $i$-th basis vector, and $X^{(i)}=(X_{ij})_{1\leq j\leq n}$. Though the $i$-th entry of $X^{(i)}$ is not necessarily zero, inequality \eqref{eq:Distreduction} still holds since the subspace spanned by $e_i$ is contained in $W_1$. Let $\mathcal{J}$ denote the set of indices $j$ such that $|X_{ij}|\leq a_n$. It is a straight forward application of the Chernoff bound to show there exists $\delta>0$ such that
		$$\PP(|\mathcal{J}|<n-\sqrt{n})\leq e^{-n^\delta}.$$ 
		We begin by showing for any set $J\subset\{1,\dots,n\}$ such that $|J|\geq n-\sqrt{n}$,
		$$\EE[\dist^{-2}(R,W);E_J|\mathcal{J}=J]\leq c(n-d)^{-2/\alpha},$$
		for some event $E_J$ satisfying $\PP(E_J^c|\mathcal{J}=J)\leq cn^{-\frac{1}{2}+\gamma(\frac{2}{\alpha}-\frac{1}{2})}$. Without loss of generality assume $J:=\{1,\dots, n' \}$ with $n'\geq n-\sqrt{n}$. Let $\pi_J$ be the orthogonal projection onto the first $n'$ canonical basis vectors. Let $W_2=\pi_J(W_1)$, set 
		$$W'=\Span\left(W_2,\EE(\pi_J(X^{(i)})|\mathcal{J}=J) \right).$$
		Note $d-\sqrt{n}\leq \dim(W')\leq\dim(W_1)+1\leq d+2$. Define 
		$$Y=\pi_J(X^{(i)})-\EE(\pi_J(X^{(i)})|\mathcal{J}=J).$$
		One has 
		$$\dist(R,W)\geq\dist(Y,W').$$
		$Y$ is a mean zero vector under $\PP(\cdot|\mathcal{J}=J)$. Note $W'\subseteq \pi_J(\CC^n)$ and $Y\in\pi_J(\CC^n)$, so we will work with both as objects in only $\pi_J(\CC^n)\simeq \CC^{n'}$ and not the larger vector space $\CC^n$. Let $P$ be the orthogonal projection matrix onto $(W')^{\perp}$ in $\CC^{n'}$. Note $\tr P=n'-\dim(W')$ satisfies for sufficiently large $n$ \begin{equation}\label{eq:Tracebound}
			2(n-d)\geq\tr P\geq \frac{1}{2}(n-d).
		\end{equation}

		By construction \begin{equation}\label{eq:Traceequation}
			\EE(\dist^2(Y,W')|\mathcal{J}=J )=\EE\left[\sum_{j,k=1}^{n'}Y_jP_{jk}\bar{Y}_k |\mathcal{J}=J\right]=\sum_{j=1}^{n'}P_{jj}\EE[|Y_j|^2|\mathcal{J}=J].
		\end{equation}

		Let $S:=\sum_{j=1}^{n'}P_{jj}|Y_j|^2$. Before beginning note by Lemma \ref{TruncatedMoments} there exists $C>0$ such that $\EE[|Y_j|^2|\mathcal{J}=J]\leq Ca_n^2/n$ for $1\leq j\leq n'$. Thus\footnote{We believe there is a small typo in the bound of $\EE(|\dist^2(Y,W')-S |^2\big|\mathcal{J}=J)$ in the proof of Proposition 3.7 in \cite{HeavyIId}} \begin{align*}
			\EE(|\dist^2(Y,W')-S |^2\big|\mathcal{J}=J)&=\EE\left(\left|\sum_{k\neq j} Y_jP_{jk}\bar{Y}_k\right|^2\big|\mathcal{J}=J \right)\\
			&\leq \frac{2Ca_n^4}{n^2}\sum_{j,k}|P_{jk}|^2\\
			&=\frac{2Ca_n^4}{n^2}\|P\|_2^2\\
			&=\frac{2Ca_n^4}{n^2}\tr(P^*P)\\
			&=\frac{2Ca_n^4}{n^2}\tr(P).
		\end{align*}
		Thus \begin{equation}\label{eq:disttodiag}
			\EE(|\dist^2(Y,W')-S |^2\big|\mathcal{J}=J)=O\left(a_n^4\frac{n-d}{n^2} \right).
		\end{equation}
		
		Let $Z$ be as in Lemma \ref{CouplingLemma}. Set $w_j=P_{jj}$ and for $\ee>0$, consider the event 
		$$\Gamma_J:=\left\{\sum_{j=1}^{n'}w_j|X_{ij}|^2\geq\ee(n-d)^{1/\beta}Z \right\},$$
		where $\beta=\alpha/2$.	From Lemma \ref{CouplingLemma} there exists a coupling of $X_{i1},\dots,X_{i,n'}$ and $Z$ such that\begin{equation}\label{eq:GammaC}
			\PP(\Gamma_J^c)\leq Ce^{-cn^\delta},
		\end{equation} 
		for some $\delta>0$ and some choice of $\ee>0$. Since $(a-b)^2\geq a^2/2-b^2$ for $a,b\in\RR$ we have $S\geq \frac{1}{2}S_a-S_b$ where 
		$$S_a:=\sum_{j=1}^{n'}w_j|X_{ij}|^2,$$
		and 
		$$S_b:=\sum_{j=1}^{n'}w_j\EE[|X_{ij}|\big|X_{ij}\leq a_n]^2.$$
		From Lemma \ref{TruncatedMoments} and (\ref{eq:Tracebound}) one has 
		$$S_b\leq h^{(\alpha)}(n,d)$$
		where $h^{(\alpha)}(n,d)=\Theta( (n-d)a_n^2/n^2)$ if $\alpha\in(0,1]$ and $h^{(\alpha)}(n,d)=\Theta( (n-d))$ if $\alpha\in(1,2)$. Let $G_J^1$ be the event that $S_a\geq 3S_b$. There exists some $c_0$ such that 
		$$\PP((G_J^1)^c\cap\Gamma_J|\mathcal{J}=J)\leq\PP(Z\leq c_0(n-d)^{-1/\beta}h^{(\alpha)}(n,d)|\mathcal{J}=J).$$ 
		From the assumption $n-d\geq n^{1-\gamma}$ with $0<\gamma<\alpha/4$ we have $(n-d)^{-1/\beta}h^{(\alpha)}(n,d)\leq Cn^{-\ee_0}$ for some $C,\ee_0>0$. From here, using the bound (\ref{eq:stablemoments}) on the negative second moment of $Z$ , it is straightforward to show that for every $p>0$ there exists a constant $\kappa_p$ such that \begin{equation}\label{eq:G1C}
			\PP((G_J^1)^c\cap\Gamma_J|\mathcal{J}=J)\leq\kappa_pn^{-p}.
		\end{equation}
		Set $\tilde{\Gamma}_J=G_J^1\cap\Gamma_J$. On $\tilde{\Gamma}_J$, $S\geq \frac{1}{6}S_a\geq\frac{\ee}{6}(n-d)^{2/\alpha}Z$, and therefore 
		$$\EE[S^{-2};\tilde{\Gamma}_J|\mathcal{J}=J]\leq c_1(n-d)^{-4/\alpha}\EE[Z^{-2}].$$
		and thus using again the negative second moment bound on $Z$,\begin{equation}\label{eq:negsecondmomentdiag}
			\EE[S^{-2};\tilde{\Gamma}_J|\mathcal{J}=J]=O\left((n-d)^{-4/\alpha}\right).
		\end{equation} 
		Let $G_J^2$ be the event $\{\dist^2(Y,W')\geq S/2 \}$. Note using Markov's inequality and the Cauchy-Schwarz inequality leads to \begin{align}\label{eq:distdiag}
			\PP\left(\dist^{2}(Y,W')\leq S/2;\tilde{\Gamma}_J|\mathcal{J}=J \right)&\leq\PP\left(\frac{|\dist^2(Y,W')-S|}{S}\geq 1/2;\tilde{\Gamma}_J|\mathcal{J}=J \right)\nonumber \\
			&\leq2\EE\left[\frac{|\dist^2(Y,W')-S|}{S};\tilde{\Gamma}_J|\mathcal{J}=J \right]\\
			&\leq2\sqrt{\EE\left[|\dist^2(Y,W')-S|^2|\mathcal{J}=J\right]\EE[S^{-2};\tilde{\Gamma}_J|\mathcal{J}=J]}\nonumber.
		\end{align}
		Then by (\ref{eq:disttodiag}), (\ref{eq:negsecondmomentdiag}), and (\ref{eq:distdiag})\begin{equation}\label{eq:G2C}
			\PP\left((G_J^2)^c\cap\tilde{\Gamma}_J|\mathcal{J}=J \right)=O\left(a_n^2n^{-1}(n-d)^{\frac{1}{2}-\frac{2}{\alpha}} \right).
		\end{equation} 
		By the Cauchy-Schwarz inequality 
		$$\EE\left[\dist^{-2}(X,W);G_J^2\cap\tilde{\Gamma}_J|\mathcal{J}=J \right]\leq2\EE\left[S^{-1};\tilde{\Gamma}_J|\mathcal{J}=J\right]=O\left((n-d)^{-2/\alpha}\right).$$
		To conclude take $E_J=G_J^2\cap\tilde{\Gamma}_J$. Then 
		$$\PP(E_J^c|\mathcal{J}=J)\leq\PP(\Gamma_J^c|\mathcal{J}=J)+\PP((G_J^1)^c\cap\Gamma_J|\mathcal{J}=J)+\PP\left((G_J^2)^c\cap\tilde{\Gamma}_J|\mathcal{J}=J \right).$$
		It is then straightforward to show using (\ref{eq:GammaC}), (\ref{eq:G1C}), and (\ref{eq:G2C}) 
		$$\PP(E_J^c|\mathcal{J}=J)=O\left(n^{-\frac{1}{2}+\gamma(\frac{2}{\alpha}-\frac{1}{2})}\right).$$
		Take $E=\bigcup_{J\in\mathbf{J}}E_J\cap\{\mathcal{J}=J\}$ where $\mathbf{J}=\{J\subseteq[n]:|J|\geq n-\sqrt{n} \}$ to complete the proof.	
	\end{proof}
	

	\subsection{Application of Theorem \ref{thm:lsv}}\label{Leastsingapp} We now show that Theorem \ref{thm:lsv} can be applied to $A_n$ to bound $s_n(A_n-z)$ from below with high probability. The difficulty is connecting the hypothesis on the spectral measure in Condition \textbf{C2} to the correlation of truncated random variables in the statement of Theorem \ref{thm:lsv}.

	\begin{theorem}\label{AnSatSSVC}
		For all $z\in\CC$, there exists $C,r>0$ such that
		$$\PP\left(s_n(A_n-z)\leq Cn^{-r}\right)= o(1).$$
	\end{theorem}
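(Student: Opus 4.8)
The plan is to deduce this from the general least singular value bound, Theorem \ref{thm:lsv}, applied to the \emph{unnormalized} matrix $X_n$ together with the shift $M=-a_nzI_n$. Since $A_n=\frac1{a_n}X_n$ we have $s_n(A_n-z)=\frac1{a_n}s_n(X_n-a_nzI_n)$, and under Condition \textbf{C2} one has $a_n=O(n^{1/\alpha})$, so it suffices to show $\P\big(s_n(X_n-a_nzI_n)\le n^{-p}\big)=o(1)$ for some $p>0$; rescaling then yields the claimed bound for $s_n(A_n-z)$ after adjusting the exponent. To remove the spectral‑norm constraint in Theorem \ref{thm:lsv}, a union bound over the entries using the tail bound \eqref{eq:indivassump} (whose slowly varying function is bounded under \textbf{C2}) and Condition \textbf{C1}(ii) gives $\max_{i,j}|X_{ij}|\le n^{C_1}$, hence $s_1(X_n-a_nzI_n)\le\|X_n\|_2+a_n|z|\le n^{C_0}=:s$, with probability $1-o(1)$, for suitable absolute constants $C_1,C_0$ (one could also cite Lemma \ref{Tightness}). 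It then remains to bound $\P\big(s_n(X_n-a_nzI_n)\le \tfrac t{\sqrt n},\,s_1(X_n-a_nzI_n)\le s\big)$.

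Next I would verify the hypotheses of Theorem \ref{thm:lsv} for $X=X_n$. Hypotheses (i) and (ii) are immediate from the definition of an elliptic random matrix. For (iii), fix an $n$‑independent truncation level $a$ and put $\mathcal{E}_{ij}=\{|X_{ij}|\le a,\ |X_{ji}|\le a\}$. Because the off‑diagonal pairs are i.i.d.\ copies of $(\xi_1,\xi_2)$, the quantities $b=\P(\mathcal{E}_{12})$, $\sigma^2=\min\big(\var(\xi_1\mid\mathcal{E}_{12}),\var(\xi_2\mid\mathcal{E}_{12})\big)$ and $\rho=|\corr(\xi_1\mid\mathcal{E}_{12},\xi_2\mid\mathcal{E}_{12})|$ do not depend on $(i,j)$. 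For $a$ large, $\P(\mathcal{E}_{12})\to1$, so $b>0$; and since $\xi_1,\xi_2$ are nondegenerate (they lie in the domain of attraction of a genuinely heavy‑tailed $\alpha$‑stable law), each of them takes at least two values with positive probability on $\mathcal{E}_{12}$ once $a$ is large, so $\sigma^2>0$.

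The main obstacle is to choose such an $a$ with $\rho<1$; this is exactly where Condition \textbf{C2}(i) is used. Suppose to the contrary that $|\corr(\xi_1\mid\mathcal{E}^{(a)},\xi_2\mid\mathcal{E}^{(a)})|=1$ for all large $a$, where $\mathcal{E}^{(a)}=\{|\xi_1|\le a,\ |\xi_2|\le a\}$. Equality in the Cauchy--Schwarz inequality forces, after centering, $\xi_2=\lambda_a\xi_1+c_a$ almost surely on $\mathcal{E}^{(a)}$ with $\lambda_a\ne0$; since these events increase in $a$ and $\xi_1$ is nondegenerate on each, the coefficients agree for all large $a$, and letting $a\to\infty$ (using $\P(\mathcal{E}^{(a)})\to1$) gives $\xi_2=\lambda\xi_1+c$ almost surely with fixed $\lambda\ne0$. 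Feeding this into the convergence $\frac1{a_n}\sum_{i=1}^n(\xi_1^{(i)},\xi_2^{(i)})-b_n\Rightarrow Z=(Z_1,Z_2)$ recalled above, the deterministic sequences force $-\lambda Z_1+Z_2$ to be almost surely constant; but $-\lambda Z_1+Z_2$ is (degenerate) $\alpha$‑stable, so its scale vanishes, i.e.\ the spectral measure $\theta_d$ of $Z$ is supported on $\{(w_1,w_2):\lambda w_1-w_2=0,\ |w_1|^2+|w_2|^2=1\}$ with $(\lambda,-1)\ne(0,0)$, contradicting Condition \textbf{C2}(i). Hence $\rho<1$ for $a$ large, and we fix such an $a$; the constants $a,b,\sigma,\rho$ are then all $n$‑independent.

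Finally, Theorem \ref{thm:lsv} (whose constant $C$ depends only on $a,b,\sigma$, and in particular not on $\rho$) gives, for $n$ large and $0<t\le1$,
\[ \P\!\left(s_n(X_n-a_nzI_n)\le\tfrac t{\sqrt n},\ s_1(X_n-a_nzI_n)\le s\right)\le C\!\left(\frac{\log(Cns)}{\sqrt{1-\rho}}\Big(\sqrt{s^5t}+\tfrac1{\sqrt n}\Big)\right)^{1/4}. \]
Since $s=n^{C_0}$ is polynomial in $n$ and $\rho<1$ is a fixed constant, the prefactor is $O(\log n)$, so choosing $t=n^{-p}$ with $p>5C_0$ makes the right‑hand side $o(1)$. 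Combining this with $\P\big(s_1(X_n-a_nzI_n)>s\big)=o(1)$ from the first paragraph gives $\P\big(s_n(X_n-a_nzI_n)\le n^{-p'}\big)=o(1)$ for some $p'>0$, and dividing through by $a_n=O(n^{1/\alpha})$ yields Theorem \ref{AnSatSSVC}. The only step needing real care is the deduction $\rho<1$ from \textbf{C2}(i); everything else is bookkeeping with polynomially small error terms.
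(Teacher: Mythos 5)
Your argument is correct, and while the overall strategy is the same as the paper's (verify the hypotheses of Theorem~\ref{thm:lsv}, bound the operator norm polynomially, and use Condition~\textbf{C2}(i) to rule out $\rho=1$), the technical execution differs in two ways worth noting. First, you apply Theorem~\ref{thm:lsv} to the unnormalized matrix $X_n$ with $M=-a_nzI_n$ and a \emph{fixed} truncation level $a$, whereas the paper applies it to $A_n-z$ with the growing truncation $\mathcal{E}_{ij}^n=\{|X_{ij}|<\log(n)a_n,\,|X_{ji}|<\log(n)a_n\}$; your choice keeps $a,b,\sigma,\rho$ genuinely $n$-independent, which cleanly sidesteps any question about how the constant $C=C(a,b,\sigma)$ of Theorem~\ref{thm:lsv} behaves when $a$ grows with $n$. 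Second, and more substantively, your derivation of $\rho<1$ is simpler: with a fixed level $a$, the events $\mathcal{E}^{(a)}$ increase to the whole space, so equality in Cauchy--Schwarz for all large $a$ forces an \emph{exact} a.s.\ affine relation $\xi_2=\lambda\xi_1+c$ with $\lambda\ne 0$ (the coefficients are forced to agree across $a$ by nondegeneracy on each event), and passing to the stable limit immediately localizes $\theta_d$ on a complex hyperplane, contradicting \textbf{C2}(i). The paper's growing truncation only yields an approximate relation $X_{12}=R_nX_{21}+C_n$ on $\mathcal{E}^n_{12}$ with $n$-dependent coefficients, and it must extract a convergent subsequence of $R_n$, control $C_n/a_n\to0$ via slowly varying truncated moments, and run a perturbation argument to show $\theta_d$ cannot charge any neighborhood away from $\{z=Rw\}$. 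Your version avoids all of that estimation. Both proofs then finish identically: $s_1$ is polynomially bounded with probability $1-o(1)$ (by a union bound over entries, or Lemma~\ref{Largest Singular Value}), so $s=n^{C_0}$, $\log(Cns)=O(\log n)$, and taking $t=n^{-p}$ with $p>5C_0$ kills the $\sqrt{s^5t}$ term; rescaling by $a_n\sim cn^{1/\alpha}$ (via \textbf{C2}(ii)) gives the claim for $A_n-z$. The only small thing to make explicit is that the degeneracy of the complex stable variable $Z_2-\lambda Z_1$ means both real-linear functionals $\Re(Z_2-\lambda Z_1)$ and $\Im(Z_2-\lambda Z_1)$ have vanishing scale, which together force $\supp(\theta_d)\subset\{\lambda w_1-w_2=0\}$, exactly the form needed for \textbf{C2}(i).
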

	
	It is worth noting $o(1)$ can be improved to $n^{-\ee}$ for some sufficiently small $\ee>0$. The proof of Theorem \ref{AnSatSSVC} will be an application of Theorem \ref{thm:lsv}. First we need a bound on the operator norm, and hence the largest singular value, of $A_n-z$.  
	
	\begin{lemma}\label{Largest Singular Value}
		For every $z\in\CC$, there exists $C>0$ depending on the distribution of the entries and $z$ such that for any $k>1/\alpha$ and $n$ sufficiently large,
		$$\Prob(\|X_n-a_nz\|\geq n^k)\leq \frac{C}{n^{((k-1)\alpha/2)-2}}.$$
	\end{lemma}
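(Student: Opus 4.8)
The plan is to discard the deterministic shift, bound the spectral norm of $X_n$ crudely by its Hilbert--Schmidt norm, and then apply Markov's inequality with a fractional power adapted to the heavy tails.

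First I would remove the shift: since $\|X_n - a_n z\| \le \|X_n\| + |z|\, a_n$ and, under Condition \textbf{C2}, $a_n/n^{1/\alpha}\to c>0$ (indeed even under Condition \textbf{C1} one has $a_n = o(n^{1/\alpha+\eps})$ for every $\eps>0$), the hypothesis $k>1/\alpha$ forces $a_n = o(n^k)$, so for $n$ sufficiently large $|z|\,a_n \le \tfrac12 n^k$. Hence
\[ \Prob(\|X_n - a_n z\| \ge n^k) \le \Prob\Bigl(\|X_n\| \ge \tfrac12 n^k\Bigr), \]
and it suffices to control the right-hand side.

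Next, $\|X_n\| = s_1(X_n) \le \|X_n\|_2 = \bigl(\sum_{i,j}|X_{ij}|^2\bigr)^{1/2}$, so I would estimate $\Prob\bigl(\sum_{i,j}|X_{ij}|^2 \ge \tfrac14 n^{2k}\bigr)$. Take the fractional power $q := \alpha/4 \in (0,1/2)$. Because $t\mapsto t^q$ is subadditive on $[0,\infty)$ for $q\le 1$,
\[ \E\Bigl[\bigl(\textstyle\sum_{i,j}|X_{ij}|^2\bigr)^q\Bigr] \le \sum_{i,j}\E|X_{ij}|^{2q} = \sum_{i,j}\E|X_{ij}|^{\alpha/2}. \]
Each expectation is finite: for the off-diagonal entries this is \eqref{eq:momentbound} with $p=\alpha/2<\alpha$, and for the diagonal entries Condition \textbf{C1}(ii) gives $\Prob(|\zeta|\ge t)\le Ct^{-\alpha}$, which yields $\E|\zeta|^{\alpha/2}<\infty$. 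Letting $M$ be the maximum of these (finitely many) fractional moments, we get $\E[(\sum_{i,j}|X_{ij}|^2)^q] \le M n^2$, and Markov's inequality gives
\[ \Prob\Bigl(\textstyle\sum_{i,j}|X_{ij}|^2 \ge \tfrac14 n^{2k}\Bigr) \le \frac{4^q M n^2}{n^{2kq}} = C\, n^{2 - k\alpha/2}. \]
Since $k\alpha/2 \ge (k-1)\alpha/2$, we have $n^{2-k\alpha/2}\le n^{2-(k-1)\alpha/2} = n^{-((k-1)\alpha/2-2)}$ for $n\ge 1$, which is the claimed bound after relabeling the constant.

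There is no genuine obstacle here: the bound is deliberately crude. The only points needing care are the absorption of $|z|\,a_n$ (immediate from $k>1/\alpha$), choosing the exponent $q=\alpha/4<1$ so that subadditivity of $t\mapsto t^q$ applies, and verifying that the fractional moments $\E|X_{ij}|^{\alpha/2}$ are finite and uniformly bounded, which follows at once from \eqref{eq:momentbound} and Condition \textbf{C1}(ii). A better exponent could be extracted by letting $q\uparrow\alpha/2$ (with $q$ then depending on $k$), but the stated estimate does not require this refinement.
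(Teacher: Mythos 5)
Your proof is correct. The absorption of the deterministic shift is handled the same way as the paper; the difference is in how you crudely bound $\|X_n\|$. The paper uses $\|X_n\|^2 \le n^2\max_{i,j}|X_{ij}|^2$, a union bound over the $n^2$ entries, and then Markov's inequality at power $\alpha/4$ on a single entry, giving $n^2\cdot n^{-(k-1)\alpha/2}$. You instead bound $\|X_n\|\le\|X_n\|_2$, use subadditivity of $t\mapsto t^{\alpha/4}$ to linearize the sum $\sum_{i,j}|X_{ij}|^2$, and then apply Markov once to the whole sum, which yields the slightly sharper exponent $n^{2-k\alpha/2}$ before you relax to the stated $n^{2-(k-1)\alpha/2}$. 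Both are elementary Markov-type arguments with the same fractional moment $\E|X_{ij}|^{\alpha/2}$ as the input; your version trades the union bound for subadditivity and picks up the factor $n^{-\alpha/2}$ improvement because the Hilbert--Schmidt bound is tighter than $n^2\max|X_{ij}|^2$. One small remark: you should keep the fractional moments of the diagonal entries in the maximum $M$ as you do, since the paper's displayed $\E|X_{12}|^{\alpha/2}$ is a slight abuse of notation — the diagonal entries are copies of $\zeta$, not $\xi_1$ or $\xi_2$, and Condition \textbf{C1}(ii) is needed exactly as you invoke it.
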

	
	\begin{proof}
		Since
		$$\|X_n-a_nz\|\leq \|X_n\|+a_n|z|\leq \|X_n\|+C_zn^{1/\alpha},$$
		it is sufficient to bound $\PP\left(\|X_n\|\geq cn^k\right)$, for some $0<c<1$ and $n$ sufficiently large. Note 
		
		$$\|X_n\|^2\leq n^2\max_{1\leq i,j\leq n}\{|X_{ij}|^2\}.$$
		Thus, by the union bound, \begin{align*}
			\PP\left(\|X_n\|\geq cn^k \right)&=\PP\left(\|X_n\|^2\geq c^2n^{2k} \right)\\
			&\leq\PP\left(\max_{1\leq i,j\leq n}\{|X_{ij}|^2\}\geq c^2n^{2(k-1)} \right)\\
			&\leq\PP\left(\bigcup_{1\leq i,j\leq n}\{|X_{ij}|^2\geq c^2n^{2(k-1)}\} \right)\\
			&\leq n^2\max_{i,j=1,2}\left\{\PP\left(|X_{ij}|^2\geq c^2n^{2(k-1)} \right)\right\}\\
			&\leq n^2\frac{C'\EE|X_{12}|^{\alpha/2}}{n^{2(k-1)\alpha/4}},
		\end{align*} where the last inequality follows from Markov's inequality and (\ref{eq:momentbound}).
	\end{proof}

	\begin{proof}[Proof of Theorem \ref{AnSatSSVC}]
		Clearly $X_n$ satisfies conditions (i) and (ii) of Theorem \ref{thm:lsv}, so it only remains to check condition (iii) before we can apply Theorem \ref{thm:lsv}. Let $\mathcal{E}_{ij}^n:=\{|X_{ij}|< \log(n)a_n,|X_{ji}|< \log(n)a_n  \}$. Since $\{ (X_{ij}, X_{ji}) : 1 \leq i < j \leq n\}$ is a collection i.i.d.\ random tuples we focus on showing for some $n$, $\mathcal{E}_{12}^n$ satisfies the desired conditions. From the tail bounds on $X_{12},$ and $X_{21}$, 
		$$\PP(\mathcal{E}^n_{12})\geq 1-\frac{C}{\log(n)^{\alpha}n}.$$
		
		$\var(X_{12}|\mathcal{E}^n_{12})=0$ if and only if $X_{12}$ is constant on $\mathcal{E}^n_{12}$, and hence on any subset of $\mathcal{E}^n_{12}$. Thus if $\var(X_{12}|\mathcal{E}^{n_0}_{12})>0$ for some $n_0$, then $\var(X_{12}|\mathcal{E}^n_{12})>0$ for all $n\geq n_0$. Since $X_{12}$ is non-constant, there must be some $n$ sufficiently large such that $X_{12}$ is non-constant on $\mathcal{E}_{12}^n$. Thus $\var(X_{12}|\mathcal{E}^n_{12})>0$ for all $n$ sufficiently large. The same argument follows for $\var(X_{21}|\mathcal{E}^n_{12})$. 
		
		Now assume for all $n$ sufficiently large,
		$$|\Corr\left(X_{12}|\mathcal{E}_{12}^n, X_{21}|\mathcal{E}_{12}^n \right)|=1.$$
		
		Then there exists $\theta_n\in[0,2\pi)$ such that on $\mathcal{E}_{12}^n$, $$X_{12}-\EE[X_{12}|\mathcal{E}^n_{12}]=e^{i\theta_n}\sqrt{\frac{\var(X_{12}|\mathcal{E}^n_{12})}{\var(X_{21}|\mathcal{E}^n_{12})}} \left[X_{21}-\EE[X_{21}|\mathcal{E}^n_{12}]\right].
		$$ For $\alpha<1$, by Lemma \ref{TruncatedMoments} $$\frac{\EE[|X_{12}|\indicator{{|X_{12}|\leq t}}]}{\left(\frac{\alpha}{1-\alpha}L(t)t^{1-\alpha}\right)}\rightarrow1,$$
		as $t\rightarrow\infty$, for some slowly varying function $L$. We assumed the atom variables satisfy Condition \textbf{C2} (ii), specifically $a_n\sim cn^{1/\alpha}$ and thus $L(t)\rightarrow c'$ for some constant $c'>0$ as $t\rightarrow\infty$. For $\alpha\geq 1$, $\EE[|X_{12}|\indicator{{|X_{12}|\leq t}}]$ is dominated by $t^r$ for any $r>0$. Thus on $\mathcal{E}_{12}^n$ 
		$$X_{12}=R_nX_{21}+C_n$$
		where $R_n$ is a sequence of complex numbers and \begin{equation}\label{eq:boundedconstant}
			\lim\limits_{n\rightarrow\infty}\frac{C_n}{a_n}=0,
		\end{equation} since $\sqrt{\frac{\var(X_{12}|\mathcal{E}^n_{12})}{\var(X_{21}|\mathcal{E}^n_{12})}}$ is slowly varying by Lemma \ref{TruncatedMoments}.
		If $R_n$ has a bounded subsequence $R_{n_k}$, we shall take the corresponding sequences $a_{n_k}$, and $n_k$ and for simplicity denote all three by $R_n,a_n,$ and $n$. If not, then we note on $\mathcal{E}_{12}^n$ 
		$$X_{21}=(R_n)^{-1}X_{12}+C_n'$$
		where $(R_n)^{-1}$ is bounded, and (\ref{eq:boundedconstant}) still holds for $C_n'$. Thus we will assume $R_n$ is bounded and we take a subsequence which converges to $R$.
		Let $r>0$ and $B$ be a Borel subset of the unit sphere in $\CC^2$ such that $\theta_d(\partial B)=0$. Note \begin{align}\label{eq:EandEc}
			\theta_d(B)m_\alpha([r,\infty)=&\lim\limits_{n\rightarrow\infty}n\PP\left(\frac{(X_{12},X_{21})}{\|(X_{12},X_{21}) \|}\in B ,\|(X_{12},X_{21}) \|\geq ra_n \right)=&\nonumber\\
			&\lim\limits_{n\rightarrow\infty}n\PP\left(\frac{(X_{12},X_{21})}{\|(X_{12},X_{21}) \|}\in B,\|(X_{12},X_{21}) \|\geq ra_n, \mathcal{E}^n_{12} \right)\\
			&\quad\quad+\lim\limits_{n\rightarrow\infty}n\PP\left(\frac{(X_{12},X_{21})}{\|(X_{12},X_{21}) \|}\in B,\|(X_{12},X_{21}) \|\geq ra_n, (\mathcal{E}^n_{12})^c \right),\nonumber	
		\end{align}
		and \begin{align}\label{eq:Ectozero}
			&\lim\limits_{n\rightarrow\infty}n\PP\left(\frac{(X_{12},X_{21})}{\|(X_{12},X_{21}) \|}\in B,\|(X_{12},X_{21}) \|\geq ra_n, (\mathcal{E}^n_{12})^c \right)\nonumber\\
			&\quad\leq\lim\limits_{n\rightarrow\infty}n\PP((\mathcal{E}^n_{12})^c)\nonumber\\
			&\quad=\lim\limits_{n\rightarrow\infty}n\PP(|X_{12}|\geq \log(n)a_n,\text{or}|X_{21}|\geq \log(n)a_n)\\
			&\quad\leq\lim\limits_{n\rightarrow\infty}Cn(\log(n)a_n)^{-\alpha}\nonumber\\
			&\quad=\lim\limits_{n\rightarrow\infty} C'n(\log(n)n^{1/\alpha})^{-\alpha}\nonumber\\
			&\quad=0\nonumber.
		\end{align}
		Thus we will consider only\begin{equation} 
			\lim\limits_{n\rightarrow\infty}n\PP\left(\frac{(X_{12},X_{21})}{\|(X_{12},X_{21}) \|}\in B,\|(X_{12},X_{21}) \|\geq ra_n, \mathcal{E}^n_{12} \right).
		\end{equation}

		Define the set $A$ as 
		$$A=\{(z,w)\in\CC^2:z=Rw, |z|^2+|w|^2=1 \},$$
		where $R$ is the limit of $R_n$. Let $(z_1,z_2)\in\CC^2$ be such that $(z_1,z_2)\notin A$ and $|z_1|^2+|z_2|^2=1$. Let $O$ be a small open neighborhood of $(z_1,z_2)$ in $\CC^2$ such that $A\cap \bar{O}=\emptyset$. We will consider the limit\begin{equation}\label{eq:OLimit}
			\lim\limits_{n\rightarrow\infty}n\PP\left(\frac{(X_{12},X_{21})}{\|(X_{12},X_{21}) \|}\in O,\|(X_{12},X_{21}) \|\geq ra_n, \mathcal{E}^n_{12} \right).
		\end{equation} Before we deal with this limit note that on $\mathcal{E}^n_{12}$ \begin{equation}
			\frac{(X_{12},X_{21})}{\|(X_{12},X_{21}) \|}=\frac{(RX_{21},X_{21})}{\|(X_{12},X_{21}) \|}+\frac{((R_n-R)X_{21},0)}{\|(X_{12},X_{21}) \|}+\frac{(C_n,0)}{\|(X_{12},X_{21}) \|},
		\end{equation}\begin{equation}\label{eq:Revtriangleonnorm}
			\left|\|(RX_{21},X_{21})\|-\|((R_n-R)X_{21},0)\|-\|(C_n,0)\| \right|\leq\|(X_{12},X_{21})\|,
		\end{equation} and\begin{equation}\label{eq:Triangleonnorm}
			\|(X_{12},X_{21})\|\leq\|(RX_{21},X_{21})\|+\|((R_n-R)X_{21},0)\|+\|(C_n,0)\|.
		\end{equation} We aim to show the unit vector in (\ref{eq:OLimit}) is approaching the bad set $A$, which will lead to a contradiction of Condition \textbf{C2}. To this end note by factoring out $\|(RX_{21},X_{21})\|$ from (\ref{eq:Revtriangleonnorm}) and (\ref{eq:Triangleonnorm}), we get
		$$\|(X_{12},X_{21})\|\geq\|(RX_{21},X_{21})\|\left|1-\frac{\|((R_n-R)X_{21},0)\|+|C_n|}{\|(RX_{21},X_{21})\|} \right|,$$
		and 
		$$\|(X_{12},X_{21})\|\leq\|(RX_{21},X_{21})\|\left[1+\frac{\|((R_n-R)X_{21},0)\|+|C_n|}{\|(RX_{21},X_{21})\|} \right].$$
		Since $|C_n|=o(a_n)$, it follows from (\ref{eq:Triangleonnorm}) that if $\|(X_{12},X_{21})\|\geq ra_n$, then $\|(RX_{21},X_{21})\|\geq ca_n$ for some $c>0$. It then follows that on $\{\|(X_{12},X_{21})\|\geq ra_n,\mathcal{E}^n_{12} \}$
		$$\|(X_{12},X_{21})\|\geq\|(RX_{21},X_{21})\|\left[1-o(1)\right],$$
		and 
		$$\|(X_{12},X_{21})\|\leq\|(RX_{21},X_{21})\|\left[1+o(1)\right].$$
		Thus \begin{align}\label{eq:C2contra}
			&\lim\limits_{n\rightarrow\infty}n\PP\left(\frac{(X_{12},X_{21})}{\|(X_{12},X_{21}) \|}\in O,\|(X_{12},X_{21}) \|\geq ra_n, \mathcal{E}^n_{12} \right)\nonumber\\
			&\quad=\lim\limits_{n\rightarrow\infty}n\PP\left(\frac{(RX_{21},X_{21})}{\|(RX_{21},X_{21}) \|}+o(1) \in O,\|(X_{12},X_{21}) \|\geq ra_n, \mathcal{E}^n_{12} \right)\\
			&\quad=0,\nonumber
		\end{align} since $\frac{(RX_{21},X_{21})}{\|(RX_{21},X_{21}) \|}+o(1)$ is a small perturbation of a vector in $A$ which is disjoint from $O$. From (\ref{eq:EandEc}), (\ref{eq:Ectozero}), and (\ref{eq:C2contra}) we see
		$$\lim\limits_{n\rightarrow\infty}n\PP\left(\frac{(X_{12},X_{21})}{\|(X_{12},X_{21}) \|}\in O,\|(X_{12},X_{21}) \|\geq ra_n\right)=0.$$
		Thus for arbitrary $(z_1,z_2)\in\CC^2$, $(z_1,z_2)\notin\supp(\theta_d)$ and $\supp(\theta_d)\subseteq A$, a contradiction of Condition \textbf{C2} (i). Therefore there exists arbitrarily large $n$ such that
		$$|\Corr\left(X_{12}|\mathcal{E}_{12}^n, X_{21}|\mathcal{E}_{12}^n \right)|<1.$$  
		
		From the above we see Theorem \ref{thm:lsv} may be applied to $A_n-z$, which combining with Lemma \ref{Largest Singular Value} completes the proof of Theorem \ref{AnSatSSVC}. 	
	\end{proof}

	\subsection{Uniform integrability} For $0<\delta<1$ we define  $K_\delta=[\delta,\delta^{-1}]$. We aim to show that $\log(\cdot)$ is uniformly integrable in probability with respect to $\{\nu_{A-z}\}_{n\geq 1}$, i.e. for all $\ee>0$ \begin{equation}\label{eq:UniformInt}
		\lim\limits_{\delta\rightarrow 0}\lim\limits_{n\rightarrow\infty}\PP\left(\int_{K_\delta^c}|\log(x)|d\nu_{A_n-z}(x)>\ee \right)= 0.
	\end{equation}

	From Lemma \ref{Tightness} there exists a constant $c_0>0$ such that with probability 1 
	$$\limsup_{n\rightarrow\infty}\int_{1}^{\infty}|\log(x)|d\nu_{A_n-z}(x)<c_0.$$
	From this, the part of the integral in (\ref{eq:UniformInt}) over $[\delta^{-1},\infty)$ is not a concern. Thus it suffices to prove that for every sequence $\delta_n$ converging to $0$,
	$$\frac{1}{n}\sum_{i=0}^{n-1}\mathbf{1}_{\{ s_{n-i}\leq \delta_n\}}\log s_{n-i}^{-2}$$ 
	converges to $0$ in probability. By Theorem \ref{AnSatSSVC} we may, with probability $1-o(1)$, lower bound $s_{n-i}$ by $cn^{-r}$ for all $i$. Take $0<\gamma<\alpha/4$ to be fixed later. Using the polynomial lower bound for $0\leq i\leq n^{1-\gamma}$, it follows that it is sufficient to prove that
	$$\frac{1}{n}\sum_{i=\lfloor n^{1-\gamma}\rfloor }^{n-1}\mathbf{1}_{\{ s_{n-i}\leq \delta_n\}}\log s_{n-i}^{-2}$$
	converges in probability to $0$. 
	
	We next aim to show there exists an event $F_n$ such that for some $\delta>0$ and $c>0$
	$$\PP(F_n^c)\leq c\exp(-n^\delta),$$
	and 
	$$\EE[s_{n-i}^{-2}|F_n]\leq c(\frac{n}{i})^{\frac{2}{\alpha}+1},$$
	for $\lfloor n^{1-\gamma}\rfloor\leq i\leq n-1$.
	First to see why this implies convergence in probability to zero, note 
	$$\PP(s_{n-i}\leq\delta_n)\leq\PP(F_n^c)+c\delta_n^2(\frac{n}{i})^{1+2/\alpha}.$$
	It follows there exists a sequence $\ee_n=\delta_n^{1/(\frac{2}{\alpha}+1)}$ tending to zero such that $\PP(s_{n-\lfloor n\ee_n\rfloor}\leq\delta_n)$ converges to $0$. Hence it is sufficient to prove, given $F_n$, 
	\begin{equation} \label{eq:fnto0}
		\frac{1}{n}\sum_{i=\lfloor n^{1-\gamma}\rfloor }^{\lfloor\ee_n n\rfloor}\log s_{n-i}^{-2}
	\end{equation} 
	converges to zero in probability. Using the negative second moment bound on $F_n$ and the concavity of $\log$ we have\begin{align*}
		\EE\left[\frac{1}{n}\sum_{i=\lfloor n^{1-\gamma}\rfloor }^{\lfloor\ee_n n\rfloor}\mathbf{1}_{\{ s_{n-i}\leq \delta_n\}}\log s_{n-i}^{-2}\big| F_n \right]&\leq\frac{1}{n}\sum_{i=\lfloor n^{1-\gamma}\rfloor }^{\lfloor\ee_n n\rfloor}\log\EE[s_{n-i}^{-2}|F_n]\\
		&\leq\frac{c_1}{n}\sum_{i=1}^{\lfloor\ee_n n\rfloor}\log(\frac{n}{i})\\
		&\rightarrow 0
	\end{align*}
	where the last sum converges to zero by a Riemann Sum approximation.  Thus, by Markov's inequality we obtain convergence (given $F_n$) of \eqref{eq:fnto0} in probability to zero.
	
	Now we finish with the construction of such an event $F_n$. Let $B_n$ be the matrix formed by the first $n-\lfloor i/2\rfloor$ rows of $a_n(A_n-z)$. If $s_1'\geq s_2'\geq\cdots\geq s_{n-\lfloor i/2\rfloor}'$ are the singular values of $B_n$, then by Cauchy interlacing \begin{equation}\label{eq:Cauchyint}
		s_{n-i}\geq\frac{s_{n-i}'}{a_n}.
	\end{equation}
	By the Tao-Vu negative second moment lemma, Lemma \ref{Negative2ndMom}, we have \begin{equation}\label{eq:neq2ndmoment}
		s_1'^{-2}+\cdots s_{n-\lfloor i/2\rfloor}'^{-2}=\dist_1^{-2}+\cdots\dist_{n-\lfloor i/2\rfloor}^{-2}
	\end{equation}
	where $\dist_j$ is the distance from the $j$-th row of $B_n$ to the subspace spanned by the other rows. Using (\ref{eq:Cauchyint}) and (\ref{eq:neq2ndmoment})  we get
	$$\frac{i}{2}s_{n-i}^{-2}\leq a_n^2\sum_{j=i}^{n-\lfloor i/2\rfloor}\dist_j^{-2}.$$
	
	Now let $\dist_j'$ be the distance from the $j$-th row of $B_n$ with its $j$-th entry removed and the subspace spanned by the other rows minus their $j$-th entry. Since $\dist_j\geq\dist_j'$ we have  \begin{equation}\label{eq:ithsingbound}
		\frac{i}{2}s_{n-i}^{-2}\leq a_n^2\sum_{j=i}^{n-\lfloor i/2\rfloor}\dist_j'^{-2}
	\end{equation}

	Let $F_{n,i}$ be the event that for all $1\leq j\leq n-\lfloor i/2\rfloor$, $\dist_j'\geq (n-1)^{(1-2\gamma)/\alpha}$. Since the span of all but 1 row of $B_n$ is at most $n-i/2$, we can use Proposition \ref{RowSubspace1} to obtain, for sufficiently large $n$, 
	$$\PP(F_{n,i}^c)\leq\exp(-(n-1)^{\delta'}),$$
	for some $\delta'>0$ (after a union bound). Let $F_n=\bigcap_{i=0}^{n^{1-\gamma}} F_{n,i}$, then 
	$$\PP(F_n^c)\leq\exp(-(n-1)^{\delta}),$$
	for some $\delta>0$.
	
	We now aim to show the desired negative second moment bound on $F_n$. Recall from Proposition \ref{Rowsubspace2} there exists an event $E_j$ independent from the rows $i\neq j$ of $B_n$ without their $i$-th entry such that $\PP(E_j^c)\leq n^{-\frac{1}{2}+\gamma(\frac{2}{\alpha}-\frac{1}{2})}$ and for any subspace $W$ with dimension $d<n-n^{1-\gamma}$ one has
	$$\EE[\dist(R,W)^{-2};E_j]\leq c(n-d)^{-2/\alpha}$$
	where $R$ is the $j$-th row of $B_n$ with the $j$-th entry removed. Thus 
	$$\EE[\dist_j'^{-2};E_j]=O(i^{-2/\alpha}),$$
	for $i\leq j\leq n-\lfloor i/2\rfloor$.
	
	By taking the lower bound of $\dist_j'\geq (n-1)^{(1-2\gamma)/\alpha}$ on $E^c_j\cap F_n$, we get 
	$$\EE[\dist_j'^{-2};F_n]\leq c_2\left(i^{-2/\alpha}+n^{-\frac{1}{2}+\gamma(\frac{2}{\alpha}-\frac{1}{2})-2(1-2\gamma)/\alpha}\right).$$
	So for $\gamma$, not dependent on $i$, sufficiently small one has 
	$$\EE[\dist_j'^{-2};F_n]\leq c_3i^{-2/\alpha}.$$
	Thus by (\ref{eq:ithsingbound})
	$$\EE[s_{n-i}^{-2};F_n]\leq c_3a_n^2ni^{-(1+2/\alpha)}.$$
	The result then follows from the assumption $a_n\sim cn^{1/\alpha}$. 
	
	\subsection{Proof of Theorem \ref{EigenConv}}
	
	We have shown that almost surely $\nu_{A_n-zI_n}$ converges weakly to $\nu_{z,\alpha,\theta_d}$ and that $\log(\cdot)$ is uniformly integrable in probability with respect to $(\nu_{A_n-zI_n})_{n\geq 1}$. Thus by Lemma \ref{Girko}, $\mu_{A_n}$ converges weakly to some deterministic probability measure $\mu_{\alpha,\theta_d}$ in probability.

	\appendix

	\section{Additional lemmas}
	\subsection{Concentration}
	
	\begin{lemma}[McDiarmid's inequality, Theorem 3.1 in \cite{McDiarmid}]\label{McDiarmid}
		Let $X_1,X_2,\dots,X_n$ be independent random variables taking values in $R_1,R_2,\dots,R_n$ respectively. Let $F:R_1\times\cdots\times R_n\rightarrow\CC$ be a function with the property that for every $1\leq i\leq n$ there exists a $c_i>0$ such that 
		$$|F(x_1,x_2,\dots,x_i,\dots,x_n)-F(x_1,x_2,\dots,x_i',\dots,x_n)|\leq c_i$$
		for all $x_j\in R_j$, $x_i'\in R_i$ for $1\leq j\leq n$. Then for any $t>0$,
		$$\PP(|F(X)-\EE F(X)|\geq \sigma t)\leq C\exp(-ct^2)$$
		for absolute constants $C,c>0$ and $\sigma^2:=\sum_{i=1}^{n}c_i^2$.
	\end{lemma}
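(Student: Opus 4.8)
The plan is to prove this by the standard Doob martingale decomposition followed by the Azuma--Hoeffding inequality; indeed the statement is essentially a reformulation of Theorem~3.1 of \cite{McDiarmid}, and I reproduce the argument for completeness. Write $\mathcal{F}_k$ for the $\sigma$-algebra generated by $X_1,\dots,X_k$ (with $\mathcal{F}_0$ trivial), and set
\[
Y_k := \EE\big[F(X_1,\dots,X_n)\mid\mathcal{F}_k\big],\qquad 0\le k\le n,
\]
so that $(Y_k,\mathcal{F}_k)_{k=0}^n$ is a martingale with $Y_0=\EE F(X)$ and $Y_n=F(X)$. I would first handle real-valued $F$; the complex-valued case then follows by writing $F=\Re F+i\Im F$, noting that both $\Re F$ and $\Im F$ inherit the bounded-differences property with the same constants $c_i$, applying the real estimate to each with $\sigma t$ replaced by $\sigma t/\sqrt2$, and taking a union bound, which only affects the absolute constants.

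The key step is the conditional bound on the martingale increments $D_k:=Y_k-Y_{k-1}$. Choosing a measurable version $g_k(x_1,\dots,x_k)=\EE[F(X)\mid X_1=x_1,\dots,X_k=x_k]$, independence of the coordinates gives $Y_{k-1}=\EE[g_k(x_1,\dots,x_{k-1},X_k)]$ evaluated at $(X_1,\dots,X_{k-1})$, so conditionally on $\mathcal{F}_{k-1}$ the increment $D_k$ is $g_k(x_1,\dots,x_{k-1},X_k)$ minus its mean. The hypothesis $|F(\dots,x_k,\dots)-F(\dots,x_k',\dots)|\le c_k$ yields, after integrating out $X_{k+1},\dots,X_n$, that $x_k\mapsto g_k(x_1,\dots,x_{k-1},x_k)$ has oscillation at most $c_k$; hence there is an $\mathcal{F}_{k-1}$-measurable $A_k$ with $A_k\le D_k\le A_k+c_k$ almost surely and $\EE[D_k\mid\mathcal{F}_{k-1}]=0$. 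Hoeffding's lemma applied conditionally then gives $\EE[e^{\lambda D_k}\mid\mathcal{F}_{k-1}]\le e^{\lambda^2c_k^2/8}$ for all $\lambda\in\RR$.

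Iterating this moment bound over $k=n,n-1,\dots,1$, applying Markov's inequality to $e^{\lambda(Y_n-Y_0)}$, and optimizing in $\lambda$ produces
\[
\PP\big(|Y_n-Y_0|\ge s\big)\le 2\exp\!\left(-\frac{2s^2}{\sum_{i=1}^n c_i^2}\right)
\]
for all $s\ge0$; taking $s=\sigma t$ with $\sigma^2=\sum_{i=1}^n c_i^2$ gives the claimed $\PP(|F(X)-\EE F(X)|\ge\sigma t)\le 2e^{-2t^2}$ in the real case, and the complex case follows as above with a worse absolute constant. The only point requiring genuine care is Step~2, namely keeping the selection of $A_k$ (and the oscillation bound on $g_k(x_1,\dots,x_{k-1},\cdot)$) $\mathcal{F}_{k-1}$-measurable when the spaces $R_i$ are general; this is a routine measurability argument, and everything else is the textbook Azuma--Hoeffding computation, so I do not anticipate a substantive obstacle.
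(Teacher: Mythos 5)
Your proof is correct: the Doob martingale decomposition, the bounded-increments observation (using independence of coordinates to keep the conditional oscillation of $D_k$ within $c_k$), the conditional Hoeffding lemma, and the Chernoff optimization are exactly the standard argument for McDiarmid's inequality, and your reduction of the complex case to the real case via $\Re F$, $\Im F$, and a union bound is sound and only costs absolute constants. The paper itself offers no proof of this lemma --- it is stated as a citation to Theorem~3.1 of \cite{McDiarmid} --- so there is nothing internal to compare against, but your argument is the one given in that reference.
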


	For the following lemma we need a way of breaking up an elliptic random matrix $X$ into independent pieces.  For a matrix $M=(m_{ij})_{i,j=1}^n$ we let the $k$-wedge of $M$ be the $n\times n$ matrix $C_k$ with entries $c_{ij}=m_{ij}$ if either $i=k$ and $j\geq i$ or $j=k$ and $i\geq j$, with $c_{ij}=0$ otherwise. Note
	$$M_n=\sum_{k=1}^n C_k,$$
	and $\rank(C_k)\leq 2$ for all $1\leq k\leq n$. Recall the total variation norm of a function $f:\RR\rightarrow\RR$ is given by
	$$\|f\|_{TV}=\sup\sum_{k\in\ZZ}|f(x_{k+1})-f(x_k)|,$$
	where the supremum runs over all sequences $(x_k)_{k\in\ZZ}$ with $x_{k+1}\geq x_k$.    
	
	\begin{lemma}\label{TVwIndWed}
		Let $M$ be a $n\times n$ random matrix with independent wedges. Then for any $f:\RR\rightarrow\RR$ going to $0$ at $\pm\infty$ with $\|f\|_{TV}\leq 1$ and every $t\geq 0$,
		$$\PP\left(\left| \int fd\nu_M-\EE\int fd\nu_M\right|\geq t\right)\leq C\exp(-cnt^2),$$
		for absolute constants $C,c>0$.
	\end{lemma}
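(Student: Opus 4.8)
The plan is to apply McDiarmid's inequality, Lemma \ref{McDiarmid}, to the function $F$ that maps the $n$ independent wedges $(C_1,\dots,C_n)$ of $M$ to the linear statistic $\int f\,d\nu_M$, where $M=\sum_{k=1}^n C_k$. All that has to be verified is a bounded-difference estimate: if the $k$-th argument $C_k$ is replaced by an independent copy $C_k'$, producing the matrix $M'=M-C_k+C_k'$, then $\bigl|\int f\,d\nu_M-\int f\,d\nu_{M'}\bigr|\le c_k$ for some $c_k=O(1/n)$ uniform in $k$.

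First I would bound the rank of the perturbation. Both $C_k$ and $C_k'$ have all their nonzero entries confined to row $k$ and column $k$, so the same is true of $C_k-C_k'=M-M'$, which therefore has rank at most $2$. By the rank inequality for singular values --- a consequence of the Weyl-type interlacing $s_{i+r}(A)\le s_i(B)\le s_{i-r}(A)$ valid whenever $\rank(A-B)\le r$ (here $r=2$) --- the empirical singular value distribution functions $F_{\nu_M}(x):=\nu_M((-\infty,x])$ and $F_{\nu_{M'}}$ satisfy
\[
\sup_{x\in\RR}\bigl|F_{\nu_M}(x)-F_{\nu_{M'}}(x)\bigr|\le\frac{2}{n}.
\]
Next, since $f$ has finite total variation and tends to $0$ at $\pm\infty$, Riemann--Stieltjes integration by parts gives $\int f\,d\nu_M=-\int F_{\nu_M}\,df$, the boundary term $\bigl[f\,F_{\nu_M}\bigr]_{-\infty}^{\infty}$ vanishing precisely because $f\to 0$ at both ends while $\nu_M$ is a probability measure. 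Hence
\[
\Bigl|\int f\,d\nu_M-\int f\,d\nu_{M'}\Bigr|=\Bigl|\int\bigl(F_{\nu_M}-F_{\nu_{M'}}\bigr)\,df\Bigr|\le\bigl\|F_{\nu_M}-F_{\nu_{M'}}\bigr\|_{\infty}\,\|f\|_{TV}\le\frac{2}{n},
\]
so one may take $c_k=2/n$ for every $k$, giving $\sigma^2=\sum_{k=1}^n c_k^2\le 4/n$. Feeding this into Lemma \ref{McDiarmid} (with the $t$ there replaced by $t/\sigma$) yields
\[
\PP\Bigl(\bigl|\int f\,d\nu_M-\EE\int f\,d\nu_M\bigr|\ge t\Bigr)\le C\exp(-ct^2/\sigma^2)\le C\exp(-c'nt^2),
\]
which is the claim.

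This argument is routine and presents no genuine obstacle. The two points that deserve a little care are the rank inequality for the empirical singular value distribution functions (standard, via singular-value interlacing) and the vanishing of the boundary term in the integration by parts, which is exactly what the hypothesis $f(x)\to 0$ as $x\to\pm\infty$ is there to secure. It is also worth recording why the wedge decomposition is the natural one: although an elliptic random matrix has $X_{ij}$ dependent on $X_{ji}$, both of these entries lie in the single wedge indexed by $\min(i,j)$, so the wedges $C_1,\dots,C_n$ genuinely are independent --- which is what licenses the use of McDiarmid's inequality --- while each has rank at most $2$, which is what keeps the bounded differences at order $1/n$.
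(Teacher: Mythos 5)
Your proof is correct and follows essentially the same route as the paper: decompose $M$ into the $n$ independent wedges, obtain a $2/n$ bounded-difference estimate for the linear statistic via the rank-$\le 2$ nature of the wedge swap together with the interlacing/Lidskii bound $\|F_A-F_B\|_\infty\le\rank(A-B)/n$ and integration by parts against $f$ (with the hypothesis $f\to 0$ at $\pm\infty$ killing the boundary term), and finish with McDiarmid's inequality. No gaps.
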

	
	\begin{proof}
		If $A,B\in \Mat_n(\CC)$ let $F_A$ and $F_B$ be the cumulative distribution functions of $\nu_A$ and $\nu_B$. By the Lidskii inequality (see Theorem 3.3.16 in \cite{HornTopics}) for singular values 
		$$\|F_A-F_B\|_\infty\leq\frac{\rank(A-B)}{n}.$$
		
		Assume $f$ is smooth. Integrating by parts we get\begin{equation}\label{eq:TVSing}
			\left|\int fd\nu_A-\int fd\nu_B \right|=\left|\int_\RR f'(t)(F_A(t)-F_B(t))dt \right|\leq\frac{\rank(A-B)}{n}\|f\|_{TV}.
		\end{equation} 
		Since $\left|\int fd\nu_A-\int fd\nu_B \right|$ depends on only finitely many points for any $f$, we can extend the previous inequality to any function of finite total variation. 
		
		Now fix $f:\RR\rightarrow\RR$ going to $0$ at $\pm\infty$ with $\|f\|_{TV}\leq 1$. Let $\mathcal{C}_k$ be the space of all $k$-wedges and $F_f:\mathcal{C}_1\times\mathcal{C}_2\times\dots\times\mathcal{C}_n$ be the function defined by 
		$$F_f(C_1,C_2,\dots,C_n)=\int fd\nu_A$$ where $A$ is the matrix with $k$-wedge $C_k$. By (\ref{eq:TVSing}) 
		$$|F_f(C_1,C_2,\dots,C_i,\dots,C_n)-F_f(C_1,C_2,\dots,C_i',\dots,C_n)|\leq \frac{2}{n},$$
		and thus by McDiarmid's inequality, Lemma \ref{McDiarmid}, and the definition of $F_f$,
		$$\PP\left(\left| \int fd\nu_M-\EE\int fd\nu_M\right|\geq t \right) \leq C\exp(-cnt^2).$$
	\end{proof}

	\subsection{Singular value estimates}
	
	\begin{lemma}[See \cite{HornTopics}, Chapter 3]\label{Basics}
		If $A$ and $B$ are $n\times n$ complex matrices then 
		$$s_1(AB)\leq s_1(A)s_1(B)\text{ and }s_1(A+B)\leq s_1(A)+s_1(B),$$
		$$\max_{1 \leq k \leq n}|s_k(A+B)-s_k(A)|\leq\|B\|,$$
		$$s_{i+j-1}(A+B)\leq s_i(A)+s_j(B)$$
		for $1\leq i,j\leq n$ and $i+j\leq n+1$. In addition, 
		$$\max_{1 \leq i \leq n}|s_i(A)-s_i(B)|\leq s_1(A-B).$$
	\end{lemma}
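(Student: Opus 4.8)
The plan is to deduce every inequality in the statement from the Courant--Fischer-type min-max characterization of singular values,
$$s_k(A)=\max_{\dim S=k}\ \min_{x\in S,\,\|x\|=1}\|Ax\|=\min_{\dim S=n-k+1}\ \max_{x\in S,\,\|x\|=1}\|Ax\|,$$
where $S$ ranges over linear subspaces of $\CC^n$ and for each $k$ the minimizing (resp.\ maximizing) subspace is attained. With this in hand, $s_1(AB)\le s_1(A)s_1(B)$ is just submultiplicativity of the operator norm (from $\|ABx\|\le\|A\|\,\|Bx\|\le\|A\|\,\|B\|\,\|x\|$), and $s_1(A+B)\le s_1(A)+s_1(B)$ is the special case $i=j=1$ of the inequality $s_{i+j-1}(A+B)\le s_i(A)+s_j(B)$. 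So it suffices to establish the three Weyl-type bounds.

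First I would prove $s_{i+j-1}(A+B)\le s_i(A)+s_j(B)$ under the hypothesis $i+j\le n+1$. Pick a subspace $S$ with $\dim S=n-i+1$ realizing the min-max value $s_i(A)$, so that $\|Ax\|\le s_i(A)$ for every unit $x\in S$, and a subspace $T$ with $\dim T=n-j+1$ and $\|Bx\|\le s_j(B)$ for every unit $x\in T$. Then $\dim(S\cap T)\ge(n-i+1)+(n-j+1)-n=n-(i+j-1)+1$, which is positive precisely because $i+j\le n+1$. Fix any $U\subseteq S\cap T$ of dimension exactly $n-(i+j-1)+1$; for unit $x\in U$ the triangle inequality gives $\|(A+B)x\|\le\|Ax\|+\|Bx\|\le s_i(A)+s_j(B)$, and feeding $U$ into the min-max formula for $s_{i+j-1}(A+B)$ yields the claim (one checks $1\le i+j-1\le n$ so the index is legitimate).

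Next, for $\max_k|s_k(A+B)-s_k(A)|\le\|B\|$: take $S$ of dimension $n-k+1$ realizing $s_k(A)$; then $\max_{x\in S,\,\|x\|=1}\|(A+B)x\|\le\max_{x\in S,\,\|x\|=1}(\|Ax\|+\|Bx\|)\le s_k(A)+\|B\|$, so $s_k(A+B)\le s_k(A)+\|B\|$ by the min-max upper bound. Applying this with $A$ replaced by $A+B$ and $B$ replaced by $-B$ (and $\|-B\|=\|B\|$) gives the reverse inequality, hence $|s_k(A+B)-s_k(A)|\le\|B\|$ for every $k$. Finally, $\max_i|s_i(A)-s_i(B)|\le s_1(A-B)$ is exactly this last bound with $A$ replaced by $B$ and $B$ replaced by $A-B$, since $\|A-B\|=s_1(A-B)$. (Alternatively, all three perturbation bounds follow at once by passing to the Jordan--Wielandt Hermitian dilation $\bigl(\begin{smallmatrix}0&A\\A^\ast&0\end{smallmatrix}\bigr)$, whose eigenvalues are $\pm s_1(A),\dots,\pm s_n(A)$, and invoking Weyl's inequalities for eigenvalues of Hermitian matrices.)

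There is no genuine obstacle here: these are classical facts, and indeed the statement is attributed to \cite{HornTopics}. The only point that requires care is the bookkeeping in the dimension counts — using the codimension form $\dim S=n-k+1$ of the min-max formula consistently, verifying that $i+j\le n+1$ is exactly the condition keeping $S\cap T$ nontrivial, and remembering to run the perturbation argument in both directions to obtain the two-sided estimate.
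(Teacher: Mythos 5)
Your proof is correct, and since the paper offers no argument of its own here --- it simply cites \cite{HornTopics}, Chapter 3, where these bounds are established through exactly the variational characterization of singular values you invoke --- your derivation is essentially the canonical one from the cited source. The one place worth a second look is the claim that the chosen $S$ with $\dim S = n-i+1$ satisfies $\|Ax\|\le s_i(A)$ for \emph{all} unit $x\in S$, not merely that its worst-case value equals $s_i(A)$; this does hold (take $S$ to be the span of the right singular vectors $v_i,\dots,v_n$, for which $\|Ax\|^2=\sum_{k\ge i}s_k^2|c_k|^2\le s_i^2$), but it is a stronger assertion than just ``$S$ achieves the min'' and deserves to be stated explicitly since the triangle-inequality step uses the pointwise bound. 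The dimension count $n-(i+j-1)+1\ge 1$ under $i+j\le n+1$, the two-sided perturbation argument via $\pm B$, and the specialization $A\mapsto B$, $B\mapsto A-B$ are all clean, and the remark about the Jordan--Wielandt dilation $\left(\begin{smallmatrix}0&A\\A^\ast&0\end{smallmatrix}\right)$ is the other standard route.
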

	
	
	\begin{lemma}[Tao-Vu Negative Second Moment, \cite{UnivCL} Lemma A.4]\label{Negative2ndMom}
		If $A$ is a full rank $n'\times n$ complex matrix with rows $R_1,\dots,R_{n'}$ and $R_{-i}:=\Span\{R_j:j\neq i\}$, then 
		$$\sum_{i=1}^{n'}s_i(A)^{-2}=\sum_{i =1}^{n'}\dist(R_i,R_{-i})^{-2}.$$
	\end{lemma}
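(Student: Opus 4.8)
The plan is to identify $\sum_i s_i(A)^{-2}$ with the trace of an inverse Gram matrix and then evaluate its diagonal entries geometrically. Since $A$ is full rank with $n'\le n$, the matrix $AA^\ast$ is an $n'\times n'$ positive-definite Hermitian matrix whose eigenvalues are precisely $s_1(A)^2,\dots,s_{n'}(A)^2$; hence $\sum_{i=1}^{n'}s_i(A)^{-2}=\tr\big((AA^\ast)^{-1}\big)=\sum_{i=1}^{n'}\big((AA^\ast)^{-1}\big)_{ii}$. Moreover $(AA^\ast)_{jk}=\sum_l A_{jl}\overline{A_{kl}}=\langle R_j,R_k\rangle$, so $AA^\ast$ is exactly the Gram matrix $G(R_1,\dots,R_{n'})$ of the rows. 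Thus it suffices to prove that $\big((AA^\ast)^{-1}\big)_{ii}=\dist(R_i,R_{-i})^{-2}$ for each $i$ and then sum.

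For the diagonal identity I would use the adjugate formula $(B^{-1})_{ii}=\det(B^{(i)})/\det B$, where $B^{(i)}$ denotes $B$ with its $i$-th row and column removed. With $B=AA^\ast=G(R_1,\dots,R_{n'})$, the submatrix $B^{(i)}$ is the Gram matrix $G(\{R_j:j\ne i\})$, so $\big((AA^\ast)^{-1}\big)_{ii}=G(\{R_j:j\ne i\})/G(R_1,\dots,R_{n'})$.

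The remaining input is the classical ``base times height squared'' factorization of Gram determinants: for vectors $v_1,\dots,v_k$ one has $G(v_1,\dots,v_k)=G(v_1,\dots,v_{k-1})\cdot\dist\big(v_k,\Span\{v_1,\dots,v_{k-1}\}\big)^2$, obtained by writing $v_k$ as its orthogonal projection onto $\Span\{v_1,\dots,v_{k-1}\}$ plus a perpendicular remainder and expanding the determinant along the last row and column. Permuting the rows so that $R_i$ appears last and applying this identity with $\{R_j:j\ne i\}$ in the role of $v_1,\dots,v_{k-1}$ gives $G(R_1,\dots,R_{n'})=G(\{R_j:j\ne i\})\cdot\dist(R_i,R_{-i})^2$, hence $\big((AA^\ast)^{-1}\big)_{ii}=\dist(R_i,R_{-i})^{-2}$. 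Summing over $i$ yields the claim.

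I expect no real obstacle here: this is a standard linear-algebra identity. The only care needed is bookkeeping --- confirming that $AA^\ast$ (not $A^\ast A$) is the relevant Gram matrix, which is why one needs $n'\le n$ and full row rank, and either citing or quickly re-deriving the recursive Gram-determinant formula. An alternative, determinant-free route is to observe that $\big((AA^\ast)^{-1}\big)_{ii}=\|w_i\|^2$, where $w_i:=A^\ast(AA^\ast)^{-1}e_i$ is the minimum-norm solution of $Aw=e_i$, and then to check directly that $\|w_i\|^{-1}=\dist(R_i,R_{-i})$ by writing $w_i$ as a scalar multiple of the component of $\overline{R_i}$ orthogonal to $\Span\{\overline{R_j}:j\ne i\}$.
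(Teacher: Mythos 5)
Your proof is correct. Note that the paper itself does not prove this lemma; it cites it directly as Lemma A.4 of Tao and Vu, so there is no internal argument to compare against. Your route --- identifying $\sum_i s_i(A)^{-2}$ with $\tr((AA^\ast)^{-1})$, recognizing $AA^\ast$ as the Gram matrix of the rows, and then evaluating each diagonal entry of the inverse via the cofactor formula together with the ``base times height squared'' recursion for Gram determinants --- is the standard and essentially shortest derivation, and it matches the flavor of Tao--Vu's original argument. Your alternative via the minimum-norm preimage $w_i = A^\ast(AA^\ast)^{-1}e_i$ is also sound (including the conjugation bookkeeping: $w_i$ lies in $\Span\{\overline{R_j}\}$ and is Hermitian-orthogonal to $\overline{R_j}$ for $j\neq i$, so it is the perpendicular component of $\overline{R_i}$, suitably scaled), and it is perhaps the cleaner way to see the statement without invoking determinants. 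One small stylistic point: the hypothesis ``full rank'' in the statement is meant to force all $s_i(A) > 0$ and all $\dist(R_i,R_{-i}) > 0$, which your argument correctly uses when inverting $AA^\ast$ and dividing by $\|PR_i\|^2$.
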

	
	\begin{lemma}[Cauchy interlacing, see \cite{HornTopics}]\label{CauchyInterlacing}
		Let $A$ be an $n\times n$ complex matrix. If $B$ is an $n'\times n$ matrix obtained by deleting $n-n'$ rows from $A$, then for every $1\leq i\leq n'$,
		$$s_i(A)\geq s_i(B)\geq s_{i+n-n'}(A).$$
	\end{lemma}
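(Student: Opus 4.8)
The plan is to deduce the singular-value interlacing from the classical Cauchy interlacing theorem for Hermitian matrices applied to $AA^\ast$. Deleting $n-n'$ rows from $A$ to form $B$ amounts to writing $B = PA$, where $P \in \Mat_{n'\times n}(\CC)$ is the $0$--$1$ matrix whose rows are the standard basis row vectors $e_{j_1}^{\mathrm{T}},\dots,e_{j_{n'}}^{\mathrm{T}}$ corresponding to the retained indices $j_1 < \cdots < j_{n'}$. Then for all $1 \le k,\ell \le n'$,
$$(BB^\ast)_{k\ell} = (PAA^\ast P^\ast)_{k\ell} = e_{j_k}^{\mathrm{T}}(AA^\ast)e_{j_\ell} = (AA^\ast)_{j_k j_\ell},$$
so $BB^\ast$ is precisely the $n'\times n'$ principal submatrix of the Hermitian positive semidefinite matrix $H := AA^\ast$ obtained by keeping the rows and columns indexed by $\{j_1,\dots,j_{n'}\}$.

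Next I would invoke the eigenvalue interlacing. Since $s_i(A)^2$ is the $i$-th largest eigenvalue of $H$ and $s_i(B)^2$ is the $i$-th largest eigenvalue of $BB^\ast$, writing $\lambda_1 \ge \cdots \ge \lambda_n \ge 0$ for the eigenvalues of $H$ and $\mu_1 \ge \cdots \ge \mu_{n'} \ge 0$ for those of $BB^\ast$, the Cauchy interlacing theorem for Hermitian matrices (deleting $n-n'$ rows together with the same columns, which itself follows from the Courant--Fischer min-max formula for eigenvalues) yields
$$\lambda_i \ \ge\ \mu_i \ \ge\ \lambda_{i+(n-n')}, \qquad 1 \le i \le n'.$$
Taking nonnegative square roots throughout gives $s_i(A) \ge s_i(B) \ge s_{i+n-n'}(A)$ for every $1 \le i \le n'$, as claimed.

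If one prefers a self-contained argument avoiding the Hermitian interlacing theorem, the inequality $s_i(B) \le s_i(A)$ is immediate from the variational characterization $s_i(M) = \max_{\dim V = i}\ \min_{x \in V,\ \|x\|=1} \|Mx\|$ together with $\|Bx\| = \|PAx\| \le \|Ax\|$ for all $x$. For the lower bound $s_i(B) \ge s_{i+n-n'}(A)$ one sets $W := \{x \in \CC^n : (Ax)_j = 0 \text{ for all } j \notin \{j_1,\dots,j_{n'}\}\}$, an intersection of $n-n'$ hyperplanes so $\dim W \ge n'$, on which $\|Bx\| = \|Ax\|$; intersecting $W$ with the span $V_0$ of the top $i+n-n'$ right singular vectors of $A$ produces a subspace of dimension at least $i$ on which $\|Bx\| = \|Ax\| \ge s_{i+n-n'}(A)\,\|x\|$, and feeding this subspace into the $\max$ over $i$-dimensional subspaces completes the bound. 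The only point requiring genuine care is this lower interlacing inequality: a crude monotonicity argument gives only $s_i(B) \le s_i(A)$, and recovering $s_i(B) \ge s_{i+n-n'}(A)$ needs either the full strength of Hermitian Cauchy interlacing or the dimension-counting subspace construction just sketched.
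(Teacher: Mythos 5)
Your proposal is correct. Note that the paper does not actually supply a proof of this lemma; it simply cites Horn and Johnson's \emph{Topics in Matrix Analysis} as the source. Your main argument—identifying $BB^\ast$ as the principal submatrix of $H=AA^\ast$ on the retained indices and then quoting Hermitian Cauchy interlacing followed by taking square roots—is exactly the standard derivation one would find in such a reference, so it matches what the paper is implicitly invoking. Your alternative self-contained argument is also sound: $\dim W \geq n - (n-n') = n'$, $\dim V_0 = i+n-n'$, so $\dim(W\cap V_0) \geq i$, and on $W\cap V_0$ one has $\|Bx\| = \|Ax\| \geq s_{i+n-n'}(A)\|x\|$, which together with the Courant--Fischer characterization gives the lower bound; the upper bound from $\|Bx\|\le\|Ax\|$ is immediate. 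You are right that the lower inequality is the only delicate point, and the dimension count handles it cleanly.
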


	\begin{lemma}[See Remark 1 in \cite{Ky-FanHoffman}]\label{lem:KyFanHoffmanBound}
		Let $A$ be an $n\times n$ matrix and let $\Re(A)=(A+A^*)/2$ denote the real part of $A$. If $s_1\geq s_2\geq \dots\geq s_n$ and $\lambda_1\geq\lambda_2\geq\dots\geq\lambda_n$ denote the singular values of $A$ and eigenvalues of $\Re(A)$ respectively, then \begin{equation}
			\lambda_j\leq s_j
		\end{equation} for every $1\leq j\leq n$.
	\end{lemma}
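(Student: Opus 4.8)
The plan is to derive this classical bound straight from the Courant--Fischer min--max principle, applied once to the Hermitian matrix $\Re(A)$ and once to the positive semidefinite matrix $A^*A$. The one computational input is the pointwise estimate: for every unit vector $x\in\CC^n$,
$$\langle \Re(A)x,x\rangle=\frac{1}{2}\big(\langle Ax,x\rangle+\langle A^*x,x\rangle\big)=\Re\langle Ax,x\rangle\le |\langle Ax,x\rangle|\le\|Ax\|,$$
where the second equality uses $\langle A^*x,x\rangle=\overline{\langle Ax,x\rangle}$ and the last inequality is Cauchy--Schwarz together with $\|x\|=1$.

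Next I would write down the two min--max formulas with a common indexing convention. Since $\Re(A)$ is Hermitian with eigenvalues $\lambda_1\ge\cdots\ge\lambda_n$,
$$\lambda_j=\min_{\substack{W\subseteq\CC^n\\\dim W=n-j+1}}\ \max_{\substack{x\in W\\\|x\|=1}}\ \langle \Re(A)x,x\rangle,$$
and, because $s_j(A)=\sqrt{\lambda_j(A^*A)}$ and $t\mapsto\sqrt{t}$ is increasing,
$$s_j=\min_{\substack{W\subseteq\CC^n\\\dim W=n-j+1}}\ \max_{\substack{x\in W\\\|x\|=1}}\ \|Ax\|.$$
For each admissible subspace $W$ the inner maximum in the first display is bounded by the inner maximum in the second, by the pointwise estimate above; taking the minimum over all such $W$ yields $\lambda_j\le s_j$ for every $1\le j\le n$.

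I expect no real obstacle here: the only thing to watch is consistency of conventions --- both the $\lambda$'s and the $s$'s listed in non-increasing order, and the ``minimum over codimension-$(j-1)$ subspaces of the maximum over unit vectors'' form of Courant--Fischer used in both cases (the version that square-roots correctly through $A^*A$). One can alternatively obtain the weak majorization $\sum_{j\le k}\lambda_j\le\sum_{j\le k}s_j$ from Ky Fan's maximum principle by bounding $\Re\tr(U^*AU)$ for isometries $U\in\CC^{n\times k}$, but that does not by itself give the pointwise inequality we need, so the direct min--max route is preferable.
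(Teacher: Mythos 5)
Your argument is correct. The pointwise bound $\langle \Re(A)x,x\rangle=\Re\langle Ax,x\rangle\le\|Ax\|$ for unit $x$, combined with the Courant--Fischer characterizations $\lambda_j=\min_{\dim W=n-j+1}\max_{x\in W,\|x\|=1}\langle\Re(A)x,x\rangle$ and $s_j=\min_{\dim W=n-j+1}\max_{x\in W,\|x\|=1}\|Ax\|$ (the latter obtained from the min--max theorem for $A^*A$ and monotonicity of $\sqrt{\cdot}$), gives $\lambda_j\le s_j$ for every $j$, exactly as you describe. The paper does not give its own proof of this lemma; it simply cites Remark~1 of Ky Fan and Hoffman, so there is no in-paper argument to compare with. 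Your variational proof is a standard and self-contained derivation of the cited inequality, and you are right that the Ky Fan maximum principle alone only yields the weak majorization $\sum_{j\le k}\lambda_j\le\sum_{j\le k}s_j$ rather than the entrywise bound, so the direct min--max route is the one to use.
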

	
	\begin{lemma}[Weyl's inequality, \cite{Weyl408}, see also Lemma B.5 in \cite{HeavyIId}]\label{Weyl}
		For every $n\times n$ complex matrix $A$ with eigenvalues ordered as $|\lambda_1(A)|\geq|\lambda_2(A)|\geq\dots\geq|\lambda_n(A)|$ one has
		$$\prod_{i=1}^{k}|\lambda_i(A)|\leq\prod_{i=1}^ks_i(A)\text{ and }\prod_{i=k}^{n}s_i(A)\leq\prod_{i=k}^n|\lambda_i(A)|$$
		for all $1\leq k\leq n$. Moreover for $r>0$
		$$\sum_{k=1}^{n}|\lambda_k(A)|^r\leq\sum_{k=1}^ns_k(A)^r.$$
		
	\end{lemma}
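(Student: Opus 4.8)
The plan is to reduce the statement to its two multiplicative parts — which together say that the singular values \emph{log-majorize} the moduli of the eigenvalues — and then to obtain the additive inequality from a standard majorization argument. Throughout, write $\lambda_i=\lambda_i(A)$ and $s_i=s_i(A)$ with the stated orderings.

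\textbf{The first multiplicative inequality.} First I would bring in the $k$-th compound (exterior power) matrix $\Lambda^k A$. Its singular values are exactly the products $s_{i_1}\cdots s_{i_k}$ over $1\le i_1<\cdots<i_k\le n$, so $s_1(\Lambda^k A)=s_1\cdots s_k=\prod_{i=1}^k s_i$; and its eigenvalues are the products $\lambda_{i_1}\cdots\lambda_{i_k}$ over the same index sets, so its spectral radius equals $\max_{i_1<\cdots<i_k}|\lambda_{i_1}\cdots\lambda_{i_k}|=|\lambda_1\cdots\lambda_k|=\prod_{i=1}^k|\lambda_i|$ by the ordering. Since the spectral radius of any matrix is at most its operator norm, $s_1(\Lambda^k A)\ge\operatorname{spr}(\Lambda^k A)$ gives $\prod_{i=1}^k|\lambda_i|\le\prod_{i=1}^k s_i$ for every $1\le k\le n$. (Alternatively one can use the Schur triangularization $A=QTQ^\ast$ with $T$ upper triangular and diagonal $\lambda_1,\dots,\lambda_n$ in decreasing order of modulus, and the variational description of $\prod_{i=1}^k s_i$ as the largest $k$-dimensional volume distortion of $A$, evaluated on the span of the first $k$ coordinate vectors.)

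\textbf{The second multiplicative inequality.} If $A$ is invertible, apply the first inequality to $A^{-1}$, using $s_i(A^{-1})=s_{n+1-i}(A)^{-1}$ and $|\lambda_i(A^{-1})|=|\lambda_{n+1-i}(A)|^{-1}$; taking $k=n+1-\ell$ in the resulting bound $\prod_{i=1}^{\ell}|\lambda_{n+1-i}(A)|^{-1}\le\prod_{i=1}^{\ell}s_{n+1-i}(A)^{-1}$ and inverting yields $\prod_{i=k}^n s_i\le\prod_{i=k}^n|\lambda_i|$. If $A$ is singular, the inequality is trivial once $k$ exceeds $\operatorname{rank}(A)$ (the left side is $0$); for the remaining $k$ one perturbs to $A+\varepsilon I$, which is invertible for all small $\varepsilon>0$ outside a finite set, applies the invertible case, and lets $\varepsilon\to 0$ using the continuity of singular values and of the eigenvalue moduli in the entries of the matrix.

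\textbf{The additive inequality.} Note that at $k=n$ the first inequality is an equality, since $\prod_{i=1}^n|\lambda_i|=|\det A|=\sqrt{\det(AA^\ast)}=\prod_{i=1}^n s_i$; thus $(s_1,\dots,s_n)$ log-majorizes $(|\lambda_1|,\dots,|\lambda_n|)$ when $A$ is invertible (and the singular case is again handled by the $A+\varepsilon I$ limiting argument, or directly since a zero eigenvalue only makes the left-hand side smaller). I would then invoke the real-variable fact that if $a_1\ge\cdots\ge a_n>0$ and $b_1\ge\cdots\ge b_n>0$ satisfy $\prod_{i=1}^k a_i\le\prod_{i=1}^k b_i$ for $1\le k\le n$, then $\sum_{i=1}^n\psi(a_i)\le\sum_{i=1}^n\psi(b_i)$ for any $\psi$ such that $t\mapsto\psi(e^t)$ is nondecreasing and convex; this is proved by substituting $x_i=\log a_i$, $y_i=\log b_i$, observing that $\sum_{i\le k}x_i\le\sum_{i\le k}y_i$, and applying the standard Abel-summation proof that majorization passed through a nondecreasing convex function gives a sum inequality. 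Taking $\psi(x)=x^r$, so that $\psi(e^t)=e^{rt}$ is convex and increasing, gives $\sum_{k=1}^n|\lambda_k|^r\le\sum_{k=1}^n s_k^r$ (entries equal to $0$ contribute nothing on either side for $r>0$ and are discarded before taking logarithms).

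\textbf{Main obstacle.} The matrix-theoretic core is short — it is just ``spectral radius $\le$ operator norm'' applied to $\Lambda^k A$. The genuinely fiddly parts are bookkeeping: the passage to the limit $\varepsilon\to0$ for singular $A$ in the second inequality, and the clean handling of zero entries and of the equality case $k=n$ when converting log-majorization into the $r$-th power sum bound. Neither is deep, but both require a little care to state precisely.
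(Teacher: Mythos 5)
The paper does not include a proof of Lemma \ref{Weyl}; it cites Weyl's original paper and Lemma B.5 of \cite{HeavyIId}. Your argument is the classical one (it is essentially Weyl's own proof) and it is correct: the exterior-power trick establishes the first multiplicative chain by reducing it to ``spectral radius $\le$ operator norm'' for $\Lambda^k A$, the second follows by passing to $A^{-1}$, and the additive inequality follows from the Tomi\'{c}--Weyl theorem applied with $\psi(x)=x^r$, so that $t\mapsto\psi(e^t)=e^{rt}$ is nondecreasing and convex.

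Two small remarks on presentation, neither a genuine gap. First, the perturbation argument for the singular case of the second multiplicative inequality is unnecessary: if $A$ is singular then $s_n(A)=0$, hence $\prod_{i=k}^n s_i(A)=0$ for every $k\le n$ and the inequality is trivial for all $k$. Second, when converting log-majorization to the power-sum inequality you invoke discarding zero entries ``before taking logarithms''; it is worth spelling out that the chain $\prod_{i\le k}|\lambda_i|\le\prod_{i\le k}s_i$ forces the number of vanishing $|\lambda_i|$ to be at least the number of vanishing $s_i$, so after restricting to the largest $m$ indices on which both are positive one still has a valid weak log-majorization, and the discarded zero terms only decrease the left-hand side of $\sum|\lambda_k|^r\le\sum s_k^r$.
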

	
	\begin{lemma}[Schatten Bound, see proof of Theorem 3.32 in \cite{Matrixineq}]\label{Schattent}
		Let $A$ be an $n\times n$ complex matrix with rows $R_1,\dots,R_n$. Then for every $0<r\leq 2$,
		$$\sum_{i=1}^{n}s_k(A)^r\leq\sum_{i =1}^r\|R_k\|^r.$$
	\end{lemma}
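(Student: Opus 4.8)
The plan is to pass through the Gram matrix $B:=AA^\ast$. This matrix is positive semi-definite, its eigenvalues are exactly $s_1(A)^2\ge\cdots\ge s_n(A)^2$, and its diagonal entries are $B_{kk}=\sum_{j}|A_{kj}|^2=\|R_k\|^2$. Hence, with $p:=r/2$, the assertion $\sum_k s_k(A)^r\le \sum_k\|R_k\|^r$ is equivalent to the statement that for every positive semi-definite $B\in\Mat_n(\CC)$ with eigenvalues $\lambda_1,\dots,\lambda_n$ and every $p\in(0,1]$,
$$\sum_{k=1}^n\lambda_k^{\,p}\le\sum_{k=1}^n B_{kk}^{\,p}.$$

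The key input is purely a convexity remark: since $0<r\le 2$ we have $p=r/2\in(0,1]$, so $t\mapsto t^{\,p}$ is concave and nonnegative on $[0,\infty)$. First I would write the spectral decomposition $B=U\,\diag(\lambda_1,\dots,\lambda_n)\,U^\ast$ with $U$ unitary, so that for each $k$,
$$B_{kk}=\sum_{j=1}^n|U_{kj}|^2\lambda_j,$$
and, because $\sum_j|U_{kj}|^2=1$, this expresses $B_{kk}$ as a convex combination of the $\lambda_j$. Jensen's inequality for the concave map $t\mapsto t^{\,p}$ then gives $B_{kk}^{\,p}\ge\sum_j|U_{kj}|^2\lambda_j^{\,p}$ for every $k$. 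Summing over $k$ and using that the columns of $U$ are also unit vectors, $\sum_k|U_{kj}|^2=1$, yields
$$\sum_{k=1}^n B_{kk}^{\,p}\ge\sum_{j=1}^n\lambda_j^{\,p}\sum_{k=1}^n|U_{kj}|^2=\sum_{j=1}^n\lambda_j^{\,p},$$
which is the desired inequality after substituting $\lambda_k=s_k(A)^2$ and $B_{kk}=\|R_k\|^2$. Equivalently, one may invoke the Schur--Horn theorem — the diagonal of a Hermitian matrix is majorized by its spectrum — together with the fact that majorization reverses sums of concave functions; the Jensen computation above is simply the elementary proof of the one half of Schur's theorem that is actually needed.

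There is no substantive obstacle here; the only point that must not be overlooked is the exact role of the hypothesis $r\le 2$. For $r>2$ the exponent $p=r/2$ exceeds $1$, $t\mapsto t^{\,p}$ becomes convex, and the inequality reverses — which is why attempts to derive the bound directly from subadditivity of the Schatten quasi-norm (valid only up to the first power, i.e.\ for $p\le 1$) are the right instinct but must be applied at the level of $B=AA^\ast$ rather than of $A$ itself. It is also worth noting that the endpoint $r=2$ degenerates to the identity $\sum_k s_k(A)^2=\tr(AA^\ast)=\sum_k\|R_k\|^2$, so all the content lies in the range $0<r<2$.
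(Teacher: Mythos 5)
Your proof is correct and self-contained: the reduction to the Gram matrix $B=AA^\ast$, the identification of $B_{kk}$ as a convex combination of the eigenvalues $\lambda_j=s_j(A)^2$ via the spectral decomposition, and Jensen's inequality applied to the concave map $t\mapsto t^{r/2}$ for $0<r\leq 2$ together give exactly the claimed bound. The paper does not supply a proof — it merely cites Theorem 3.32 of the referenced monograph — and your argument is the standard majorization (Schur) route that any such reference would give, so there is no genuine divergence to report; your observations about the necessity of $r\le 2$ and the degenerate equality at $r=2$ are both accurate.
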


	\subsection{Moments of stable distributions}
	\begin{lemma}[See \cite{HeavyIId} and \cite{feller-vol-2} Theorem VIII.9.2]\label{TruncatedMoments}
		Let $Z$ be a positive random variable such that for every $t>0$, 
		$$\PP(Z\geq t)=L(t)t^{-\alpha}$$
		for some slowly varying function $L$ and some $\alpha\in(0,2)$.
		Then for every $p>\alpha$,
		$$\lim\limits_{t\rightarrow\infty}\frac{\EE[Z^p\indicator{Z\leq t}]}{ c(p)L(t)t^{p-\alpha}}\rightarrow1,$$	
		where $c(p):=\alpha/(p-\alpha)$.
	\end{lemma}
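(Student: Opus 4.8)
The plan is to reduce the statement to Karamata's theorem on integrals of regularly varying functions, which is exactly \cite[Theorem VIII.9.2]{feller-vol-2}: the lemma is a packaging of that result together with an elementary Fubini computation.

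First I would record the tail asymptotics in the form actually needed. Writing $\bar{F}(s):=\PP(Z>s)$, I claim $\bar{F}(s)\sim L(s)s^{-\alpha}$ as $s\to\infty$. Indeed $\bar F(s)\le\PP(Z\ge s)=L(s)s^{-\alpha}$, while for every $\eps>0$ one has $\bar F(s)\ge\PP(Z\ge(1+\eps)s)=L((1+\eps)s)((1+\eps)s)^{-\alpha}$, and letting $s\to\infty$ and then $\eps\to0$ using the slow variation of $L$ gives the matching lower bound; the atom $\PP(Z=s)$ is negligible relative to a regularly varying tail of negative index. Then I would use the layer-cake identity $Z^p=\int_0^\infty ps^{p-1}\indicator{s<Z}\,ds$ and Tonelli's theorem to write, for each $t>0$,
\begin{equation*}
\EE\bigl[Z^p\indicator{Z\le t}\bigr]=\int_0^t ps^{p-1}\,\PP(s<Z\le t)\,ds=p\int_0^t s^{p-1}\bar F(s)\,ds-t^p\bar F(t).
\end{equation*}
Since $\bar F\le 1$ and $p-1>\alpha-1>-1$, the integrand $s^{p-1}\bar F(s)$ is integrable near $0$, so the right-hand side is finite for every $t$.

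The function $g(s):=s^{p-1}\bar F(s)$ is regularly varying at infinity with index $p-1-\alpha$, which exceeds $-1$ precisely because $p>\alpha$; hence Karamata's theorem applies and yields
\begin{equation*}
\int_0^t g(s)\,ds\ \sim\ \frac{t\,g(t)}{p-\alpha}\ =\ \frac{t^p\bar F(t)}{p-\alpha}\ \sim\ \frac{L(t)\,t^{p-\alpha}}{p-\alpha}\qquad(t\to\infty).
\end{equation*}
Feeding this back into the previous display and using $t^p\bar F(t)\sim L(t)t^{p-\alpha}$ gives
\begin{equation*}
\EE\bigl[Z^p\indicator{Z\le t}\bigr]\ \sim\ \frac{p}{p-\alpha}L(t)t^{p-\alpha}-L(t)t^{p-\alpha}\ =\ \frac{\alpha}{p-\alpha}\,L(t)\,t^{p-\alpha}\ =\ c(p)\,L(t)\,t^{p-\alpha},
\end{equation*}
which is the claim.

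There is no substantive obstacle here: the only care needed is in the two routine reductions above — passing from $\PP(Z\ge\cdot)$ to $\PP(Z>\cdot)$ and checking the hypotheses of Karamata's theorem (integrability of $g$ near $0$ and regular variation of index exceeding $-1$ at $\infty$), both of which rest on the single inequality $p>\alpha$. For $p<\alpha$ the quantity $\EE[Z^p\indicator{Z\le t}]$ instead converges to the finite moment $\EE[Z^p]$, a different regime, which is why the hypothesis $p>\alpha$ is exactly the relevant one.
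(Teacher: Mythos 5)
Your proof is correct and is exactly the argument that the paper's citation points to: the paper states the lemma without proof, referring to \cite{feller-vol-2} Theorem VIII.9.2 (Karamata), and your argument consists of the layer-cake/Fubini reduction to $\int_0^t s^{p-1}\bar F(s)\,ds$ followed by an application of that same Karamata theorem. The small preliminary reductions you supply (passing from $\PP(Z\ge\cdot)$ to $\PP(Z>\cdot)$, noting integrability near $0$, and checking the index $p-1-\alpha>-1$) are the right details to fill in, and the final subtraction is safe because the leading coefficients $p/(p-\alpha)$ and $1$ do not cancel.
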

	
	\subsection{Proof of Proposition \ref{prop:RDE}}\label{app:PropRDEProof} From Proposition \ref{prop:RecursiveEquation} one has \begin{equation*}
		\begin{pmatrix}
			a(z,\eta)&b(z,\eta)\\
			b'(z,\eta)&c(z,\eta)
		\end{pmatrix}=-\left(\begin{pmatrix}
			\eta&z\\
			\bar{z}&\eta
		\end{pmatrix}+\sum_{k=1}^\infty\begin{pmatrix}
		0&y_k^{(1)}\\
		\bar{y}_{k}^{(2)}&0\\
	\end{pmatrix}\tilde{R}(U)_{kk}\begin{pmatrix}
	0&y_k^{(2)}\\
	\bar{y}_{k}^{(1)}&0\\
\end{pmatrix} \right)^{-1}.
	\end{equation*} We will consider the point process $\{(y_k^{(1)},y_k^{(2)})\}_{k\geq1}$ in the polar form $\{(r_k,w_k)\}_{k\geq 1}$ where $\{r_k\}_{k\geq 1}$ is a Poisson point process with intensity measure $m_\alpha$ and $w_1,w_2,\dots$ is a collection of independent identically $\theta_d$ distributed random variables independent of $\{r_k\}_{k\geq 1}$. We will denote the coordinates of $w_k$ by $(w_k^{(1)},w_k^{(2)})$. In polar coordinates the recursive equation becomes \begin{equation}\label{eq:PPPRDE}
	\begin{pmatrix}
		a(z,\eta)&b(z,\eta)\\
		b'(z,\eta)&c(z,\eta)
	\end{pmatrix}=-\left(\begin{pmatrix}
		\eta&z\\
		\bar{z}&\eta
	\end{pmatrix}+\sum_{k=1}^\infty r_k^2\begin{pmatrix}
		c_k(z,\eta)|w^{(1)}_k|^2&b'_k(z,\eta)w^{(1)}_kw^{(2)}_k\\
		b_k(z,\eta)\bar{w}^{(1)}_k\bar{w}^{(2)}_k&a_k(z,\eta)|w^{(2)}_k|^2
	\end{pmatrix} \right)^{-1},
\end{equation} where \begin{equation*}
\begin{pmatrix}
	a_k(z,\eta)&b_k(z,\eta)\\
	b'_k(z,\eta)&c_k(z,\eta)
\end{pmatrix}:=\tilde{R}(U)_{kk}.
\end{equation*} To complete the proof that the matrix in \eqref{eq:limitresolvent} satisfies the distributional equation we need the following lemma, which is essentially Theorem 10.3 in \cite{Arous_2007}.
\begin{lemma}\label{lem:SeriesRepofStableVectors}
	Let $\{r_k\}_{k\geq 1}$ be a Poisson point process with intensity measure $m_\alpha$ for $\alpha\in(0,2)$ and $v_1,v_2,\dots$ be a collection of bounded independent and identically $\nu$ distributed random vectors for some probability measure $\nu$. Then \begin{equation}
		\sum_{k=1}^{\infty}r_k^2v_k\overset{d}{=}S,
	\end{equation} where $S$ is an $\frac{\alpha}{2}$-stable random vector with spectral measure $\Gamma_\nu$, where $\Gamma_\nu$ is the measure on the unit sphere obtained by the image of the measure $\frac{\Gamma(2-\alpha/2)\cos(\pi\alpha/4)}{1-\alpha/2}\|v\|^{\alpha/2}d\nu(v)$ under the map $v\mapsto \frac{v}{\|v\|}$.
\end{lemma}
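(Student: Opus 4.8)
The plan is to identify the characteristic function of $\sum_{k\geq 1} r_k^2 v_k$ and match it to the canonical form of an $\tfrac{\alpha}{2}$-stable vector recorded in the introduction. First I would use the polar description of the Poisson point process $\{r_k\}$: since $\{r_k\}$ has intensity $m_\alpha$ with density $\alpha r^{-(1+\alpha)}\,dr$ on $(0,\infty)$, the pairs $\{(r_k^2, v_k)\}_{k\geq 1}$ form a Poisson point process on $(0,\infty)\times\mathbb{R}^d$ whose intensity is the image of $m_\alpha(dr)\otimes \nu(dv)$ under $(r,v)\mapsto(r^2,v)$; a change of variables $\rho = r^2$ turns the radial part into $\tfrac{\alpha}{2}\rho^{-(1+\alpha/2)}\,d\rho$, i.e.\ the intensity $m_{\alpha/2}$ in the new radial variable. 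Thus $\sum_k r_k^2 v_k = \sum_k \rho_k v_k$ where $\{(\rho_k,v_k)\}$ is Poisson with intensity $m_{\alpha/2}(d\rho)\otimes\nu(dv)$.

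Next I would apply the Lévy–Khintchine/Campbell formula for Poisson integrals: for $u\in\mathbb{R}^d$,
\[
\mathbb{E}\exp\!\left(i\,u^{\mathrm T}\sum_k \rho_k v_k\right)
= \exp\!\left(\int_0^\infty\!\!\int_{\mathbb{R}^d}\bigl(e^{i\rho u^{\mathrm T}v}-1\bigr)\,\nu(dv)\,m_{\alpha/2}(d\rho)\right),
\]
valid because $\alpha/2<1$ guarantees the integral converges near $\rho=0$ without a compensating linear term, and boundedness of $v_k$ together with $\alpha/2 < 1 < 2$ handles the tail — this is exactly the summability already guaranteed by Lemma \ref{Pialphaprop}(4), so no separate convergence argument is needed for the series itself. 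Then for each fixed $v$ I would evaluate the one-dimensional integral $\int_0^\infty (e^{i\rho t}-1)\,\tfrac{\alpha}{2}\rho^{-(1+\alpha/2)}\,d\rho$ with $t = u^{\mathrm T}v$ using the standard formula $\int_0^\infty(e^{i\rho t}-1)\rho^{-(1+\beta)}\,d\rho = -\Gamma(-\beta)|t|^\beta\bigl(\cos(\pi\beta/2)-i\,\mathrm{sign}(t)\sin(\pi\beta/2)\bigr)$ with $\beta=\alpha/2$; writing $\Gamma(-\beta) = -\Gamma(1-\beta)/\beta = -\Gamma(2-\beta)/(\beta(1-\beta))$ produces the constant $\tfrac{\Gamma(2-\alpha/2)\cos(\pi\alpha/4)}{1-\alpha/2}$ and a factor $|u^{\mathrm T}v|^{\alpha/2}(1 - i\,\mathrm{sign}(u^{\mathrm T}v)\tan(\pi\alpha/4))$. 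Integrating over $v\sim\nu$ and pushing $\nu$ forward under $v\mapsto v/\|v\|$ while peeling off $\|v\|^{\alpha/2}$ from $|u^{\mathrm T}v|^{\alpha/2}$ yields precisely $-\int_{\mathbb{S}^{d-1}}|u^{\mathrm T}s|^{\alpha/2}(1-i\,\mathrm{sign}(u^{\mathrm T}s)\tan(\pi\alpha/4))\,d\Gamma_\nu(s)$, which is the characteristic exponent of an $\tfrac{\alpha}{2}$-stable vector with spectral measure $\Gamma_\nu$ and zero shift. By uniqueness of the characteristic function this identifies the distribution, completing the proof.

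The main obstacle is bookkeeping the constant: one must be careful that the exponent $\beta = \alpha/2$ lies in $(0,1)$ (true since $0<\alpha<2$) so that the representation has no linear compensator and the shift vector is genuinely $0$, and one must track the real and imaginary parts of $\Gamma(-\alpha/2)$ through the reflection/recursion identities to land on exactly $\tfrac{\Gamma(2-\alpha/2)\cos(\pi\alpha/4)}{1-\alpha/2}$ rather than an equivalent but differently-normalized constant; comparing against Theorem 10.3 in \cite{Arous_2007}, which the statement cites as essentially the same result, is the cleanest way to pin down the normalization. A minor point to check is measurability/finiteness of $\int\|v\|^{\alpha/2}d\nu(v)$, which is immediate from the assumed boundedness of the $v_k$.
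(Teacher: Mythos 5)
Your proof is correct in structure but takes a genuinely different route from the paper's. The paper proves the lemma by approximation: it introduces i.i.d.\ non-negative heavy-tailed random variables $X_k$ with tail index $\alpha/2$, invokes Theorem 10.3 of Ben Arous--Guionnet \cite{Arous_2007} to show $b_n^{-1}\sum_{k=1}^n X_k v_k \Rightarrow S$, then uses Proposition \ref{OrderStatConv} to show the point process $\sum_{k\le n}\delta_{X_k v_k/b_n}$ converges to $\sum_k\delta_{r_k^2 v_k}$, and finally upgrades this to convergence of the sums via uniform summability. Your approach instead computes the characteristic function of $\sum_k r_k^2 v_k$ directly via Campbell's exponential formula for Poisson integrals, performs the change of variables $\rho = r^2$ to pass to intensity $m_{\alpha/2}$, and evaluates the resulting radial integral explicitly to match the canonical $\alpha/2$-stable characteristic exponent quoted in the introduction. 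Your route is more self-contained (no black-box appeal to the Ben Arous--Guionnet result) and makes the normalization constant transparent, while the paper's route is shorter given the citation. One bookkeeping slip to fix: the ``standard formula'' you quote should read $\int_0^\infty(e^{i\rho t}-1)\rho^{-(1+\beta)}\,d\rho = \Gamma(-\beta)|t|^\beta(\cos(\pi\beta/2) - i\,\mathrm{sign}(t)\sin(\pi\beta/2))$ without the extra leading minus (note $\Gamma(-\beta)<0$ for $0<\beta<1$, so the real part is correctly negative); with that correction the $\frac{\alpha}{2}$ from $m_{\alpha/2}$ combines with $\Gamma(-\alpha/2)$ to give $\frac{\alpha}{2}\Gamma(-\alpha/2) = -\Gamma(1-\alpha/2) = -\frac{\Gamma(2-\alpha/2)}{1-\alpha/2}$, and the exponent lands on $-\int_{\Sp^{d-1}}|u^{\mathrm T}s|^{\alpha/2}(1 - i\,\mathrm{sign}(u^{\mathrm T}s)\tan(\pi\alpha/4))\,d\Gamma_\nu(s)$ exactly as you claim. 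Your use of $\alpha/2 < 1$ to dispense with the compensator, and of Lemma \ref{Pialphaprop}(4) for absolute summability of $\sum r_k^2 v_k$, are both correct.
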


\begin{proof}
	Let $(X_k)_{k\geq1}$ be a sequence of i.i.d.\ non-negative random variables such that \begin{equation}
		\PP(X_1\geq u)=\frac{L(u)}{u^{\alpha/2}},
	\end{equation} for some slowly varying function $L$, and define the normalizing constants\begin{equation}
	b_n:=\inf\left\{b:\PP(X\geq b)\leq\frac{1}{n} \right\}.
\end{equation} From Theorem 10.3 in \cite{Arous_2007}, $\frac{1}{b_n}\sum_{k=1}^nX_kv_k$ converges in distribution to $S$. It also follows from Proposition \ref{OrderStatConv} that \begin{equation}
\sum_{k=1}^{n}\delta_{X_kv_k/b_n}\Rightarrow\sum_{k=1}^{\infty}\delta_{r_k^{2}v_k},
\end{equation} as $n\rightarrow\infty$. It then follows from the almost sure uniform summability of the sequence $(X_k)_{k\geq 1}$ that \begin{equation}
\frac{1}{b_n}\sum_{k=1}^nX_kv_k\Rightarrow\sum_{k=1}^{\infty}r_k^2v_k,
\end{equation} as $n\rightarrow\infty$.
\end{proof} An application of Lemma \ref{lem:SeriesRepofStableVectors} to the series in \eqref{eq:PPPRDE} gives \eqref{eq:RDE}. 

\bibliography{heavy}
\bibliographystyle{abbrv}

\end{document}